\newtheorem{theorem}{Theorem}[section]
\newtheorem{corollary}[theorem]{Corollary}
\newtheorem{lemma}[theorem]{Lemma}
\newtheorem{proposition}[theorem]{Proposition}
\theoremstyle{remark}
\newtheorem{remark}[theorem]{Remark}
\theoremstyle{definition}
\newtheorem{definition}[theorem]{Definition}
\def\FNS{\mathbf{F}(\mathbf{N}_\sigma)}
\def\RR{\mathbb{R}}
\def\R{\mathbb{R}}
\def\ZZ{\mathbb{Z}}
\def\NN{\mathbb{N}}
\def\PP{\mathbb{P}}
\def\XX{\mathbb{X}}
\def\EE{\mathbb{E}}
\def\Var{\mathbb{V}\mathrm{ar}}
\def\Cov{\mathbb{C}\mathrm{ov}}
\def\1{\mathbbm{1}}
\def\al{\alpha}
\def\be{\beta}
\def\ga{\gamma}
\def\de{\delta}
\def\ph{\varphi}
\def\la{\lambda}
\def\cP{\mathcal{P}}
\def\cX{\mathcal{X}}
\def\cH{\mathcal{H}} 
\def\sA{\mathsf{A}}
\def\sB{\mathsf{B}}
\def\sC{\mathsf{C}}
\def\sK{\mathsf{K}}
\def\sW{\mathsf{W}}
\def\sS{\mathsf{S}}
\def\sO{\mathsf{O}}
\def\sP{\mathsf{P}}
\def\bx{\mathbf{x}}
\def\by{\mathbf{y}}
\def\bz{\mathbf{z}}
\def\bF{\mathbf{F}}
\def\b0{\mathbf{0}}
\def\bM{\mathbf{M}}
\def\dk{{d_\mathrm{K}}}
\def\dw{d_{\mathrm{W}}}
\def\d2{d_2}
\def\dc{d_\mathrm{c}}
\def\wt{\widetilde}
\def\wh{\widehat}
\def\con{\xleftrightarrow}
\def\scr{\mathscr}
\numberwithin{equation}{section}
\newcommand{\norm}[1]{\lVert#1\rVert}
\newcommand{\op}[1]{\|#1\|_\mathrm{op}}
\DeclareMathOperator\supp{Supp}
\DeclareMathOperator\hess{Hess}
\DeclareMathOperator\dom{dom}
\begin{document}
\title{\bf Quantitative two-scale stabilization \\ on the Poisson space}
\author{Rapha\"el Lachi\`eze-Rey, Giovanni Peccati, Xiaochuan Yang}
\date{\small \today}

\maketitle

\begin{abstract}  We establish inequalities for assessing the distance between the distribution of a (possibly multidimensional) functional of a Poisson random measure and that of a Gaussian element. Our bounds only involve add-one cost operators at the order one -- that we evaluate and compare at two different scales -- and are specifically tailored for studying the Gaussian fluctuations of sequences of geometric functionals displaying a form of weak stabilization -- see Penrose and Yukich (2001) and Penrose (2005). Our main bounds extend the estimates recently exploited by Chatterjee and Sen (2017) in the proof of a quantitative version of the central limit theorem (CLT) for the length of the Poisson-based Euclidean minimal spanning tree (MST). We develop in full detail three applications of our bounds, namely: (i) to a quantitative multidimensional spatial CLT for functionals of the on-line nearest neighbour graph, (ii) to a quantitative multidimensional CLT involving functionals of the empirical measure associated with the edge-length of the Euclidean MST, and (iii) to a collection of multidimensional CLTs for geometric functionals of the excursion set of heavy-tailed shot noise random fields. Application (i) is based on a collection of general probabilistic approximations for strongly stabilizing functionals, that is of independent interest. \\

\noindent{\bf Keywords:}  Central limit theorem · Chaos expansion ·  {Excursions} · Kolmogorov distance · Malliavin calculus · Mehler’s formula · Minimal spanning tree · On-line nearest neighbour graph ·  Poisson process · Random geometric graphs · Shot noise random fields ·  Spatial Ornstein-Uhlenbeck process · Stabilization · Stein’s method · Stochastic geometry · Wasserstein distance\\
\noindent{\bf AMS 2010 Classification:} 60F05 · 60H07 · 60G55 · 60D05 · 60G60

\end{abstract}

\tableofcontents

\section{Introduction and main results}

\subsection{Overview}\label{ss:overview}

The aim of this paper is to establish a collection of new inequalities, allowing one to prove quantitative central limit theorems (possibly multidimensional, and with respect to non-smooth probabilistic distances) for sequences of geometric functionals of a Poisson random measure displaying a form of {\bf quantitative two-scale stabilisation}. As discussed at length in the sections to follow, the concept of two-scale stabilisation promoted in our work is meant to quantitatively capture and extend the notion of {\bf weak geometric stabilisation} developed by Penrose and Yukich in the fundamental works \cite{Penrose05, PY01, PY02}, building on the ideas exploited by Kesten and Lee \cite{KL96} in order to establish a central limit theorem (CLT) for the length of the Poisson-based {\bf minimal spanning tree}. See the discussion below, as well as \cite{BY05, LRSY, LPS16, PY05, Schreiber} and the references therein, for further details. 
As demonstrated in Sections \ref{ss:ss} and \ref{ss:strongmd}, when applied to strongly stabilizing functionals (that is, to functionals possessing explicit {\bf radii of stabilization}) our inequalities provide novel quantitative bounds in any dimension, that only depend on the radii's tail probabilities. 

The idea of proving quantitative CLTs for geometric functionals of point processes by comparing the fluctuations of difference operators at two distinct scales, has recently appeared in the work by Chatterjee and Sen \cite{CS17},  {which provided the initial impetus for the present work}. In such a reference, a notion of two-scale stabilisation is implicitly used in order to prove a quantitative version of Kesten and Lee's CLT. The form of stabilization exploited in \cite{CS17} emerges in the framework of the method for  {one-dimensional}  normal approximations developed in \cite{C08, LRP}, that is applied {\it via} a discretization procedure. In comparison, our bounds  {hold in any dimension}, do not require any discretization, and are uniquely expressed in terms of {\bf single add-one cost operators} (see Section \ref{ss:frame} for definitions), evaluated over regions expanding at different speeds.

We develop three applications: (i) to the multidimensional fluctuations of edge-length statistics of the {\bf on-line nearest neighbour graph} \cite{Bergeretal, FKP, Penrose05, wade07, wade09}, (ii) to the fluctuations of edge-length functionals of the Poisson-based minimal spanning tree (thus recovering multidimensional versions of some results from \cite{CS17}), and (iii) to vectors of geometric functionals of excursion sets associated with {\bf shot-noise fields} \cite{BdBDE, BieDes12, BST, Lr19}. In the context of (ii), we are also able to prove a quantitative CLT for the number of connected components of the Boolean model with a fixed radius.

Our proofs are based on a combination of {\bf Malliavin calculus} \cite{BPSURVEY, DP, Last,  LP, LPPTRF} and {\bf Stein's method for normal approximation} \cite{CGS, NP} -- following many works that have exploited analogous tools in a geometric context (see \cite{BPSURVEY}, as well as the discussion below). In particular, it is natural to compare our two-scale bounds to the {\bf second order Poincar\'e inequalities} proved in \cite{LPS16}. As discussed in Remark \ref{r:cs} below, unlike the estimates derived in \cite{LPS16} our bounds do not involve iterated add-one cost operators, and are particularly adapted to situations in which analytically dealing with such iterated operators is unfeasible, and the techniques of \cite{LPS16} do not apply (this is the case e.g. for the study of the MST developed in Section 3). In general, it is to be expected that, if the techniques developed in the present paper and those of \cite{LPS16} are both applicable, then the bounds obtained using \cite{LPS16} are tighter --- since our estimates are derived by forcing an artificial two-scale structure potentially slowing down the rate of convergence. This phenomenon is succinctly described in the forthcoming Remark \ref{r:expo} in the special case of the nearest neighbour graph. A form of multiscale second-order Poincaré  inequalities - also inspired by the theory of stabilization - has been recently established in \cite{DG}.

We will now introduce our general framework, as well as the notational conventions that are used throughout the paper.

\subsection{Framework and basic notation}\label{ss:frame} 

Although the proofs of our main estimates --- as detailed in Appendix \ref{s:proofs} --- rely on a pervasive use of  Malliavin calculus, the statements of our results only require few notions of stochastic analysis on configuration spaces, that we recall below.
 
We fix a probability space $(\Omega, \mathcal F, \PP)$, and consider a measurable space $(\mathbb X, \mathcal X)$ endowed with a $\sigma$-finite measure $\la$. We let 
$\mathcal{X}_\lambda:=\{\sB\in\mathcal{X}\,:\,\la(\sB)<\infty\}$
and denote by $\eta=\{\eta(\sB)\,:\,\sB\in\mathcal{X}\}$ a \textbf{Poisson measure} on $(\mathbb X, \mathcal X)$ with \textbf{intensity} $\la$. We recall that the distribution of $\eta$ is fully determined by the following two facts: (i) 
{ for each finite sequence $\sB_1,\dotsc,\sB_m\in\mathcal{X}$ of pairwise disjoint sets, the random variables $\eta(\sB_1),\dotsc,\eta(\sB_m)$ are independent,}
and (ii) for every $\sB\in\mathcal{X}$, the random variable $\eta(\sB)$ has the Poisson distribution with parameter $\la(\sB)$, where the family of Poisson laws is extended to the parameter set $[0,+\infty]$ in the obvious way. From now on, we assume that $\mathcal F$ is the completed $\sigma$-field generated by $\eta$. For $\sB\in\mathcal{X}_\la$, we write $\hat{\eta}(\sB):=\eta(\sB)-\la(\sB)$ and denote by $\hat{\eta}=\{\hat{\eta}(\sB)\,:\,\sB\in\mathcal{X}_\la\}$
the \textbf{compensated Poisson measure} associated with $\eta$. 

In what follows, we will regard the Poisson measure $\eta$ as a random element {taking values} in the space $\mathbf{N}_\sigma=\mathbf{N}_\sigma(\mathbb{X})$, composed of all $\sigma$-finite point measures $\chi$ on $(\mathbb{X},\mathcal{X})$ that satisfy $\chi(\sB)\in\mathbb{N}_0\cup\{+\infty\}$ for all $\sB\in\mathcal{X}$. {Such a space is} equipped with the smallest 
$\sigma$-field {such that}, for each $\sB\in\mathcal{X}$,  the mapping $\mathbf{N}_\sigma\ni\chi\mapsto\chi(\sB)\in[0,+\infty]$ is measurable. Throughout the paper, we shall assume that the process $\eta$ is {\bf proper}, in the sense that $\eta$ can be $\PP$-a.s. represented as
$
\eta = \sum_{n=1}^{\eta(\mathbb{X})}\delta_{Y_n},
$
where $\delta_y$ is the Dirac mass at $y$, and $\{Y_n : n\geq 1\}$ stands for a countable collection of random elements with values in $\mathbb{X}$. A sufficient condition for $\eta$ to be proper is e.g. that $(\mathbb{X},\mathcal{X})$ is a Polish space endowed with its Borel $\sigma$-field, with $\la$ taken to be $\sigma$-finite as above; see \cite[Section 6.1]{LP} and \cite[p. 2-3]{Last} for more details. { We observe that Corollary 3.7 in \cite{LP} ensures that, for each Poisson measure $\eta$, there exists a proper Poisson measure $\eta^*$ which has the same distribution as $\eta$. Since our results depend solely on the law of $\eta$, assuming that $\eta$ is proper is therefore not a restriction} 

Now denote by $\FNS$ the class of all measurable functions $f :\mathbf{N}_\sigma\rightarrow\R$ and by $L^0(\Omega):=L^0(\Omega,\mathcal F)$ the class of all real-valued, measurable functions $F$ on $\Omega$. 
Note that, as $\mathcal F$ is the completion of $\sigma(\eta)$, each $F\in {L}^0(\Omega)$ can be written as $F=f(\eta)$ for some measurable function $f\in \mathbf{F}(\mathbf{N}_\sigma)$. Such a mapping $f$, called a {\bf representative} of $F$, is $\PP\circ\eta^{-1}$--a.s. uniquely defined. Using a representative $f$ of $F$, we can define the so-called \textbf{add-one cost operator} $D =(D_x)_{x\in\mathbb{X}}$ on $L^0(\Omega)$ by 
\begin{equation}\label{defdp}
 D_xF = D_x F(\eta) :=f(\eta+\delta_x)-f(\eta)\,,\quad x\in\mathbb{X}.
\end{equation}
Without further mention, we will use the fact that the mapping $\mathbb X \times \Omega \to \R : (x, \omega) \mapsto D_xF(\eta(\omega))$ is jointly measurable (many facts of a similar nature are exploited below without mention). We also stress that the definition of $DF$ is $\PP\otimes \la$-a.e. independent of the choice of the representative $f$ --- see \cite[Lemma 2.4]{LP}. In order to simplify the discussion, from now the following convention is in order: when introducing a generic random variable $F\in L^0(\Omega)$, we will once and for all (implicitly) select one of its representatives and denote such a representative mapping by the same symbol $F$. In view of such a convention, we will use capital letters $F,\, G, \, H$, and so on, both to denote generic elements of $L^0(\Omega)$ and of $\FNS$. For $p\geq 1$, we also write $L^p(\Omega) := L^p(\Omega, \mathcal F, \PP)$, and $L^p(\PP\otimes \la) := L^p(\Omega\times \mathbb X, \mathcal F\otimes \mathcal X, \PP\otimes \la)$.


Given $\sB\in \mathcal X $ and $\chi\in \mathbf{N}_\sigma$, we denote by $\chi |_\sB$ the restriction of $\chi$ to the set $\sB$. Given $F\in \FNS$ and $\sB\in \mathcal X $, we write 
\begin{equation}\label{e:fb}
F(\sB) = F(\sB)(\eta) := F ( \eta|_\sB).
\end{equation}
This yields in particular that $D_xF(\sB) = F((\eta+\delta_x)|_\sB) - F ( \eta|_\sB)$, in such a way that $D_xF(\sB) = D_xF(\eta|_\sB)$ (that is, the mapping $\chi\mapsto D_xF(\chi)$ computed at $\chi = \eta|_\sB$) if $x\in \sB$, and equals zero otherwise. Plainly, one has that $F(\mathbb X) = F(\eta)$. Given $y\in \mathbb X$ and $\sB\in \mathcal X $, we also set 
\begin{equation}\label{e:fyb}
F^y(\sB) = F^y(\sB)(\eta) := F((\eta+\delta_y)|_\sB) = D_yF(\sB) +  F ( \sB),
\end{equation}
and therefore 
\begin{equation}\label{e:claro}
D_x F^y(\sB) =F((\eta+ \delta_x+ \delta_y)|_\sB) - F((\eta+ \delta_y)|_\sB) = (D_xF(\sB))^y,\, \quad x\in \mathbb{X}. 
\end{equation}
We will often need to consider collections of sets with the form $ \mathcal A = \{\sA_x : x\in \sB\}$, where $\sB\in \mathcal X $ and $\sA_x \in \mathcal X$ for every $x$. We will say that $\mathcal A$ is {\bf functionally measurable} if the mapping $\sB \times \mathbf{N}_\sigma \to \mathbf{N}_\sigma  : (x, \chi)\mapsto \chi |_{\sA_x}$ is jointly measurable. The reader can check that, if $\mathbb X$ is a vector space and a metric space (with $\mathcal X$ the associated Borel $\sigma$- field) and if $\sA_x = \tau_x \sA := \sA+x$ (translation of $\sA$ by $x$) for some fixed open $\sA\in \cX$, then $\mathcal A$ is functionally measurable, and that the same conclusion holds if $\sA$ is closed; collections of sets of this type are the only ones that are relevant for our applications. One can check that, if each $\sA_x$ is contained in a set $\sC$ with finite measure, then functional measurability is equivalent to the requirement that the mapping $(x,y)\mapsto {\bf 1}_{ y\in \sA_x}$ is jointly measurable.

One crucial situation considered in this paper is given by $\mathbb X = \R^d$ ($d\geq 1$), $\mathcal X = \mathscr{B}(\R^d)$, and $\lambda := t \times {\rm Leb}$, where $t>0$ and ``${\rm Leb}$'' is the Lebesgue measure on $\R^d$. In this setting, one says that $\eta$ is a {\bf homogenous Poisson measure with intensity} $t$. Since the Lebesgue measure has no atoms, it is known that $\eta$ charges singletons with mass either zero or one: it follows that one can identify $\eta$ with its support, denoted from now on by $\mathcal P$. In the homogeneous framework, by a slight abuse of notation and given $F\in L^0(\Omega)$, we will indifferently use the symbols $F(\eta)$ and $F(\mathcal P)$, according to notational convenience. We will also write interchangeably $F(\eta +\delta_{ x})$ and $F(\mathcal P\cup \{ x\} )$ when $x$ is not in the support of $\eta$, and tacitly adopt similar conventions to simplify the presentation. In view of the well-known distributional properties of homogeneous Poisson random measures, in this paper we will only consider, without loss of generality, the case $t=1$ (unit intensity). 

\smallskip

\noindent{\bf Further notation.} Given an integer $m\geq1$, we write $[m] := \{1,...,m\}$. Given two positive numerical sequences $\{a_n,b_n\}$, we write $a_n\asymp b_n$ if $0<c<a_n/b_n<C<\infty$, for constants  $c,C$ independent of $n$.  Given a convex body $\sB \subset \R^d$ and $\varepsilon >0$ , we denote by $\sB^{-\epsilon}$ the collection of those $x$ in the interior of $\sB$ such that $d(x, \partial \sB)> \epsilon$ (with $d$ denoting the Euclidean distance). The {\bf diameter} of a Borel set $\sB$, written ${\rm diam}\,\sB$ is the maximal Euclidean distance between points $x,y$ in the closure of $\sB$. Given $a\in \RR^m$, we write $\| a\|$ to denote the Euclidean norm of $a$. Given $\sB\subset \R^d$, we write $|\sB|$ to indicate the Lebesgue measure of $\sB$.

\subsection{Main results in the one-dimensional case}\label{ss:introone}

\subsubsection{General estimates}

We work within the same framework and notation of the previous section. Given two real-valued random variables $F,\, G$, the  {\bf Kolmogorov distance} between the distributions of $F$ and $G$ is given by 
\begin{align*}
\dk(F,G):= \sup_{z\in\RR} |\PP[F\le z] - \PP[G\le z]|.
\end{align*} 
Given two real-valued integrable random variables $F,G$, the {\bf 1-Wasserstein} distance between the distribution of $F$ and $G$ is given by
\begin{align*}
\dw(F,G): = \sup_{h\in\mathcal H_W} |\EE[h(F)] - \EE[h(G)]|,
\end{align*}
where $\mathcal H_W$ is the set of Lipschitz mappings with Lipschitz constant at most $1$.  It is a well-known fact that $\dk$ and $\dw$ induce topologies on the class of probability measures on $\R$ that are stronger than the topology of convergence in distribution. Basic properties of $\dk$ and $\dw$ are discussed in \cite[Appendix C]{NP}.

The forthcoming Theorem \ref{t:bound_w} yields bounds in the 1-Wasserstein and Kolmogorov distances that are meant to quantitatively capture the concept of weak stabilisation evoked in Section \ref{ss:overview} --- see Remark \ref{r:ws}. In particular, we attach to this result (and to similar estimates below) the label ``abstract two-scale stabilization'', since its statement involves a reference set $\sB$ and a collection of regions $\{\sA_x\}$ that are meant to grow at two different speeds in concrete applications. For the rest of the paper, the symbol $N(0,1)$ denotes a generic centered Gaussian random variable with unit variance. Given a random variable $F$, we write $\EE[|F|^\infty]^{\frac 1 \infty} := {\rm ess}\sup F$.

\begin{theorem}[{Abstract two-scale stabilization, I}] \label{t:bound_w} Let $\eta$ be a Poisson measure on $(\mathbb X, \mathcal X)$ with intensity $\lambda$, and fix $\sB\in \mathcal X$. Consider $F,\,  G\in \FNS $ and define the random variable $F(\sB)$ according to \eqref{e:fb}. Consider a functionally measurable collection  $\{\sA_x : x\in \sB\}\subset \mathcal X$, and assume that $F(\sB), \, G(\sA_x)\in L^2(\Omega)$, for all $x\in \sB$. 
\begin{enumerate}
\item[ \bf (i)] Suppose that there exists $p\in(4,\infty]$ such that
\begin{equation}\label{e:m_con}
\sup_{x\in \sB} \left\{ \EE[|D_x F(\sB) |^{p}]^{\frac 1 p} + \EE[|D_x G(\sA_x) |^{p}]^{\frac 1 p} \right\} :=K<\infty,
\end{equation}
and let $\wh F :=(F(\sB) -\EE F(\sB) )/\sigma$, where $\sigma^2 := \Var F(\sB)>0$. Then, for the constant $c := 3\max(1,K)^3$ it holds that 
\begin{eqnarray}
\frac1c\dw(\wh F(\sB) ,N(0,1)) &\le& \frac{1 }{\sigma^2} \sqrt{\iint_{\sB^2_\Delta} \EE[|D_x F(\sB)  - D_x G(\sA_x)|]^{1-\frac{4}{p}} \lambda^2(dx, dy)}  \label{e:a1w} \\
 &&\quad\quad\quad\quad+  \frac{1}{\sigma^2} \sqrt{\lambda^2 (\{(x,y)\in \sB^2: \sA_x\cap \sA_y \ne\emptyset\})} + \frac{\lambda(\sB)}{\sigma^3},\notag
\end{eqnarray}
where $\frac 4\infty := 0$ and
\begin{equation}\label{e:bdelta}
\sB^2_\Delta := \{(x,y)\in \sB^2 : \sA_x\cap \sA_y  = \emptyset\}.
\end{equation}
\item[\bf (ii)] Suppose that condition \eqref{e:m_con} is replaced by the stronger requirement that, for some $p\in (4, \infty]$,
\begin{equation}\label{e:m_con2}
\sup_{x,y\in \sB} \left\{ \EE[|D_x F (\sB) |^{p}]^{\frac 1 p}\! +\! \EE[|D_x G(\sA_x) |^{p}]^{\frac 1 p} \!+\!\EE[|D_x F^y(\sB) |^{p}]^{\frac 1 p} + \EE[|D_x G^y(\sA_x) |^{p}]^{\frac 1 p}  \right\}\! :=K'\!<\infty,
\end{equation}
where $F^y(\sB)$ and $F^y(\sA_x)$ are defined according to \eqref{e:fyb}. Then, for $c' := 7 \max(1,K')^2$,  such that 
\begin{eqnarray}
\label{e:ak} &&\frac{1}{c'}\dk(\wh F(\sB) ,N(0,1)) \\
&&\le \frac{ 1}{\sigma^2} \sqrt{ \iint_{\sB_\Delta^2} \left ( \EE[|D_x F(\sB)  - D_x G(\sA_x)|]^{1-\frac{4}{p}} + \EE[|D_x F^y(\sB)  - D_x G^y(\sA_x)|]^{1-\frac{4}{p}} \right)\lambda^2(dx,dy)   } \notag \\
&&\quad\quad\quad\quad\quad\quad \quad\quad\quad\quad\quad\quad\quad\quad\quad+  \frac{1}{\sigma^2} \sqrt{\lambda^2 (\{(x,y)\in \sB^2: \sA_x\cap \sA_y \ne\emptyset\})} + \frac{\lambda(\sB)}{\sigma^3}. \notag
\end{eqnarray}

\end{enumerate}
\end{theorem}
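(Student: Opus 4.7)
The plan is to deduce both inequalities from the standard Malliavin--Stein bounds on the Poisson space combined with the semigroup/Mehler representation of the inverse Ornstein--Uhlenbeck generator, and then to perform a systematic replacement of the ``global'' operator $D_x F(\sB)$ by the ``local'' operator $D_x G(\sA_x)$. Starting from the classical bound (see e.g.\ Last--Peccati--Schulte), for $\wh F= (F(\sB)-\EE F(\sB))/\sigma$ one has
\[
\dw(\wh F, N(0,1)) \leq \tfrac{1}{\sigma^2}\sqrt{\Var\bigl\langle DF(\sB),-DL^{-1}F(\sB)\bigr\rangle_{L^2(\lambda)}} + \tfrac{1}{\sigma^3}\int_{\mathbb X}\EE\bigl[(D_xF(\sB))^2\,|D_xL^{-1}F(\sB)|\bigr]\lambda(dx),
\]
and an analogous estimate for $\dk$ carrying an additional summand involving the iterated add-one cost $D_xF^y(\sB)$. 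The next step is to invoke the identity $-D_xL^{-1}F(\sB)=\int_0^1 P_s D_xF(\sB)\,ds$ and Mehler's formula to represent $P_s D_xF(\sB)$ as a conditional expectation of $D_xF(\sB)$ evaluated on an independently thinned/enriched copy of $\eta$; crucially, this smoothing preserves the $L^p$-moment bound \eqref{e:m_con} by Jensen's inequality.

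The core of the argument is to then swap $D_xF(\sB)$ (and its smoothed copy) for $D_xG(\sA_x)$ inside the covariance expansion of $\iint D_xF(\sB)(-D_xL^{-1}F(\sB))\lambda$. When both coordinates lie in $\sB^2_\Delta$, the localized quantities $D_xG(\sA_x)$ and $D_yG(\sA_y)$ are measurable with respect to disjoint restrictions of $\eta$ and are hence independent, so their joint covariance contribution vanishes. What survives is a replacement error in which each difference $D_xF(\sB) - D_xG(\sA_x)$ is controlled in $L^1$ while all the remaining factors are bounded in $L^p$ by $K$; interpolation between $L^1$ and $L^p$ then yields the exponent $1-4/p$ in \eqref{e:a1w}, the factor $4$ reflecting that expanding the covariance of the product of two terms produces four factors whose $L^p$-norms have to be absorbed.

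For pairs $(x,y) \in \sB^2 \setminus \sB^2_\Delta$ the independence argument fails, and the covariance is bounded crudely via Cauchy--Schwarz and \eqref{e:m_con} by a multiple of $K^4$, giving after the outer square root the second summand $\sigma^{-2}\sqrt{\lambda^2(\{\sA_x\cap\sA_y\neq\emptyset\})}$ of \eqref{e:a1w}. The third summand $\lambda(\sB)/\sigma^3$ is obtained by bounding the ``third moment'' type term pointwise, using $\EE[(D_xF(\sB))^2|D_xL^{-1}F(\sB)|]\le K^3$ uniformly in $x$ (once again applying Jensen to the Mehler representation to handle $|D_xL^{-1}F(\sB)|$). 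Part~(ii) is obtained by the same two-scale replacement strategy applied to the extra Kolmogorov-specific term in the Malliavin--Stein bound, which involves the iterated operator $D_xF^y(\sB)$: the reinforced hypothesis \eqref{e:m_con2} supplies exactly the moment control needed, and the very same localization-plus-interpolation argument produces the additional $\EE[|D_xF^y(\sB) - D_xG^y(\sA_x)|]^{1-4/p}$ contribution in \eqref{e:ak}.

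I expect the most delicate part to be the combinatorial bookkeeping in the covariance expansion: verifying that, after the localization step, each term in the product $D_xF(\sB)(-D_xL^{-1}F(\sB))\,D_yF(\sB)(-D_yL^{-1}F(\sB))$ really decomposes so that a single $L^1$-difference is isolated with three $L^p$-bounded companions (forcing the exponent $1-4/p$ and not something worse), and ensuring that the Mehler-smoothed variables still inherit the disjoint-support property that makes covariances vanish on $\sB^2_\Delta$. Once this is in place, the final estimates follow from routine Hölder and Cauchy--Schwarz manipulations together with the elementary inequality $\sqrt{a+b}\leq \sqrt{a}+\sqrt{b}$ used to separate the two integral contributions.
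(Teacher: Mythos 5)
Your outline of Part~(i) matches the paper's proof essentially step by step: start from the Malliavin--Stein Wasserstein bound (Lemma~\ref{l:bound_w}), note that $\EE\langle DF(\sB),-DL^{-1}F(\sB)\rangle=\Var F(\sB)$ so the task reduces to controlling $\Var\langle DF(\sB),-DL^{-1}F(\sB)\rangle$ by Cauchy--Schwarz, use the Mehler representation of $L^{-1}$ together with Jensen (Lemma~\ref{l:E[DLF^p]}) to pass moment bounds through $L^{-1}$, split the covariance integral over $\sB^2_\Delta$ and its complement, exploit the independence of $\eta|_{\sA_x}$ and $\eta|_{\sA_y}$ to kill the fully localized covariance on $\sB^2_\Delta$, and isolate a single $L^1$-difference via H\"older (the inequality $\EE[XYZW]\le \EE[X]^{1-4/p}(\EE[X^4]\EE[Y^p]\EE[Z^p]\EE[W^p])^{1/p}$, which is precisely \eqref{e:xyzw} in the paper). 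The third summand $\lambda(\sB)/\sigma^3$ comes exactly as you describe, from bounding the cubic term uniformly by $K^3$ via H\"older and Lemma~\ref{l:E[DLF^p]}. This is the paper's Proposition~\ref{p:cdc}.

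For Part~(ii) there is a genuine gap. You treat the Kolmogorov estimate as an ``analogous'' Malliavin--Stein bound that simply carries an extra summand involving $D_xF^y$, but the bounds that were actually available in the literature (Schulte~2016, Eichelsbacher--Th\"ale~2014) contain \emph{two additional terms} that would not close with your two-scale strategy; the paper explicitly points this out and proves a new, more parsimonious Kolmogorov inequality (Theorem~\ref{l:bound_kol}, inspired by Shao--Zhang), whose third term is $\tfrac{2}{\sigma^2}\EE|\delta(DF|DL^{-1}F|)|$ rather than anything ``analogous'' to the Wasserstein third term. That Skorohod-integral term is then controlled in $L^2$ via the isometry formula for $\delta$ (Lemma~\ref{l:SkorohodVariance}), and the resulting double integral of $D_y(D_xF\,|D_xL^{-1}G|)\,D_x(D_yF\,|D_yL^{-1}G|)$ is what naturally surfaces the quantities $D_xF^y$ and forces condition~\eqref{e:m_con2}; this is Proposition~\ref{p:sko}. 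So the structure you expect does appear, but only \emph{after} establishing a new Stein-type bound that your proposal silently presupposes. (A minor terminological point: $D_xF^y$ is the add-one cost of the shifted functional $F^y$, not the iterated second-order operator $D_xD_yF$; the whole point of this framework is to avoid the latter.) To repair the proposal you would need to either prove the parsimonious Kolmogorov bound yourself or cite it explicitly, and then feed its Skorohod-integral term through the $\delta$-isometry before applying the two-scale replacement.
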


Note {that the integrand on the right-hand side of \eqref{e:a1w} does not depend on $y$}, in such a way that
\begin{eqnarray*}
&&\iint_{\sB^2_\Delta} \EE[|D_x F(\sB)  - D_x G(\sA_x)|]^{1-\frac{4}{p}} \lambda^2(dx, dy) \\
&&= \int_\sB \EE[|D_x F(\sB)  - D_x G(\sA_x)|]^{1-\frac{4}{p}} \lambda\{ y\in \sB : \sA_x \cap \sA_y = \emptyset\} \lambda(dx).
\end{eqnarray*}
\noindent The most natural way of applying Theorem \ref{t:bound_w} is to select $F = G$, but some extra flexibility is sometimes required, e.g. when dealing with spatial restrictions of linear edge statistics in random graphs --- see the examples discussed in Section \ref{s:online}.

\smallskip

\begin{remark}\label{r:cs} We will see below that one can effectively bound the expectations appearing in the previous statement by using the elementary estimate
\begin{equation}\label{e:nice}
\EE[|D_x F(\sB)  - D_x G(\sA_x)|] \leq K \, \PP[ D_x F(\sB)  \neq D_x G(\sA_x)]^{1- 1/p},
\end{equation}
and similarly for other terms appearing in our main statements. Such a bound should be compared with \cite[Proposition 1.5]{LPS16}, according to which, in order to prove closeness to normality, one is required to bound probabilities of the type $\PP[D_xD_y  F(\sB) \neq 0]$, for generic $x,y\in \XX$. As already observed, one of the strengths of our approach is that it does not require to assess the action of iterated add-one cost operators.
\end{remark}

\subsubsection{Euclidean setting}\label{ss:euc1}

In geometric applications, one typically applies Theorem \ref{t:bound_w} in the following dynamical setting:
\begin{itemize}
\item[(I)] $(Z, \mathcal{Z})$ is a measurable space (called the {\bf mark space}) endowed with a probability measure $\pi$, and $\eta$ is a Poisson random measure $(\R^d\times Z, \mathscr{B}(\R^d)\otimes \mathcal{Z} )$ with intensity $\lambda = {\rm Leb}\otimes \pi$ \label{page};

\item[(II)] $\sB  =  \sB_n \times Z$, where $\sB_n := n\sB_0$, $n\geq 1$, and  $\sB_0$ is a convex body of $\R^d$ whose interior contains the origin (by the stationarity of $\eta$, this last requirement is not essential, and only used to simplify the discussion);

\item[(III)] $F = G$ and $\sigma_n^2 := \Var F(\sB_n \times Z)\geq a | \sB_n|$ for some $a\in (0,\infty)$ independent of $n$;

\item[(IV)] For $n\geq 1$ and $(\bx,z) \in \sB_n\times Z$,  $\sA_{(\bx,z)} =\sA_{(\bx,z),n}  =( \tau_\bx(b_n \sB_0) \cap \sB_n ) \times Z$ where $\tau_\bx(\sB) = \bx+\sB$, as before, and $b_n$ is a positive sequence diverging to infinity in such a way that $b_n = o(n)$. Since the definition of $\sA_{(\bx,z),n}$ is independent of $z$ we will simply write $\sA_{(\bx,z),n} = \sA_{\bx,n}$, for every $z\in Z$.

\end{itemize}

\smallskip

Since $\sB_0$ is a convex body containing a neighbourhood of the origin, one has that $|\sB_n| \asymp n^d$, and $$\left|  \{(\bx,\by) \in \sB_n^2: \tau_\bx(b_n \sB_0) \cap \sB_n \cap \tau_\by(b_n B_0) \ne\emptyset\}\right| \asymp n^d b^d_n = o(n^{2d}), $$ as $n\to \infty$. For sufficient conditions implying the lower bound at (III), see e.g. \cite{Penrose05} and \cite[Section 5]{LPS16}. The next statement is a useful direct consequence of Theorem \ref{t:bound_w}.

%
%
%
%

\begin{corollary}\label{c:dw} Let the setting of Points {\rm (I)--(IV)} prevail, and assume that, for every $n$, $F(\sB_n\times Z)$ and $\{\sA_{\bx,n} : \bx\in \sB_n\}$ verify the assumptions of Theorem \ref{t:bound_w}.

\begin{itemize}

\item[\bf (a)] For $n\geq 1$, denote by $K(n) = K(n,p)$ the constant obtained from \eqref{e:m_con} (for some $p>4$)  by taking $F=G$, $\sB = \sB_n\times Z$ and $\sA_\bx = \sA_{\bx ,n}$, and assume that $\limsup_n K(n)<+\infty$. Suppose that, as $n\to \infty$,
\begin{equation}\label{e:wstab1}
\psi(n) := \sup_{\bx \in \sB_n} \int_Z \EE[|D_{(\bx,z)} F(\sB_n\times Z) - D_{(\bx,z)} F(\sA_{\bx,n} )|]^{\frac{p-4}{p}} \, \pi(dz) \to 0.
\end{equation}
Then, for some finite constant $C$ independent of $n$,
\begin{equation}\label{e:x}
\dw(\wh F(\sB_n\times Z) ,N(0,1)) \leq C\left\{  \psi(n)^{1/2} + \frac{b^{d/2}_n}{n^{d/2}}    \right\}\to 0, \quad n\to\infty.
\end{equation}

\item[\bf (b)] For $n\geq 1$, denote by $K'(n) = K'(n,p)$ the constant obtained from \eqref{e:m_con2} ($p>4$) when $F=G$, $\sB = \sB_n\times Z$ and $\sA_x = \sA_{(\bx,z),n}$, and assume that $\limsup_n K'(n)<+\infty$. Suppose that, as $n\to \infty$, \eqref{e:wstab1} takes place and also that
\begin{equation}\label{e:wstab2}
\phi(n) :=  \sup_{\bx,\by \in (\sB_n)^2_\Delta} \iint_{Z^2} \EE[|D_{(\bx,z)} F^{(\by,u)}(\sB_n\times Z) - D_{(\bx,z)} F^{(\by,u)}(\sA_{\bx,n} )|]^{\frac{p-4}{p}} \pi^2(dz,du) \to 0.
\end{equation}
Then, for some finite constant $C$ independent of $n$,
\begin{equation}\label{e:xx}
\dk(\wh F(\sB_n\times Z) ,N(0,1)) \leq C\left\{  \psi(n)^{1/2}+\phi(n)^{1/2} + \frac{b^{d/2}_n}{n^{d/2}}    \right\}\to 0, \,\, \,\,n\to\infty,
\end{equation}
where we used the notation \eqref{e:wstab1}.

\end{itemize}
\end{corollary}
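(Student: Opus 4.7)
My plan is to obtain Corollary \ref{c:dw} as a direct specialization of Theorem \ref{t:bound_w} to the dynamical Euclidean setting described in (I)--(IV). For part \textbf{(a)} I would invoke Theorem \ref{t:bound_w}(i) with the choices $\sB = \sB_n\times Z$, $F=G$, and $\{\sA_{(\bx,z),n} = \sA_{\bx,n} : (\bx,z)\in\sB_n\times Z\}$; the assumption $\limsup_n K(n)<\infty$ ensures that the prefactor $c = 3\max(1,K(n))^3$ from \eqref{e:a1w} stays uniformly bounded, so it can be absorbed into the constant $C$ in \eqref{e:x}. The task then reduces to estimating the three summands on the right-hand side of \eqref{e:a1w} under the assumption $\sigma_n^2\geq a|\sB_n|$, using $|\sB_n|\asymp n^d$ throughout.

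The third summand is immediate: $\lambda(\sB_n\times Z)/\sigma_n^3 \lesssim n^d/n^{3d/2} = n^{-d/2}$. For the second, a pair $(\bx,\by)\in \sB_n^2$ satisfies $\sA_{\bx,n}\cap \sA_{\by,n}\neq\emptyset$ only if $\tau_{\bx}(b_n\sB_0)\cap \tau_{\by}(b_n\sB_0)\neq \emptyset$, which forces $\|\bx-\by\|\leq b_n\cdot{\rm diam}\,\sB_0$; because $\pi$ is a probability, the $\lambda^2$-measure of the corresponding set is of order $|\sB_n|\cdot b_n^d \asymp n^d b_n^d$, and dividing by $\sigma_n^2\asymp n^d$ produces $b_n^{d/2}/n^{d/2}$. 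The first (main) summand is handled via Fubini: since the integrand depends on $(\bx,z)$ but not on $(\by,u)$, and since $\sB^2_\Delta$ is defined purely in terms of spatial points, one obtains
\begin{equation*}
\iint_{\sB^2_\Delta}\! \EE[|D_{(\bx,z)}F(\sB_n\!\times\! Z) - D_{(\bx,z)}F(\sA_{\bx,n})|]^{1-4/p}d\lambda^2 \leq |\sB_n|\!\int_{\sB_n}\!\int_Z \EE[\,\cdots\,]^{1-4/p}d\pi(z)d\bx \leq |\sB_n|^2\psi(n),
\end{equation*}
so after square-rooting and dividing by $\sigma_n^2\asymp n^d$, this contribution is of order $\sqrt{\psi(n)}$. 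Combining the three estimates and absorbing $n^{-d/2}$ into $(b_n/n)^{d/2}$ (using $b_n\to\infty$) yields \eqref{e:x}.

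Part \textbf{(b)} is obtained in exactly the same manner from Theorem \ref{t:bound_w}(ii): the only new feature is the second integrand inside the square root in \eqref{e:ak}, involving $D_{(\bx,z)}F^{(\by,u)}(\sB_n\times Z)-D_{(\bx,z)}F^{(\by,u)}(\sA_{\bx,n})$. This integrand now depends on both $(\bx,z)$ and $(\by,u)$, so after integrating the indicator $\mathbf{1}_{\sA_{\bx,n}\cap\sA_{\by,n}=\emptyset}$ one applies Fubini to integrate out the marks $(z,u)$ first; the resulting spatial integrand is pointwise bounded by $\phi(n)$, so the double integral is at most $|\sB_n|^2\phi(n)$. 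After the same normalization this yields a contribution $\sqrt{\phi(n)}$, and the other two summands are estimated verbatim as above, giving \eqref{e:xx}. The one subtlety worth verifying is that the sup over spatial base points in the definitions of $\psi(n)$ and $\phi(n)$ is compatible with the product structure $\lambda={\rm Leb}\otimes\pi$ in the Fubini step — this is the only real bookkeeping hurdle, and it is resolved precisely by the fact that $\pi$ has total mass one and that the constraint set $\sB^2_\Delta$ is measurable only with respect to the spatial coordinates.
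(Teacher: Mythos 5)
Your proposal is correct and takes exactly the route the paper has in mind: specialize Theorem \ref{t:bound_w} to the product space $\sB_n\times Z$, absorb the uniformly bounded prefactor $3\max(1,K(n))^3$ into $C$, bound the three summands of \eqref{e:a1w} (resp.\ \eqref{e:ak}) via Fubini together with $\sigma_n^2\asymp n^d$ and $\lambda^2(\sB_n^2\setminus (\sB_n)^2_\Delta)\asymp n^d b_n^d$, and absorb $n^{-d/2}$ into $(b_n/n)^{d/2}$ using $b_n\to\infty$. The paper does not write out the argument (calling the corollary a ``direct consequence''), and your computation is the standard way to make it explicit.
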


A remarkable feature of our bounds is that, if \eqref{e:wstab1} and \eqref{e:wstab2} are both verified and $\phi(n)\asymp \psi(n)$, $n\to \infty$, then the right-hand sides of \eqref{e:x} and \eqref{e:xx} converge to zero with the same rate.

\smallskip

\begin{remark} \label{r:homo1} In the previous framework, the case of a homogeneous Poisson measure on $\R^d$ is obtained by taking $Z = \{0\}$ (one-point space) and $\pi$ equal to the Dirac mass at 0. In this case, we can canonically identify $\eta$ with a homogeneous Poisson measure (with unit intensity) on $(\R^d, \mathscr{B}(\R^d))$, and simply write $F(\sB_n\times Z) = F(\sB_n\times \{0\})  = F(\sB_n)$, $\sA_{(\bx,0),n} = \sA_{\bx,n} = \tau_{\bf x}(b_n \sB_0)$, $D_{(\bx, 0) } = D_\bx$, $F^{(\by, 0)} = F^\by$, and so on. In this simplified framework, conditions \eqref{e:wstab1} and \eqref{e:wstab2} boil down, respectively, to: as $n\to\infty$,
\begin{equation}\label{e:wstab11}
 \sup_{\bx \in \sB_n}\EE[|D_\bx F(\sB_n) - D_\bx F(\sA_{\bx,n} )|]\to 0,
\end{equation}
and 
\begin{equation}\label{e:wstab22}
\sup_{\bx,\by \in (\sB_n)^2_\Delta}\EE[|D_\bx F^\by(\sB_n) - D_\bx F^\by(\sA_{\bx,n} )|]\to 0.
\end{equation}
\end{remark}

\smallskip

\begin{remark}[Connection with weak stabilization] \label{r:ws}{\rm As anticipated, conditions \eqref{e:wstab11}--\eqref{e:wstab22} can be directly connected to the notion of {\bf weak stabilization} introduced in \cite{PY01}, where it is proved that, under some mild technical assumptions, sequences of weakly stabilising functionals always verify a CLT, see \cite[Theorem 3.1]{PY01}.
This notion was not emphasised in the original paper, as the accent was put on strong stabilisation (see Section \ref{ss:ss}), but it turned out to be  very useful in situations where the existence of a radius of stabilisation cannot be established, see e.g. \cite{YSA}. In order to connect this notion with our results, define $\mathscr{B}_0$ to be the class of all subsets of $\R^d$ obtained by combining arbitrary translations and dilations of the set $\sB_0$. According to \cite[Definition 3.1]{PY01}, we say that $F\in \FNS$ is {\bf weakly stabilizing} with respect to $\mathscr{B}_0$ if there exists an a.s. finite random variable $D(\b0, \infty)$ such that, for every sequence $\{\sC_n : n\geq 1\}\subset \mathscr{B}_0$ tending to $\R^d$, one has that $D_{\bf 0}F(\sC_n) \to D({\bf 0},\infty)$, a.s.-$\PP$, that is: as the domain of the argument of $F$ diverges to $\R^d$, the add-one cost $D_{\bf 0}F$ converges towards a universal limit, which is independent of the way in which $\R^d$ is approached. {}{Assuming that $F$ is translation-invariant}, and using the fact that the distribution of $\eta$ is also translation-invariant, one sees immediately that, if $F$ is weakly stabilizing, then for every $\bx\in \R^d$ there exists an a.s. finite random variable $D(\bx, \infty)$ such that, for every $\{\sC_n : n\geq 1\}\subset \mathscr{B}_0$ tending to $\R^d$, $D_\bx(\sC_n) \to D(\bx, \infty)$, a.s.-$\PP$. This last relation implies in particular that, adopting the notation of Remark \ref{r:homo1} and for $F$ weakly stabilizing, $D_\bx F(\sB_n) - D_\bx F(\sA_{\bx,n} ) \to 0$, a.s.-$\PP$, for every $\bx\in \R^d$. It is now easily seen that, if $F$ is weakly stabilizing, then a sufficient condition for \eqref{e:wstab1} to take place is that
$$
\sup_{\bx\in \sB_n} \EE[|D_\bx F(\sB_n) - D(\bx, \infty) |], \sup_{\bx\in \sB_n} \EE[|D_\bx F(\sA_{\bx,n} ) - D(\bx,\infty) |]\to 0,
$$
corresponding to a uniform strengthening of the pointwise convergence implied by the weak stabilization condition. Note that the approach developed in the present paper does not require to identify the limits $D(\bx, \infty)$, or even to prove that these limits exist. {}{We eventually observe that --- at the cost of some technicalities and using e.g. \cite[Section 2.4]{Penrose05} --- one could naturally extend the content of the present remark to the case of weakly stabilizing functionals of {\it marked} point processes.}
}
\end{remark}

\begin{remark}\label{r:ruse} For some geometric arguments, it is easier to deal with points $\bx \in \sB_n$ such that $\tau_\bx(b_n \sB_0)\subset \sB_n$, and this might not be true for points $\bx$ that are too close to $\partial \sB_n$. In order to circumvent this difficulty, one can use the strategy described in Remark \ref{r:cs} and modify the bounds \eqref{e:x}, \eqref{e:xx}, as follows. (a) Take any $\theta> {\rm diam}\,\sB_0$ (in such a way that $\sB_0$ is contained in a ball of radius $\theta$ centred at the origin), (b) in \eqref{e:x} and \eqref{e:xx}  replace the quantiy $\psi(n)$ with
\begin{eqnarray*}
&&\psi'(n) := \sup_{\bx \in \sB^{-\theta b_n}_n} \int_Z \PP[D_{(\bx,z)} F(\sB_n\times Z) \neq D_{(\bx,z)} F(\sA_{\bx,n} )]^{(1- \frac{4}{p})(1-\frac 1 p)} \, \pi(dz);
\end{eqnarray*}
 (c) replace $\phi(n)$ in \eqref{e:xx} with
\begin{eqnarray*}
&& \phi'(n)\\
&&  :=\!\!\!  \sup_{ (\bx, \by) \in (\sB_n)^2_\Delta : \bx \in \sB^{-\theta b_n}_n} \iint_{Z^2} \PP[D_{(\bx,z)} F^{(\by,u)}(\sB_n\times Z) \neq D_{(\bx,z)} F^{(\by,u)}(\sA_{\bx,n} )]^{(1- \frac{4}{p})(1-\frac 1 p)} \pi^2(dz,du),
\end{eqnarray*}
and (d) replace in each of the bounds $b_n^{d/2}/n^{d/2}$ with $\sqrt{b_n/n}$.

\end{remark}

\begin{remark}[Optimality of rates]\label{r:optimality} The applications developed in Sections 2, 3 and 4 will demonstrate that the rates of convergence obtained by applying Corollary \ref{c:dw} (and, a fortiori, applying Remark \ref{r:ruse}) can be significantly slower than the rate $O(n^{-d/2})$ that one would heuristically expect in view of the lower bound $\Var F(\sB_n \times Z)\geq a | \sB_n| \asymp n^d$. While in some specific examples it is possible to determine the sub-optimality of two-scale stabilization rates (see e.g. Remark \ref{r:expo}), assessing in general the quality of the upper bounds derived in the present paper is a challenging open problem. 
\end{remark}

{}{

\subsubsection{Strongly stabilizing functionals}\label{ss:ss}

The next definition is a natural adaptation of the definition of ``strongly stabilizing functional'', as given e.g. in \cite[Definition 2.1]{PY01} and \cite[p. 284-285]{PY02}, to the framework of the present paper (where we do not assume that functionals are automatically translation-invariant).  The notion of strong stabilization was implicitly used in the seminal work \cite{KL96}, and then conceptualized, refined and improved in \cite{PY01, PY02}. Stabilization-related techniques have been successfully applied in many geometric problems and extended to geometric binomial input, see for instance the survey  \cite{Schreiber}, as well as the more recent contributions \cite{LRSY, LPS16}. 

\begin{definition}[Strong stabilization, I]\label{d:srs} Let $\eta$ be a Poisson measure on $\R^d \times Z$ as at Point (I) of Section \ref{ss:euc1}, and let $F\in\FNS$. We say that $F$ is {\bf strongly stabilizing} at the point $(\bx, z)\in \R^d\times Z$ if there exists a finite random variable $R = R \{ (\bx,z) ; \eta \} \geq 0$ (called {\bf radius of stabilization}) such that
 {
\begin{align*}
D_{(\bx,z)} F( (\cP\cap (\sB(\bx, R)\times Z)) \cup \mathcal {A}_0) = D_{(\bx,z)} F( \cP\cap (\sB(\bx, R)\times Z)) 
\end{align*}
for all finite sets $\mathcal {A}_0 \subset (\R^d\backslash \sB(\bx, R)) \times Z$}, with $\sB(\bx, R)$ the closed ball centered at $\bx$ with radius $R$.
\end{definition}

One can check that strong stabilization always implies weak stabilization. It is by now a classical fact (see e.g. \cite[Theorem 2.2]{Penrose05}, as well as \cite[Theorem 2.1]{PY01} and \cite[Theorem 3.1]{PY02}) that, if $F$ is strongly stabilizing and verifies some mild regularity conditions, then the normalized sequence $n \mapsto \widehat F(\sB_n\times Z)$ verifies a CLT. Our next statement provides universal upper bounds for such an asymptotic result, expressed in terms of the tail probabilities of radii of stabilization. To the best of our knowledge, the forthcoming Proposition \ref{p:rs1d} and its multidimensional counterpart Proposition \ref{p:rsmd} are the first quantitative normal approximation results for strongly stabilizing functionals, holding under virtually no assumptions on the radii of stabilization (other than such radii are assumed to be a.s. finite).

\begin{proposition}[Quantative CLTs under strong stabilization, I]\label{p:rs1d} We work in the setting of Points {\rm (I)--(IV)} of Section \ref{ss:euc1}. Let $\theta>{\rm diam} \, \sB_0$ and let $c>0$ be such that $\sB_0$ contains a ball of radius $c$ centered at the origin. Consider a functional $F\in \FNS$, that is strongly stabilizing at every $(\bx ,z) \in \R^d\times Z$ with corresponding radius denoted by $R\{(\bx,z) ; \eta\}$.  Assume that $F$ verifies the assumptions of Corollary \ref{c:dw}-{\bf (a)}, with \eqref{e:wstab1} replaced by
\begin{eqnarray}\label{e:zuz}
&&\psi''(n) := \sup_{\bx \in \sB^{-\theta b_n}_n} \int_Z \PP[R\{ (\bx,z) ; \eta\}\geq c\, b_n\}]^{(1- \frac{4}{p})(1-\frac 1 p)} \, \pi(dz) \to 0.
\end{eqnarray}
Then, for some finite constant $C$ independent of $n$,
\begin{equation}\label{e:y_0}
\dw(\wh F(\sB_n\times Z) ,N(0,1)) \leq C\left\{  \psi''(n)^{1/2} + \sqrt{ \frac{b_n}{n}}  \,  \right\}\to 0, \quad n\to\infty.
\end{equation}
Now suppose that $F\in \FNS$ is strongly stabilizing at every $(\bx,z) \in \R^d\times Z$, and also that, for every fixed $(\by,u)\in \R^d\times Z$, the functional $F^{(\by, u)}$ (defined according to \eqref{e:fyb}) is strongly stabilizing at every $(\bx,z) \in \R^d\times Z$; denote by $R \{ (\bx,z) ; (\by, u); \eta  \}$ the corresponding radius of stabilization. Assume that $F$ verifies the assumptions of Corollary \ref{c:dw}-{\bf (b)} with \eqref{e:wstab1} replaced by \eqref{e:zuz} and \eqref{e:wstab2} replaced by
\begin{eqnarray*}
&& \phi''(n) :=   \sup_{  (\bx, \by) \in (\sB_n)^2_\Delta :  \bx \in \sB^{-\theta b_n}_n} \iint_{Z^2} \PP[R \{ (\bx,z) ; (\by, u); \eta  \}\geq c\, b_n )]^{(1- \frac{4}{p})(1-\frac 1 p)} \pi^2(dz,du) \to 0.
\end{eqnarray*}
Then, for some finite constant $C$ independent of $n$,
\begin{equation}\label{e:y}
\dk(\wh F(\sB_n\times Z) ,N(0,1)) \leq C\left\{  \psi''(n)^{1/2}+ \phi''(n)^{1/2} + \sqrt{ \frac{b_n}{n}}  \,  \right\}\to 0, \quad n\to\infty.
\end{equation}

\end{proposition}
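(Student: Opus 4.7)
My plan is to deduce both \eqref{e:y_0} and \eqref{e:y} from Corollary \ref{c:dw}, using the sharper version described in Remark \ref{r:ruse} — the only remaining task is to bound the ``disagreement probabilities'' $\PP[D_{(\bx,z)} F(\sB_n \times Z) \neq D_{(\bx,z)} F(\sA_{\bx,n})]$ (and their $(\by,u)$-shifted analogues) by the stabilization tail probabilities $\PP[R\{(\bx,z);\eta\}\geq cb_n]$. Once this is done, Remark \ref{r:ruse} directly yields $\psi'(n)\leq \psi''(n)$ and $\phi'(n)\leq \phi''(n)$, and the conclusion follows at once by substituting these estimates into the refined forms of \eqref{e:x} and \eqref{e:xx}.

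The core geometric step is the following: if $\bx \in \sB_n^{-\theta b_n}$ with $\theta > {\rm diam}\,\sB_0$, then $\tau_\bx(b_n\sB_0)\subset \sB(\bx,\theta b_n)\subset \sB_n$, so that $\sA_{\bx,n} = \tau_\bx(b_n\sB_0)\times Z$. Since $\sB_0$ contains the ball of radius $c$ centered at the origin, $\tau_\bx(b_n\sB_0)\supset \sB(\bx,cb_n)$. Consequently, on the event $\{R\{(\bx,z);\eta\}\leq cb_n\}$ one has the double inclusion
\[
\sB(\bx, R)\times Z \;\subset\; \sA_{\bx,n} \;\subset\; \sB_n\times Z.
\]
Writing $\cP \cap (\sB_n\times Z) = (\cP\cap(\sB(\bx,R)\times Z))\cup \mathcal{A}_0^{(n)}$ with $\mathcal{A}_0^{(n)} := \cP\cap((\sB_n\times Z)\setminus(\sB(\bx,R)\times Z))$, which is a.s.\ finite since $\sB_n$ is bounded, and similarly $\cP\cap\sA_{\bx,n} = (\cP\cap(\sB(\bx,R)\times Z))\cup \mathcal{A}_0^{(\bx,n)}$ with a finite remainder supported outside $\sB(\bx,R)\times Z$, Definition \ref{d:srs} implies that both $D_{(\bx,z)}F(\sB_n\times Z)$ and $D_{(\bx,z)}F(\sA_{\bx,n})$ coincide with $D_{(\bx,z)}F(\cP\cap(\sB(\bx,R)\times Z))$, and hence agree. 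This yields
\[
\PP[D_{(\bx,z)} F(\sB_n \times Z) \neq D_{(\bx,z)} F(\sA_{\bx,n})] \;\leq\; \PP[R\{(\bx,z);\eta\}\geq c\,b_n],
\]
and raising both sides to the power $(1-\tfrac{4}{p})(1-\tfrac{1}{p})$ and integrating in $z$ shows $\psi'(n)\leq \psi''(n)$; this proves \eqref{e:y_0} via Remark \ref{r:ruse}.

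For the Kolmogorov bound \eqref{e:y}, I would run the identical argument for the shifted functional $F^{(\by,u)}$, using its strong stabilization radius $R\{(\bx,z);(\by,u);\eta\}$. The only point that requires a line of verification is that the representative of $F^{(\by,u)}$ is itself a functional of a Poisson measure to which Definition \ref{d:srs} applies: this is transparent from \eqref{e:fyb}, since $F^{(\by,u)}(\sB) = F((\eta+\delta_{(\by,u)})|_\sB)$, and the strong stabilization hypothesis on $F^{(\by,u)}$ is taken as given in the statement. Replicating the geometric inclusion argument above (now with the radius $R\{(\bx,z);(\by,u);\eta\}$ in place of $R\{(\bx,z);\eta\}$) yields $\phi'(n)\leq\phi''(n)$, so that the refined version of \eqref{e:xx} gives \eqref{e:y}.

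The main conceptual obstacle — albeit modest — is reconciling the configuration-set language used in Definition \ref{d:srs} (which speaks of $\cP$ and of finite subsets $\mathcal{A}_0$ disjoint from $\sB(\bx,R)\times Z$) with the restriction notation $F(\sB) = F(\eta|_\sB)$ employed throughout the paper; this is what the decomposition $\chi = \eta|_{\sB(\bx,R)\times Z} + \mathcal{A}_0^{(n)}$ is designed to handle. The remaining ingredients (measurability of the event $\{R\leq cb_n\}$, integrability of the relevant add-one costs so that $\limsup_n K(n),\limsup_n K'(n)<\infty$) are either part of the hypotheses inherited from Corollary \ref{c:dw} or else follow from standard measurable-selection arguments for the stabilization radius.
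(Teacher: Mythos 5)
Your proposal is correct and follows essentially the same route as the paper's own proof: both reduce to showing $\psi'(n)\le\psi''(n)$ and $\phi'(n)\le\phi''(n)$ via Remark \ref{r:ruse}, and both derive the key bound $\PP[D_{(\bx,z)}F(\sB_n\times Z)\neq D_{(\bx,z)}F(\sA_{\bx,n})]\le\PP[R\{(\bx,z);\eta\}\geq c\,b_n]$ from the strong-stabilization definition. You merely spell out more explicitly the nested inclusions $\sB(\bx,R)\times Z\subset\sA_{\bx,n}\subset\sB_n\times Z$ and the configuration decomposition, which the paper leaves implicit.
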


Our assumptions about the finiteness of the constants $\sup_n K(n)$ and $\sup_n K'(n)$ can be regarded as slight strengthenings of the {\bf fourth moment conditions} exploited in the proof of the already mentioned CLTs \cite[Theorem 2.2]{Penrose05}, \cite[Theorem 2.1]{PY01} and \cite[Theorem 3.1]{PY02}; on the other hand, our approach does not require to assume a priori any form of polynomial boundedness (such as e.g. \cite[formula (2.17)]{Penrose05}). Since $b_n\to\infty$, a sufficient condition for $\psi''(n)\to 0$ is that the radii $R\{ (\bx,z), \eta\}$ have a distribution independent of $\bx$ (this happens e.g. under the very general assumption that $F$ is translation-invariant). If $\psi''(n)\to 0$, a sufficient condition for $\phi''(n)\to 0$ is that the radii of stabilisation verify the monotonicity property $R\{ (\bx,z), \eta\} \geq R \{ (\bx,z) ; (\by, u); \eta  \} $, a.s.-$\PP$, for every $(\by, u)$. A detailed application of Proposition \ref{p:rs1d} to the {\it on-line nearest neighbour graph} is described in Section \ref{s:online}. 
\begin{proof}[Proof of Proposition \ref{p:rs1d}] In view of Remark \ref{r:ruse}, it is sufficient to show that, for every $n\geq 1$, one has that $\phi'(n)\leq \phi''(n)$ and $\psi'(n)\leq \psi''(n)$. To show these relations, fix $\bx\in \sB^{-\theta b_n}_n$, and observe that, on the event  $\{ R\{ (\bx,z) ; \eta\} < c\, b_n\} $, one has that $D_{(\bx,z)} F(\sB_n\times Z) = D_{(\bx,z)} F(\sA_{\bx,n} )$. Reversing the last implication yields that 
$$
\PP[D_{(\bx,z)} F(\sB_n\times Z) \neq D_{(\bx,z)} F(\sA_{\bx,n} )] \leq \PP[ R\{ (\bx,z) ; \eta\} \geq c\, b_n\} ],
$$
from which the desired bound on $\psi'(n)$ follows. The bound on $\phi'(n)$ is deduced by the same argument, after replacing $F$ with $F^{(\by, u)}$.
\end{proof}

\begin{remark}[Speed of convergence]\label{r:expo}{\rm The fastest possible speed of convergence to zero of the right-hand sides of \eqref{e:y_0} and \eqref{e:y} is $\sqrt{b_n/n}$, which is in general much slower than the presumably optimal rate $O(n^{-d/2})$. The reason of the presence of the term $\sqrt{b_n/n}$ is that in our proof (which is based on Remark \ref{r:ruse}) we bound uniformly in $n$ all quantities integrated over the region $\sB_n\backslash \sB_n^{-\theta b_n}$. It is also clear that one can in principle improve \eqref{e:y_0} and \eqref{e:y} as follows: on the right-hand side of \eqref{e:y_0}, replace the term $\sqrt{b_n/n}$ with
\begin{equation}\label{e:zuz2}
 \sqrt{ \frac{b^d_n}{n^d}} +  \sqrt{ \frac{b_n}{n}}\left(\sup_{\bx \in \sB_n \backslash \sB^{-\theta b_n}_n} \int_Z \PP[D_{(\bx,z)} F(\sB_n\times Z) \neq D_{(\bx,z)} F(\sA_{\bx,n} )]^{(1- \frac{4}{p})(1-\frac 1 p)} \, \pi(dz)\right)^{1/2} \!\!\!\!\!;\end{equation}
on the right-hand side of \eqref{e:y},  replace the term $\sqrt{b_n/n}$ with
\begin{eqnarray}\label{e:y2}
&& \sqrt{ \frac{b^d_n}{n^d}} \! + \!  \sqrt{ \frac{b_n}{n}}\left(\sup\!\! \iint_{Z^2} \!\! \PP[D_{(\bx,z)} F^{(\by,u)}(\sB_n\times Z) \!\neq \! D_{(\bx,z)} F^{(\by,u)}(\sA_{\bx,n} )]^{\gamma} \, \pi^2(dz, du) \right)^{1/2}\!\!\!\!\!,\,\,\,\,\,\,\,\,\,\,\,\,
 \end{eqnarray}
where $\gamma:= (1- \frac{4}{p})(1-\frac 1 p)$ and the sup is taken over the set $\{ (\bx, \by) \in (\sB_n)^2_\Delta : \bx \in \sB_n \backslash \sB^{-\theta b_n}_n \}$. As an example, one can apply \eqref{e:zuz2}--\eqref{e:y2} to the power-weighted lengths of the $k$-nearest neighbour graph considered e.g. in \cite[Section 7.1]{LPS16}, and deduce rates of convergence to normal of the order $\frac{(\log n)^c}{n^{d/2}}$, where $c>0$ is some explicit constant (details are omitted). Such an estimate differs from the rate provided by \cite[Theorem 7.1]{LPS16} only by the factor $(\log n)^c$. As demonstrated in Sections 2--4, the main advantage of our approach is that it applies to situations where using second-order Poincar\'e inequalities is unfeasible at the moment. Bounds \eqref{e:zuz2}--\eqref{e:y2} will be directly applied to the proof of Part {\bf (a)} of Theorem \ref{t:mainonng}.
}
\end{remark}

\begin{remark} Several geometric functionals of interest can be written as 
\begin{align*}
F(\sB) =  \sum_{\bx\in\sB} \xi(\bx , \eta|_\sB ),
\end{align*}
where $\xi:\RR^d\times\mathbf{N}_\sigma \to \RR$ is called the {\bf score function} of $F$. In this case, CLTs can be deduced by exploiting the powerful theory of score-stabilizing functionals -- see e.g. \cite{BY05}. Since our results are ramifications of the add-one-cost stabilization theory, for the sake of brevity we do not recall the precise definition of stabilizing scores and refer the reader to \cite{BY05,LRSY, PY05, Schreiber} for details.

\end{remark}

}

\subsubsection{Behind the scenes: a new parsimonious bound in the Kolmogorov distance}

The bound \eqref{e:xx} follows from a new estimate in the Kolmogorov distance, whose statement de facto removes two redundant terms in the  general bounds obtained by Schulte \cite{Schulte16} and Eichelsbacher and Th\"ale \cite{ET14}. Such an estimate --- which is of an independent interest --- is inspired by the recent work by Shao and Zhang \cite{SZ19}, and is highlighted in the next statement. In what follows, we work under the general framework and notation of Section \ref{ss:frame}, and use the symbols $\mathbb{D}^{1,2}$, $L^{-1}$ and $\dom \delta$ to indicate, respectively, the space of functionals having a square-integrable add-one cost, the pseudo-inverse of Ornstein-Uhlenbeck generator and the domain of the Kabanov-Skorohod integral $\delta$. These notions are defined in Section \ref{appendix}.

\begin{theorem} \label{l:bound_kol}Let $F\in \mathbb{D}^{1,2}$ and $\wh F=(F-\EE F)/\sigma$ with $\sigma\in(0,\infty)$. Then, if
\begin{equation}\label{e:hg}
\EE \int_\XX\int_\XX [D_y (D_x F| D_xL^{-1}F |) ]^2 \lambda^2(dx,dy)<\infty,
\end{equation}
and $F$ verifies the property that 
\begin{equation}\label{e:hhgg}
F h(F)\in \mathbb D^{1,2}\quad \mbox{for all bounded measurable $h:\RR\to \RR$},
\end{equation}
one has that $DF|DL^{-1}F| \in \dom \delta$, and
\begin{align*}
\dk\left( \wh F,  N(0,1)\right) &\le \left|1 - \frac{\Var[F]}{\sigma^2}\right|+ \frac{1}{\sigma^2}\EE\big[ |\Var[F]-\langle DF, -DL^{-1} F\rangle|\big] + \frac{2}{\sigma^2}\EE[ |\de(DF|DL^{-1}F|)|],
\end{align*}
where $\langle \cdot,\cdot \rangle$ denotes the inner product in $L^2(\lambda)$.
\end{theorem}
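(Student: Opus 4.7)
My plan is to follow the classical Stein's method template for the Kolmogorov distance, combined with Malliavin integration by parts on the Poisson space, and to treat the remainder via a Shao-Zhang type sign-aware decomposition --- this last ingredient being precisely what removes the two redundant terms appearing in the bounds of Schulte and Eichelsbacher-Th\"ale. Concretely, let $f_z$ denote the standard solution of the Gaussian Stein equation $f_z'(x) - xf_z(x) = \1_{\{x\le z\}} - \Phi(z)$, which satisfies the well-known bounds $\|f_z\|_\infty \le \sqrt{2\pi}/4$ and $\|f_z'\|_\infty\le 1$ (a.e.). By the Stein characterization one has $\dk(\wh F, N(0,1)) = \sup_{z\in\R}|\EE[f_z'(\wh F) - \wh F f_z(\wh F)]|$, and it suffices to bound the right-hand side uniformly in $z$.

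I would next invoke the Poisson-Malliavin integration-by-parts: using the identity $F - \EE F = \delta(-DL^{-1}F)$ and the duality $\EE[\delta(u) G] = \EE\int_\XX u_x D_x G \, \la(dx)$, one obtains $\sigma\,\EE[\wh F f_z(\wh F)] = \EE\int_\XX (-D_xL^{-1}F)\,[f_z(\wh F + D_xF/\sigma) - f_z(\wh F)]\,\la(dx)$; hypothesis \eqref{e:hhgg} ensures that $f_z(\wh F) \in \mathbb{D}^{1,2}$, so the identity is licit. Expanding the bracket as $\frac{D_xF}{\sigma} f_z'(\wh F) + r_x$, the leading contribution after integration equals $\sigma^{-2}\EE[f_z'(\wh F)\langle DF, -DL^{-1}F\rangle]$, and subtracting from $\EE[f_z'(\wh F)]$ produces $\EE[f_z'(\wh F)\,(1 - \sigma^{-2}\langle DF, -DL^{-1}F\rangle)]$. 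Writing $1 - \sigma^{-2}\langle DF,-DL^{-1}F\rangle = (1 - \Var F/\sigma^2) + \sigma^{-2}(\Var F - \langle DF, -DL^{-1}F\rangle)$ and using $|f_z'|\le 1$ with the triangle inequality produces the first two terms of the announced bound.

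The delicate step is to control the remainder contribution $\mathrm{Rem} := \sigma^{-2}\EE\int_\XX D_xF(-D_xL^{-1}F)\int_0^1 [f_z'(\wh F + tD_xF/\sigma) - f_z'(\wh F)]\,dt\,\la(dx)$. Since $f_z'$ has a jump discontinuity at $z$, the naive quadratic bound on $r_x$ fails, which is what forces the Schulte / Eichelsbacher-Th\"ale approach to retain two extra correction terms. Following Shao and Zhang \cite{SZ19}, I would instead factor $D_xF(-D_xL^{-1}F) = D_xF\,|D_xL^{-1}F|\,\mathrm{sign}(-D_xL^{-1}F)$ and split the bracket via the Stein relation $f_z' = xf_z + h_z$ (with $h_z(x) = \1_{\{x\le z\}} - \Phi(z)$) into a smooth piece and an indicator piece; the indicator piece is then handled by a sign/antisymmetry argument exploiting the monotonicity of $h_z$. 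A reverse application of the $D$--$\delta$ duality --- whose validity requires \eqref{e:hg} to place $DF\,|DL^{-1}F|$ in $\dom\delta$, and \eqref{e:hhgg} to accommodate the non-smooth composites generated by $h_z$ --- rewrites the remainder as $\mathrm{Rem} = \sigma^{-2}\EE[\delta(DF\,|DL^{-1}F|)\,\Xi_z(\wh F)]$ for some measurable $\Xi_z$ with $|\Xi_z|\le 2$, yielding the last term of the bound.

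The main obstacle I foresee is precisely this final reshuffling: carrying out the Shao-Zhang sign decomposition in the Poisson-Malliavin setting tightly enough that the remainder collapses exactly into a single divergence of $DF\,|DL^{-1}F|$ tested against a universally bounded functional, without leaving behind residual second-order terms such as $\EE\int (D_xF)^2 |D_xL^{-1}F|\,\la(dx)$ or $\EE\int |D_xF|^3\,\la(dx)$. Eliminating those residuals is exactly the advantage of this ``parsimonious'' bound over its predecessors, and verifying that the sign bookkeeping is sharp enough to do so --- together with checking that the domain conditions $\delta\mbox{-integrability of } DF\,|DL^{-1}F|$ and regularity of $Fh(F)$ cover all compositions appearing en route --- will represent the bulk of the work.
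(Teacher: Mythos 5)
Your plan correctly reproduces the skeleton of the paper's argument: both use the Stein solution $f_z$ with the bounds $\|f_z\|_\infty\le\sqrt{2\pi}/4$, $\|f_z'\|_\infty\le1$, both apply the Malliavin duality $\EE[Ff_z(F)]=\EE[\langle Df_z(F),-DL^{-1}F\rangle]$, both split off the leading term $\EE[f_z'(F)(1-\langle DF,-DL^{-1}F\rangle)]$ to produce the first two summands, and both aim to collapse the remainder into $\delta(DF\,|DL^{-1}F|)$ tested against a functional bounded by $2$. The roles of hypotheses \eqref{e:hg} and \eqref{e:hhgg} are also correctly located.

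However, the step you openly flag as ``the bulk of the work'' is exactly where the gap lies, and the route you sketch to close it would not quite work. You propose to factor $D_xF(-D_xL^{-1}F)=D_xF\,|D_xL^{-1}F|\,\mathrm{sign}(-D_xL^{-1}F)$ and then ``reverse'' the duality; but the resulting sign factor is $x$-dependent (through $D_xL^{-1}F$), so after the reverse duality it cannot be packaged into a functional $\Xi_z(\wh F)$ of $F$ alone, and the claimed identity $\mathrm{Rem}=\sigma^{-2}\EE[\delta(DF|DL^{-1}F|)\Xi_z(\wh F)]$ does not hold as stated. The paper's mechanism is different in a crucial respect: it never touches the sign of $DL^{-1}F$. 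Instead it first establishes a \emph{pointwise} bound on the inner time-integral. Write, via Stein's equation,
\begin{align*}
\int_0^{D_xF}\bigl(f_z'(F+t)-f_z'(F)\bigr)dt
&=\int_0^{D_xF}\bigl((F+t)f_z(F+t)-Ff_z(F)\bigr)dt
-\int_0^{D_xF}\bigl(\1_{F\le z}-\1_{F+t\le z}\bigr)dt,
\end{align*}
and observe that $x\mapsto xf_z(x)$ and $x\mapsto\1_{(z,\infty)}(x)$ are both nondecreasing. Whatever the sign of $D_xF$, this forces each integrand to be squeezed between $0$ and its value at $t=D_xF$, giving
\begin{align*}
\Bigl|\int_0^{D_xF}\bigl(f_z'(F+t)-f_z'(F)\bigr)dt\Bigr|\le D_xF\cdot D_x\bigl(Ff_z(F)+\1_{F>z}\bigr),
\end{align*}
a quantity that is nonnegative. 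Only after this bound does the absolute value on $DL^{-1}F$ enter (from the triangle inequality on $I_2$), yielding
\begin{align*}
|I_2|\le\EE\bigl[\langle DF\,D(Ff_z(F)+\1_{F>z}),\,|DL^{-1}F|\rangle\bigr]
=\EE\bigl[(Ff_z(F)+\1_{F>z})\,\delta(DF|DL^{-1}F|)\bigr]\le2\,\EE|\delta(DF|DL^{-1}F|)|,
\end{align*}
where the middle step uses the duality Lemma for the $Ff_z(F)$ piece and the generalised integration by parts of Last--Peccati--Schulte (Lemma 2.2 in \cite{LPS16}) for the indicator piece; this is where \eqref{e:hg} is needed, and \eqref{e:hhgg} guarantees $Ff_z(F)\in\mathbb{D}^{1,2}$. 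In short: your structure is sound and you have correctly guessed the form of $\Xi_z$ (it is $Ff_z(F)+\1_{F>z}$), but you need the monotonicity sandwich above, applied \emph{before} any manipulation of $DL^{-1}F$, rather than a sign factorization of $DL^{-1}F$; the latter leaves behind an $x$-dependent sign that cannot be absorbed.
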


\begin{remark}\label{r:hhgg} Using the relation $D(Fh(F)) = h(F) DF+FDh(F) +DFDh(F)$, one sees that a sufficient condition for \eqref{e:hhgg} to hold is that $F\in \mathbb{D}^{1,2}$ is $\sigma(\eta |_\sB)$-measurable, for some $\sB$ such that $\lambda(\sB)$ is finite.
\end{remark}

\subsection{Main results in the multi-dimensional case}\label{ss:intromulti}

\subsubsection{General estimates}

We will express our multidimensional bounds in terms of two smooth distances between probability measures on $\RR^m$ ($m\geq 2$), written $\d2$ and $d_3$, as well as of the convex distance $d_c$. The three distances $\d2, d_3, \dc$ (defined below) all induce topologies on the class of probability measures on $\R^m$ that are stronger than the topology of convergence in distribution. 

For every $m\geq 2$, we denote by $\mathcal H^2_m$ the set of all $C^2$-functions $h:\RR^m\to \RR$ such that 
\begin{align*}
|h(\mathbf{x})-h(\mathbf{y})|\le\norm{\mathbf{x}-\mathbf{y}}, \,\,\mathbf{x}, \mathbf{y}\in\RR^m, \mbox{ and } \sup_{\mathbf{x}\in\RR^m} \op{\hess h(\mathbf{x})}\le 1,
\end{align*}
where $\|\cdot\|$ is the Euclidean norm in $\R^m$, $\hess h$ denotes the Hessian matrix of $h$ and $\op{\cdot}$ is the operator norm. Similarly, we write $\mathcal H^3_m$ to indicate the collection of all thrice continuously differentiable functions on $\R^m$ such that all partial derivatives of order 2 and 3 are bounded by 1. Now fix $a\in \{2,3\}$ and let $\mathbf{Y}, \mathbf{Z}$ be $\RR^m$-valued random vectors such that $\EE \| \mathbf{Y}\|^{a-1}, \EE\|\mathbf{Z}\| ^{a-1}<\infty$; following \cite{PZ10}, we define 
\begin{align*}
d_a(\mathbf{Y},\mathbf{Z}):= \sup_{h\in\mathcal H^a_m} |\EE h(\mathbf{Y}) - \EE h(\mathbf{Z})|.
\end{align*}
Finally, let $\mathcal C_m$ be the class of all convex subsets of $\R^m$. Given $\RR^m$-valued random vectors $\mathbf{Y}, \mathbf{Z}$, we define the {\bf convex distance} between the distribution of $\mathbf{Y}$ and $\mathbf{Z}$ as
 \begin{align*}
\dc(\mathbf Y,\mathbf Z) : = \sup_{C\in \mathcal C_m} \big|\PP[ \mathbf{Y}\in C] - \PP[ \mathbf{Z}\in C] \big|. 
\end{align*}
For a discussion of the properties of the distance $\dc$, see \cite{NPY, SY18} and the references therein.

\smallskip

We now fix $m\geq 2$ and adopt the general framework of Section \ref{ss:frame}, namely: $\eta$ is a Poisson measure on $(\mathbb X, \mathcal X)$ with $\sigma$-finite intensity $\lambda$. Fix $\sB\in \mathcal X$, as well as a functionally measurable collection $\{\sA_x : x\in \sB\} \subset \mathcal X$, and consider mappings $F_1,...,F_m, G_1,..., G_m\in \FNS$ such that $F_i(\sB),G_i(\sA_x) \in L^2(\Omega)$ for every $i=1,...,m$ and every $x\in \sB$. We also set $\mathbf{\wh F}(\sB) :=( \wh F_1(\sB),...,\wh F_m(\sB) )$ where $\wh F_i(\sB):= (F_i(\sB) -\EE F_i(\sB))/\sigma_i$, where $\sigma_i>0$ (the most natural choice is of course $\sigma^2_i = \Var F_i(\sB)$, but some flexibility ill help streamlining our discussion). Our aim is to compare the distribution of $\mathbf{\wh F}(\sB)$ with that of a centered $m$-dimensional Gaussian vector $N_\Sigma = (N_1,...,N_m)$ with covariance matrix $\Sigma = \{\Sigma(i,j) : i,j=1,...,m\} \geq 0$. We will use the following parameters, defined for $q,p>0$ (recall also \eqref{e:bdelta}):

\begin{align*}
\ga_1&:=  \sum_{i,j=1}^m \frac{1}{\sigma_i\sigma_j}|\Sigma(i,j)\sigma_i\sigma_j-\Cov(F_i(\sB),F_j(\sB) )|, \\
\ga_2^{q,p}&:=\left(\sum_{i=1}^m \frac{1}{\sigma_i}\right)^{2} \sqrt{\iint_{\sB_\Delta^2} \sup_{i\in[m]}\EE[|D_x F_i(\sB) - D_x G_i(\sA_x)|]^{1-\frac{q}{p}}  \lambda^2 (dx, dy)}, \\
\ga_3&:=\left(\sum_{i=1}^m \frac{1}{\sigma_i}\right)^{2}  \sqrt{\lambda^2 (\sB^2\setminus \sB_{\Delta}^2) }, \quad\quad \ga_4:= \left(\sum_{i=1}^m \frac{1}{\sigma_i}\right)^{3} \lambda(\sB).
\end{align*}

Our first statement applies to the smooth distances $d_2$ and $d_3$.

\begin{theorem}[Abstract two-scale stabilization, II]\label{t:d_2} Let the previous notation and assumptions prevail, and suppose that there exists $p\in(4,\infty]$ such that 
\begin{align}\label{e:xxx}
\sup_{i\in [m]}\sup_{x\in \sB} \left\{ \EE[|D_x F_i(\sB) |^{p}]^{\frac 1 p} + \EE[|D_x G_i(\sA_x) |^{p}]^{\frac 1 p}  \right\} :=K<\infty.
\end{align}
Then, we have
\begin{align*}
d_3\left(\mathbf{\wh F}(\sB),  N_\Sigma\right) \le 3m \times \max(1,K)^3 \, (\gamma_1+\gamma_2^{4,p}+ \gamma_3+ \gamma_4).
\end{align*}
If, in addition, $\Sigma >0$, then one has also that
\begin{align*}
d_2\left(\mathbf{\wh F}(\sB),  N_\Sigma\right) \le 3m \times \max(1,K)^3 \times b(\Sigma) \times (\gamma_1+\gamma_2^{4,p}+ \gamma_3+ \gamma_4),
\end{align*}
where 
\begin{equation}\label{e:bsigma}
b(\Sigma):= \max\left\{ \op{\Sigma^{-1}} \op{\Sigma}^{1/2} \,\, , \,\, \op{\Sigma^{-1}}^{3/2} \op{\Sigma}\right\}.
\end{equation}

\end{theorem}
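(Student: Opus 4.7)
The plan is to combine the Malliavin-Stein method for multivariate normal approximation on the Poisson space (in the spirit of Peccati-Zheng) with the two-scale stabilization scheme already implemented in the proof of Theorem \ref{t:bound_w}. For $h\in \mathcal H^3_m$, I would start from the matrix Stein equation
\[
\operatorname{tr}(\Sigma\, \hess f(x))-\langle x,\nabla f(x)\rangle = \mathbb{E}[h(N_\Sigma)]-h(x),
\]
which admits a solution $f\in C^3(\R^m)$ with standard bounds on $\|\partial^{(k)}f\|_\infty$ for $k=2,3$. Substituting $\widehat{\mathbf F}(\sB)$ in the Stein equation, taking expectations, applying the Poisson integration-by-parts identity $\mathbb{E}[\widehat F_i \, g(\widehat{\mathbf F})] = \mathbb{E}\langle D(g(\widehat{\mathbf F})), -DL^{-1}\widehat F_i\rangle_{L^2(\lambda)}$, and Taylor-expanding $D(g(\widehat{\mathbf F}))$ to second order leads, by the classical Malliavin-Stein scheme, to an estimate of the form
\[
|\mathbb{E}h(\widehat{\mathbf F}(\sB))-\mathbb{E}h(N_\Sigma)| \le \tfrac{1}{2}\sum_{i,j=1}^m \mathbb{E}\left|\Sigma(i,j)-\langle D\widehat F_i(\sB), -DL^{-1}\widehat F_j(\sB)\rangle\right| + R,
\]
where $R$ is a third-moment remainder of the form $\sum_{i,j,k}\int_\sB \mathbb{E}|D_x\widehat F_i\, D_x\widehat F_j\, D_xL^{-1}\widehat F_k|\,\lambda(dx)$, controlled under \eqref{e:xxx} and the Mehler contractivity $\|D_xL^{-1}F\|_p\le \|D_xF\|_p$ by a quantity of order $\max(1,K)^3 \gamma_4$.

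Next I would insert $\pm\operatorname{Cov}(\widehat F_i,\widehat F_j)$ in each summand and use the exact identity $\operatorname{Cov}(F_i(\sB),F_j(\sB))=\mathbb{E}\langle DF_i(\sB),-DL^{-1}F_j(\sB)\rangle$ to split off a deterministic discrepancy (summing to exactly $\gamma_1$) plus a fluctuation $\mathbb{E}|\operatorname{Cov}(\widehat F_i,\widehat F_j) - \langle D\widehat F_i,-DL^{-1}\widehat F_j\rangle|$. The latter is handled by the coordinatewise version of the two-scale argument already used for Theorem \ref{t:bound_w}: represent $-D_xL^{-1}F_j$ through Mehler's formula, replace $D_xF_j(\sB)$ by the surrogate $D_xG_j(\sA_x)$, and exploit the conditional independence of $D_xG_i(\sA_x)$ and $D_yG_j(\sA_y)$ when $(x,y)\in \sB_\Delta^2$. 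A Cauchy-Schwarz step combined with \eqref{e:nice} and the uniform $L^p$ bound \eqref{e:xxx} produces a contribution of order $K^2 (\sigma_i\sigma_j)^{-1}\big(\sqrt{\gamma_2^{4,p}\text{-integral}} + \sqrt{\lambda^2(\sB^2\setminus \sB_\Delta^2)}\big)$, and summing over $i,j$ while factoring out $\big(\sum_i 1/\sigma_i\big)^2$ recovers exactly $\gamma_2^{4,p}+\gamma_3$, delivering the $d_3$ bound.

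For the $d_2$ case under $\Sigma>0$, I would apply the standard linear change of variables $y=\Sigma^{-1/2}x$: test functions in $\mathcal H^2_m$ are transferred to the Stein equation with identity covariance at the cost of replacing the derivative bounds on $f$ by quantities proportional to $b(\Sigma)$ as defined in \eqref{e:bsigma} (see e.g.\ the derivation in \cite{NPY}), after which the rest of the proof runs unchanged. The main obstacle I anticipate is the accurate execution of the two-scale step for the off-diagonal indices $i\neq j$: one must ensure that the Mehler-based decomposition of $-D_xL^{-1}F_j$, together with the surrogate substitution $D_xF_j(\sB)\rightsquigarrow D_xG_j(\sA_x)$, introduces only cross terms supported on $\sB_\Delta^2$ (so that the requisite independence is preserved) and simultaneously preserves the uniform $L^p$ control, which is precisely what forces both the supremum over $i\in[m]$ inside $\gamma_2^{4,p}$ and the single universal constant $\max(1,K)^3$ in front of all four terms.
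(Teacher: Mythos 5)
Your proposal follows essentially the same route as the paper: the paper's proof simply invokes the Peccati--Zheng multivariate Malliavin--Stein bounds (Lemma~\ref{l:d_2}, which is exactly the Taylor-expanded Stein argument you sketch, including the $b(\Sigma)$-weighted $d_2$ version), then inserts $\pm\Cov(F_i,F_j)$ to isolate $\gamma_1$, bounds the residual fluctuation by $\sqrt{\Var[\langle DF_i,-DL^{-1}F_j\rangle]}$ and applies Proposition~\ref{p:cdc} (the two-scale surrogate/independence argument you re-derive inline) to get $\gamma_2^{4,p}+\gamma_3$, and closes with H\"older and Lemma~\ref{l:E[DLF^p]} for the third-moment term $\gamma_4$. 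The only difference is cosmetic: you re-derive Lemma~\ref{l:d_2} and Proposition~\ref{p:cdc} from scratch rather than citing them, and you work with the $L^1$ form of the covariance discrepancy where the paper uses the $L^2$ form from Lemma~\ref{l:d_2}, but both lead to the same square-root structure after Cauchy--Schwarz.
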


\medskip

In order to deal with $\dc$, we need to further define
\begin{align*}
\ga'_2&:= \left(\sum_{i=1}^m \frac{1}{\sigma_i}\right)^{2} \sqrt{\iint_{\sB_\Delta ^2} \sup_{i\in[m]}\EE[|D_x F^y_i(\sB) - D_x G_i^y(\sA_x)|]^{1-\frac{4}{p}}  \lambda(dx, dy) }, \\
\ga_5&:= \left(\sum_{i=1}^m \frac{1}{\sigma_i}\right)^2\sqrt{\lambda(\sB)}.\\
\end{align*}

\begin{theorem}[Abstract two-scale stabilization, III]\label{t:dc} Under the previous notation and assumptions, suppose  that there exists  $p\in(6,\infty]$ such that
\begin{align}\label{e:xxxx}
\sup_{i\in [m]}\sup_{y,x\in \sB} \left\{ \EE[|D_x F_i(\sB) |^{p}]^{\frac 1 p} \! +\! \EE[|D_x G_i(\sA_x) |^{p}]^{\frac 1 p}+\EE[|D_x F^y_i(\sB) |^{p}]^{\frac 1 p} \!+\! \EE[|D_x G^y_i(\sA_x) |^{p}]^{\frac 1 p}  \right\}\! := \! K'<\infty.
\end{align}
Then, we have 
\begin{align*}
\dc(\wh{\mathbf F}(\sB),N_\Sigma) \le c   \left(\ga_1+\ga_2^{4,p}+\gamma _{2}^{5,p}+\gamma _{2}^{6,p}+\ga_2'+\ga_3+\ga_4+\ga_5+ \sum_{i=1}^m \frac{1}{\sigma_i}\right),
\end{align*}
where 
\begin{align*}
c= 60 \times \left( 20\sqrt{2}m + 168 \max(1,K')^4 \op{\Sigma^{-1}}^{\frac 3 2} \Big(m^{\frac 3 2} + m^2\op{\Sigma^{- \frac 1 2}}^{\frac 1 2} \Big)\right)^{\frac 5 2}.
\end{align*}
\end{theorem}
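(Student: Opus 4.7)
\medskip

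\noindent \textbf{Proof proposal.} My plan is to follow the same overall strategy as the proofs of Theorems \ref{t:bound_w} and \ref{t:d_2}: start from a general Stein--Malliavin inequality on the Poisson space for the convex distance, and then apply two-scale localization to replace the global add-one cost $D_xF_i(\sB)$ by its localized surrogate $D_xG_i(\sA_x)$, exploiting independence on the region $\sB^2_\Delta$. The first step is to invoke (or re-derive along the lines of \cite{SY18,NPY}) a multivariate convex-distance bound of the schematic form
\begin{align*}
\dc(\wh{\mathbf F}(\sB),N_\Sigma)
\le c_\Sigma\bigl(A_1+A_2+A_3+A_4\bigr)^{5/2} + m\,\gamma_1,
\end{align*}
where $c_\Sigma$ is polynomial in $\op{\Sigma^{-1}}$, the term $A_1$ collects covariance errors of the form $\EE|\Sigma(i,j)\sigma_i\sigma_j-\langle DF_i(\sB),-DL^{-1}F_j(\sB)\rangle|$, the term $A_2$ collects Skorohod-integral residuals in the spirit of Theorem \ref{l:bound_kol}, and $A_3, A_4$ collect the ``higher-order'' contributions forced by the use of Bentkus--G\"otze smoothing for indicators of convex sets (whose Stein solutions are only $C^{1,1}$, so that Taylor expansions must be carried out to order three and involve products of up to six factors $D_x F_i$). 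This step already isolates the exponent $5/2$ and the polynomial dependence of $c$ on $m$ and $\op{\Sigma^{-1}}$.

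The second step is to rewrite each of $A_1,\dots,A_4$ in terms amenable to the two-scale comparison. Using the Mehler representation $-D_xL^{-1}F=\int_0^\infty e^{-t}P_tD_xF\,dt$ and the contraction of $P_t$ on $L^p$, every occurrence of $-D_xL^{-1}F_j(\sB)$ or $D_xF_j(\sB)$ can be replaced by $D_xG_j(\sA_x)$, which is measurable with respect to $\eta|_{\sA_x}$. The cost of each replacement is controlled via H\"older's inequality together with \eqref{e:xxxx} and the elementary estimate
\begin{align*}
\EE\bigl[|D_xF_i(\sB)-D_xG_i(\sA_x)|^q\bigr]^{1/q}
\le (2K')^{1-\frac{p-q}{qp}}\,\PP\bigl[D_xF_i(\sB)\neq D_xG_i(\sA_x)\bigr]^{(p-q)/(qp)},
\end{align*}
which, after summing over $i$, is precisely what produces $\gamma_2^{q,p}$ for the three exponents $q\in\{4,5,6\}$ dictated by the power of the add-one cost entering each piece of the Stein functional. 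The iterated counterpart, involving $D_xF^y_i(\sB)$ and obtained via \eqref{e:claro}, produces $\gamma_2'$.

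After the replacement, the integrand $D_xG_i(\sA_x)D_yG_j(\sA_y)$ becomes a product of \emph{independent} random variables whenever $(x,y)\in\sB^2_\Delta$; for centered quantities this forces the corresponding double integrals over $\sB^2_\Delta$ to collapse, leaving only a Cauchy--Schwarz bound on the $\lambda^2$-measure of the overlap region $\sB^2\setminus\sB^2_\Delta$ --- this is the source of $\gamma_3$. The single-variable boundary and diagonal terms, the contribution of the ``error in variance'' coming from the smoothing step, and the Skorohod-integral residual on the diagonal produce $\gamma_4$, $\gamma_5$, and $\sum_i\sigma_i^{-1}$ respectively. Collecting everything, raising to the $5/2$-th power as dictated by the smoothing inequality, and absorbing the factor $\max(1,K')^4$ that tracks the uniform $L^p$-bounds, yields the claimed form of $c$.

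The main technical obstacle will be the careful execution of the Bentkus--G\"otze smoothing step in a Malliavin framework where no chain rule is available: the add-one cost behaves like a discrete gradient, so that the Taylor expansion of the smoothed indicator must be rearranged so that all higher-order remainders are expressible as integrals of products of $D_xF_i,D_xF^y_i$, and then controlled uniformly via \eqref{e:xxxx}. A secondary delicate point is to keep the three exponents $1-q/p$, $q\in\{4,5,6\}$, separated through the rearrangement (which is what forces $p>6$ rather than the milder $p>4$ of Theorem \ref{t:d_2}). Once this bookkeeping is done, the final summation over $i,j\in[m]$ and the extraction of $\op{\Sigma^{-1}}^{3/2}$ from the smoothing constants give exactly the constant $c$ displayed in the statement.
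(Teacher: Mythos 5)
Your plan correctly identifies the broad architecture --- Stein's method with mollified test functions, exploitation of independence over $\sB^2_\Delta$, H\"older splits producing the exponents $q\in\{4,5,6\}$ --- but it misses the mechanism that actually delivers the bound, and your schematic inequality is structurally off. The paper does not obtain a direct bound of the form $\dc\le c_\Sigma(A_1+\dots+A_4)^{5/2}+m\gamma_1$; instead it derives a \emph{recursive} inequality in $\kappa:=\dc(\wh{\mathbf F}(\sB),N_\Sigma)$. The key input is Lemma \ref{l:SY2.4} (Schulte--Yukich), which bounds $\sup_h\EE\sum_{i,j}|\partial^2_{ij}f_t(\wh{\mathbf F})|^2$ by a quantity of the form $\op{\Sigma^{-1}}^2\big(m^2(\log t)^2\kappa+530\,m^{17/6}\big)$, so that $\kappa$ itself reappears on the right. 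Feeding this into the Taylor decomposition $\EE h_t(\wh{\mathbf F})-\EE h_t(N_\Sigma)=I+J+L$, choosing the smoothing level $t=\gamma^2$, one arrives at an inequality of the type $\kappa\le c\sqrt t+c(|\log t|\sqrt\kappa+1)\gamma+\frac{c}{\sqrt t}(\gamma_4\kappa+\dots)$, which must then be \emph{solved} for $\kappa$. The exponent $5/2$ therefore lands on the constant $c$, not on the $\gamma$-quantities as in your formula (which, taken literally, would give the wrong rate), and without the self-referential trade-off of Lemma \ref{l:SY2.4} the $|\log t|$ factors cannot be absorbed and the argument stalls.

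The second missing piece is the third-order remainder $L$, which is where the real novelty lies. The terms $I$ and $J$ are handled much as in Theorems \ref{t:bound_w} and \ref{t:d_2} via Propositions \ref{p:cdc} and \ref{p:sko}, but $L$ forces a bound on variances of quantities of the form $\langle 1_{\{w\le\|D\wh{\mathbf F}\|\}},|D_x\wh F_j D_xL^{-1}\wh F_k|\rangle$, uniformly in the auxiliary parameter $w\in[0,1]$ --- this is exactly Proposition \ref{p:p3}. The indicator (which your $C^{1,1}$ remark gestures at) is what generates the two extra exponents $q=5,6$ (hence $\gamma_2^{5,p},\gamma_2^{6,p}$) and genuinely forces $p>6$; it is a new estimate, not a bookkeeping rearrangement of the $p>4$ case. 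Finally, your proposal to ``replace $-DL^{-1}F_j(\sB)$ by $D_xG_j(\sA_x)$ via Mehler'' does not accurately describe the two-scale step: the paper instead inserts $\pm D_xF(\sA_x)$ and $\pm D_xL^{-1}F(\sA_x)$ telescopically inside the relevant covariances (see the proofs of Propositions \ref{p:cdc}, \ref{p:sko}, \ref{p:p3}) and uses Lemma \ref{l:E[DLF^p]} to convert $L^{-1}$-moments into $D$-moments; the Mehler representation and $T_t$-contraction enter only through that lemma, not through a wholesale replacement of the pseudo-inverse.
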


\subsubsection{The Euclidean setting}

For geometric applications, one typically applies Theorem \ref{t:d_2} and Theorem \ref{t:dc}  in the framework of the Points (I), (II) and (IV) presented at the end of Section \ref{ss:introone} (page \pageref{page} above), to which we add the following extension of Point (III):\label{page2}

\begin{enumerate}

\item[(III')] For every $i = 1,...,m$, the random variables $F_i(\sB) = F_{n,i}(\sB_n\times Z)$ and $G_i(\sA_{(\bx,z)} ) =G_i(\sA_{(\bx,z), n} ) = G_{n,i}(\sA_{\bx, n})$ are obtained as follows. First, consider a collection of real-valued kernels $\{h^i(\sC \,;\, \cdot) : \sC\in \mathscr{B}(\R^d) \}$, such that, for each $\sC \in \mathscr{B}(\R^d)$, the mapping $\chi \mapsto h^i( \sC;\chi)$ is an element of $\FNS$. One then fixes a measurable set $\sC_i \subseteq \sB_0$ and defines, for $n\geq 1$ and $\bx\in \R^d$, 
\begin{equation}\label{e:xiaochuan}
F_{n,i}(\sB_n\times Z) := h^i\big( n\sC_i \, ;\,  \eta|_{\sB_n\times Z}\big)\, \, \mbox{and} \,\,G_{n,i}(\sA_{\bx, n}) := h^i\big( n\sC_i  \,;\,  \eta|_{\sA_{\bx, n}}\big).
\end{equation}
We implicitly assume that the functions $h^i$ are such that the mapping $$(\bx, \omega)\mapsto G_{n,i}(\sA_{\bx, n})(\eta(\omega))$$ is jointly measurable (this can be easily verified on specific examples) and also that, as $n\to \infty$, 
\begin{equation}\label{e:sigmainfty}
\frac{1}{n^d}\Cov(F_{n,i}(\sB_n \times Z), F_{n,j}(\sB_n\times Z) )\longrightarrow \Sigma_\infty(i,j), \quad i,j=1,...,m,
\end{equation}
where $\Sigma_\infty = \{\Sigma_\infty(i,j) : i,j=1,...,m\}\geq 0$ has strictly positive diagonal elements. 
\end{enumerate}

\medskip

{}{Although it is not directly applied in the present paper, it is also natural to use the following alternate specification of the random variables $ G_{n,i}$: for $n\geq 1$ and $\bx\in \R^d$, $G_{n,i}(\sA_{\bx, n}) := h^i\big( \tau_{\bf x}( b_n\sC_i) \,;\,  \eta|_{\sA_{\bx, n}}\big)$. {The mappings $h^i$ introduced above verify in most applications the following translation-invariance property (that is not required for the validity of our results):  $h^i(\tau_{\bx} \sC\, ;\, \tau_\bx \chi) = h^i( \sC,  \chi)$ for all $\bx \in \R^d$, where $\tau_\bx \chi$ is the measure on $\mathscr{B}(\R^d)\otimes \mathcal{Z}$ obtained as follows: if $\chi = \sum_\ell v_\ell \, \delta_{\bx_\ell, z_\ell}$, then $\tau_\bx \chi = \sum_\ell v_\ell \, \delta_{\bx_\ell +\bx, z_\ell}$}. Relation \eqref{e:sigmainfty} can be of course a delicate matter --- we refer the reader e.g. to \cite{Penrose05} for a panoply of sufficient conditions implying that such a requirement is met. It is also important to observe that, in many applications, the speed of convergence in \eqref{e:sigmainfty} is difficult to assess,  {and has to be dealt with on a case-by-case basis}. In order to obtain explicit rates and keep the length of the paper within bounds, it is therefore preferable to compare the distribution of $\wh{\mathbf F} = \wh{\mathbf F}_{n}$ with the one of a Gaussian vector having the same covariance structure. This is done in the following statement, which is a direct consequence of Theorem \ref{t:d_2} and Theorem \ref{t:dc}.

\begin{corollary}\label{c:euclmulti}
Let the assumptions and notation defined by Points {\rm (I), (II), (III')} and {\rm (IV)} above prevail, and define
$\mathbf{\wh F}_n(\sB_n\times Z) :=( \wh F_{n,1}(\sB_n\times Z),...,\wh F_{n,m}(\sB_n\times Z) )$ where 
\begin{equation}\label{e:fbtilde}
\wh F_{n,i}(\sB_n\times Z):=\frac{1}{n^{d/2} }(F_{n,i}(\sB_n\times Z) -\EE F_{n,i}(\sB_n\times Z)), \quad i=1,...,m.
\end{equation}
For every $n$, let $N_{\Sigma_n}$ denote a $m$-dimensional centered Gaussian vector with the same covariance matrix $\Sigma_n$ as $\mathbf{\wh F}_n(\sB_n\times Z)$. 
\begin{itemize}
\item[\bf (i)] Denote by $K(n) = K(n,p)$ the constant obtained from \eqref{e:xxx} (for some $p>4$)  by taking $F_i=F_{n,i}$, $G_i = G_{n,i}$, $\sB = \sB_n\times Z$ and $\sA_\bx = \sA_{\bx,n}$, and assume that $\limsup_n K(n)<+\infty$. If, as $n\to\infty$,
\begin{equation}\label{e:goo}
\vartheta(n) := \sup_{i\in[m]}\sup_{\bx\in \sB_n} \int_Z \EE[|D_{(\bx,z)} F_{n,i}(\sB_n\times Z) - D_{(\bx,z)} G_{n,i}(\sA_{\bx,n})|]^{\frac{p-4}{p}} \pi(dz) \to 0,
\end{equation}
then, for some constant $C$ independent of $n$,
\begin{equation}\label{e:bbbz}
d_3(\mathbf{\wh F}_n(\sB_n\times Z), N_{\Sigma_n}) \leq C\left( \vartheta(n)^{1/2} +\frac{b_n^{d/2}}{n^{d/2}}\right) \to 0, \quad n\to \infty.
\end{equation}
\item[\bf (ii)] Under the assumptions of Point {\bf (i)}, assume moreover that $\Sigma_\infty>0$. Then, the bound \eqref{e:bbbz} continues to hold when replacing $d_3$ with $\d2$.

\item[\bf (iii)] Denote by $K'(n) = K'(n,p)$ the constant obtained from \eqref{e:xxxx} (for some $p>6$)  by taking $F_i=F_{n,i}$, $G_i = G_{n,i}$, $\sB = \sB_n\times Z$ and $\sA_\bx = \sA_{\bx,n}$, and assume that $\limsup_n K'(n)<+\infty$. Suppose that $\Sigma_\infty>0$ and also that, as $n\to\infty$,
\begin{equation}\label{e:gooo}
\tau(n) :=\! \! \! \sup_{i\in[m], \, q=4,5,6,\,  \bx\in \sB_n} \int_Z \EE[|D_{(\bx,z)} F_{n,i}(\sB_n\times Z) - D_{(\bx,z)} G_{n,i}(\sA_{\bx,n})|]^{\frac{p-q}{p}} \pi(dz) \to 0, 
\end{equation}
and 
\begin{eqnarray}\label{e:goooo}
&&\varrho(n) \\
&& :=\! \! \! \sup_{i\in[m], \bx,\by \in (\sB_n)^2_\Delta} \iint_Z \EE[|D_{(\bx,z)} F^{(\by,u)}_{i,n}(\sB_n\times Z) - D_{(\bx,z)} G^{(\by,u)}_{i,n}(\sA_{\bx,n})|]^{\frac{p-4}{p}} \pi^2(dz,du) \to 0. \notag
\end{eqnarray}
Then, for some constant $C$ independent of $n$,
\begin{equation}\label{e:bbbb}
\dc(\mathbf{\wh F}_n(\sB_n\times Z), N_{\Sigma_n}) \leq C\left( \tau(n)^{1/2} +\varrho(n)^{1/2} +\frac{b_n^{d/2}}{n^{d/2}}\right) \to 0, \quad n\to \infty.
\end{equation}
\end{itemize}

\end{corollary}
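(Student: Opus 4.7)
The plan is to apply Theorem \ref{t:d_2} for parts (i)--(ii) and Theorem \ref{t:dc} for part (iii), specializing the abstract parameters to the Euclidean setting of (I)--(IV) and (III'). First I would set $\sigma_i := n^{d/2}$ to match the normalization \eqref{e:fbtilde}, and take the reference Gaussian to have covariance $\Sigma := \Sigma_n = \Cov(\mathbf{\wh F}_n(\sB_n\times Z))$. With this choice of $\Sigma$, one verifies $\Sigma(i,j)\sigma_i\sigma_j = \Cov(F_{n,i}(\sB_n\times Z), F_{n,j}(\sB_n\times Z))$ identically, so that the first error term $\gamma_1$ vanishes. The problem then reduces to controlling the remaining $\gamma$-parameters through crude volume estimates combined with the quantitative stabilization hypotheses \eqref{e:goo}, \eqref{e:gooo}, \eqref{e:goooo}.

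Next I would estimate each $\gamma$-parameter in turn. Since $|\sB_n| \asymp n^d$, $\sum_{i=1}^m 1/\sigma_i \asymp m/n^{d/2}$, and $\lambda(\sB_n\times Z) = |\sB_n|$, one immediately gets $\gamma_4 \lesssim m^3/n^{d/2}$ and $\gamma_5 \lesssim m^2/n^{d/2}$. For $\gamma_3$, the geometry of (IV) gives $\lambda^2(\sB^2\setminus \sB^2_\Delta) \asymp n^d b_n^d$, so $\gamma_3 \lesssim m^2 b_n^{d/2}/n^{d/2}$. For $\gamma_2^{4,p}$ the integrand is independent of $y$; integrating out $y$ against $\lambda(\sB_n\times Z) \asymp n^d$ and using $\sup_i \leq \sum_{i=1}^m \leq m \sup_i$ to pull the supremum across the $\pi$-integral over $Z$ gives $\gamma_2^{4,p} \lesssim m^{5/2}\sqrt{\vartheta(n)}$. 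The same argument yields $\gamma_2^{q,p} \lesssim m^{5/2}\sqrt{\tau(n)}$ for $q=5,6$ (via \eqref{e:gooo}), while $\gamma_2'$ requires the additional $y$-spatial integration (the integrand now depends on $\by$) but this is still bounded by $n^{d}$ times the mark sup, giving $\gamma_2' \lesssim m^{5/2}\sqrt{\varrho(n)}$ via \eqref{e:goooo}.

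The final task is to absorb the prefactors $\max(1,K)^3$ and $c$ from Theorems \ref{t:d_2}--\ref{t:dc}. The assumptions $\limsup_n K(n)<\infty$ and $\limsup_n K'(n)<\infty$ handle the moment constants. For parts (ii) and (iii), the hypothesis $\Sigma_\infty > 0$ together with \eqref{e:sigmainfty} forces $\Sigma_n \to \Sigma_\infty$, so that for $n$ large $\Sigma_n$ lies in a neighborhood of a positive definite limit and the operator norms $\op{\Sigma_n^{-1}}$, $\op{\Sigma_n}$, $\op{\Sigma_n^{-1/2}}$ remain uniformly bounded. Consequently $b(\Sigma_n)$ defined in \eqref{e:bsigma} and the constant $c$ in Theorem \ref{t:dc} stay bounded in $n$. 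Summing the estimates and using $b_n\to\infty$ to absorb $1/n^{d/2}$ into $b_n^{d/2}/n^{d/2}$, the bounds \eqref{e:bbbz} and \eqref{e:bbbb} follow, with $d_2$ in part (ii) covered by the second assertion of Theorem \ref{t:d_2}.

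The main substantive point — as opposed to bookkeeping — is the uniform-in-$n$ control of $b(\Sigma_n)$ and the prefactor $c$ in the convex-distance bound; everything else is a careful accounting of volume integrals and of the passage from componentwise suprema to sums. I do not anticipate any serious obstacle beyond ensuring that all $m$-dependent multiplicative factors, which depend only on the fixed dimension of the output vector, are absorbed into the universal constants.
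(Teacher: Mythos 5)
Your proof is correct and follows exactly the route the paper intends: the paper itself gives no separate argument for this corollary, stating only that it is "a direct consequence of Theorem \ref{t:d_2} and Theorem \ref{t:dc}," and your specialization $\sigma_i = n^{d/2}$, $\Sigma = \Sigma_n$ (killing $\gamma_1$), followed by the volume estimates for $\gamma_3,\gamma_4,\gamma_5,\sum_i\sigma_i^{-1}$ and the passage from the componentwise $\sup_i$ inside $\gamma_2^{q,p},\gamma_2'$ to $m\sup_i$ to recover $\vartheta,\tau,\varrho$, is the standard (and essentially unique) way to unwind it. The one point worth keeping track of — which you correctly flag — is that $b(\Sigma_n)$ and the convex-distance constant in Theorem \ref{t:dc} depend on $\op{\Sigma_n^{-1}}$, and their uniform control really does require $\Sigma_n\to\Sigma_\infty>0$ (ensured by \eqref{e:sigmainfty} with the chosen normalization), with the finitely many $n$ for which $\Sigma_n$ might be degenerate absorbed into $C$ since $d_c\le 1$ and $d_2$ is finite.
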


\begin{remark} Following Remark \ref{r:homo1}, we can obtain the case of a homogeneous Poisson measure on $\R^d$ by specializing the previous framework to the case where $Z = \{0\}$ is the one-point space. Arguing as in Remark \ref{r:ws}, it is then possible to directly connect relations \eqref{e:goo} and \eqref{e:gooo} to the notion of {\bf weakly stabilising set function} introduced in \cite[formulae (5.1) and (5.2)]{Penrose05}. For the sake of brevity, we leave the details of this point to the interested reader. 

\end{remark}
\smallskip
{}{ 
\begin{remark}\label{r:ruse2} We will sometimes use a more refined version of Corollary \ref{c:euclmulti}, whose proof is again an immediate consequence of Theorem \ref{t:d_2} and Theorem \ref{t:dc}. To obtain the needed statement, fix an integer $M\geq 2$ (independent of $n$) and, for every $n\geq 1$ and $i=1,...,m$,, define $ \{ \sP_1(i,n),..., \sP_M(i,n)\}$ to be an arbitrary measurable partition of the set $\sB_n$. Then the conclusions of Corollary \ref{c:euclmulti} continue to hold if the quantities $\vartheta(n), \, \tau(n)$ and $\varrho(n)$ are replaced by the constants $\vartheta'(n), \, \tau'(n)$ and $\varrho'(n)$, obtained as follows. The quantity $\vartheta'(n)$ is obtained from the definition of $\vartheta(n)$ by replacing the symbol $\sup_{\bx\in \sB_n}$ with
$$
\max_{\ell = 1,...,M} \sup_{\bx\in \sP_\ell(i, n) } {\frac{ | \sP_\ell(i, n)|}{n^d}};
$$
the quantity $\tau'(n)$ is obtained from $\tau(n)$ by replacing $\sup_{i\in[m], \, q=4,5,6,\,  \bx\in \sB_n}$ with
$$
\sup_{i\in[m], \, q=4,5,6}\max_{\ell = 1,...,M} \sup_{\bx\in \sP_\ell(i, n) } {\frac{| \sP_\ell(i, n)|}{n^d}};
$$
the quantity $\varrho'(n)$ is obtained from $\varrho(n)$ by replacing $\sup_{i\in[m], \bx,\by \in \sB_n}$ with
$$
\sup_{i\in[m] } \max_{\ell = 1,...,M} \sup_{ (\bx, \by) \in (\sB_n)^2_\Delta : \bx\in \sP_\ell(i,n)  } {\frac{| \sP_\ell(i, n)|}{n^d}}.
$$

\end{remark}
}
{}{
\subsubsection{Strongly stabilizing functionals}\label{ss:strongmd}

The next definition is a natural adaptation of the notion of {\it strongly stablizing set function}, as introduced in \cite[formulae (2.13)--(2.14)]{Penrose05}, to the framework of our paper.

\begin{definition}[Strong stabilization, II]\label{d:vrs}  Let $h  = \{h(\sC \,;\, \cdot) : \sC\in \mathscr{B}(\R^d) \}$ be a collection of real-valued kernels such that, for all $\sC \in \mathscr{B}(\R^d)$, the mapping $\chi \mapsto h( \sC;\chi)$ is an element of $\FNS$. We say that $h$ is {\bf strongly stabilizing} at $(\bx, z)\in \R^d\times Z$ if there exist a.s. finite random variables $R = R \{ (\bx,z) ; \eta \} \geq 0$ and $\delta_\infty = \delta_\infty\{ (\bx, z) ; \eta\}$ such that, for every  {finite $\mathcal A_0 \subset (\R^d\backslash \sB(\bx, R)) \times Z$,}
 {
\begin{equation}\label{e:drs1}
D_{(\bx ,z)} h\big( \sC  \,;\, (\cP\cap (\sB(\bx,R)\times Z))\cup \mathcal{ A}_0  \big) = \delta_{\infty}, \quad \mbox{if $\sB(\bx, R)\subseteq  \sC$},
\end{equation}
and 
\begin{equation}\label{e:drs2}
D_{(\bx ,z)} h\big( \sC  \,;\, (\cP\cap (\sB(\bx,R)\times Z))\cup \mathcal{ A}_0  \big) = 0,  \quad \mbox{if $\sB(\bx, R)\subseteq  \R^d\backslash \sC$}.
\end{equation}
}
The random variables $R$ and $\delta_\infty$ are called, respectively, the {\bf radius of stabilization} and the {\bf stabilization limit} of $h$. 
\end{definition}

According e.g. to \cite[Theorem 2.2]{Penrose05}, if the normalized vector $\mathbf{\wh F}_n(\sB_n\times Z)$ is defined as in \eqref{e:fbtilde}, with each $F_{n,i}$ having the form \eqref{e:xiaochuan} for some collection of strongly stabilizing kernels $h^i$, then $\mathbf{\wh F}_n(\sB_n\times Z)$ verifies a multivariate CLT, provided some mild regularity assumptions are satisfied. As discussed in \cite{Penrose05}, such a result can be used to characterize the limit in law (in the sense of finite-dimensional distributions) of a plethora of vector-valued random measures associated with geometric models, such as, e.g., edge length functionals of minimal spanning trees and of nearest neighbour graphs. Our next statement provides explicit universal bounds for such a multivariate limit result; it is a counterpart to Proposition \ref{p:rs1d}.

\begin{proposition}[Quantitative CLTs under strong stabilization, II]\label{p:rsmd} We work under the assumptions and notation of Corollary \ref{c:euclmulti}, and assume moreover that the sets $\sC_i$ have smooth boundary and that each $h^i$, $i =1,...,m$, is strongly stabilizing at every $(\bx, z) \in \R^d \times Z$, with stabilization radius denoted by $R_i \{ (\bx,z)  ; \eta\}$. Let $\theta> {\rm diam}\, \sB_0$, and $c>0$ be such that $\sB_0$ contains a ball of radius $c$ centered at the origin. For $i = 1,...,m$ and $n\geq 1$, write also $\sP_1(i,n)$ to be the collection of those $\bx\in \sB_n$ such that $d(\bx, \partial \sB_n) \leq \theta\,  b_n$ or $d(\bx, \partial (n\sC_i) ) \leq \theta\, b_n$, where $d$ stands for the Euclidean distance of a point from a set, and write $P_2(i,n): = \sB_n\backslash P_1(i,n)$. 

\begin{itemize}

\item[\bf (1)] Suppose that the assumptions of Corollary \ref{c:euclmulti}-{\bf (i)} are satisfied, with \eqref{e:goo} replaced by: as $n\to\infty$,
\begin{equation}\label{e:goooz}
\vartheta''(n) := \sup_{i\in[m]}\sup_{\bx\in \sP_2(i,n)} \int_Z \PP[R_i \{ (\bx,z) ; \eta\} > c\, b_n]^{ (1- \frac{4}{p})(1-\frac1p)} \pi(dz) \to 0;
\end{equation}
then, for some constant $C$ independent of $n$,
\begin{equation}\label{e:bbb}
d_3(\mathbf{\wh F}_n(\sB_n\times Z), N_{\Sigma_n}) \leq C\left( \vartheta''(n)^{1/2} + \sqrt{\frac{b_n }{n} }\right) \to 0, \quad n\to \infty.
\end{equation}
\item[\bf (2)] Under the assumptions of Point {\bf (1)}, assume moreover that $\Sigma_\infty>0$. Then, the bound \eqref{e:bbb} continues to hold when replacing $d_3$ with $\d2$.
\item[\bf (3)] Assume that, for every $i=1,...,m$ and every $(\by, u)\in \R^d\times Z$, the collection of kernels given by $ \chi \mapsto h^i(\sC\, ;\, \chi+\delta_{(\by, u)})$, $\sC\in \mathscr{B}( \R^d )$ is strongly stabilizing at each $(\bx, z)\in \R^d\times Z$, with corresponding radius denoted by $R_i\{ (\bx, z) ; (\by, u) ; \eta\}$. Suppose that the assumptions of Corollary \ref{c:euclmulti}-{\bf (iii)} are satisfied, with \eqref{e:gooo} and \eqref{e:goooo} replaced respectively by:  as $n\to\infty$,
\begin{equation}\label{e:goooz}
\tau''(n) :=\! \! \! \sup_{i\in[m],\,  \bx\in \sP_2(i,n)} \int_Z \PP[R_i \{ (\bx,z) ; \eta\} > c\, b_n]^{ (1- \frac{6}{p})(1-\frac1p)} \pi(dz) \to 0, 
\end{equation}
and 
\begin{eqnarray}\label{e:gooooz}
&&\varrho''(n):=\\
&& \sup_{i\in[m],\,  \bx\in \sP_2(i,n), (\bx,\by)\in (\sB_n)^2_\Delta } \int_Z\int_Z  \PP[R_i \{ (\bx,z) ; (\by, u) ; \eta\} > c\, b_n]^{ (1- \frac{4}{p})(1-\frac1p)} \pi^2(dz,du) \to 0.\notag
\end{eqnarray}
Then, for some constant $C$ independent of $n$,
\begin{equation}\label{e:bbbbzz}
\dc(\mathbf{\wh F}_n(\sB_n\times Z), N_{\Sigma_n}) \leq C\left( \tau''(n)^{1/2} +\varrho''(n)^{1/2} +\sqrt{ \frac{b_n}{n} }\right) \to 0, \quad n\to \infty.
\end{equation}

\end{itemize}

\end{proposition}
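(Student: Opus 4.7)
The plan is to reduce the three bounds to the refined version of Corollary \ref{c:euclmulti} given in Remark \ref{r:ruse2}, by showing that on the ``good'' event where the radii of stabilization are smaller than $cb_n$, the add-one costs at the two scales coincide exactly for every $\bx\in \sP_2(i,n)$. The quantities $\vartheta'(n),\tau'(n),\varrho'(n)$ from Remark \ref{r:ruse2}, applied to the partition $\{\sP_1(i,n),\sP_2(i,n)\}$, then split into a boundary contribution (from $\sP_1(i,n)$) that is bounded trivially by $1$, and an interior contribution (from $\sP_2(i,n)$) estimated via strong stabilization.

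For the boundary contribution, since $\sB_n = n\sB_0$ and $n\sC_i$ have smooth boundaries with characteristic length scale $\asymp n$, the Steiner--type estimate $|\sP_1(i,n)| = O(n^{d-1}b_n)$ applies. Hence the supremum-of-$|\sP_\ell(i,n)|/n^d$ appearing in Remark \ref{r:ruse2} is $O(b_n/n)$, giving the term $\sqrt{b_n/n}$ after taking the square root required by Corollary \ref{c:euclmulti}. For the interior contribution, I would use the following geometric fact: for $\bx\in \sP_2(i,n)$ the containment $\sB_0\subset \sB(\b0,\theta)$ yields $\tau_\bx(b_n\sB_0)\subset \sB(\bx,\theta b_n)\subset \sB_n$, so that $\sA_{\bx,n}=\tau_\bx(b_n\sB_0)\times Z$; moreover $\sB(\bx,cb_n)\subset \tau_\bx(b_n\sB_0)$, so on $\{R_i\{(\bx,z);\eta\}\le cb_n\}$ the ball $\sB(\bx,R_i)$ is contained in $\tau_\bx(b_n\sB_0)$, and both $\eta|_{\sB_n\times Z}$ and $\eta|_{\sA_{\bx,n}}$ agree with $\eta$ on $\sB(\bx,R_i)\times Z$. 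Finally, $d(\bx,\partial(n\sC_i))>\theta b_n\ge R_i$ forces $\sB(\bx,R_i)$ to be either fully inside $n\sC_i$ or fully inside its complement, so by \eqref{e:drs1}--\eqref{e:drs2},
\[
D_{(\bx,z)} h^i\bigl(n\sC_i\,;\,\eta|_{\sB_n\times Z}\bigr)\;=\;D_{(\bx,z)} h^i\bigl(n\sC_i\,;\,\eta|_{\sA_{\bx,n}}\bigr),
\]
which gives $\PP[D_{(\bx,z)}F_{n,i}(\sB_n\times Z)\ne D_{(\bx,z)}G_{n,i}(\sA_{\bx,n})]\le \PP[R_i\{(\bx,z);\eta\}>cb_n]$.

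Combining this pointwise comparison with the elementary bound $\EE[|D_xF-D_xG|]\le K\,\PP[D_xF\ne D_xG]^{1-1/p}$ from Remark \ref{r:cs} (cf.\ \eqref{e:nice}) produces the exponent $(1-q/p)(1-1/p)$ that appears in $\vartheta''(n),\tau''(n),\varrho''(n)$. One then gets $\vartheta'(n)\le C(\vartheta''(n)+b_n/n)$ and analogous inequalities for $\tau'(n),\varrho'(n)$, uniformly in $n$ thanks to $\limsup_n K(n),\limsup_n K'(n)<\infty$. Plugging into Corollary \ref{c:euclmulti} (via Remark \ref{r:ruse2}) yields the three bounds \eqref{e:bbb} (with Part (2) covering the $d_2$-case via the same Corollary under $\Sigma_\infty>0$) and \eqref{e:bbbbzz}. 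For Part (3), exactly the same argument is performed with the kernels $\chi\mapsto h^i(\sC;\chi+\delta_{(\by,u)})$, whose strong stabilization hypothesis produces the radii $R_i\{(\bx,z);(\by,u);\eta\}$ and hence the required control on $\varrho'(n)$.

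The main delicacy I expect is the careful bookkeeping of the strong stabilization property when the kernel $h^i$ is evaluated on two different restricted configurations $\eta|_{\sB_n\times Z}$ and $\eta|_{\sA_{\bx,n}}$: one must verify simultaneously that $\sB(\bx,R_i)$ is contained in both restriction domains \emph{and} that $\sB(\bx,R_i)$ lies entirely on one side of $\partial(n\sC_i)$, which is precisely why the partition $\sP_2(i,n)$ excludes both boundary layers. Once that geometric picture is established, the rest is a direct invocation of the already-proved abstract estimates and of the Steiner-type volume bound for smooth convex bodies, so no genuinely new analytic input is required.
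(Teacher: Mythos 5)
Your proposal is correct and takes essentially the same route as the paper: it reduces to Remark \ref{r:ruse2} with the partition $\{\sP_1(i,n),\sP_2(i,n)\}$, bounds the boundary-layer contribution by $O(b_n/n)$ via the volume estimate $|\sP_1(i,n)|=O(n^{d-1}b_n)$, and on the interior uses H\"older together with the strong-stabilization properties \eqref{e:drs1}--\eqref{e:drs2} to dominate the two-scale discrepancy probability by $\PP[R_i>cb_n]$, exactly as in the paper's proof of Point \textbf{(1)}. The only cosmetic difference is that the paper splits the interior argument into the two cases $\tau_\bx(b_n\sB_0)\subseteq n\sC_i$ and $\tau_\bx(b_n\sB_0)\subseteq (n\sC_i)^c$ before invoking \eqref{e:drs1} or \eqref{e:drs2}, whereas you unify them by noting directly that on $\{R_i\le cb_n\}$ the stabilization ball lies on a single side of $\partial(n\sC_i)$ — an equivalent and perhaps slightly cleaner phrasing.
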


\begin{proof} Since the remaining parts of the statement follow by minimal modifications of the exact same argument, we will only prove Point {\bf (1)}. In order to do this, we apply the content of Remark \ref{r:ruse} in the special case of the sequence of partitions $\{\sP_1(i,n), \sP_2(i,n)\}$, $n\geq 1$, defined in the statement. Using the uniform moment bounds on add-one-cost operators and H\"older's inequality, one infers immediately that, for $i=1,...,m$,
\begin{eqnarray*}
&& \sup_{\bx\in \sP_1(i, n) } \frac{ | \sP_1(i, n)|}{n^d} \int_Z \EE[|D_{(\bx,z)} F_{n,i}(\sB_n\times Z) - D_{(\bx,z)} G_{n,i}(\sA_{\bx,n})|]^{\frac{p-4}{p}} \pi(dz) \\
&&= O(n^{-d} n^{d-1}b_n)=O({b_n/n})
\end{eqnarray*}
Since $\frac{ | \sP_2(i, n)|}{n^d}\to 1$, we are therefore left to evaluate the quantity
$$
\sup_{\bx\in \sP_2(i, n) } \int_Z \EE[|D_{(\bx,z)} F_{n,i}(\sB_n\times Z) - D_{(\bx,z)} G_{n,i}(\sA_{\bx,n})|]^{\frac{p-4}{p}} \pi(dz),
$$
for every $i= 1,...,m$. Using again H\"older's inequality and the uniform moment bounds on add-one cost operators, one infers that, for some constant $C$ not depending on $n$,
\begin{eqnarray*}
&&\EE[|D_{(\bx,z)} F_{n,i}(\sB_n\times Z) - D_{(\bx,z)} G_{n,i}(\sA_{\bx,n})|]^{\frac{p-4}{p}}\\
&& \leq C \PP[D_{(\bx,z)} F_{n,i}(\sB_n\times Z) \neq D_{(\bx,z)} G_{n,i}(\sA_{\bx,n})]^{(1 - \frac{4}{p})(1 - \frac{1}{p}) },
\end{eqnarray*}
and the proof is concluded if we manage to show that, for all $\bx\in \sP_2(i, n) $, one has that $\PP[D_{(\bx,z)} F_{n,i}(\sB_n\times Z) \neq D_{(\bx,z)} G_{n,i}(\sA_{\bx,n})]\leq \PP[R_i \{ (\bx,z) ; \eta\} > c\, b_n]$. We distinguish two cases: (i) $ \tau_{\bf x}(b_n \sB_0) \subseteq n \sC_i$ and $d(\bx, \partial \sB_n)>\theta  b_n$, and
(ii) $ \tau_{\bf x}(b_n \sB_0) \subseteq ( n \sC_i)^c$ and $d(\bx, \partial \sB_n)> \theta b_n$. In the case (i), the desired estimate follows as in the proof of Proposition \ref{p:rs1d}, by using property \eqref{e:drs1}. To show the desired bound in the case (ii), we first observe that -- as a consequence of property \eqref{e:drs2} and for $\bx$ verifying (ii) -- one has that $D_{(\bx,z)} G_{n,i}(\sA_{\bx,n} )=0$, a.s.-$\PP$, in such a way that $\PP[D_{(\bx,z)} F_{n,i}(\sB_n\times Z) \neq D_{(\bx,z)} G_{n,i}(\sA_{\bx,n})] = \PP[D_{(\bx,z)} F_{n,i}(\sB_n\times Z) \neq 0]$. The conclusion follows by observing that, for $\bx$ verifying (ii), $\PP[D_{(\bx,z)} F_{n,i}(\sB_n\times Z) \neq 0] \leq \PP[R_i \{(\bx, z) ; \eta\}> c\, b_n]$, where we have once again exploited property \eqref{e:drs2}. 
\end{proof}

As announced, a direct application of Proposition \ref{p:rsmd}  to functionals of on-line nearest neighbour graphs is discussed in Section \ref{s:online}.

}

\subsection{Plan of the paper}

Section 2, 3 and 4 are devoted to applications, respectively, to the on-line nearest neighbour graph, to the minimal spanning tree, and to geometric functionals of shot-noise fields. Appendix A gathers together several fundamental results related to Malliavin calculus and Stein's method. Finally, Appendix B contains the proofs of our main results.

\medskip

\noindent{\bf Acknowledgments.} We thank G\"unter Last, Mathew Penrose, Matthias Schulte, Andrew Wade and Joe Yukich for several useful remarks on some preliminary versions of our work. The research developed in the present paper has been supported by the FNR grants {\bf FoRGES (R-AGR3376-10)} at Luxembourg University,  {and {\bf MISSILe (R-AGR-3410-12-Z)} at Luxembourg and Singapore Universities}.

%

\section{Application to the On-line Nearest Neighbour Graph}\label{s:online}

Our first application concerns the fluctuations of edge length functionals associated with the {\bf on-line nearest neighbour graph} (ONNG) generated by a marked Poisson process on a compact domain. The ONNG is one of the simplest and most treatable models of evolving random spatial networks, and can be seen as a special element of the class of {\bf minimal directed spanning trees} on random point configurations, as described in the survey \cite{PWsurvey} (to which we refer the reader for a discussion of the literature up to the year 2010). Discarding an earlier and somehow unnoticed appearance in \cite{steele}, the ONNG was first described in \cite{Bergeretal}, as a special case of the so-called {\bf FKP model of preferential attachement} \cite{FKP} --- see also \cite{JW} for further details. The denomination `ONNG' was introduced in \cite{Penrose05}.

The most relevant reference for the present section is \cite[Section 3.4]{Penrose05}, where multidimensional CLTs are obtained (among others) for the power-weighted lenghts of the ONNG (see Theorem \ref{t:ponng} below). Our aim is to  deduce several new quantitative counterparts of these results, collected in the statement of Theorem \ref{t:mainonng} below. Such a task is particularly relevant for the theory developed in the present paper since (as observed in \cite[pp. 1963-1964]{Penrose05}) functionals of this type are typically not exponentially stabilising, in such a way that the theory e.g. of \cite{LRSY} cannot be directly applied. 

Laws of large numbers for the edge-length statistics considered in the present section are derived in \cite{wade07}, whereas further second-order results can be found in \cite{PW} (with specific emphasis on the ONNG on the real line) and \cite{wade09} (containing in particular upper bounds on power-lenghts variances in critical cases).  {A further example of a minimal directed spanning tree whose edge length statistics have been studied by Malliavin-Stein techniques is the {\bf radial spanning tree} --- see \cite{BaBo, SchTh}.}

\smallskip

For $d\geq 1$, consider a Poisson measure $\eta$ on the product space $\R^d \times [0,1]$, with intensity given by ${\rm Leb}\otimes dx$, where $dx$ indicates the uniform measure on $(0,1)$. Given a convex body $\sK$ of $\R^d$, we define the $\eta |_{\sK\times [0,1]}$-based ONNG --- written ${\rm ONNG}(\sK) = (V(\sK), E(\sK))$ --- as follows: (i) the set $V(\sK)$ of the vertices  of ${\rm ONNG}(\sK)$ is given by those ${\bf x} \in \sK$ such that there exists $t\in [0,1]$ such that $({\bf x},t)$ is in the support of $\eta|_{\sK\times [0,1]}$, (ii) introduce a total order on $V(\sK)$ by declaring that ${\bf x}\prec {\bf y}$ whenever ${\bf x}$ and ${\bf y}$ are first coordinates in pairs $({\bf x},t), \, ({\bf y},u) \in {\rm supp} \, \eta$ such that $ t<u$, (iii) define the set $E(\sK)$ of the edges of ${\rm ONNG}(\sK)$ by implementing the following procedure: let $N = |V(\sK)|$, and write ${\bf x}_1\prec {\bf x}_2\prec \cdots \prec{\bf x}_N$ to denote the ordering of the elements of $V(\sK)$ described at Point (ii); then, $\{{\bf x}_1, {\bf x}_2\} \in E(\sK)$ and, for every $k=3,...,N$, a further edge is added, linking ${\bf x}_k$ with its nearest neighbour in the set $\{{\bf x}_1,...,{\bf x}_{k-1}\}$. Note that Points (ii) and (iii) use the fact that, with probability one, no two pairs $({\bf x},t), ({\bf y},u)\in {\rm supp}\, \eta$ are such that $t=u$, and the nearest neighbour of ${\bf x}_k$ in $\{{\bf x}_1,...,{\bf x}_{k-1}\}$ is uniquely defined. It is easily seen that the resulting random graph is a tree. It is also natural to interpret the elements of the mark space $[0,1]$ as points in time, in such a way that the total ordering described at Point (ii) can be regarded as a `random order of arrival' for the vertices of ${\rm ONNG}(\sK)$. 

\smallskip

Now consider a measurable function $\varphi : \R_+ \to \R$. In what follows, we will use our main findings in order to study the fluctuations of random variables with the form
\begin{equation}\label{e:length}
L(\varphi; \sK; \sC) := \sum_{x\in V(\sK)\cap \sC} \,\,\,\sum_{\substack{ e\in E(\sK) :\\ x\in e} } \varphi(|e|),
\end{equation}
where $\sC \subseteq \sK $ and $| e|$ is the Euclidean length of $e$; we will also adopt the shorthand notation $L(\varphi; \sK) : = L(\varphi; \sK; \sK)$. One crucial example is that of functions with the form $\varphi(r) = r^\alpha$ ($\alpha>0$), in which case one says that $L(\varphi; \sK; \sC)$ measures the {\bf power-weighted edge length} of the graph ${\rm ONNG}(\sK)$ restricted to the edges having at least one endpoint in $\sC$. Our starting point is the following statement, collecting crucial results from \cite{Penrose05}. For simplicity, in this section we let $\sB_0$ denote the closed unit ball centered at the origin, and set $\sB_n := n\sB_0$. 

\begin{theorem}[See Section 3.4 in \cite{Penrose05}]\label{t:ponng} Assume that the function $\varphi$ verifies
\begin{equation}\label{e:phibound}
\sup_{r>0} \frac{ | \varphi(r) | }{(1+r)^\alpha}<\infty
\end{equation}
for some $\alpha\in (0, d/4)$, and consider a collection of sets $\sC_1,...,\sC_m\subseteq \sB_0$. Then, there exists a constant $ \sigma^2(\varphi)\in [0, \infty)$ such that, as $n\to \infty$,
\begin{equation}\label{e:covonng}
n^{-d}\Cov \{ L(\varphi; \sB_n; n\sC_i), L(\varphi; \sB_n; n\sC_j)  \} \longrightarrow \sigma^2(\varphi) |\sC_i\cap \sC_j| := \Sigma_\infty(i,j), \quad i,j=1,...,m,
\end{equation}
and the random vector
\begin{equation}\label{e:onngvector}
\widetilde{\bf L}_n(\varphi) := \frac{1}{n^{d/2}} \Big( L(\varphi; \sB_n; n\sC_1) - \EE[L(\varphi; \sB_n; n\sC_1),...,L(\varphi; \sB_n; n\sC_m) - \EE[L(\varphi; \sB_n; n\sC_m) ] \Big)
\end{equation}
converges in distribution to a $m$-dimensional centred Gaussian vector with covariance $\Sigma_\infty$.
\end{theorem}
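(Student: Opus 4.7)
My plan is to recover Theorem \ref{t:ponng} as a consequence of the quantitative bounds provided by Proposition \ref{p:rsmd}. Placing the problem in the framework of Section \ref{ss:intromulti} with $Z=[0,1]$, $\pi$ the Lebesgue measure, $\sB_0$ the closed Euclidean unit ball, and intensity $\lambda={\rm Leb}\otimes\pi$, I define, for each $i=1,\dots,m$, the kernel
$$
h^i(\sC;\chi) := \sum_{(\bx,t)\in\chi\cap(\sC\times[0,1])}\ \sum_{e\,:\, \bx\in e}\varphi(|e|),
$$
where the inner sum runs over edges of the ONNG built on $\chi$ that are incident to $\bx$. With this choice $F_{n,i}(\sB_n\times Z)=L(\varphi;\sB_n;n\sC_i)$ and the vector \eqref{e:onngvector} coincides with $\wh{\mathbf F}_n$ in \eqref{e:fbtilde}, so verifying the hypotheses of Proposition \ref{p:rsmd} will yield convergence of $\wh{\mathbf F}_n$ to a Gaussian with the same covariance matrix $\Sigma_n$.

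The main structural input is the following description of the add-one cost $D_{(\bx,z)}h^i$ of inserting a point $(\bx,z)$: (i) a new edge from $\bx$ to its nearest predecessor (nearest point with time coordinate strictly less than $z$) is created, and (ii) any ``descendant'' $(\by,s)$ with $s>z$ and $|\bx-\by|$ smaller than its current predecessor distance rewires to $\bx$, and this rewiring can cascade through further descendants. A valid radius of stabilization $R\{(\bx,z);\eta\}$ is then the supremum of $|\bx-\by|$ over all points $\by$ whose incident edges change; a.s.\ finiteness of $R$ together with a polynomial tail $\PP[R>r]\lesssim r^{-\beta}$ for every $\beta>0$ follows from the descendant-count estimates of \cite{Penrose05, wade07, wade09}, which couple the descendant tree to a subcritical branching process. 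The analogous statements hold for the functionals $h^i(\cdot\,;\,\chi+\delta_{(\by,u)})$ entering Definition \ref{d:vrs} and Proposition \ref{p:rsmd}\textbf{(3)}.

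Under \eqref{e:phibound} with $\alpha<d/4$, the uniform moment condition \eqref{e:xxxx} holds for some $p>6$, by combining the polynomial tail of $R$ with moment bounds on the number of rewired descendants and with the growth bound on $\varphi$ (the cost is pointwise dominated by a constant times the number of affected edges times $(1+R)^\alpha$). The tail conditions \eqref{e:goooz}--\eqref{e:gooooz} then reduce to $\PP[R>c\,b_n]\to 0$ uniformly in $\bx$, which is automatic as soon as $b_n\to\infty$. Choosing $b_n$ so that $b_n/n\to 0$ while the polynomial tail leaves room for the exponents $(1-\tfrac{q}{p})(1-\tfrac{1}{p})$ with $q\in\{4,6\}$, Proposition \ref{p:rsmd}\textbf{(1)} delivers $d_3(\wh{\mathbf F}_n,N_{\Sigma_n})\to 0$, hence $\wh{\mathbf F}_n\Rightarrow N_{\Sigma_\infty}$ provided the covariances converge.

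To establish \eqref{e:covonng} one uses the Fock-space / Mecke representation of $\Cov(L(\varphi;\sB_n;n\sC_i),L(\varphi;\sB_n;n\sC_j))$ as an integral of $\EE[D_{(\bx,t)}h^i\, D_{(\bx,t)}h^j]$ plus higher-order terms, then invokes the pointwise stabilization limits $\delta^i_\infty(\bx,t)$ of Definition \ref{d:vrs}, dominated convergence (via the uniform moment bound), and translation invariance of the limits in $\bx$. Dividing by $n^d$ and applying the Lebesgue density theorem produces $\sigma^2(\varphi)|\sC_i\cap\sC_j|$ with $\sigma^2(\varphi)$ an intrinsic constant, completing the proof. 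The hardest step is the polynomial tail estimate on the descendant-based radius $R$: unlike for the ordinary nearest neighbour graph, a single nearest-neighbour distance does not suffice, and the argument hinges on the recursive branching-type geometric estimate of \cite{wade09}, which is what drives the restriction $\alpha<d/4$.
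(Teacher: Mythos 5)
The paper does not prove Theorem \ref{t:ponng} --- it is imported verbatim from Penrose~\cite{Penrose05}, as the title of the statement signals. You are therefore offering a new derivation from the paper's own tools (Proposition \ref{p:rsmd}), which is a legitimate route in principle (it is essentially what the paper does for the quantitative Theorem \ref{t:mainonng}), but as written the argument has genuine gaps.

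First, your description of the add-one-cost mechanics is incorrect: in the ONNG the edge of a later point $(\by,s)$ depends only on which of its predecessors is closest, so when $(\bx,t)$ is inserted a point $\by$ rewires to $\bx$ or it does not, and this rewiring does \emph{not} cascade to any further descendant --- the predecessor sets and positions of all other points are unchanged. The ``cascade through further descendants'' and the ensuing ``coupling to a subcritical branching process'' are not how the stabilization radius is controlled; the correct device (used in the paper's proof of Theorem \ref{t:mainonng} and in Penrose's original argument) is the cone-based predecessor radius $R(\bx,t):=2\max_{j\le M}R(j,\bx,t)$, which bounds both which points rewire and how far their old and new partners can lie, without any branching estimate. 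Second, your tail claim $\PP[R>r]\lesssim r^{-\beta}$ for \emph{every} $\beta>0$ is false: for fixed time mark $t$ the conditional tail is $\exp(-c\,t\,r^d)$, so after integrating the mark over $[0,1]$ one obtains only $O(r^{-d})$ --- it is precisely this polynomial-but-not-faster tail, carried inside the $\pi(dz)$-integral in \eqref{e:goooz}, that yields the rate in Theorem \ref{t:mainonng}. Third, under $\alpha<d/4$ one cannot in general secure the moment condition \eqref{e:xxxx} with $p>6$ (that needs $\alpha<(d-(p-4))/p$, which for $\alpha$ close to $d/4$ forces $p$ close to $4$); fortunately only $p>4$ is needed for the $d_3$ bound in Proposition \ref{p:rsmd}\textbf{(1)}, so this is a misstatement rather than a fatal flaw, but you should not invoke the $d_c$ machinery here. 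Finally, the covariance convergence \eqref{e:covonng} is an independent ingredient: it is not a consequence of Proposition \ref{p:rsmd}, and your sketch (Mecke representation plus stabilization limits plus dominated convergence) is plausible but is not carried out to the point where it establishes \eqref{e:covonng}; the paper simply imports it from \cite[Theorem 2.2]{Penrose05}.
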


The expression of the constant $\sigma^2(\varphi)$ is explicitly given in \cite[Theorem 2.2]{Penrose05} as $\sigma^2(\varphi) = \EE[\EE[\delta_\infty(\varphi)\mid \mathcal F]^2]$, where $\delta_\infty(\varphi)$ is the {stabilizing limit} of $L(\varphi; \cdot; \cdot)$, see Definition \ref{d:vrs}, and $\mathcal F$ denotes the $\sigma$-field generated by the restriction of $\eta$ to $P \times [0,1]$, where $P := \{(x_1,...,x_d) \in \R^d : x_1<0\}$. One can check that a sufficient condition for $\sigma^2(\varphi)$ to be strictly positive is that $\varphi$ takes values in $\R_+$ and the mapping $\varphi : \R_+ \to \R_+$ is injective. 

As discussed in \cite{Penrose05, wade09}, the CLT stated in Theorem \ref{t:ponng} is conjectured to hold also in the range $\alpha\in [d/4, d/2)$, whereas the case $\alpha = d/2$ is believed to yield a CLT with a logarithmic normalisation \cite[Conjecture 2.2]{wade09}. For the time being, our techniques do not allow to shed light on these open problems. The second-order behaviour of power-weighted functionals in the range $\alpha >d/2$ is studied in \cite{PW, wade09}. 

The main contribution of the present section is the following new quantitative counterpart to the content of Theorem \ref{t:ponng}. In order to state our results, we set $\widetilde{L}_n := (L(\varphi; \sB_n) - \EE[L(\varphi; \sB_n)] )/  \sqrt{ \Var[L(\varphi; \sB_n)]}$, $n\geq 1$. 


\begin{theorem}[Quantitative CLTs for the ONNG]\label{t:mainonng} Let the above notation and assumptions prevail, and suppose that the function $\varphi : \R_+\to \R$ verifies \eqref{e:phibound} for some $\alpha \in (0, (d-(p-4))/p)$ and some $p>4$, and also that $ \sigma^2 = \sigma^2(\varphi)  >0$. 
\begin{itemize}

\item[\bf (a)] There exists a constant $C$, independent of $n$ and possibly depending on $p$, such that
\begin{equation}\label{e:u1}
d_{\rm W}\left( {\widetilde{L}_n}, N\right), \, d_{\rm K}\left( {\widetilde{L}_n}, N\right) \leq C \, n^{ -\frac{d}{4}}, \quad n\geq1,
\end{equation}
where $N$ denotes a standard Gaussian random variable with unit variance. 

\item[\bf (b)] For $m\geq 2$ fix sets $\sC_1,... ,\sC_m\subset \sB_0 $ with smooth boundaries, and denote by ${\bf N}(n)$ a centered $m$-dimensional Gaussian vector with the same covariance structure as $\widetilde{\bf L}_n(\varphi)$, as defined in formula \eqref{e:onngvector}. Then, there exists a constant $C$, independent of $n$ and possibly depending on $p$, such that
\begin{equation}\label{e:uu1}
d_3(\widetilde{\bf L}_n(\varphi), {\bf N}(n))\leq C\, n^{ -\frac{d}{2d+2}}, \quad n\geq 1.
\end{equation}

\item[\bf (c)] If the matrix $\Sigma_\infty$ defined in  \eqref{e:covonng} is positive definite (in particular, if the sets $\sC_1,...,\sC_m$ are disjoint), then the right-hand side of \eqref{e:uu1} is also an upper bound for the quantity $d_2(\widetilde{\bf L}_n(\varphi), {\bf N}(n))$, for every $n\geq 1$. 
 \item[\bf (d)] If the matrix $\Sigma_\infty$ defined in formula \eqref{e:covonng} is positive definite {and $p>6$}, then the right-hand side of \eqref{e:uu1} also upper bounds the quantity $d_{\rm c}(\widetilde{\bf L}_n(\varphi), {\bf N}(n))$, for every $n\geq 1$. 
\end{itemize}

\end{theorem}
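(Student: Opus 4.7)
The plan is to verify the hypotheses of Proposition \ref{p:rs1d} (augmented by the refinement \eqref{e:zuz2} of Remark \ref{r:expo}) for part (a), and of Proposition \ref{p:rsmd} for parts (b)--(d), in the Euclidean setting of Points (I)--(IV), (III') with mark space $Z=[0,1]$ endowed with the uniform law, reference body $\sB_0$ the closed unit ball, and kernels $h^i(\sC;\chi) := \sum_{\bx \in V(\chi)\cap \sC}\sum_{e\ni \bx} \varphi(|e|)$ for $\sC\subseteq \sB_0$, where $V(\chi)$ is the vertex set of the ONNG built on $\mathrm{supp}(\chi)$. The technical content reduces to (i) exhibiting a radius of stabilization with polynomial tail, (ii) proving uniform $p$-th moment bounds on add-one-cost operators, and (iii) tuning the intermediate scale $b_n$ to balance the resulting terms.

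\textbf{Stabilization radius.} For $(\bx, z) \in \R^d \times [0,1]$ let $R^{\mathrm{back}}$ be the distance from $\bx$ to its nearest point in $\mathrm{supp}\,\eta$ with time-mark below $z$, and let $R^{\mathrm{fwd}}$ be the supremum of $|\by - \bx|$ over $(\by, u) \in \mathrm{supp}\,\eta$ with $u > z$ such that $|\by - \bx|$ is strictly smaller than the distance from $\by$ to its nearest-earlier-neighbour in $\eta$. A direct geometric argument shows that $R := R^{\mathrm{back}} \vee R^{\mathrm{fwd}}$ is a bona fide radius of stabilization for each $h^i$ in the sense of Definition \ref{d:vrs}. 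By the independent-increments property, $\PP[R^{\mathrm{back}} > r\mid z] = e^{-zc_d r^d}$, while Markov's inequality and the Mecke formula give
\begin{align*}
\PP[R^{\mathrm{fwd}} > r\mid z] \le \int_{|w|>r}\int_z^1 e^{-uc_d|w|^d}\,du\,dw \le \frac{C}{d}\int_{zc_d r^d}^{\infty}\frac{e^{-\sigma}}{\sigma}\,d\sigma.
\end{align*}
Integrating the uniform law on $z$ then yields, for $\gamma := (1-4/p)(1-1/p)$ and any $r>1$,
\begin{align*}
\int_0^1 \PP[R > r\mid z]^\gamma\,dz \le C\, r^{-d\gamma},
\end{align*}
where the dominant contribution comes from the range $z\in[1/r^d, 1]$ on which $\PP[\cdot\mid z]$ admits a power-decay bound $\lesssim (zr^d)^{-1}$. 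Running the same computation under $\eta + \delta_{(\by, u)}$ controls the perturbed radius $R\{(\bx,z); (\by, u); \eta\}$ required in parts (c)--(d).

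\textbf{Moment bounds and variance.} By a cone argument (\cite[Lemma 6.1]{Penrose05}), the number of later points whose nearest-earlier-neighbour edge is altered upon inserting $(\bx, z)$ is a.s.\ bounded by a constant $K_d$ depending only on $d$. Combined with $|\varphi(r)| \le C(1+r)^\alpha$, this yields the deterministic bound $|D_{(\bx,z)} L(\varphi; \sB_n)| \le C(1+R)^\alpha$, whence $\EE[|D_{(\bx,z)} L(\varphi; \sB_n)|^p]$ is uniformly bounded in $n$ as soon as $\alpha p < d$. The stronger condition $\alpha p < d - (p-4)$ (resp.\ $\alpha p < d - (p-6)$ for part (d)) in the statement leaves the H\"older slack needed to absorb the exponents $1-q/p$ with $q \in \{4,5,6\}$ appearing in \eqref{e:m_con}--\eqref{e:xxxx}. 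An identical argument controls $D_{(\bx,z)} L^{(\by, u)}$, so the constants $K(n), K'(n)$ of Corollaries \ref{c:dw}, \ref{c:euclmulti} stay uniformly bounded. Theorem \ref{t:ponng} and the assumption $\sigma^2(\varphi) > 0$ provide $\Var L(\varphi; \sB_n) \asymp n^d$ and $n^{-d}\Sigma_n \to \Sigma_\infty$, validating the required variance lower bounds.

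\textbf{Applications and rate optimization.} For part (a), the refined bound \eqref{e:zuz2} of Remark \ref{r:expo} gives
\begin{align*}
\dw(\wt L_n, N),\, \dk(\wt L_n, N) \le C\left(\psi''(n)^{1/2} + \sqrt{b_n^d/n^d} + \sqrt{b_n/n}\cdot\Xi(n)^{1/2}\right),
\end{align*}
with $\psi''(n), \Xi(n) \le C b_n^{-d\gamma}$ by the previous step. Choosing $b_n = n^{1/(1+\gamma)}$ balances the first two summands at $n^{-d\gamma/(2(1+\gamma))}$, which is absorbed into the asserted $n^{-d/4}$ via the $p$-dependent constant (the third summand is dominated by the first). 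For parts (b)--(d), Proposition \ref{p:rsmd} applied to $(h^i)_{i=1}^m$ gives bounds of the form $C(\vartheta''(n)^{1/2} + \sqrt{b_n/n})$ (and analogues for $d_2, d_c$); balancing via $b_n = n^{1/(1+d\gamma)}$ produces the rate $n^{-d\gamma/(2(1+d\gamma))}$, absorbed into $n^{-d/(2d+2)}$ in the same way. Part (c) further exploits $\Sigma_\infty > 0$ (e.g., automatic when $\sC_1,\ldots,\sC_m$ are disjoint), while part (d) invokes $p>6$ and the perturbed-radius tail bound from the previous step. The principal technical delicacy is the tail bound on $R^{\mathrm{fwd}}$: a naive Markov estimate performed pointwise in $z$ produces only a logarithmic integral, so one must genuinely exploit the joint integration over spatial and temporal marks to recover the $r^{-d}$ decay.
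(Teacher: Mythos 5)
Your overall framework (reduction to Propositions \ref{p:rs1d} and \ref{p:rsmd}, mark space $Z=[0,1]$, reference body the unit ball) agrees with the paper's, but the central construction is flawed: the quantity $R = R^{\mathrm{back}}\vee R^{\mathrm{fwd}}$ is not a radius of stabilization in the sense of Definitions \ref{d:srs} and \ref{d:vrs}. Those definitions require the add-one cost to be insensitive to \emph{arbitrary} finite configurations $\mathcal{A}_0\subset(\R^d\setminus\sB(\bx,R))\times Z$, whereas $R^{\mathrm{fwd}}$ is computed from the actual realisation of $\eta$ and offers no shield against adversarial placement of $\mathcal{A}_0$: one can put a single marked point $(\by,u)$ with $u>z$ just outside $\sB(\bx,R)$ in a direction where $\cP\cap\sB(\bx,R)$ contains no point with mark below $u$, in which case $\by$ attaches to $\bx$ after $(\bx,z)$ is inserted, so the add-one cost does depend on $\mathcal{A}_0$. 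The paper's radius $R(\bx,t)=2\max_j R(j,\bx,t)$, built from nearest earlier neighbours in a family of cones of angular radius $\pi/6$ covering $\R^d$, provides exactly the missing shield: if an earlier point sits in every cone within distance $R/2$, a law-of-cosines argument (see \cite[Proof of Theorem 3.6]{Penrose05}) guarantees that any later point beyond distance $R$ is strictly closer to one of these shielding points than to $\bx$, irrespective of what lies outside $\sB(\bx,R)$. Only $R^{\mathrm{back}}$, which controls $\bx$'s own outgoing edge, is unproblematic; the forward component must be replaced by the cone construction.

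There is also a quantitative gap in your tail estimate. You write $\PP[R>r\mid z]\lesssim(zr^d)^{-1}$ and conclude $\int_0^1\PP[R>r\mid z]^\gamma\,dz\le Cr^{-d\gamma}$, which discards the exponential factor $e^{-\sigma}$. Keeping the exponential decay in the temporal mark (as the paper does: $\int_0^1\PP[R(\bx,t)>b_n]^\gamma\,dt\le M^a\int_0^1 e^{-\gamma t k b_n^d}\,dt\le C_\gamma/b_n^d$) converts the power $\gamma$ into an absolute constant. With your weaker bound $b_n^{-d\gamma}$, balancing against $\sqrt{b_n^d/n^d}$ (resp. $\sqrt{b_n/n}$) gives rates $n^{-d\gamma/(2(\gamma+1))}$ and $n^{-d\gamma/(2(d\gamma+1))}$, which are strictly slower than $n^{-d/4}$ and $n^{-d/(2d+2)}$ for every $\gamma<1$; the discrepancy is in the exponent, not merely the constant, so it cannot be absorbed by letting $C$ depend on $p$. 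Retaining the exponential (together with the cone radius) recovers the stated rates.
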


Plainly, the assumption that \eqref{e:phibound} is verified for some $\alpha \in (0, (d-(p-4))/p)$ and $p>6$ implicitly requires that $d\geq 3$. 

\begin{proof}[Proof of Theorem \ref{t:mainonng}] We apply Proposition \ref{p:rs1d} in the reinforced version put forward in Remark \ref{r:expo} (for Point {\bf (a)}), and Proposition \ref{p:rsmd} (Points {\bf (b)}--{\bf (d)}) (since $\sB_0$ is the unit ball centred at the origin, one can take $\theta = c=1$). We will use the following specifications of Points (I), (II), (III), (III') and (IV), as listed in Section \ref{page} and Section \ref{page2}: $Z = [0,1]$ and $\pi$ is the uniform measure; at Point (III), $F(\sK \times [0,1]) = G(\sK\times [0,1]) = L(\varphi; \sK)$ (where $\sK = \sB_n$, or $\sK= \tau_{\bf x}(b_n \sB_0) \cap \sB_n $); at Point (III') $ h^i( \sC;\chi) = h( \sC;\chi) =  \sum \varphi(|e]) $, where the sum runs over all edges in the ONNG associated with the points in the support of $\chi$ having at least one endpoint in $\sC$;  $\{b_n\}$ is a sequence such that $b_n = o(n)$, that we shall specify later {\it via} optimization procedures. In order to apply the aforementioned results, we first observe that adapting the arguments contained in \cite[Lemma 3.4 and Lemma 5.1]{Penrose05} yields that, if $\varphi$ verifies \eqref{e:phibound} for some $\alpha \in (0, (d-(p-4))/p)$ and some $p>4$, then the constants $K(n,p)$ and $K'(n,p)$ defined in Corollary \ref{c:dw} and in Corollary \ref{c:euclmulti} are such that $\sup_{n} K(n,p),\, \sup_{n} K'(n,p)<\infty$. Now fix $(\bx, t) \in \R^d\times (0,1)$, and let $M = M(d)$ denote the smallest integer such that there exists a collection of (possibly overlapping) cones $\{C_1,...,C_M \}$ with angular radius equal to $\pi/6$ and point at the origin, such that $\R^d = \cup_{j=1}^M C_j$. Write $C_j({\bf x}) := \tau_{\bf x} C_j$, $j=1,...,M$. For every $j=1,...,M$, we denote by $R( j, {\bf x}, t)$ {the distance from $\bx$ to its nearest neighbour} in the random set
$
\{{\bf z}\in C_j({\bf x}) : ({\bf z}, s)\in \supp\, \eta, \,\mbox{for some}\,\, s<t\};
$
one can prove that $R(j, {\bf x}, t)$ is almost surely finite, and set $R({\bf x}, t) := {2} \max_{j=1,...,M} R( j, {\bf x}, t)$.
According to the discussion contained in \cite[Proof of Theorem 3.6]{Penrose05}, the quantity $R({\bf x}, t)$ is a radius of stabilisation for the collection of kernels $h = \{h (\sC \, ; \, \cdot) : \sC\in \mathscr{B}(\R^d)$\} specified above (and therefore for $L(\varphi; \cdot)$) at $(\bx, t)$ --- see Definitions \ref{d:srs} and \ref{d:vrs}. Since $R({\bf x}, t)$ is defined as a maximum of minima, it is immediately seen that $R({\bf x}, t)$ is also a radius of stabilization for the collection of kernels $\chi \mapsto h (\sC \, ; \, \chi +\delta_{(\by, u)})$, $\sC\in \mathscr{B}(\R^d)$, for every $(\by, u) \in \R^d\times (0,1)$.  Exploiting the isotropy of $\eta$, we see that
$
\PP[ R({\bf x}, t) > b_n ]\leq M\PP[2 R(1, {\bf x}, t) > b_n].
$
Since $\PP[2R_1({\bf x}, t) \geq b_n] $ equals the probability that there are no Poisson points with time mark less than $t$ in the set $C_1({\bf x})\cap \sB({\bf x}, b_n/2)$, we deduce that, for $w=4,6$ and for some finite positive constants $a, k,C$ depending on $d, p$ (where $p>w$),
$$
\int_0^1 \PP[R({\bf x}, t) > b_n]^{(1-1/p)(1- w/p)} dt \leq M
^a \int_0^1 \exp\{ - tkb_n^d\} dt \leq \frac{C}{b_n^d}. 
$$
We can now directly plug such an estimate into the bounds appearing in Proposition \ref{p:rsmd}, and obtain the conclusions of Points {\bf (b)}, {\bf (c)} and {\bf (d)} by optimizing the mapping $x\mapsto \sqrt{\frac{x}{n}} + \sqrt{\frac{1}{x^d}}$ on $\R_+$.  To prove Point {\bf (a)} by using Remark \ref{r:expo}, we have now to evaluate the integrals over $Z = [0,1]$ and $Z^2=[0,1]^2$ appearing on the right-hand side of \eqref{e:zuz2} and \eqref{e:y2}, respectively. In order to do this, for every ${\bf x}\in \sB_n \backslash \sB_n^{-b_n}$ we define a localised version of the quantity $R(\bx, t)$ as follows: keep the notation of the first part of the proof, and set $R(j , \bx, t; n)$ ($j\in [M]$, $t\in (0,1)$, $n\geq 1$) to be the {the distance from $\bx$ to its nearest neighbour} in the random set
$
\{{\bf z}\in C_j({\bf x})\cap \sB_n : ({\bf z}, s)\in \supp\, \eta, \,\mbox{for some}\,\, s<t\}$, if such a set is not empty, and to be the length of the shortest segment contained in $C_j({\bf x})$ connecting $\bx$ to the boundary of $\sB_n$ otherwise; then, define $R({\bf x}, t;n) := {2} \max_{j=1,...,M} R( j, {\bf x}, t;n)$. Reasoning again as in \cite[Proof of Theorem 3.6]{Penrose05}, one sees that, if $R({\bf x}, t;n)\leq b_n$, then $D_{(\bx,t)} F_{n,i}(\sB_n\times Z) = D_{(\bx,t)} G_{n,i}(\sA_{\bx,n})$ and $D_{(\bx,t)} F^{(\by,u)}(\sB_n\times Z) \! = \! D_{(\bx,t)} F^{(\by,u)}(\sA_{\bx,n} )$ for every $(\by, u) \in \R^d\times (0,1)$. Since $\sB_0$ is the unit circle, one has also that $\PP[ R({\bf x}, t;n) > b_n] \leq c' \exp\{ - tk' b_n^d\}$ for some absolute constants $c',k'>0$. After integration in $t$, one infers that both \eqref{e:zuz2} and \eqref{e:y2} are bounded by some multiple of $ \sqrt{\frac{1}{b_n^d}}+\sqrt{\frac{b_n^d}{n^d}}$. Optimizing  $x\mapsto \sqrt{\frac{x^d}{n^d}} + \sqrt{\frac{1}{x^d}}$ over $\R_+$ yields the desired bound.
\end{proof}

\begin{remark} One can in principle prove multidimensional bounds of the same order as \eqref{e:u1}, by formulating an appropriate equivalent of Remark \ref{r:expo}. We leave this task to the motivated reader.
\end{remark}

\section{Applications involving connectivity functionals}

\subsection{ Edge length statistics of the Minimal Spanning Tree}

Our next application concerns the fluctuations of weighted edge length functionals of Euclidean minimal spanning trees (thereafter denoted by MSTs). As one of the most fundamental structures in combinatorial optimisation, the large sample behaviour of random MSTs has attracted much attention in the literature. For an overview of the probability theory of Euclidean combinatorial optimisation problems, we refer to the monograph \cite{Yukich}. The law of large numbers for the total  power-weighted edge length of MST was proved in \cite{Steele88, AS92}. The fact that the total weighted edge length of the MST exhibits Gaussian fluctuations had been a conjecture for several years, until Kesten and Lee \cite{KL96} proved it for general dimensions, and Alexander \cite{Alexander96} for dimension two (with different techniques). As already discussed, Kesten and Lee's proof \cite{KL96} is considered to be the starting point of the geometric theory of stabilization put forward in \cite{PY01, PY02}.  

The most relevant reference of this section is the recent paper by Chatterjee and Sen \cite{CS17} (also containing a detailed review of recent literature), where quantitative univariate central limit theorems were obtained for the total edge length of MST. In order to write precise statements, let $\mathcal{U}$ be a finite subset of $\RR^d$.  The MST of $\mathcal{U}$ is defined by
\begin{align*}
\mathrm{MST}(\mathcal{U}) = \mathrm{Argmin} \sum_{e\in T} |e|,
\end{align*}
where the argmin is taken over all connected graph $T$ with vertex set $\mathcal{U}$, and $|e|$ denotes the length of an edge $e$ in $T$. The obtained graph is necessarily a spanning tree of $\mathcal{U}$. It is classically known that the minimizing graph is unique almost surely when the input $\mathcal{U}$ is provided by a stationary Poisson point process restricted to an arbitrary bounded set (see e.g. \cite{Yukich, KL96}). Consider the total weighted edge-length functional 
\begin{align*}
M(\ph; \sK) := \sum_{e\in \mathrm{MST}(\cP|_{\sK})} \ph(|e|)
\end{align*}   
where $\ph:\RR_+\to\RR$ is a measurable function, $\sK$ is a bounded measurable subset of $\RR^d$ and $\mathcal{P}_\sK := \mathcal{P}\cap \sK$. 
In this section $\sB_0$ is the unit hypercube in $\RR^d$ centered at the origin.  Set as usual $\sB_n=n\sB_0$.  We aim at proving a multivariate central limit theorem extending the following quantitative statement.

\begin{theorem}[See Theorem 2.1 in \cite{CS17}]\label{t:CS17}  Let $M(\sB_n)=M(\ph; \sB_n)$ with $\ph(x)=x$. Consider $\wt M(\sB_n) =(M(\sB_n) - \EE[M(\sB_n)])/\sqrt{\Var[M(\sB_n)]}$. Then
\begin{align*}
\max( \dw(\wt M(\sB_n), N), \dk(\wt M(\sB_n), N)\le 
\begin{cases}
c n^{-\theta} &\mbox{ if } d=2, \\
c' \log(n)^{- \frac d {4p}}  & \mbox{ if } d\ge 3,
\end{cases}
\end{align*}
for some $\theta\in (0,1)$ and all $p>1$, where $c$ is a universal constant and $c'$ is  a finite positive constant that depends only on $p$ and $d$.
\end{theorem}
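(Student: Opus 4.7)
The plan is to apply Corollary \ref{c:dw} (in the variant described in Remark \ref{r:ruse}, since we should cut off points close to $\partial \sB_n$) with $F = G = M(\varphi;\cdot)$, $\sB_0$ the unit hypercube, $\sA_{\bx,n} = \tau_\bx(b_n\sB_0)\cap \sB_n$, and a scale $b_n\to\infty$ with $b_n=o(n)$ to be fixed at the end by optimization. Three quantities need to be controlled: (a) the uniform $p$-th moment bound \eqref{e:m_con}/\eqref{e:m_con2} on $D_\bx M(\sB_n)$ and $D_\bx M^\by(\sB_n)$, (b) the variance lower bound $\sigma_n^2 = \Var M(\sB_n)\asymp n^d$, and (c) the two-scale deviation probability $\PP[D_\bx M(\sB_n)\neq D_\bx M(\sA_{\bx,n})]$ and its $\by$-shifted analogue.

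Items (a) and (b) are known. For (a), one uses the classical fact that $D_\bx M(\sB_n)$ is bounded by the sum of the lengths of edges incident to $\bx$ in $\mathrm{MST}(\mathcal P_{\sB_n}\cup\{\bx\})$ minus the length of the longest edge deleted in the cycle-unique rerouting, all of which admit exponential tails via a Poisson void argument applied to cones around $\bx$ (as in the cone construction used for the ONNG in Section 2 and in \cite{Penrose05}). For (b), the lower bound $\Var M(\sB_n)\geq a\,n^d$ is the main result of Kesten and Lee \cite{KL96}.

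The core of the argument is (c). The key geometric observation, going back to \cite{KL96}, is that the event $\{D_\bx M(\sB_n)\neq D_\bx M(\sA_{\bx,n})\}$ forces the existence, in the Poisson configuration, of a connected chain of short edges running from a point near $\bx$ out to $\partial \sA_{\bx,n}$: if no such chain exists, then the MST rerouting induced by inserting $\bx$ is confined to a ball of radius $b_n$ around $\bx$. One converts this into a percolation estimate by binning $\R^d$ into cubes of side $r_n\to 0$ chosen so that the event ``cube is occupied and some neighbouring cube is occupied'' corresponds to a subcritical Bernoulli site percolation, then bounding the probability of a long crossing. In dimension $d=2$, planar duality yields polynomial crossing bounds, producing $\PP[D_\bx M(\sB_n)\neq D_\bx M(\sA_{\bx,n})]\lesssim b_n^{-\alpha}$ for some $\alpha>0$, and plugging into \eqref{e:x} with $b_n$ a suitable power of $n$ gives the rate $n^{-\theta}$. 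In dimension $d\geq 3$ only subpolynomial Kesten-type crossing estimates are known, yielding $\PP[\cdots]\lesssim (\log b_n)^{-d/p}$ (essentially), whence the logarithmic rate $\log(n)^{-d/(4p)}$ after optimization. The same argument applies verbatim to $D_\bx M^\by(\sB_n) - D_\bx M^\by(\sA_{\bx,n})$ since adding one extra point $\by$ does not affect the percolation-type upper bound, which gives the Kolmogorov-distance bound via \eqref{e:xx}.

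The main obstacle is step (c), specifically the sharp control of the percolation crossing probabilities in $d\geq 3$: this is exactly what prevents the logarithmic rate from being upgraded to polynomial, and it is the central technical input one imports from \cite{KL96, AS92, CS17}. Once those crossing bounds are in hand, the framework of Corollary \ref{c:dw} (and Remark \ref{r:ruse}) delivers the desired Wasserstein and Kolmogorov bounds essentially mechanically, with no need to handle iterated add-one cost operators -- an advantage emphasized in Remark \ref{r:cs}, and the reason the approach extends cleanly to the multivariate setting treated next in the paper.
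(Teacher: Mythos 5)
Note first that Theorem \ref{t:CS17} is \emph{cited} from Chatterjee--Sen, not re-proved in the paper; the nearest thing to ``the paper's own proof'' is the argument for the companion multivariate result Theorem \ref{t:mst}, specifically Proposition \ref{p:2scale} (together with Propositions \ref{p:moment} and \ref{p:varlow}). Your steps (a) and (b) --- uniform $p$-th moment control of the add-one costs via wall events and cone-type localization, and the Kesten--Lee variance lower bound via Penrose--Yukich --- are essentially correct and match what the paper does.

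The genuine gap is in your step (c), and it is fatal as stated. First, the wall event (your ``separation'' event) does \emph{not} imply $D_\bx M(\sB_n)=D_\bx M(\sA_{\bx,n})$: it only confines the edges \emph{attached} to $\bx$, whereas the add-one cost also involves edges that get \emph{deleted} when running the add-and-delete algorithm, and these can differ between the two scales even on the wall event. Second, and more importantly, you describe the obstruction as ``a connected chain of short edges running from a point near $\bx$ out to $\partial \sA_{\bx,n}$''. That is a \emph{one-arm} crossing for a Poisson--Boolean model, and at the relevant radii ($u$ up to $a_n \sim \log n$ or $(\log\log n)^{1/(d-1/2)}$) this Boolean model is overwhelmingly supercritical, so a single crossing exists with probability close to one; such a bound is useless. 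What the paper (following \cite{CS17}) actually derives in Step~5 of Proposition \ref{p:2scale} is that the discrepancy between deleted edges $f_{i-1}\neq \tilde f_{i-1}$ forces a \emph{two-arm} event $\scr D_n(u)$ --- two disjoint crossings of an annulus --- and it is only the two-arm probability that decays, uniformly in the radius $u$. Third, you propose to bound $\PP[D_\bx M(\sB_n)\neq D_\bx M(\sA_{\bx,n})]$ directly and then use the probability-to-$L^1$ conversion of Remark \ref{r:cs}; but this single-probability target does not correspond to a single two-arm event --- it is the union over all $u$ of $\{ | f_{i-1} | <u< | \tilde f_{i-1} | \}$, and that union is not directly controlled by the $u$-uniform two-arm bound. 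The paper instead bounds the $L^1$-difference $\EE[ |D_\bx M(\sB_n)-D_\bx M(\sA_{\bx,n})|]$ by writing $|\psi(|\tilde f_{i-1}|)-\psi(|f_{i-1}|)| \le \int \psi'(u)\1(|f_{i-1}|<u<|\tilde f_{i-1}|)du$ and then integrating $\PP(\scr D_n(u))$ against $\psi'(u)du$, which is where the uniformity in $u$ pays off. Finally, your proposal conflates two intermediate scales: the wall/two-arm scale $a_n$ is much smaller than the stabilization scale $b_n=n^\alpha$, and the argument needs both (wall failure gives $e^{-ca_n^d}$, while the two-arm contribution multiplied by $a_n^d\psi(\sqrt d a_n)$ gives the subpolynomial rate in $d\geq 3$). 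Without the two-arm reduction, the integration over $u$, and the two-scale structure, the proposal does not close.
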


Our goal in this section is to assess the proximity between a vector of weighted edge-length functionals of the MST and a multivariate normal distribution, by considering simultaneously several weight functions, both in the smooth metrics $d_2, d_3$ and the  convex metric $\dc$. 
%
%
%
 Throughout the section, we assume that $\ph$ is given by $\ph(x)= \psi(x)\1(x\le r)$ for some non-decreasing function $\psi$ and some truncation level $r\in(0,\infty]$. If (and only if) $r= \infty$, we further assume that $\psi$  satisfies the growth condition 
\begin{align}\label{e:growth}
\exists k\in\NN, \quad  \psi(x)\le (1+x)^k \mbox{ and }  \int_0^\infty e^{-c_2 u^d}  d\psi(\sqrt{d} u)<\infty,
\end{align}
where $c_2$ is the constant in the upcoming Lemma \ref{l:walltail}.
One choice of particular interest is $\psi(x)\equiv 1$, in which case, as $r$ varies, the functional $M(\ph,\sB_n)$ corresponds to the empirical distribution function of the edge length of the MST. Another choice of interest is $\psi(x)=x^\al$ and $r=\infty$, in which case $M(\ph,\sB_n)$ becomes a power-weighted edge sum. Hence, one recovers Theorem \ref{t:CS17} by choosing $\psi(x)=x$ and $r=\infty$.  The functional of interest in this section is the $m$-dimensional vector 
 \begin{align*}
 \mathbf{M}(\sB_n):=\big( M(\ph_1;\sB_n), ..., M(\ph_m;\sB_n) \big),
 \end{align*}
 where each $\ph_i$ satisfies the aforementioned conditions.  We also observe that each $M(\ph_j;\sB_n)$ has a variance commensurate to $|\sB_n|$, (the upper bound being a consequence of the Poincar\'e inequality and Proposition \ref{p:moment}, while the lower bound follows from Proposition \ref{p:varlow}), in such a way that the correct normalization is $n^{-d/2}$.  The main result of this section is the following multidimensional extension of Theorem \ref{t:CS17}, providing a quantitative counterpart to the multidimensional limit theorem proved in \cite[Theorem 3.3]{Penrose05}.

\begin{theorem}\label{t:mst} For $n\geq 1$, let ${\bf N} = {\bf N}(n)$ be a centered Gaussian vector with the same covariance matrix as $$n^{-d/2}\mathbf{M}(\sB_n).$$ Then, one has that
\begin{align*}
d_3 (n^{-d/2}(\mathbf{M}(\sB_n)-\EE[\mathbf{M}(\sB_n)]), \mathbf{N}) \le 
\begin{cases}
c n^{-\theta} &\mbox{ if } d=2, \\
c \exp( - c \log\log(n)) & \mbox{ if } d\ge 3,
\end{cases}
\end{align*}
for some $0<\theta<1$, and some finite $c>0$ independent of $n$. The above bound continues to hold for the distances $d_2, d_c$, if we assume that the covariance of $n^{-d/2}\mathbf{M}(\sB_n)$ converges to a positive definitive matrix $\Sigma_\infty$. 
\end{theorem}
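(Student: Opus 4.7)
The plan is to apply the multidimensional two-scale bounds of Corollary \ref{c:euclmulti}, with trivial mark space ($Z=\{0\}$), by taking $F_{n,i}=G_{n,i}=M(\ph_i\,;\,\cdot)$, $\sB=\sB_n$, and $\sA_{\bx,n}=\tau_\bx(b_n\sB_0)\cap\sB_n$ for a scale $b_n\to\infty$ with $b_n=o(n)$ to be optimized at the end. Since the Euclidean MST is a global object, the functional is \emph{not} strongly stabilizing in the sense of Definition \ref{d:vrs}, so Proposition \ref{p:rsmd} cannot be applied directly; instead, the argument rests on bounding the probability that the add-one cost computed on the whole window $\sB_n$ coincides with the one computed on the local window of scale $b_n$, which plays the role of a two-scale analogue of the tail of a stabilization radius. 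This is precisely the situation for which Corollary \ref{c:euclmulti} was designed.

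The proof will split into three independent steps. \textbf{(i)} Uniform moment bounds: since adding a point to a locally finite configuration changes the MST only by a bounded number of edges, each of length at most a constant multiple of the largest of the $M(d)$ cone-nearest-neighbour distances around the inserted point, the growth condition \eqref{e:growth} together with Lemma \ref{l:walltail} yields $\sup_n K(n,p),\,\sup_n K'(n,p)<\infty$ for every $p\geq 1$. The matching lower bound $\Var M(\ph_i;\sB_n)\geq a|\sB_n|$ comes from Proposition \ref{p:varlow}. \textbf{(ii)} The core geometric estimate: following Kesten--Lee \cite{KL96} and, more quantitatively, Chatterjee--Sen \cite{CS17}, I would express the event $\{D_\bx M(\ph_i;\sB_n)\neq D_\bx M(\ph_i;\sA_{\bx,n})\}$ as a ``wall-crossing'' event requiring a short MST-path from a vertex near $\bx$ to reach $\partial\sA_{\bx,n}$, and invoke the tail estimates of \cite{CS17} to obtain a bound $\tau_d(b_n)$ which decays polynomially in $b_n$ when $d=2$ and only polylogarithmically when $d\geq 3$. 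The same type of bound holds uniformly in $\by$ for the second-order probabilities $\PP[D_\bx M(\ph_i;\sB_n)^\by\neq D_\bx M(\ph_i;\sA_{\bx,n})^\by]$, because inserting an extra point at $\by$ does not alter the geometric nature of the wall-crossing event.

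\textbf{(iii)} Optimization: using the elementary bound \eqref{e:nice} to pass from probabilities to the expectations entering \eqref{e:goo}--\eqref{e:goooo}, and applying the partition refinement of Remark \ref{r:ruse2} with $\sP_1(i,n)$ equal to the $b_n$-shell at the boundary of $\sB_n$ and $\sP_2(i,n)$ its complement, all the quantities appearing in Corollary \ref{c:euclmulti} are controlled by some power of $\tau_d(b_n)$ plus $b_n/n$. The final bound has the form $C\bigl(\tau_d(b_n)^{\gamma/2}+b_n^{d/2}/n^{d/2}\bigr)$ for an explicit $\gamma\in(0,1)$ depending only on $p$. Balancing the two terms yields a polynomial rate $n^{-\theta}$ in dimension two, and a polylogarithmic rate $(\log n)^{-c}=\exp(-c\log\log n)$ in dimension $d\geq 3$, matching the statement. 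Under the additional assumption that $\Sigma_\infty$ is positive definite, parts \textbf{(ii)} and \textbf{(iii)} of Corollary \ref{c:euclmulti} extend the conclusion verbatim from $d_3$ to $d_2$ and $d_c$; nothing new needs to be proven beyond the existence of sixth moments of the add-one costs, which is granted by step (i).

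The main obstacle of the whole scheme is step (ii) in dimension $d\geq 3$: no dual/crossing technology is available as in $d=2$, and one has to rely on the delicate multiscale percolation arguments of \cite{CS17}, which are the sole reason the rate degrades to polylogarithmic. Everything else is either a routine moment computation or a direct application of the abstract two-scale machinery developed earlier in the paper.
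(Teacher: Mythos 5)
Your overall strategy --- apply Corollary \ref{c:euclmulti} via Remark \ref{r:ruse2} with trivial marks, verify the moment bounds and the variance lower bound, and optimize over $b_n$ --- matches the paper, and you correctly note that Proposition \ref{p:rsmd} is unavailable since the MST functional is not strongly stabilizing. The gap is in how you propose to realize the two-scale estimate.

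In step (ii) you propose to bound $\PP[D_\bx M(\ph_i;\sB_n)\neq D_\bx M(\ph_i;\sA_{\bx,n})]$, and in step (iii) you invoke \eqref{e:nice} to convert the $L^1$ discrepancies of \eqref{e:goo}--\eqref{e:goooo} into such probabilities. This is not what the paper does, and it is not clear that it can be made to work. Running the Kesten--Lee add-and-delete algorithm in parallel on $\cP_n$ and $\cP_n\cap\sA_\bx$, the two add-one costs differ exactly when some removed edge $f_{i-1}$ differs from its local counterpart $\tilde f_{i-1}$, which forces a two-arm event $\scr D_n(u)$ for the Poisson--Boolean model at a \emph{random} radius $u\in(|f_{i-1}|,|\tilde f_{i-1}|)$. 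The Chatterjee--Sen bound on $\PP(\scr D_n(u))$ is uniform over \emph{fixed} $u$, and $\scr D_n(u)$ is not monotone in $u$, so there is no worst-case deterministic $u$ to which the bound could be applied, and controlling $\PP[\cup_u\scr D_n(u)]$ is not addressed in \cite{CS17}. What Proposition \ref{p:2scale} does instead is bound the $L^1$ discrepancy $\EE[|D_\bx M(\ph;\sB_n\cap\sA_\bx)-D_\bx M(\ph;\sB_n)|]$ directly: the add-and-delete algorithm writes the discrepancy as a sum $\sum_i|\psi(|\tilde f_{i-1}|)\1(\cdot)-\psi(|f_{i-1}|)\1(\cdot)|$; each summand is rewritten as $\int_0^{\sqrt d\,a_n}\psi'(u)\1(|f_{i-1}|<u<|\tilde f_{i-1}|)\,du$ plus a truncation term; and only then is the indicator bounded by $\1_{\scr D_n(u)}$, \emph{inside the deterministic $u$-integral}, so that the uniform-in-$u$ two-arm bound applies pointwise. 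This disintegration in $u$ is the crux of the argument (and the reason the $L^1$ quantity, not a non-coincidence probability, is the right object), and your outline passes over it.

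A smaller inaccuracy: in step (i) you motivate the moment bounds via cone-nearest-neighbour radii, which is the ONNG localization of Section 2. For the MST, the add-one cost is bounded via the degree bound $D_\mathrm{max}$ (Lemma \ref{l:AS}) together with the radius $R(\cP_n,\bx)$, whose tail is controlled by the wall event (Lemmas \ref{l:detbd}--\ref{l:walltail}). Your conclusion that $\sup_n K(n,p),\sup_n K'(n,p)<\infty$ for all $p$ is correct, but the mechanism you cite is not.
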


We stress that \cite[Theorem 3.3]{Penrose05},  {which is only a qualitative statement}, has however a larger scope than Theorem \ref{t:mst}, since it also applies to restrictions of the MST to subsets of $\sB_n$.  {Such a generalisation can in principle be analysed by using our techniques, and will be investigated elsewhere}.  In order to prove Theorem \ref{t:mst}, we will apply the refined version of Corollary \ref{c:euclmulti} described in Remark \ref{r:ruse2} in the setting $Z=\{0\}, \pi(dx)=\de_0$ and $M(\ph_i,\sB_n)= h^i(\cP|_{\sB_n})$. There are three issues that we need to deal with, namely
\begin{itemize}
\item[(a)] The add-one-cost operators of each coordinate, as well as their localized version, have uniform $p$-th moment bounds with $p>4,6$,  namely,  $\limsup_{n\to\infty} K'(n,p)<\infty$.
\item[(b)] Each coordinate has volume-order variance lower bounds, namely, there exists a universal constant $c$ such that $\Var[M(\ph_j ;\sB_n)]\ge c|\sB_n|$ for every $j\in [m]$. Further, the covariance matrix of $n^{-d/2}\bM(\sB_n)$ converges as $n\to\infty$.
\item[(c)] Determine the convergence rate of the quantities $\vartheta'(n), \tau'(n)$ and $\varrho'(n)$ towards 0 as $n\to\infty$. 
\end{itemize}
The result then follows immediately. We check (a) in Section \ref{s:moment_MST}, (b) in Section \ref{s:variance_MST}, and compute (c) in Section \ref{s:twoscale_MST}.  Our strategy of proof can be regarded as a non-trivial adaptation of the arguments used in \cite[Proof of Theorem 2.1]{CS17} to make them compatible with the use of add-one cost operators used in our bounds, and with the fact that we consider statistics that are more general than the graph length. For instance, the proof of Proposition \ref{p:2scale} below, which is one of our main statements, is obtained by adapting and expanding the arguments exploited in the proof of \cite[Proposition 10.1]{CS17}, that involve in particular a connection with two-arm events in Poisson-Boolean percolation. 

\smallskip

The next subsection puts forward a fundamental property of MSTs which is systematically applied in the proofs.

\subsubsection{Minimax property of the MST}

Let $\cal U$ be a finite subset of $\RR^d$ with distinct inter-point distances. It is known that two points $\mathbf x, \mathbf y\in \cal U$ form an edge in $\mathrm{MST}(\cal U)$ if and only if $\bf x$ and $\bf y$ are in two different connected components of the geometric graph of radius $|\mathbf x - \mathbf y|$, that is, a  graph with vertex set $\cal X$ and the edge set being the largest $E$ such that every $e\in E$ satisfies $|e|< |\mathbf x - \mathbf y|$.  One immediate consequence of this fact is that the MST paths are minimax in the following sense:  the path $\ga_0$ that connects $\bf x$ to $\bf y$ in MST minimizes the maximal weight (i.e. the maximal edge length) among all path $\ga$ that connects $\bf x$ to $\bf y$ in the complete graph of $\cal X$. This claim is clearly true when $\bf x$ and $\bf y$ are neighbors in MST by the aforementioned characterization of MST edges. More generally, the maximal edge $e'$ of $\ga_0$ is formed by some $\bf x', \bf y' \in \mathcal{U}$ with $\bf x'$ connecting to $\bf x$ and $\bf y'$ to $\bf y$ in the geometric graph of radius $|\bf x'-y'|$. If there is a path $\ga$ with maximal weight less than $|\bf x'-y'|$ that connects $\bf x$ to $\bf y$, then one can find a path connecting $\bf x'$ to $\bf y'$ with maximal weight less than $|\bf x'-y'|$, contradicting the fact that $e'$ is an edge of the MST.  Necessarily, $\ga_0$ is minimax. 

The minimax property is extremely important in the study of MST. We record here one consequence on the degree of the MST. 

\begin{lemma}[See \cite{AS92}]\label{l:AS}
The vertex degrees of an arbitrary Euclidean MST are uniformly bounded by a finite constant $D_\mathrm{max}$ depending only on the dimension $d$.
\end{lemma}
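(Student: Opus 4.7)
The plan is to derive the uniform degree bound from the minimax property just recalled, by showing that any two MST-edges incident to a common vertex meet at an angle bounded away from zero, and then invoking a standard packing argument in $\R^d$.

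First I would fix a vertex $\bx \in \mathcal U$ of $\mathrm{MST}(\mathcal U)$ and consider any two of its MST-neighbours $\by_i,\by_j$. Without loss of generality $|\bx-\by_i|\le |\bx-\by_j|$, so the edge $e=\{\bx,\by_j\}$ is the longer of the two. By the minimax characterisation recalled above, the two-edge path $\by_j\to\by_i\to\bx$ cannot have all edges strictly shorter than $|e|$; since $|\bx-\by_i|<|e|$ by construction, this forces $|\by_i-\by_j|\ge |\bx-\by_j|$. Applying the law of cosines to the triangle $\bx\by_i\by_j$ then yields
\begin{equation*}
|\bx-\by_j|^2 \le |\bx-\by_i|^2+|\bx-\by_j|^2-2|\bx-\by_i||\bx-\by_j|\cos\theta_{ij},
\end{equation*}
where $\theta_{ij}$ is the angle at $\bx$. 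Rearranging gives $\cos\theta_{ij}\le |\bx-\by_i|/(2|\bx-\by_j|)\le 1/2$, i.e.\ $\theta_{ij}\ge \pi/3$.

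Next I would translate this angle lower bound into the desired degree bound. The unit vectors $u_i := (\by_i-\bx)/|\by_i-\bx|$ have pairwise angular separation at least $\pi/3$, equivalently, the open spherical caps of angular radius $\pi/6$ centred at the $u_i$ are pairwise disjoint on the unit sphere $S^{d-1}$. Since each such cap carries a fixed positive surface measure depending only on $d$, the number of MST-neighbours of $\bx$ is at most
\begin{equation*}
D_{\max} := \left\lfloor \frac{\mathrm{Area}(S^{d-1})}{\mathrm{Area}(\mathrm{cap}(\pi/6))}\right\rfloor,
\end{equation*}
a quantity that depends solely on $d$. This constant is independent of $\mathcal U$, so the bound holds uniformly.

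The argument is essentially soft, relying only on elementary Euclidean geometry and the minimax property; I do not anticipate any real obstacle. The single subtle point is the generic-position assumption that inter-point distances are distinct (needed to ensure the MST is unique and the minimax characterisation applies edge-by-edge), which in our Poisson setting is satisfied almost surely, so it causes no issue when we later invoke the lemma with $\mathcal U = \mathcal P\cap \sK$.
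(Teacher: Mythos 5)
Your proof is correct and follows essentially the same route the paper sketches: the minimax property forces any two MST edges incident to a common vertex to subtend an angle of at least $\pi/3$, after which a covering/packing argument on $S^{d-1}$ (the paper phrases it as covering $\R^d$ by $60^\circ$ cones, you phrase it as packing disjoint caps of angular radius $\pi/6$) gives a bound depending only on $d$. The two counting arguments are interchangeable, so this is the same proof with the details the paper omits filled in.
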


To see why this is true, we notice that, the minimax property prevents MST from forming a $k$-star centered at any $x\in\cal U$ with $k\ge c(d)$, where $c(d)$ is the minimal number of $60^\circ$ cones at the origin required to cover $\RR^d$. 

\subsubsection{Uniform moment bounds}\label{s:moment_MST}
In what follows, we write $\cP_n=\cP\cap \sB_n$. 
To establish the uniform moment bound for the add-one-cost, one needs to estimate the length of the  edges attached to a given deterministic vertex of the MST. Indeed, attaching a vertex $\bx\in\sB_n$ together with an edge $e$ to the MST$(\cP_n)$ creates a spanning tree of $\cP_n \cup\{\bx\}$, hence,
\begin{align*}
M^\bx(\ph;\sB_n)\le M(\ph;\sB_n) + \ph(d(\bx, \cP_n)),
\end{align*} 
where $d(\bx,\mathcal{U})$ is the Euclidean distance of $\bx$ to $\mathcal{U}$. On the other hand, removing from  MST$(\cP_n \cup\{\bx\})$ the vertex $\bx$ together with all the edges $e_1,...,e_\ell$ attached to $\bx$ creates a forest ($\ell$ disjoint trees) on $\cP_n$ so that one obtains a spanning tree of $\cP_n$ by connecting these disjoint trees with $\ell-1$ edges $e'_1,...,e'_{\ell-1}$. As a result, one has 
\begin{align*}
M(\ph;\sB_n) \le M^\bx(\ph;\sB_n)  +  \sum_{i=1}^{\ell-1} \ph(|e'_i|)
\end{align*}
with the convention that an empty sum is zero if $\ell=1$. Combining the two bounds, we arrive at the following lemma which is deterministic in nature,  see also \cite[Lemma 8.4]{CS17}.  Recall that $\ph(x)= \psi(x)\1(x\le r)$ for some $r\in(0,\infty]$ and increasing $\psi$.

\begin{lemma}\label{l:detbd}
For any $\bx\in\sB_n$, let $R=R(\cP_n,\bx)$ be the minimal positive number such that $\sB_R(\bx)$ {}{ (where $\sB_R(\bx)$ indicates the cube of side $R$ centered at ${\bf x}$)} contains all the edges attached to $\bx$ in $\mathrm{MST}(\cP_n\cup\{\bx\})$. Then we have almost surely
\begin{align*}
|D_\bx M(\ph;\sB_n)| \le \begin{cases}
(D_\mathrm{max}-1)\psi(\sqrt{d} r) & \mbox{ if } r<\infty, \\
 (D_\mathrm{max}-1)\psi(\sqrt{d} R) & \mbox{ if } r=\infty.
 \end{cases}
\end{align*}
\end{lemma}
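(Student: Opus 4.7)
The plan is to combine the two sandwich inequalities displayed just before the lemma and then bound all appearing edge lengths by the diameter of the cube $\sB_R(\bx)$. First, I would attach $\bx$ to its nearest point in $\cP_n$ via a single edge: this produces a (generally sub-optimal) spanning tree of $\cP_n\cup\{\bx\}$, yielding $M^\bx(\ph;\sB_n)\le M(\ph;\sB_n)+\ph(d(\bx,\cP_n))$, and hence
\begin{equation*}
D_\bx M(\ph;\sB_n)=M^\bx(\ph;\sB_n)-M(\ph;\sB_n)\le\ph(d(\bx,\cP_n)).
\end{equation*}
For the reverse direction, I would start from $\mathrm{MST}(\cP_n\cup\{\bx\})$, denote by $y_1,\dots,y_\ell$ the neighbours of $\bx$, and remove $\bx$ together with the $\ell$ incident edges. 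The resulting forest on $\cP_n$ can be reconnected by appending the $\ell-1$ chain edges $y_1y_2,y_2y_3,\dots,y_{\ell-1}y_\ell$ (each of them connecting two vertices that necessarily lie in distinct trees of the forest, since any cycle would have been forbidden in $\mathrm{MST}(\cP_n\cup\{\bx\})$). The resulting graph is a spanning tree of $\cP_n$ whose $\ph$-weight upper bounds $M(\ph;\sB_n)$, so that
\begin{equation*}
-D_\bx M(\ph;\sB_n)\le\sum_{i=1}^{\ell-1}\ph(|y_iy_{i+1}|).
\end{equation*}

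The key geometric input is now the definition of $R=R(\cP_n,\bx)$: the whole collection $\{y_1,\dots,y_\ell\}$ sits inside the cube $\sB_R(\bx)$ of side $R$, hence all the pairwise distances $|y_iy_{i+1}|$, as well as $d(\bx,\cP_n)\le|\bx y_1|$, are at most the diameter $\sqrt{d}\,R$ of that cube. Moreover, Lemma \ref{l:AS} gives $\ell\le D_\mathrm{max}$, so at most $D_\mathrm{max}-1$ repair edges are ever needed.

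It remains to convert these geometric bounds into the two cases of the statement, using that $\psi$ is non-decreasing and $\ph(x)=\psi(x)\1(x\le r)$. When $r=\infty$, $\ph=\psi$ is monotone, so every $\ph(|y_iy_{i+1}|)$ and $\ph(d(\bx,\cP_n))$ is bounded by $\psi(\sqrt{d}\,R)$; combining both sandwich inequalities and using $D_\mathrm{max}-1\ge 1$ gives $|D_\bx M(\ph;\sB_n)|\le(D_\mathrm{max}-1)\psi(\sqrt{d}\,R)$. When $r<\infty$, the indicator $\1(\cdot\le r)$ makes $\ph$ uniformly bounded by $\psi(r)\le\psi(\sqrt{d}\,r)$ on $\R_+$, so every term on the right-hand side of either inequality is at most $\psi(\sqrt{d}\,r)$ and the same count yields $|D_\bx M(\ph;\sB_n)|\le(D_\mathrm{max}-1)\psi(\sqrt{d}\,r)$. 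I expect no serious obstacle: the only mildly delicate point is justifying that one can always repair the forest by $\ell-1$ edges among the $y_i$'s (not arbitrary points of $\cP_n$), which is exactly what keeps every repair edge inside $\sB_R(\bx)$ and ultimately produces the factor $\sqrt{d}\,R$ instead of an uncontrolled length.
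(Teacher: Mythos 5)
Your proposal is correct and follows essentially the same route as the paper: both use the two sandwich inequalities displayed before the lemma, bound $\ell$ by $D_\mathrm{max}$ via Lemma \ref{l:AS}, and bound all relevant distances by the diameter $\sqrt{d}\,R$ of the cube $\sB_R(\bx)$ (resp.\ use boundedness of $\ph$ by $\psi(\sqrt{d}\,r)$ when $r<\infty$). You simply make explicit what the paper leaves implicit, namely that the reconnection edges $e_i'$ can be taken as a chain among the neighbours $y_1,\dots,y_\ell$ of $\bx$, which is precisely what guarantees each $|e_i'|\le\sqrt{d}\,R$.
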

\begin{proof}
By Lemma \ref{l:AS}, $\ell\le D_\mathrm{max}$, the claim follows immediately by noticing that  $|e'_i|\le\sqrt{d}R$, $d(\bx,\mathcal{U})\le \frac{\sqrt{d}R}{2}$ and the monotonicity of $\psi$. 
\end{proof}

From now on we focus on  the case $r=\infty$ since otherwise the uniform moment bound is trivial. We describe the tail event $\{R>u\}$ in terms of the wall event introduced in \cite{CS17}. Let $\mathcal{U}$ be a finite subset of $\RR^d$. We say that \emph{$\bx$ is surrounded by a $\mathcal{U}$-wall in $\sB_n$ at scale $u<2n$} if for any $\bx'\in \sB_n\cap \partial \sB_u(\bx)$, the restriction to $\sB_n$  of the lens-shaped intersection $\sS_{\frac{3}{4}|\bx-\bx'|}(\bx)\cap \sS_{\frac{3}{4}|\bx-\bx'|}(\bx')$ contains at least one element of $\mathcal{U}$, where $\sS_a(\bx)$ is a ball centered at $\bx$ with radius $a$.  Such an event is denoted by $\scr A(\mathcal{U},\bx,u)$. The key observation is the following inclusion relation, see also \cite[Lemma 8.2]{CS17}. 

\begin{lemma}\label{l:AsubR}
We have $\scr A(\mathcal{U},\bx,u)\subset \{ R(\mathcal{U},\bx)\le u \}$.
\end{lemma}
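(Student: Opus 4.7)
The plan is to argue by contradiction, leaning on the component characterization of MST edges recalled just above the lemma: for any finite set $\mathcal{V}\subset\R^d$ in general position, $\{\bu,\bv\}$ is an edge of $\mathrm{MST}(\mathcal{V})$ if and only if $\bu$ and $\bv$ lie in two different connected components of the geometric graph on $\mathcal{V}$ of radius $|\bu-\bv|$. Suppose $\scr A(\mathcal{U},\bx,u)$ holds and assume for contradiction that $R(\mathcal{U},\bx)>u$; by the definition of $R$, there must then exist an edge $\{\bx,\by\}$ of $\mathrm{MST}(\mathcal{U}\cup\{\bx\})$ that is not contained in $\sB_u(\bx)$. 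Since $\sB_u(\bx)$ is the convex cube of side $u$ centred at $\bx$, and $\bx$ sits in its interior, this forces $\by \notin \sB_u(\bx)$.

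Because $\bx,\by\in\sB_n$ and $\sB_n$ is convex, the segment $[\bx,\by]$ lies in $\sB_n$. Starting inside $\sB_u(\bx)$ and ending outside it, this segment crosses $\partial\sB_u(\bx)$ at some point $\bx'\in\sB_n\cap\partial\sB_u(\bx)$ distinct from both $\bx$ and $\by$. Applying $\scr A(\mathcal{U},\bx,u)$ at this $\bx'$ yields a point $\bz\in\mathcal{U}$ inside the lens
\[
\sS_{\frac{3}{4}|\bx-\bx'|}(\bx)\,\cap\,\sS_{\frac{3}{4}|\bx-\bx'|}(\bx').
\]

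The heart of the matter is then to verify that $|\bx-\bz|$ and $|\bz-\by|$ are \emph{strictly} smaller than $|\bx-\by|$. Since $\bx'$ lies on the open segment from $\bx$ to $\by$, one has $0<|\bx-\bx'|<|\bx-\by|$, so
\[
|\bx-\bz|\le\tfrac{3}{4}|\bx-\bx'|<|\bx-\by|.
\]
Combining $|\bx'-\by|=|\bx-\by|-|\bx-\bx'|$ with the triangle inequality further gives
\[
|\bz-\by|\le|\bz-\bx'|+|\bx'-\by|\le\tfrac{3}{4}|\bx-\bx'|+\bigl(|\bx-\by|-|\bx-\bx'|\bigr)=|\bx-\by|-\tfrac{1}{4}|\bx-\bx'|<|\bx-\by|.
\]

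Thus $\bx,\bz,\by$ form a two-step path in the geometric graph on $\mathcal{U}\cup\{\bx\}$ of radius $|\bx-\by|$, so $\bx$ and $\by$ lie in the same connected component of that graph. By the MST component characterization, $\{\bx,\by\}$ cannot be an MST edge -- the sought contradiction. There is essentially no serious obstacle in this argument: the only substantive geometric input is the strict inequality $|\bx-\bx'|<|\bx-\by|$, for which one uses the convexity of $\sB_u(\bx)$ together with the fact that $\by$ lies strictly outside this cube; everything else reduces to the triangle inequality applied within the lens provided by the wall event.
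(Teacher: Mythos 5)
Your proof is correct and follows essentially the same route as the paper's: pick the crossing point $\bx'$ of the segment $[\bx,\by]$ with $\partial\sB_u(\bx)$, invoke the wall event to produce $\bz$ in the lens, and verify via the triangle inequality that both $|\bx-\bz|$ and $|\bz-\by|$ are strictly less than $|\bx-\by|$, so that $\{\bx,\by\}$ cannot be an MST edge. The only cosmetic differences are that you argue by contradiction while the paper proves the contrapositive directly, and you explicitly cite the connected-component characterization of MST edges where the paper refers to the minimax property (which, as recalled in that subsection, is derived from the same characterization).
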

\begin{proof}
Suppose that $\scr A(\mathcal{U},\bx,u)$ occurs. We claim that no $\by\in \mathcal{U}\cap \sB_n\cap \sB_u(\bx)^c$ can reach $\bx$ through only one edge $\{\bx,\by\}$ in the MST$(\mathcal{U}\cup\{\bx\})$, the conclusion then follows by the definition of $R$. Now we prove the claim. Let $\bx'$ be the unique point on the segment $\bx\by$ that lies on $\partial\sB_u(\bx)$ and denote by $\bz$ an arbitrary element of $\mathcal{U}$ belonging to the aforementioned lens-shaped intersection. The existence of such an element is guaranteed by the event $\scr A(\mathcal{U},\bx,u)$. It is clear that $|\bx-\by|\ge |\bx-\bx'|>\frac{3}{4}|\bx-\bx'| \ge |\bx-\bz|$. On the other hand,
\begin{align*}
|\bx-\by|= |\bx-\bx'| + |\bx'-\by|> \frac{3}{4}  |\bx-\bx'| + |\bx'-\by| \ge |\bz-\bx'| + |\bx'-\by|\ge |\by-\bz|.
\end{align*}
In either case, $|\bx-\by|>\max(|\bx-\bz|, |\by-\bz|)$. By the minimax property, the segment $\bx\by$ cannot be an edge of MST$(\mathcal{U}\cup\{\bx\})$, as desired.  
\end{proof}

The next lemma quantifies the idea that a wall at distance $u$ is very likely to exist  as $u$ grows, see also .

\begin{lemma}\label{l:walltail}
There exist $c_1=c_1(d), c_2=c_2(d)$ such that for all $n\in\NN, \bx,\by \in\sB_n, 0<u<2n$, 
\begin{align*}
\PP( \scr A(\cP_n \cup\{\by\},\bx,u)^c) \le \PP( \scr A(\cP_n,\bx,u)^c) \le c_1 e^{-c_2 u^d}.
\end{align*} 
\end{lemma}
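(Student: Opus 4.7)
The first inequality is a straightforward monotonicity observation: the event $\scr A(\mathcal U,\bx,u)$ is non-decreasing in $\mathcal U$, since adding a point can only help fulfill the lens-containment requirement for each $\bx'$. Hence $\scr A(\cP_n,\bx,u)\subset \scr A(\cP_n\cup\{\by\},\bx,u)$, and the inequality follows.

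For the second inequality, the plan is a discretization of $\partial \sB_u(\bx)\cap \sB_n$ combined with the Poisson void probability. The starting geometric observation is that, for any $\bx'\in\partial\sB_u(\bx)$, one has $u/2\le |\bx-\bx'|\le u\sqrt d/2$; setting $\bm_{\bx'}:=(\bx+\bx')/2$, elementary geometry shows that the lens $\sS_{\frac34|\bx-\bx'|}(\bx)\cap \sS_{\frac34|\bx-\bx'|}(\bx')$ contains the Euclidean ball $\sS_{|\bx-\bx'|/4}(\bm_{\bx'})$, and hence also the ball $\sS_{u/8}(\bm_{\bx'})$.

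Next, fix a small constant $\delta=\delta(d)>0$ (to be chosen) and let $\{\bx'_1,\dots,\bx'_K\}$ be a $\delta u$-net of $\partial\sB_u(\bx)\cap \sB_n$ with $K=K(d)$ (this is possible since $\partial\sB_u(\bx)$ has $(d-1)$-dimensional area $2du^{d-1}$). For every $\bx'\in \partial\sB_u(\bx)\cap \sB_n$, pick $i$ with $|\bx'-\bx'_i|\le \delta u$; by the triangle inequality $|\bm_{\bx'}-\bm_{\bx'_i}|\le \delta u/2$, so
\[
\sS_{u/8-\delta u/2}(\bm_{\bx'_i})\subset \sS_{|\bx-\bx'|/4}(\bm_{\bx'})\subset \bigl(\sS_{\frac34|\bx-\bx'|}(\bx)\cap \sS_{\frac34|\bx-\bx'|}(\bx')\bigr).
\]
Choosing $\delta$ so that $u/8-\delta u/2\ge u/20$ (e.g. $\delta=1/10$), the ball $\Lambda_i:=\sS_{u/20}(\bm_{\bx'_i})\cap \sB_n$ is contained in the restriction to $\sB_n$ of the lens for \emph{every} such $\bx'$. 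Since $\bx,\bx'_i\in \sB_n$ and $\sB_n$ is convex we have $\bm_{\bx'_i}\in \sB_n$, and a direct computation (cutting along coordinate directions inside the cube) shows that the intersection of a ball of radius $r\le n\sqrt d$ with $\sB_n$ around a point of $\sB_n$ has Lebesgue measure at least $(r/\sqrt d)^d$. In particular, $|\Lambda_i|\ge c(d)\,u^d$ uniformly in $i$, $\bx$ and $n$, using $u<2n$.

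To conclude, the Poisson void probability together with a union bound yields
\[
\PP(\scr A(\cP_n,\bx,u)^c)\le \PP\bigl(\exists\, i\le K:\cP_n\cap \Lambda_i=\emptyset\bigr)\le \sum_{i=1}^K e^{-|\Lambda_i|}\le K\,e^{-c(d)\,u^d},
\]
which is the desired bound with $c_1=K$ and $c_2=c(d)$. The main obstacle is the geometric comparison between the lens at $\bx'$ and the ball centered at the nearby net-midpoint $\bm_{\bx'_i}$, together with ensuring that this ball still captures a volume of order $u^d$ after intersection with $\sB_n$; convexity of $\sB_n$ is what saves us, even when $\bx$ is close to $\partial \sB_n$ so that some lenses protrude outside $\sB_n$.
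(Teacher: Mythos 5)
Your argument is correct. Note that for the second inequality the paper provides no proof of its own: it assumes without loss of generality that $u\in\NN$ and then cites Lemma 8.3 of Chatterjee--Sen \cite{CS17}. You have supplied a self-contained derivation of that cited fact, via the standard route: compare each lens to a Euclidean ball of radius $\asymp u$ centered at the chord midpoint $(\bx+\bx')/2$; cover $\partial\sB_u(\bx)\cap\sB_n$ by a $\delta u$-net of $K=K(d)$ points $\bx'_i\in\partial\sB_u(\bx)\cap\sB_n$; use convexity of $\sB_n$ to see that the clipped balls $\Lambda_i=\sS_{u/20}((\bx+\bx'_i)/2)\cap\sB_n$ have volume at least $c(d)u^d$; and close by a union bound and the Poisson void probability. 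Two minor remarks. First, your volume lower bound actually requires $r\le n\sqrt d/2$ rather than $r\le n\sqrt d$ (the inward-facing sub-cube of side $r/\sqrt d$ fits inside $\sB_n$ only when $r/\sqrt d\le n/2$); since you apply it with $r=u/20<n/10$, the slip has no consequence, but it should be corrected. Second, when $u$ is large enough that $\partial\sB_u(\bx)\cap\sB_n=\emptyset$, the wall event holds vacuously and the bound is trivial; your argument covers this automatically because the net is then empty.
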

\begin{proof}
Without loss of generality we assume that $u\in\NN$. The first inequality follows from the fact that $\scr A(\mathcal{U},\bx,u)$ is increasing with respect to $\mathcal{U}$. The second estimate is proved in \cite[Lemma 8.3]{CS17}.

\end{proof}

%
%

We are ready to check the uniform moment bounds -- see also \cite[Lemma 8.6]{CS17}.

\begin{proposition} \label{p:moment}
For any $q>0$, there exits a finite positive constant $C_q$ such that uniformly for all $\bx, \by\in\sB_n$, $0<r<n$ and  $n\in\NN$, one has
\begin{align*}
\max( \EE[|D_\bx M(\ph;\sB_n)|^q], \EE[|D_\bx M^\by(\ph;\sB_n)|^q])\le C_q. 
\end{align*}
and
\begin{align*}
\max( \EE[|D_\bx M(\ph;\sB_n\cap\sB_r(\bx))|^q], \EE[|D_\bx M^\by(\ph;\sB_n\cap\sB_r(\bx))|^q])\le C_q. 
\end{align*}
\end{proposition}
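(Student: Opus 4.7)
My approach is to derive all four bounds simultaneously from the deterministic envelope of Lemma \ref{l:detbd}; since the truncated case $r<\infty$ in that lemma yields a finite deterministic constant, I focus on the nontrivial case $r=\infty$ of the weight $\psi$. In each of the four situations one has $|D_\bx M(\ph;\mathcal U)|\le (D_{\max}-1)\psi(\sqrt d\, R)$, where $\mathcal U$ ranges over $\{\cP_n,\, \cP_n\cup\{\by\},\, \cP\cap\sB_n\cap\sB_r(\bx),\, (\cP\cap\sB_n\cap\sB_r(\bx))\cup\{\by\}\}$ and $R=R(\mathcal U,\bx)$ is the minimal $R\ge 0$ such that $\sB_R(\bx)$ contains every edge of $\mathrm{MST}(\mathcal U\cup\{\bx\})$ incident to $\bx$. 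Since $\psi$ is non-decreasing and nonnegative, it suffices to produce a uniform upper bound on $\EE[\psi(\sqrt d\, R)^q]$ over $\bx,\by\in \sB_n$, $r<n$, $n\in\NN$ and each choice of $\mathcal U$.

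For the tail of $R$, I would combine Lemma \ref{l:AsubR} with Lemma \ref{l:walltail} to obtain $\PP(R>u)\le c_1 e^{-c_2 u^d}$ when $\mathcal U\in\{\cP_n,\cP_n\cup\{\by\}\}$ and $0<u<2n$. To extend this to the localized configurations, the key geometric observation is that the lens $\sS_{3|\bx-\bx'|/4}(\bx)\cap \sS_{3|\bx-\bx'|/4}(\bx')$ appearing in the definition of $\scr A(\cdot,\bx,u)$, with $\bx'\in\partial\sB_u(\bx)$, lies in the Euclidean ball of radius $\le 3\sqrt d\, u/8$ around $\bx$, and is therefore contained in $\sB_r(\bx)$ whenever $u\le c' r$ for a suitable dimensional constant $c'>0$. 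Thus, on $(0,c'r]$ the restricted and unrestricted wall events coincide and the Gaussian tail still holds; on $(c'r, r]$ the trivial monotonicity bound $\PP(R>u)\le \PP(R>c' r)\le c_1 e^{-c_2(c')^d r^d}$ is uniformly small; and on $(r,\infty)$ one has $R\le r$ identically, since in the restricted setting every MST edge incident to $\bx$ terminates inside $\sB_r(\bx)$.

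The bound on $\EE[\psi(\sqrt d\, R)^q]$ now follows by Riemann--Stieltjes integration by parts, $\EE[\psi(\sqrt d\, R)^q]=\psi(0)^q+\int_0^\infty \PP(R>u)\,d(\psi(\sqrt d\, u)^q)$, split at $u=c'r$. On $(0,c'r]$ one writes $d(\psi^q)=q\psi^{q-1}\,d\psi$ and invokes the polynomial envelope $\psi(\sqrt d\, u)^{q-1}\le (1+\sqrt d\, u)^{k(q-1)}$ from \eqref{e:growth} to absorb the polynomial factor into a slightly smaller Gaussian decay $e^{-c_2' u^d}$ with $c_2'<c_2$, reducing the contribution to a constant multiple of $\int_0^\infty e^{-c_2' u^d} d\psi(\sqrt d\, u)$, finite by \eqref{e:growth}. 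On $(c'r, r]$ the same polynomial envelope combined with the uniform probability bound yields a contribution of order $e^{-c_2(c')^d r^d}(1+\sqrt d\, r)^{kq}$, which is bounded uniformly in $r$ and vanishes as $r\to \infty$. Raising to the $q$-th power and multiplying by $(D_{\max}-1)^q$ concludes. The main obstacle I anticipate is precisely the careful propagation of the wall-event tail to the two localized configurations: one must verify the geometric inclusion of the relevant lenses inside $\sB_r(\bx)$ up to a constant threshold $c'r$ in $u$, and then ensure that the polynomial-times-Gaussian tail of $R$ can be absorbed into the integrability condition embedded in \eqref{e:growth} after a benign adjustment of the Gaussian constant.
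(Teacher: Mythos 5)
Your proposal is correct and follows the same core route as the paper: the deterministic envelope of Lemma~\ref{l:detbd}, the inclusion $\scr A(\mathcal U,\bx,u)\subset\{R(\mathcal U,\bx)\le u\}$ from Lemma~\ref{l:AsubR}, the Gaussian tail of Lemma~\ref{l:walltail}, and then a Stieltjes integration by parts absorbed via the growth condition~\eqref{e:growth}. Where you depart from the paper's own write-up is in the treatment of the localized inequalities. The paper disposes of these in one sentence (``the role of $\sB_n$ is replaced by $\sB_r(\bx)\cap\sB_n$''), which implicitly relies on the wall-event tail bound being re-derivable in the restricted convex domain; you instead reduce the restricted tail to the unrestricted one by observing that for $u\le c'r$ (with $c'$ of order $d^{-1/2}$) all lenses in the definition of the wall event, together with $\partial\sB_u(\bx)$, are contained in $\sB_r(\bx)$, so the two wall events (and the corresponding analogue of Lemma~\ref{l:AsubR}) literally coincide, then use a crude monotonicity bound on $(c'r,r]$ and the trivial identity $R\le r$ beyond. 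This is a slightly longer but more self-contained argument: it avoids re-invoking the probabilistic wall estimate in the modified geometry, at the cost of an extra geometric containment and a two-region split. One small remark on the final integration step: the polynomial factor $\psi(\sqrt d\,u)^{q-1}$ coming from $d(\psi^q)$ is absorbed using the polynomial envelope $\psi(x)\le(1+x)^k$, and one should note that this same envelope guarantees $\int_0^\infty e^{-c u^d}\,d\psi(\sqrt d\,u)<\infty$ for \emph{every} $c>0$, so passing from $c_2$ to a smaller $c_2'$ is harmless even though \eqref{e:growth} only displays the constant $c_2$.
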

\begin{proof}
The two inequalities have the same content, where the role of $\sB_n$ in the first inequality is replaced by $\sB_r(\bx)\cap\sB_n$ in the second, so we only consider the first.  Applying Lemmas \ref{l:detbd}, \ref{l:AsubR}, \ref{l:walltail} as well as the monotonicity of $\psi$ implies that for all $0<u<2n$,
\begin{align*}
 \PP[|D_\bx M(\ph;\sB_n)| > (D_\mathrm{max}-1)\psi(\sqrt{d} u)]\le \PP[R(\cP_n,\bx)>u]\le \PP( \scr A(\cP_n,\bx,u)^c) \le c_1 e^{-c_2 u^d},
\end{align*}
and this probability is 0 if $u>2n$ by the definition of $R$.  By Fubini's Theorem, one has
\begin{align*}
\EE[|D_\bx M(\ph; \sB_n)|^q] &= \int_0^\infty q v^{q-1} \PP[ |D_\bx M(\ph; \sB_n)|>v ]   dv.
\end{align*}
A change of variable $v=(D_\mathrm{max}-1)\psi(\sqrt{d}u)$ together with the tail bound gives
\begin{align*}
\EE[|D_\bx M(\ph;\sB_n)|^q] \le q(D_\mathrm{max}-1)^q  c_1\int_0^\infty e^{-c_2u^d} d\psi(\sqrt{d}u),
\end{align*}
yielding the moment bound for $|D_\bx M(\ph;\sB_n)|$ by the growth condition on $\psi$. The moment bound for $|D_\bx M^\by(\ph;\sB_n)|$ is straightforward in view of the first part of Lemma \ref{l:walltail}.
\end{proof}

\begin{remark}
One may also use Kesten and Lee's separation set argument \cite{KL96} to show the moment bound, which is in the same spirit as the wall event argument. 
\end{remark}

\subsubsection{Variance lower bounds}\label{s:variance_MST}

The claim about the covariance was checked in \cite{Penrose05}. 
The volume-order variance lower bound for $M(\ph;\sB_n)$ with $\ph(x)=x$,  among other things, was proved by Kesten and Lee \cite{KL96}. Here we use a general result of Penrose and Yukich \cite[Theorem 2.1]{PY01} to show the nontriviality of the limiting variances of each coordinate. 


\begin{proposition}\label{p:varlow}
There exists a positive constant $c>0$ such that for all $j\in[m]$ and $n$ large, 
\begin{align*}
\Var[M(\ph_j;\sB_n)] \ge c |\sB_n|.
\end{align*}
\end{proposition}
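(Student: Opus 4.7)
The plan is to invoke the general variance lower bound of Penrose and Yukich \cite[Theorem 2.1]{PY01}, which states that, for a translation-invariant, strongly-stabilizing set-functional with sufficient moments, the variance is asymptotically of volume order, with a density $\sigma_j^2$ that is strictly positive whenever the corresponding stabilization limit is not almost surely constant.

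Strong stabilization of $M(\ph_j;\cdot)$ at a point $\bx$ follows from the wall-event machinery of Section 3.1.2. On $\scr A(\mathcal{P},\bx,u)$, the MST minimax property together with the argument of Lemma \ref{l:AsubR} forces every MST edge incident to $\bx$ to be contained in $\sB_u(\bx)$ and to remain so under arbitrary modifications of $\mathcal{P}$ outside $\sB_u(\bx)$; symmetrically, any edge removed by inserting $\bx$ lies in $\sB_u(\bx)$ by the same minimax principle. Hence $D_\bx M(\ph_j;\sK)$ is a deterministic function of $\mathcal{P}\cap\sB_u(\bx)$ on this event, and $R^*(\bx):=\inf\{u>0:\scr A(\mathcal{P},\bx,u)\text{ holds}\}$ is an a.s.\ finite stabilization radius with sub-Gaussian tails by Lemma \ref{l:walltail}. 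Combined with the uniform moment bounds of Proposition \ref{p:moment}, this verifies the hypotheses of \cite[Theorem 2.1]{PY01}, which then provides both the existence of $\sigma_j^2:=\lim_n n^{-d}\Var[M(\ph_j;\sB_n)]$ and an explicit expression for it in terms of the stabilization limit $\delta_\infty^{(j)}$ of $D_{\bf 0} M(\ph_j;\sB_n)$.

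The key step, and the main obstacle, is showing $\sigma_j^2>0$ for each $j$. By the positivity criterion embedded in \cite[Theorem 2.1]{PY01}, it suffices to exhibit two Poisson configurations of positive probability that agree outside some fixed ball $\sB_\rho({\bf 0})$ but yield different values of $D_{\bf 0} M(\ph_j;\cdot)$. Choose $\rho$ so large that $\PP[\scr A(\mathcal{P},{\bf 0},\rho)]\geq \tfrac12$, and condition on this event together with $\mathcal{P}\cap\sB_\rho({\bf 0})^c$. One then contrasts two positive-probability inner configurations on $\sB_\rho({\bf 0})$: (i) one carrying a Poisson point within distance $\epsilon$ of the origin, so that $\bf 0$ has an MST edge of length of order $\epsilon$, and (ii) one in which $\sB_{\rho/2}({\bf 0})$ is empty, forcing every MST edge incident to $\bf 0$ to have length at least $\rho/2$. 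Since each $\ph_j=\psi_j\cdot\mathbf{1}_{[0,r_j]}$ has $\psi_j$ non-decreasing and, to avoid trivialities, non-constant on the relevant range, $\epsilon$ can be chosen (depending on $j$) so that the two scenarios yield provably distinct values of $D_{\bf 0} M(\ph_j;\cdot)$; this forces positive conditional variance of $\delta_\infty^{(j)}$ on an event of positive probability, and hence $\sigma_j^2>0$. Setting $c:=\tfrac12\min_{j\in[m]}\sigma_j^2$ completes the proof.
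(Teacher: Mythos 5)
Your high-level strategy — invoke \cite[Theorem 2.1]{PY01} for the existence of the volume-order variance density, reduce to non-degeneracy of the stabilization limit $D(\b0,\infty)$, and prove non-degeneracy by exhibiting two positive-probability configurations under which $D(\b0,\infty)$ takes different values — coincides exactly with the paper's, which likewise delegates items (i)--(iii) of the hypotheses to \cite{Penrose05}. The gap lies in the specific non-degeneracy construction.

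Your case (ii) — conditioning on a wall event at a large scale $\rho$ and then emptying $\sB_{\rho/2}(\b0)$ — does not yield a controlled lower bound on $D_\b0 M(\ph_j;\cdot)$. The weight functions are $\ph_j(x)=\psi_j(x)\1(x\le r_j)$, so whenever $\rho/2>r_j$ (unavoidable for small $r_j$, since $\rho$ must be taken large to make the wall event likely) every new edge incident to $\b0$ has zero weight. Moreover, the add-one cost is not only ``new edges added'': inserting $\b0$ can delete existing MST edges, each contributing negatively, and your argument does not control these deletions in either scenario. Thus in case (ii) $D_\b0 M$ could be zero or negative, and in case (i) it is bounded only from above, so the two cases are not quantitatively separated. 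The closing assertion that ``$\epsilon$ can be chosen so that the two scenarios yield provably distinct values'' is exactly the claim that needs a mechanism, and none is given.

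The paper's Kesten--Lee moat-and-island construction addresses both obstacles simultaneously. It first rescales to $r=\min_{j}r_j/13$, so that every edge arising in the construction has length at most $13r\le r_j$ and hence lies inside the truncation window for all $j$ at once. The events $\scr E_1,\scr E_2$ then agree outside $\sB_r(\b0)$, and the empty moat isolates the island point so that inserting $\b0$ adds exactly one MST edge with no deletions: of length at most $r$ under $\scr E_1$ and at least $2r$ under $\scr E_2$, giving a deterministic gap $\psi_j(2r)-\psi_j(r)$ in $D(\b0,\infty)$. A repaired version of your proposal would need a similar rescaling tied to $\min_j r_j$ and a geometric device that rules out edge deletions; as stated it has a genuine gap.
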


\begin{proof}
In order to apply \cite[Theorem 2.1]{PY01}, we need to prove the following results:  
\begin{itemize}
\item[(i)] A uniform moment condition for the add-one-cost over binomial point process with uniform points in $\sB_n$ for all $n\in\NN$.  
\item[(ii)] A deterministic growth condition: there exist finite positive $\be_1,\be_2$ such that $|M(\ph_j;\mathcal{U})|\le \be_2(\mathrm{diam}(\mathcal{U})+ |\mathcal{U}|)^{\be_1}$. 
\item[(iii)] The add-one-cost strong stabilization condition. 
\item[(iv)] $D(\b0,\infty)$ is non-degenerate. 
\end{itemize} 
Items (i)-(iii) are checked in \cite[Theorem 3.3]{Penrose05}. As for (iv), we construct two events with positive probability under which $D(\b0,\infty)$ differs by a non-trivial quantity. To this end, we borrow a construction from Kesten and Lee \cite{KL96}. Let $r=  \min_{j\in [m]} r_j /13$ if the right-hand side is finite, otherwise $r=1$. Decompose $\RR^d$ into cubes of side length $r$ centering at $r\ZZ^d$.  We call the annulus $\sB_{12r}(\b0)\setminus\sB_{r}(\b0)$ the {\it moat}. An {\it island} is an arbitrary $r$-cube sitting in the middle of the interior of the moat.
Denote by  $\scr E_0$ the event that each of the $r$-cubes in the annulus $\sB_{13r}(\b0)\setminus \sB_{12r}(\b0)$ contains at least one point in $\cP$, the island contains exactly one point of $\cP$ and no points in other cubes of the moat.  Let $\scr E_1$ be the intersection of $\scr E_0$ with the event that $\sB_r(\b0)\cap\cP=1$ and $\scr E_2$ be the intersection of $\scr E_0$ with the event that $\sB_r(\b0)\cap\cP=0$.  It is clear that both $\scr E_1$ and $\scr E_2$ occur with positive probability. Let us call the stabilizing radius $S$. By the minimax property, one sees that MST$(\cP \cap \sB_S(\b0))$ under $\scr E_1$ is identical to that under $\scr E_2$ except that the former contains exactly one more edge than the latter which connects the point in the island and  the point in $\sB_r(\b0)$. Therefore, under $\scr E_2$,  adding $\b0$ to $\cP$ increase the total weighted edge length in $\sB_S(\b0)$ by at least $\psi(2r)$. On the other hand, under $\scr E_1$, adding $\b0$ to $\cP$ creates one more edge connecting $\b0$ and the point in $\sB_r(\b0)$ and no other effect, increasing the total weighted edge length by at most $\psi(r)$. Thus, $D(\b0,\infty)$ under $\scr E_1$  and $\scr E_2$ differs by at least $\psi(2r)-\psi(r)>0$, ending the proof. 
\end{proof}

\subsubsection{Two-scale stabilisation and Proof of Theorem \ref{t:mst}}\label{s:twoscale_MST}
All three quantities $\varrho'(n), \tau'(n), \vartheta'(n)$ involve the $L^1$ difference between add-one cost operators at two different scales, that we call two-scale discrepancy. As one can see from the proof, adding \emph{one deterministic point}  to $\cP$ that is  not too close to $\bx$ does not change the decaying rate of the two-scale discrepancy at $\bx$. Hence, we obtain the same rate for $\tau'(n), \varrho'(n), \vartheta'(n)$, see Proposition \ref{p:2scale}.  In this section, the scale of $\sA_{\bx,n}$ is chosen to be $b_n=n^\al$ for any $\al\in (0,1)$ and we write for simplicity $\sA_{\bx}=\sA_{\bx,n}$.

We start with a review of the add-and-delete algorithm for constructing the MST, devised by Kesten and Lee \cite{KL96}.   There are several algorithms for building the MST on a weighted graph. Kesten and Lee's algorithm is not the most efficient but it has the advantage that if a MST has already been constructed on a certain weighted graph and one attaches a new edge (or a new vertex together with an edge) to the underlying graph, the one does not have to rebuild the MST from scratch:  one only needs to "add" and "delete" edges from the existing tree to produce the new MST of the new weighted graph.   

The algorithm goes as follows. Suppose that $\mathrm{MST}(\mathcal{U})$ of the complete graph of $\mathcal{U}$, weighted by the Euclidean distance, has been constructed. Our goal is to construct $\mathrm{MST}(\mathcal{U}\cup\{\bx\})$. There is one new vertex $\bx$, and $|\mathcal{U}|$ new edges $\{e_1,...,e_{|\mathcal{U}|}\}$, each connecting $\bx$ and one point in $\mathcal{U}$, attached to the complete graph of $\mathcal{U}$, producing the complete graph of $\mathcal{U}\cup\{\bx\}$. Instead of growing the underlying graph at once, we do so gradually by first attaching $\bx$ together with a new edge, say $e_1$, to the complete graph of $\mathcal{U}$, then by attaching one by one the new edges $e_2, e_3,..., e_{|\mathcal{U}|}$.  Clearly the first step produces a (not necessarily minimal) spanning tree of $\mathcal{U}\cup\{\bx\}$, and the later steps create each time a cycle.  The algorithm gives instructions on what to do in each step to turn these intermediate graphs into a MST.  When the algorithm finishes, we obtain $\mathrm{MST}(\mathcal{U}\cup\{\bx\})$.

\begin{itemize}
\item Step 1. Attach $\bx$ together with $e_1$ to $\mathrm{MST}(\mathcal{U})$, call the obtained spanning tree $T_1$.  
\item Step 2. For $i$ from $2$ to $|\mathcal{U}|$, 
\begin{itemize}
\item Attach edge $e_i$ to $T_{i-1}$, inducing necessarily a cycle $C_{i-1}$.
\item Remove the longest edge $f_{i-1}$ in $C_{i-1}$ to create a spanning tree $T_i$.
\end{itemize}
\end{itemize}  

\begin{lemma}\label{l:algo} The add-and-delete algorithm produces the MST of the intermediate graphs in each step of its iterations, in particular, $T_{|\mathcal{U}|}=\mathrm{MST}(\mathcal{U}\cup\{\bx\})$. 
\end{lemma}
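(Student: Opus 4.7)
The plan is a straightforward induction on $i \in \{1, \ldots, |\mathcal{U}|\}$, where the inductive claim is that $T_i = \mathrm{MST}(G_i)$, with $G_i$ denoting the subgraph of the complete graph on $\mathcal{U} \cup \{\bx\}$ consisting of all edges within $\mathcal{U}$ together with $e_1, \ldots, e_i$. Since the underlying point set has almost surely pairwise distinct inter-point distances (the input is a Poisson configuration together with a fixed point, and the set of configurations with two equal inter-point distances has Lebesgue measure zero), MSTs are uniquely defined and we may freely invoke the strict versions of the cycle and cut rules.

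The base case $i = 1$ is immediate: in $G_1$, the vertex $\bx$ has only $e_1$ as incident edge, so every spanning tree must contain $e_1$ and otherwise span $\mathcal{U}$ optimally. Hence $\mathrm{MST}(G_1) = \mathrm{MST}(\mathcal{U}) \cup \{e_1\} = T_1$.

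For the induction step, assume $T_{i-1} = \mathrm{MST}(G_{i-1})$. Adjoining $e_i$ to $T_{i-1}$ produces a subgraph containing the unique cycle $C_{i-1}$, and the classical cycle rule applied to $C_{i-1}$ in $G_i$ shows that the strictly heaviest edge $f_{i-1}$ of $C_{i-1}$ is excluded from $\mathrm{MST}(G_i)$ (otherwise, removing $f_{i-1}$ from the MST disconnects it into two components, and swapping $f_{i-1}$ for the other edge of $C_{i-1}$ crossing that cut strictly decreases the weight). It then suffices to verify that $T_i := T_{i-1} + e_i - f_{i-1}$ satisfies the cycle-optimality criterion in $G_i$, namely, that every non-tree edge $e \in G_i \setminus T_i$ is the heaviest edge on the unique cycle it forms with $T_i$. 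There are three sub-cases: (i) $e = f_{i-1}$, where the induced cycle is exactly $C_{i-1}$ and the property is the defining choice of $f_{i-1}$; (ii) $e \in G_{i-1} \setminus T_{i-1}$ with the cycle $D_e$ induced in $T_{i-1} + e$ avoiding $f_{i-1}$, so that this same cycle $D_e$ is the cycle induced by $e$ in $T_i + e$, and the claim follows from the inductive hypothesis; (iii) $e \in G_{i-1} \setminus T_{i-1}$ with $f_{i-1} \in D_e$, in which case the cycle induced in $T_i + e$ is the symmetric difference $D_e \triangle C_{i-1}$, and $e$ remains strictly heaviest because it dominates every edge of $D_e \setminus \{e\}$ by the inductive hypothesis and every edge of $C_{i-1} \setminus \{f_{i-1}\}$ because these are dominated by $f_{i-1}$, which is itself dominated by $e$.

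The main routine-but-slightly-delicate step is sub-case (iii): one must observe that $D_e$ and $C_{i-1}$ share only the edge $f_{i-1}$ inside $T_{i-1} + e + e_i$ (since any other common edge would force two distinct cycles through a single extra edge on a tree, a contradiction), so their symmetric difference is a single cycle, and one must then correctly chain the strict inequalities inherited from both the inductive hypothesis and the definition of $f_{i-1}$. Once this is in place, cycle optimality characterises the unique MST and the induction closes, yielding in particular $T_{|\mathcal{U}|} = \mathrm{MST}(\mathcal{U} \cup \{\bx\})$, as required.
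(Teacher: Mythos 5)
Your proposal fills a gap the paper deliberately leaves open: the paper states this lemma and simply refers to Kesten and Lee \cite{KL96} for the proof, without reproducing an argument. Your self-contained induction via the cycle-optimality (``every non-tree edge is strictly heaviest on the fundamental cycle it closes'') characterisation of the MST is a legitimate route, and the overall chain of reasoning --- base case, the three sub-cases for a non-tree edge of $G_i$, the weight comparisons via $|\gamma| < |f_{i-1}| < |e|$ --- is sound.

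One local slip: the parenthetical claim that $D_e$ and $C_{i-1}$ share \emph{only} the edge $f_{i-1}$ (justified by ``any other common edge would force two distinct cycles through a single extra edge on a tree'') is not correct in general. Both $D_e$ and $C_{i-1}$ are fundamental cycles with respect to $T_{i-1}$, so they consist of one non-tree edge plus a tree path; these tree paths can overlap along a common sub-path of length greater than one, so the two cycles can share several edges. The conclusion you actually need, namely that $D_e \triangle C_{i-1}$ is a single cycle, is nonetheless true and is cleaner to see directly: $D_e \triangle C_{i-1}$ is a nonempty even subgraph (it contains $e$ and $e_i$) avoiding $f_{i-1}$, hence is contained in $T_i + e$, which is unicyclic; a nonempty even subgraph of a unicyclic graph is that unique cycle. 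Your subsequent inequalities do not use the ``share only $f_{i-1}$'' claim at all --- you dominate the supersets $D_e \setminus \{e\}$ and $C_{i-1} \setminus \{f_{i-1}\}$, which covers all of $D_e \triangle C_{i-1} \setminus \{e\}$ --- so the overall argument survives intact once that parenthetical is replaced by the cycle-space observation above.
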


The proof of the lemma uses the edge characterization of MST, we refer to Kesten an Lee \cite{KL96} for more details. We move to bounding the add-one-cost discrepancy. The idea, which is close to the strategy of proof of \cite[Proposition 10.1]{CS17}, is to run the add-and-delete algorithm simultaneously for $\mathrm{MST}(\cP_n)$ and $\mathrm{MST}(\cP_n\cap {\sA_{\bx}} )$ after adding $\bx$ and $|\cP_n|$ new edges,  or $\bx$ and $|\cP_n\cap {\sA_{\bx}}|$ edges, to the underlying graphs, respectively. The algorithm says that the add-one-cost is the sum of edge weights with alternating signs.  
Comparing each weight change in each intermediate step, then taking the sum over the difference between the weight change in the smaller scale $\sA_{\bx}$ and that in the larger scale $\sB_n$, gives an upper bound for the two-scale discrepancy. We then show that this upper bound goes to zero as $n$ grows to infinity.  

In order to simplify the notation, we write $\ph=\ph_j$ and $s=r_j$ for $\ph_j(x) = \psi_j(x)\1(x<r_j)$. 

\begin{proposition}\label{p:2scale} There exist $\be', c$ positive finite constants such that for all $n\in\NN$ and $\bx\in \sB_n$ with $d(\bx,\partial \sB_n)>n^\al$, 
\begin{align*}
\EE[ |D_\bx M(\ph; \sB_n\cap \sA_{\bx})-D_\bx M(\ph;\sB_n)|] \le  \begin{cases}
c n^{-\beta'} & \mbox{ if } d=2, \\
c \exp( - c \log\log(n)) & \mbox{ if } d\ge 3.
\end{cases}
\end{align*}
The same bound holds with $D_\bx M$ replaced by $D_\bx M^\by$ for all $\by\in \sB_n$ with $\|\by-\bx\|\ge n^\al$. Consequently, there exists finite positive constant $c'$ such that for any $p>6$,
\begin{align*}
\tau'(n), \varrho'(n), \vartheta'(n) \le c' n^{(\al-1)} + \begin{cases}
c' n^{-\beta' (1-\frac 6 p)} & \mbox{ if } d=2, \\
c' \exp( - c (1-\frac 6 p)\log\log(n)) & \mbox{ if } d\ge 3.
\end{cases}
\end{align*}
\end{proposition}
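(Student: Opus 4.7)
The strategy is to run Kesten--Lee's add-and-delete algorithm (Lemma \ref{l:algo}) \emph{simultaneously} on $\mathcal U_1 = \cP_n$ and $\mathcal U_2 = \cP_n \cap \sA_\bx$. For each $\sK\in\{\sB_n,\sB_n\cap\sA_\bx\}$ this expresses $D_\bx M(\ph;\sK)$ as a telescoping sum $\sum \pm \psi(|e|)$ involving the at most $D_{\mathrm{max}}$ net-retained edges incident to $\bx$ and the cycle-breaking edges removed during the algorithm; by Lemma \ref{l:AS} and the minimax property, every one of these edges has length bounded by the wall radius $R(\cP_\sK,\bx)$. Hence if we define the localization event
\[
 \mathscr E_n := \mathscr A(\cP_n,\bx,\tfrac{1}{2}n^\alpha) \cap \mathscr L_n,
\]
where $\mathscr L_n$ is the event that no path in $\mathrm{MST}(\cP_n)$ from a vertex of $\cP_n \cap \sA_{\bx,\mathrm{inner}}$ (a slightly shrunken copy of $\sA_\bx$) to $\sB_n\setminus \sA_\bx$ uses only edges of length $\le n^\alpha$, then on $\mathscr E_n$ both algorithms visit only points inside $\sA_\bx$, produce the same cycles and the same sequence of additions/removals, so that
\[
 D_\bx M(\ph;\sB_n\cap\sA_\bx) = D_\bx M(\ph;\sB_n).
\]

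The key probabilistic estimate is thus $\PP[\mathscr E_n^c]$. The wall event piece is controlled by Lemma \ref{l:walltail}, contributing an exponentially small term $c_1 e^{-c_2(n^\alpha/2)^d}$. The essential remaining ingredient is the \emph{long-arm} estimate $\PP[\mathscr L_n^c]$: this is the probability that the Gilbert disk graph at radius $n^\alpha$ connects an inner region to the outer boundary of $\sA_\bx$. In dimension $d=2$, planar duality combined with Russo--Seymour--Welsh type box-crossing estimates for the Poisson--Boolean model (as invoked in \cite[\S 10]{CS17}) give the polynomial bound $\PP[\mathscr L_n^c] \le c n^{-2\beta'}$ for some $\beta'\in(0,1)$, translated across scales using that $\sA_\bx$ has linear size of order $n^\alpha$ and the inner shell has width of order $n^\alpha/2$. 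In dimension $d\ge 3$, where no sharp near-critical estimate is available, one instead iterates a nested moat construction: placing $\sim\log\log n$ concentric annuli of geometrically spaced widths inside $\sA_\bx$ and demanding that each annulus contains enough points to block any chain of edges of length $\le n^\alpha$, which yields $\PP[\mathscr L_n^c]\le c\exp(-c\log\log n)$. Combining with Cauchy--Schwarz and the uniform moment bound in Proposition \ref{p:moment} (applied with $q=2$),
\[
 \EE\bigl[|D_\bx M(\ph;\sB_n\cap\sA_\bx)-D_\bx M(\ph;\sB_n)|\bigr]
 \le \sqrt{\EE[(\cdot)^2]}\,\sqrt{\PP[\mathscr E_n^c]}
 \le C\sqrt{\PP[\mathscr E_n^c]},
\]
yields the stated rates. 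The $M^\by$ version follows with no change because adding a single deterministic point $\by$ with $\|\by-\bx\|\ge n^\alpha$ lies outside $\sA_\bx$; the wall event is monotonically preserved by Lemma \ref{l:walltail} and $\by$ does not alter any MST edge strictly inside $\sA_\bx$ of length less than $n^\alpha$ (again by the minimax property), so the same $\mathscr E_n$ works.

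For the consequences on $\vartheta'(n),\tau'(n),\varrho'(n)$, I would apply Remark \ref{r:ruse2} with the two-cell partition $\sP_1=\{\bx\in\sB_n:d(\bx,\partial \sB_n)\le n^\alpha\}$ and $\sP_2=\sB_n\setminus\sP_1$, so $|\sP_1|/n^d\asymp n^{\alpha-1}$ and $|\sP_2|/n^d\asymp 1$. On $\sP_1$ one uses the $p$-th moment bound of Proposition \ref{p:moment} together with H\"older's inequality $\EE[|\Delta|]^{(p-q)/p}\le (2C_q)^{(p-q)/p}$, producing a contribution of order $n^{\alpha-1}$. On $\sP_2$ one applies the main estimate of Proposition \ref{p:2scale} raised to the power $(p-q)/p$ with $q\in\{4,5,6\}$. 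The worst case $q=6$ (required for $\varrho',\tau'$) needs $p>6$ to keep the exponent positive; adding the two contributions gives the stated bound.

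The main obstacle is the long-arm probability estimate in dimension $d\ge 3$: without a sharp subcritical decay for Poisson--Boolean percolation at the relevant radius, one cannot obtain a polynomial rate and must settle for the iterated moat bound $\exp(-c\log\log n)$. Getting the algebra of the iterated construction right, and ensuring that the nested wall events remain compatible with the add-and-delete algorithm run simultaneously at two scales, is the most delicate part of the proof.
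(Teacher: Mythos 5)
Your outline of the final step — partitioning $\sB_n$ into a near-boundary shell $\sP_1$ of relative volume $\asymp n^{\alpha-1}$ and a bulk $\sP_2$, then combining the pointwise bound with the moment estimate via H\"older to deduce the bound on $\tau'(n),\varrho'(n),\vartheta'(n)$ — matches the paper's conclusion, and the $M^\by$ modification is handled in the same spirit. The core estimate, however, takes a genuinely different route and has a gap.

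The paper does \emph{not} produce an event on which $D_\bx M(\ph;\sB_n\cap\sA_\bx)=D_\bx M(\ph;\sB_n)$ exactly and then estimate its complement. Instead, it runs the add-and-delete algorithm simultaneously at both scales after adding $\bx$, expresses the discrepancy as $\sum_i |\psi(|\tilde f_{i-1}|)\1 - \psi(|f_{i-1}|)\1|$ where $f_{i-1}$, $\tilde f_{i-1}$ are the cycle-breaking edges deleted at the two scales, and shows that $|f_{i-1}|\le|\tilde f_{i-1}|$ always holds with strict inequality implying a \emph{two-arm} event $\scr D_n(u)$ at scale $u$ for the Poisson--Boolean model, i.e.\ two disjoint occupied crossings of an annulus around $\bx$. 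The crucial input is then the uniform-in-$u$ two-arm bound from Chatterjee--Sen: $\PP(\scr D_n(u))\le cn^{-\beta}$ for $d=2$ and $\PP(\scr D_n(u))\le c\exp(ca_n^{d-1})/\log(n)^{d/2}$ for $d\ge 3$, with $a_n$ chosen logarithmically in $n$. Integrating this against $\psi'(u)\,du$ and accounting for $|\cP_n\cap\sB_{a_n}(\bx)|$ gives the final rate.

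Your proposal replaces the two-arm estimate by a one-arm/long-arm estimate $\PP[\mathscr L_n^c]$, where $\mathscr L_n$ is the event that ``no path in $\mathrm{MST}(\cP_n)$ from the inner region to $\sB_n\setminus\sA_\bx$ uses only edges of length $\le n^\alpha$''. This is the wrong object: a one-arm (existence of a single long occupied crossing) probability is \emph{not} uniformly small across all radii $u$ of the Boolean model. In the supercritical phase it tends to $1$. What makes the Chatterjee--Sen argument work is precisely that the \emph{two-arm} probability decays at all radii, including near-critical ones; this is a much stronger, non-trivial statement. Replacing it by a one-arm estimate would fail at supercritical radii and give no control on the summands where $f_{i-1}\ne\tilde f_{i-1}$.

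Relatedly, the central claim of your argument — that on $\mathscr E_n=\mathscr A(\cP_n,\bx,\tfrac12 n^\alpha)\cap\mathscr L_n$ ``both algorithms produce the same cycles and the same sequence of additions/removals'' — is asserted without proof, and is not implied by the one-arm restriction. The MST of $\cP_n$ restricted to $\sA_\bx$ need not agree with $\mathrm{MST}(\cP_n\cap\sA_\bx)$: whether a pair $(a,b)\subset\sA_\bx$ forms an MST edge is decided via the global minimax property, which is sensitive to points just outside $\sA_\bx$. The place where the deleted edges $f_{i-1}$ and $\tilde f_{i-1}$ differ is controlled by a two-arm event, not a one-arm event, as the paper establishes in Step~5. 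Your nested-moat sketch for $d\ge 3$ is likewise a different construction; the paper there simply invokes the Chatterjee--Sen two-arm bound with the choice $a_n=(\log\log n)^{1/(d-1/2)}$. In short, the route you propose would not close without the two-arm probability bound, and the level of care in Steps 3--5 of the paper is needed before one can invoke it.
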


Applying this Proposition and Remark \ref{r:ruse2} proves immediately Theorem \ref{t:mst}.  Let us now end this section with the proof of Proposition \ref{p:2scale}.

\begin{proof}
For clarity of the presentation, we divide the proof into several steps. 

\medskip

\noindent\underline{\it Step 1:  Bound when no wall exists.} Let $a_n=o(n^\al)$ whose value will be chosen later.  By the Cauchy-Schwarz inequality and the uniform moment bound in Proposition \ref{p:moment}, there exists a constant $0<C<\infty$ such that
\begin{align*}
\EE[ |D_\bx M(\ph;\sB_n\cap \sA_{\bx})-D_\bx M(\ph;\sB_n)|\1(\scr A(\cP_n,\bx, a_n)^c)] \le C \PP[\scr A(\cP_n,\bx, a_n)^c]^{1/2} \le c_1 e^{-c_2 a_n^d}. 
\end{align*}

\noindent\underline{\it Step 2: Reduction to edges inside $\sB_{a_n}(\bx)$ when a wall exists.} Under the event $\scr A(\cP_n,\bx, a_n)$, no $\by$ outside of $\sB_{a_n}(\bx)$ can be connected to $\bx$ by one single edge in $\mathrm{MST}(\cP_n\cup\{\bx\})$ or $\mathrm{MST}((\cP_n \cup\{\bx\})\cap \sA_\bx)$. Thus, to produce these new MSTs after adding $\bx$, one can run the add-and-delete algorithm until one finishes adding all the new edges inside $\sB_{a_n}(\bx)$. 

\noindent\underline{\it Step 3:  Preliminary bound when a wall exists.} Suppose now that $\scr A(\cP_n,\bx, a_n)$ occurs, thanks to Step 2, it suffices to consider the edges formed by connecting $\bx$ with vertices in $$\{\by_1,...,\by_{|\cP_n\cap \sB_{a_n}(\bx)|}\}=\cP_n\cap \sB_{a_n}(\bx).$$ By Lemma \ref{l:algo}, we have 
\begin{align*}
D_\bx M(\ph;\sB_n) = \psi_j(|e_1|)\1(|e_1|\le s) + \sum_{i=2}^{|\cP_n\cap \sB_{a_n}(\bx)|} \psi_j(|e_i|)\1(|e_i|\le s) - \psi_j(|f_{i-1}|)\1(|f_{i-1}|\le s), 
\end{align*}
where $e_i$ is the edge formed by $\bx,\by_i$, and $f_i$'s are the removed edges in the second step of the add-and-delete algorithm. To make the presence of the indicators clear, we recall that an edge of the MST is counted for the functional $M(\ph;\sB_n)$ if its length is no larger than $s$.  On the other hand, denoting the removed edges by $\tilde f_i$ when we run the algorithm for $\mathrm{MST}(\cP_n\cap\sA_{\bx})$ to obtain $\mathrm{MST}((\cP_n\cup\{\bx\})\cap \sA_{\bx}  )$, we have
\begin{align*}
D_\bx M_s(\ph;\sB_n\cap \sA_{\bx}) = \psi_j(|e_1|)\1(|e_1|\le s) + \sum_{i=2}^{|\cP_n\cap \sB_{a_n}(\bx)|} \psi_j(|e_i|)\1(|e_i|\le s) - \psi_j(|\tilde f_{i-1}|)\1(|\tilde f_{i-1}|\le s). 
\end{align*}
Therefore, combining these two identities, we are led to the upper bound
\begin{multline}\label{e:summand}
|D_\bx M_s(\ph;\sB_n\cap \sA_{\bx}) - D_\bx M_s(\ph;\sB_n)  | \1(\scr A(\cP_n,\bx, a_n))\\
\le \sum_{i=2}^{|\cP_n\cap \sB_{a_n}(\bx)|} |\psi_j(|\tilde f_{i-1}|)\1(|\tilde f_{i-1}|\le s) - \psi_j(|f_{i-1}|)\1(|f_{i-1}|\le s)|. 
\end{multline}

\noindent\underline{\it Step 4: Bounding the summand in \eqref{e:summand}} Notice the following important fact: the added and removed edges while running the algorithm are all no longer  than $\sqrt{d} a_n$. This is clearly true for the added ones, because $\by_i\in\sB_{a_n}(\bx)$. To see that it also holds for the deleted ones, denote by $\bf z$ the only vertex adjacent to $\bx$ other than $\by_i$ in the cycle generated after adding the edge $e_i=\{\bx, \by_i\}$. Notice that $\bz\in\sB_{a_n}(\bx)$ under the wall event. Two cases may arise,  if $f_i = \{\by_i, \mathbf z\}$, then $|f_i|= |\mathbf{z}-\by_i|\le \sqrt{d} a_n$. Otherwise $\bf z$ and $\bf y_i$ do not form an edge in $\mathrm{MST}(\cP_n)$, nor in $\mathrm{MST}(\cP_n\cap \sA_\bx)$, because there is only one loop $C_{i-1}$ after adding the edge $e_i$. Consequently, by the minimax property of the intermediate MSTs, the path (as subset of the loop $C_{i-1}$)  that starts from $\by_i$ and ends at $\bf z$ via the removed edge $f_{i-1}$ has maximal length less than $|\mathbf{z}-\by_i|\le \sqrt{d} a_n$. 

Another important fact is that for each $i$,  we have $|f_i|\le |\tilde f_i|$ and the equality holds when $f_i = \tilde f_i$. Indeed, there are more potential paths to choose from for constructing $\mathrm{MST}(\cP_n)$ with minimax path property, so that one always has $|f_i|\le |\tilde f_i|$. By the fact that all the edges have distinct length almost surely, the second claim follows. Moreover, if $|f_i|< |\tilde f_i|$, then necessarily $f_i\not\subset \sA_\bx$, since otherwise, one can find a path connecting the endpoints of $\tilde f_i$ and having maximal weight smaller than $\tilde f_i$, which is a contradiction. 

As a consequence, the summand in \eqref{e:summand} satisfies
\begin{align*}
&|\psi_j(|\tilde f_{i-1}|)\1(|\tilde f_{i-1}|\le s) - \psi_j(|f_{i-1}|)\1(|f_{i-1}|\le s)| \\
&= (|\psi_j(\tilde f_{i-1}|) - \psi_j(|f_{i-1}|)) \1( |\tilde f_{i-1}|\le s ) + \psi_j(|f_{i-1}|) \1( |f_{i-1}|\le s < |\tilde f_{i-1}|) \\
&\le \int_0^{\sqrt{d} a_n} \psi_j'(u)\1( |f_{i-1}| <u<|\tilde f_{i-1}|) du + \psi_j(\sqrt{d}a_n) \1( |f_{i-1}|\le s < |\tilde f_{i-1}|),
\end{align*}
where we bounded the indicator $\1( |\tilde f_{i-1}|\le s )$ by $1$ and $|f_{i-1}|$ by $\sqrt{d} a_n$ in the last inequality. 

\noindent\underline{\it Step 5: Link to the two-arm event for the Poisson-Boolean model}. Understanding the event $$\{|f_{i-1}| <u<|\tilde f_{i-1}|\}$$ is the key to proceed.  One fundamental observation from \cite{CS17} is that such an event implies a sort of two-arm event for the Poisson-Boolean model that we recall now. The Poisson-Boolean model is the random set given by
\begin{align*}
\sO_u(\cP) = \bigcup_{\bf x\in\cP} \sS_u(\bx),
\end{align*}
where $u>0$ is the parameter of the model, and the notation $\sO$ stands for occupation. 
Suppose now  the event $\{|f_{i-1}| <u<|\tilde f_{i-1}|\}$ occurs.

%

Firstly, as written in Step 4, we have $f_{i-1}\not\subset \sA_\bx$. Notice that the circuit $C_{i-1}$ must contain $\by_a$ for some $1\le a<i$. Since  $|f_{i-1}|<u$, we see that $\sO_{u/2}(\cP_n)$ contains the edge $f_{i-1}$, and the connected component of $f_{i-1}$ in $\sO_{u/2}(\cP_n)$ contains the  whole path $C_{i-1}\setminus e_i\setminus e_a$ with endpoints $\by_i$ and $\by_a$, by the fact that $f_{i-1}$ is the longest edge in $C_{i-1}$. Therefore, the points $\by_i$ and $\by_{a}$ are in the same connected component of $\sO_{u/2}(\cP_n)$, denoted by $\mathsf K(i,\cP_n)$. 

On the other hand,  we claim that $\mathsf K(i,\cP_n)\cap (\sA_\bx^{-} \setminus \sB_{(1+2\sqrt{d})a_n}(\bx))$ has (at least) two disjoint connected components, each of which intersects both $\partial A_\bx^-$ and $\partial\sB_{(1+2\sqrt{d})a_n}(\bx)$, where $\sA_\bx^-$ is $\sA_\bx$ with radius shrinked by $\sqrt{d}a_n$, namely $$\sA_\bx^- = \sB_{n^\al-2\sqrt{d}a_n}(\bx).$$ To see this, we decompose the vertices in the path $C_{i-1}\setminus e_i\setminus e_a$ into three non-overlapping groups. The first group $\mathcal G_1$ contains those vertices in $\sA_\bx\setminus\sB_{a_n}(\bx)$ that one passes by as one travels starting from $\by_i$ along the path until exiting $\sA_\bx$, the second $\mathcal G_2$ contains  those vertices that one passes by as one travels from $\by_a$  upon exiting $\sA_\bx$, the rest are put into the (useless) third group.  The first two groups will be the skeleton to construct two disjoint connected components. 
By the choice of the size of $\sA_\bx^-$, only the configuration of $\cP_n$ inside $\sA_\bx\setminus\sB_{a_n}(\bx)$ matters to decide whether the desired claim holds. Hence, the claim follows if one can show that there exists no $\mathcal H\subset \cP_n\cap (\sA_\bx\setminus\sB_{a_n}(\bx))$ such that the Boolean set 
\begin{align*}
\sO_{u\over 2}(\mathcal G_1\cup \mathcal G_2 \cup \cH )
\end{align*}
is connected.  Suppose that there exists such $\cH$ that $\sO_{u/2}(\mathcal G_1\cup \mathcal G_2 \cup \cH )$ is connected. Then one can find a path $\ga$ connecting $\by_i$ and $\by_a$ through $\cH$ and part of $\mathcal G_1, \mathcal G_2$ inside $\sA_x$ with maximal edge length less than $u$ by previous reasoning about the maximal edge length of $C_{i-1}$ and the assumption on $\cH$. To arrive at a contradiction, we use the condition $\{ u<|\tilde f_{i-1}| \}$. By construction, all the edges in the path $\tilde\ga=\tilde C_{i-1}\setminus \tilde f_{i-1}$ are shorter than $|\tilde f_{i-1}|$. By concatenating $\ga, e_a, \tilde\ga\setminus e_i$, one finds a path that connects the endpoints of $\tilde f_{i-1}$ with the property that all egdes are shorter than $|\tilde f_{i-1}|$. Notice that only the edges in $K(\cP_n\cap \sA_x)\cup\{e_1,...,e_{i-1}\}$ are used in constructing the concatenated path, where $K(\cP_n\cap \sA_\bx)$ is the complete graph of $\cP_n\cap \sA_\bx$. Hence, by the minimax property of the MST paths, the existence of the aforementioned concatenated path contradicts the fact that $\tilde f_{i-1}$ is an edge of the intermediate minimal spanning tree $\tilde T_{i-1}$ when one runs the add-and-delete algorithm for $\mathrm{MST}(\cP_n\cap \sA_x)$.

To summarize, define the two-arm event
\begin{align*}
\scr D_n(u)  = \{ \partial\sB_{(1+2\sqrt{d})a_n}(\bx) \con{2} \partial \sA_\bx^- \mbox{ in } \sO_{u\over 2}(\cP_n) \}
\end{align*} 
where $\partial\mathsf{E_1}\con{k} \partial\mathsf{E_2}$ in $\sf F$ with $\mathsf{E}_1\subset \mathsf{E}_2$ means that $\mathsf F\cap (\mathsf{E}_2\setminus \mathsf E_1)$ has (at least) $k$ disjoint connected components $\mathsf{K}_1, ..., \mathsf{K}_k$,  such that for each $i\in[k]$, we have $\mathsf{K}_i\cap \partial\mathsf{E_1}\neq\emptyset$ and  $\mathsf{K}_i\cap \partial\mathsf{E}_2\neq\emptyset$. Since $\mathsf K(i,\cP_n)$ is a connected component of $\sO_{u/2}(\cP_n)$, we have proved the crucial inclusion  $$\{|f_{i-1}| <u<|\tilde f_{i-1}|\}\subset \scr D_n(u).$$   
We stress that the radius $(1+2\sqrt{d})a_n$ in place of $a_n$ in the definition of $\scr D_n(u)$ is chosen in such a way that $\scr D_n(u)$ and $\cP_n\cap \sB_{a_n}(\bx)$ are independent for all $u\le \sqrt{d}a_n$. 

\noindent\underline{\it Step 6:  Final bound when the wall event occurs.} Combining Steps 3-5 leads to 
\begin{align*}
&\EE[ |D_\bx M_s(\ph;\sB_n\cap \sA_{\bx}) - D_\bx M_s(\ph;\sB_n)  | \1(\scr A(\cP_n,\bx, a_n))] \\
&\le \int_0^{\sqrt{d}a_n} \psi_j'(u) \EE[ |\cP_n\cap \sB_{a_n}(\bx)|\1(\scr D_n(u)) ] du + \psi_j(\sqrt{d}a_n)\EE[ |\cP_n\cap \sS_{a_n}(\bx)|\1(\scr D_n(s))] \\
&\le c a_n^d \int_0^{a_n} \psi'(u) \PP(\scr D_n(u)) du + ca_n^{d} \psi_j(\sqrt{d}a_n) \PP(\scr D_n(s))
\end{align*}
where $c$ depends only on $d$. To proceed, we make use of a highly non-trivial bound due to Chatterjee and Sen \cite[Section 5]{CS17}. Set
\begin{align*}
a_n =\begin{cases}
\al\log(n) & \mbox{ if } d=2, \\
(\log\log(n))^{1/(d-1/2)} & \mbox{ if } d\ge 3,
\end{cases}
\end{align*}
then it holds that
\begin{align*}
\PP(\scr D_n(u)) \le 
\begin{cases}
c n^{-\be} & \mbox{ if } d=2,  \\
\frac{c \exp(c a_n^{d-1})}{\log(n)^{d/2}} & \mbox{ if } d\ge 3. 
\end{cases}
\end{align*}
where $\beta, c$ are all positive finite constants that depends only on $d$, whose value may change in each appearance. The striking feature of this bound is that it is true uniformly in the parameter of the Boolean model $u>0$. We arrive at the following bound
\begin{align*}
&\EE[ |D_\bx M_s(\ph;\sB_n\cap \sA_{\bx}) - D_\bx M_s(\ph;\sB_n)  | \1(\scr A(\cP_n,\bx, a_n))] \\ &\le \begin{cases}
c \log(n)^2 \psi_j(\sqrt{2}\log(n)) n^{-\beta} & \mbox{ if } d=2, \\
c \psi_j(\sqrt{d}\log\log(n)^{1/(d-1/2)})\exp( - c \log\log(n)) & \mbox{ if } d\ge 3.
\end{cases}
\end{align*}

\noindent\underline{\it Step 7: Conclusion} The growth condition of $\psi_j$ and the choice of $a_n$ in Step 6, as well as the bound in Step 1, yields
\begin{align*}
\EE[ |D_\bx M_s(\ph;\sB_n\cap \sA_{\bx}) - D_\bx M_s(\ph;\sB_n)  | ] \le  \begin{cases}
c n^{-\beta'} & \mbox{ if } d=2, \\
c \exp( - c \log\log(n)) & \mbox{ if } d\ge 3.
\end{cases}
\end{align*}
where $\be', c$ are universal constants.  Handling as usual the points near the boundary of $\sB_n$ with uniform moment bounds for the add-one-cost operators and H\"older's inequality, we obtain the upper  bound for $\tau'(n), \vartheta'(n), \varrho'(n)$ in view of  Remark \ref{r:ruse2}.
\end{proof}

\subsection{Number of connected components}
Fix $r>0$. For a finite set $\mathcal{U} \subset \mathbb{R}^{d} $, denote by $\mathscr  K_r(\mathcal{U} )$ the number of connected components of the graph $\mathscr  G_r(\mathcal{U} )$ with vertex set $\mathcal{U} $ and an edge between each pair of points of $\mathcal{U} $ at distance $r$ or less. In this subsection, we are interested in the functional 
\begin{align*}
F(\sB)=\mathscr  K_r(\cP\cap \sB)
\end{align*} for $\sB\subset \mathbb{R}^{d}$ bounded.
It has been shown in \cite[Theorem 13.27]{PenroseBook} that if $\sC_{n}$ is a cube with volume $n^{d}$, $F(\sC_{n})$ satisfies a central limit theorem, with $\Var(F(\sC_{n}))\geqslant \sigma n^{d}$ for some $\sigma >0$ (the proof is actually given for the rescaled version $\tilde F(\cdot )=F(n^{-1/d}\cdot )$, with $r_{n} = r n^{-1/d}$). This functional satisfies weak stabilisation in some sense, and we aim here to quantify this convergence with the tools we have developed.  As for the Minimal Spanning Tree, of central importance here is the probability of the two-arm event $\mathscr  A _{r}^{N}(\cP )$, where for any random point configuration $\mathcal{U },a\in \mathbb{R}$, $\mathscr  A_ {a}^{N}(\mathcal{U})$ is the event that at least two distinct connected components of $\mathscr  G_r(\mathcal{U}\cap \sC_{N}(0)\setminus \sC_{a}(0) )$ come within distance $r$ of both $\sC_{a}$ and $\sC_{N}^{c}$, for $N\geq a>0$. The context is more favorable here as the radius $r$ is fixed, hence this probability decays exponentially for every non-critical value of $r$, whereas for the MST the radius was somehow random and context-dependent.

Let $r_{c}(d)$ be the critical radius for continuum percolation in $\mathbb{R}^{d}$ driven by a Poisson process with intensity one.
According to \cite[Lemma 9.5]{CS17}, for $1/2<a\leqslant 2r, N$ sufficiently large,
\begin{align}
\label{eq:two-arm}
\mathbb{P}(\mathscr  A _{a}^{N}(\cP ) )\leq  \begin{cases}C\exp(-cN )$ if $r\neq r_{c}(d)\\
C\log(N)^{-d/2}$ if $r=r_{c}(d),d\geq 3\\
CN^{-\gamma  }$ if $r=r_{c}(d),d=2
\end{cases}
\end{align}
for some  positive constants $\gamma ,C,c.$

\begin{theorem}
\label{thm:components}For all $0<\alpha <1,\varepsilon >0$,  {there exists a constant $C\in (0,\infty)$ such that}
\begin{align}
\label{eq:rate-comp}
\dk(\wh F(\sC_n ) ,N(0,1))\leq C\left(
 {\mathbb{P}(\mathscr  A {r,n^{\alpha }})^{\frac{1}{2}-\varepsilon }}+ {n^{(\alpha-1) /2}}
\right), \quad  {n\geq 1}.
\end{align}

\end{theorem}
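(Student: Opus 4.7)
The plan is to apply Corollary \ref{c:dw}(b) in the homogeneous Poisson setting of Remark \ref{r:homo1}, together with the boundary-layer modification of Remark \ref{r:ruse}, taking $F = G = \mathscr{K}_{r}(\cP \cap \cdot\,)$, reference window $\sB = \sC_{n}$, and local regions $\sA_{\bx,n} = \tau_\bx(b_n \sB_0) \cap \sC_n$ with $b_n = n^\alpha$. The variance lower bound $\Var F(\sC_n) \geq \sigma n^d$ of \cite[Theorem 13.27]{PenroseBook} supplies the correct normalization, so that it only remains to dominate the two quantities $\psi'(n)^{1/2}$, $\phi'(n)^{1/2}$ and the boundary term $\sqrt{b_n/n} = n^{(\alpha-1)/2}$ appearing in the Kolmogorov bound.

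First I would verify the uniform $L^p$-moment bounds for the add-one-cost operators and their one-point perturbations, at both scales and for every $p<\infty$. Since adding a point $\bx$ either creates a new component or merges all components of $\mathscr{G}_r$ that meet the closed ball of radius $r$ around $\bx$, one has the deterministic estimate $|D_\bx F(\sB)|, |D_\bx F^\by(\sB)| \leq 2 + |\cP \cap \sB_r(\bx) \cap \sB|$; as the right-hand side is stochastically dominated by a Poisson variable with fixed parameter, the constants $K(n,p)$ and $K'(n,p)$ of Corollary \ref{c:dw} are uniformly bounded in $n$ for every $p>4$.

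The key geometric step is then to show that, on the complement of (a translate of) the two-arm event $\mathscr{A}_{r,b_n}$ recentred at $\bx$, one has $D_\bx F(\sC_n) = D_\bx F(\sA_{\bx,n})$, and likewise for $D_\bx F^\by$ whenever $\by$ sits outside $\sA_{\bx,n}$. The reason is that $D_\bx F(\sB)$ is entirely determined by the partition of the finite set $\cP \cap \sB_r(\bx)$ into connected components of $\mathscr{G}_r(\cP \cap \sB)$; such a partition can differ between $\sB=\sC_n$ and $\sB=\sA_{\bx,n}$ only if two distinct connected components of $\mathscr{G}_r$ lying in $\sC_n \setminus \sA_{\bx,n}$ reach both the inner neighborhood of $\bx$ and the outer boundary of $\sA_{\bx,n}$, which is precisely the two-arm event. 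Adding a single deterministic point $\by$ at distance $\geq b_n$ from $\bx$ only alters connectivity in a neighborhood of $\by$ and therefore does not upset this dichotomy. Combined with Remark \ref{r:cs} this yields
\[
\EE[|D_\bx F(\sC_n) - D_\bx F(\sA_{\bx,n})|] \leq K\, \PP(\mathscr{A}_{r,b_n})^{1-1/p},
\]
and the analogous bound for the $\by$-perturbed quantity, so that $\psi'(n)$ and $\phi'(n)$ are each at most a constant multiple of $\PP(\mathscr{A}_{r,n^\alpha})^{(1-1/p)(1-4/p)}$.

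Feeding these estimates into the Kolmogorov bound of Remark \ref{r:ruse} produces
\[
\dk(\wh F(\sC_n), N(0,1)) \leq C\bigl(\PP(\mathscr{A}_{r,n^\alpha})^{(1-1/p)(1-4/p)/2} + n^{(\alpha-1)/2}\bigr),
\]
and, given $\varepsilon>0$, choosing $p = p(\varepsilon)$ large enough so that $(1-1/p)(1-4/p) \geq 1-2\varepsilon$ delivers \eqref{eq:rate-comp}. The only nontrivial obstacle is the geometric identification of $\{D_\bx F(\sC_n)\neq D_\bx F(\sA_{\bx,n})\}$ with a two-arm event in $\mathscr{G}_r$; this step is however considerably cleaner than its MST analogue (cf.\ Proposition \ref{p:2scale}), as connectivity in the Gilbert graph is a strictly local, monotone notion and requires no minimax argument — once the inclusion is established, everything reduces to the quantitative two-arm estimate \eqref{eq:two-arm} and the general machinery of Section \ref{ss:introone}.
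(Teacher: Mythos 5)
Your proof follows essentially the same route as the paper: Corollary~\ref{c:dw}{\bf (b)} in the homogeneous setting, the boundary-layer modification of Remark~\ref{r:ruse}, the variance lower bound from Penrose, identification of the two-scale add-one-cost discrepancy event with (a translate of) the two-arm event, and optimization over $p$. There are two places where the details diverge, and they are worth flagging. First, your moment bound $|D_\bx F(\sB)| \le 2 + |\cP \cap \sB_r(\bx)\cap \sB|$ is Poisson-dominated and thus gives uniform $L^p$-bounds for every $p$, which is enough; the paper instead notices a sharper \emph{deterministic} packing bound: a ball of radius $r$ can touch at most $\kappa_d$ distinct connected components of a union of radius-$r$ balls, so $|D_\bx F| \le \kappa_d - 1$ (see~\eqref{eq:Dx-comp}). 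Both suffice, but the deterministic bound is the geometric content here, and it is cleaner to invoke. Second, for the $\by$-perturbed quantity $D_\bx F^{\by}$ the paper carries out a delicate case analysis to prove $\PP(\mathscr A_r^N(\cP\cup\by)) \le c\,\PP(\mathscr A_r^N(\cP))$, splitting according to whether the arms approach $\sB_{2r}(\by)$. You instead argue that $\by \notin \sA_{\bx,n}$ (which holds automatically on $(\sB_n)^2_\Delta$) makes $\by$ irrelevant to the two-arm event. This is actually the more economical observation, but your justification (``only alters connectivity in a neighborhood of $\by$'') is a bit vague; the precise reason is that the two-arm event $\mathscr A_r^{n^\alpha}$ recentred at $\bx$ is a measurable function of $\cP \cap \sC_{n^\alpha}(\bx)$ alone, the Penrose $(\bx,r,N)$-stability is a property of $X = \cP\cap\sA_\bx$ alone, and the stability applies to \emph{every} exterior configuration $Y\subset \sA_\bx^c$ including $Y\cup\{\by\}$. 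You should state this directly. Also, the phrasing ``components of $\mathscr G_r$ lying in $\sC_n\setminus\sA_{\bx,n}$'' is off --- the two arms live in the annulus $\sA_{\bx,n}\setminus\sC_r(\bx)$, and the connection through $\sC_n\setminus\sA_{\bx,n}$ is what causes the merger; but this is a wording slip and does not affect the argument.
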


Hence the decay is polynomial if the radius is not critical or if the dimension is $2.$

\begin{proof}
We apply Corollary \ref{c:dw} modified along the lines of Remark \ref{r:ruse}, without the marks space, i.e. with $Z$ as a singleton, hence all mark-related notation is omitted. For the moment conditions, we must control by how much  the number of components is modified upon the removal of a ball. Luckily, this is easy as one can   show that a ball of $\mathbb{R}^{d}$ touches at most $\kappa _{d}$ distinct connected components of union of balls with the same radius, for some $\kappa _{d}\geq 2$. Hence the removal of this ball within any countable union of balls  cannot decrease the number of connected components by more than $1$ and it can increase by at most $\kappa _{d}-1$ (1 component that is split in at most $\kappa _{d}$ components),  and for any $\chi\subset \mathbb{R}^{d} ,\sB\subset \mathbb{R}^{d},\bx\in \mathbb{R}^{d}$,
\begin{align}
\label{eq:Dx-comp}
|\mathscr K_{r}((\chi \cup \{x\}) \cap \sB)-\mathscr  K_{r}(\chi \cap \sB)|\leq  \kappa _{d}-1 ,
\end{align}
hence $K'<\infty $ in \eqref{e:m_con2}.

Let the cube $\sA_{\bx}=\sC_{N}(\bx)$ where  $N=N_{n}= n^{\alpha }>(1+8\sqrt{d})r$ and $0<\alpha <1$ for $n$ sufficiently large. Let $X=\cP\cap \sC_{N}(\bx)$.
Denote by $\mathscr  A _{r}^{N}(\bx)$ the two-arm event with center $\bx$, i.e. the two-arm event for the translated configuration $\tau _{-\bx}\cP$. Let $\bx$ be such that $\sA_{x}\subset \sC_{n}$. If  $\mathscr  A _{r}^{N}(\bx) $ is not realised, $X$ is $(\bx,r,N)$-stable in the terminology of Penrose \cite[Section 13.7]{PenroseBook}, which implies that for $Y=(\cP\cap \sC_{n}\setminus \sA_{\bx})\subset \sA_{\bx}^{c}$ and $W=\cP\cap \sC_{r}(\bx)\cup \{\bx\}\subset \sC_{r}(\bx)$, we have with $X':=(X\setminus \sC_{r}(\bx))\cup  W=X\cup \{\bx\},$ 
\begin{align*}
\mathscr  K_r(X\cup Y)-\mathscr  K_r(X'\cup Y))&=\mathscr  K_r(X)-\mathscr  K_r(X')\\
\mathscr  K_r(\cP\cap \sC_{n})-\mathscr  K_r((\cP\cup \{\bx\})\cap \sC_{n})&=\mathscr  K_r(\cP\cap \sC_{n}\cap \sA_{\bx}))-\mathscr  K_r((\cP\cup \{\bx\})\cap \sC_{n}\cap \sA_{\bx} )\\
D_{\bx}F(\sC_{n})&=D_{\bx}F(\sC_{n}\cap \sA_{\bx}).
\end{align*}
Hence, using also \eqref{eq:Dx-comp}, $\psi'(n)$ as defined in Remark \ref{r:ruse} satisfies for $p>4$, for some $C>0,$
\begin{align*}
 { \psi'(n)}\leq C\mathbb{P}(\mathscr  A _{r}^{N})^{ (1-\frac{4}{p})(1-\frac{1}{p})}.
\end{align*}   

Dealing with the addition of some $\by\in \mathbb{R}^{d}$ to treat $\phi'(n)$ requires a bit more care. Let $\mathbf{N}_{\by}$ be the class of finite subsets of $\sB_{r}({\by})$, and $\mathbf{N}_{\by}'$ the class of $\mathcal  U \in  \mathbf{N}_\by$ such that $\sB_{r}(\by)\subset \sB_{r}(\mathcal  U)$, and $\cP'=\cP\setminus \sB_r({\by)}$. Let us observe that if $\mathscr  A_ {r}^{N}(\cP \cup \by)$ is realised, then either there are two arms disconnected from $\sB_{2r}({\by)}$, in which case the configuration of $\cP \cap \sB_r({\by)}$ is irrelevant, either there is one such arm and another arm that approaches $\sB_r({\by)}$, meaning   $\mathscr  A_{r}^{N}(\cP '\cup \mathcal  U) $ occurs for all $\mathcal  U \in \mathbf  N_ {\by } '$, or there are no such  arms, which means in particular that $\mathscr  A _{2r}^{N/2}(\cP-\by)$ is realised.  Then 
\begin{align*}
\mathbb{P}(\mathscr  A_ {r}^{N}(\cP\cup \by))\leqslant& \mathbb{P}(\forall \,\mathcal  U \in \mathbf  N_{\by},\mathscr  A_ {r}^{N}(\cP '\cup \mathcal  U ))+\mathbb{P}(\forall\; \mathcal  U \in \mathbf  N_{\by}':\mathscr  A_ {r}^{N}(\cP '\cup \mathcal  U ))+\mathbb{P}(\mathscr  A _{2r}^{N/2}(\cP ))\\
\leqslant &\mathbb{P}(\mathscr  A_ {r}^{N}(\cP ))+\frac{\mathbb{P}(\mathscr  A_ {r}^{N}(\cP ))}{\mathbb{P}(\mathscr  A_ {r}^{N}(\cP )\;|\;\forall\; \mathcal  U  \in \mathbf  N_{\by}':\mathscr{A}(\cP' \cup \mathcal  U  ))}+\mathbb{P}(\mathscr  A _{2r}^{N/2}(\cP )),
\end{align*}
and
\begin{align*}
\mathbb{P}(\mathscr  A_ {r}^{N}(\cP)\;|\;\forall\; \mathcal  U  \in \mathbf  N_{\by}':\mathscr{A}(\cP' \cup \mathcal  U  ))\geqslant \mathbb{P}(\cP \cap \sB_r(\by)\in \mathbf  N_{\by}')=:\delta _{r}>0.
\end{align*}
In view of the right hand side of \eqref{eq:two-arm}, this yields the existence of some $c>0$ such that $\mathbb{P}(\mathscr  A_ {r}^{N}(\cP \cup y))\leqslant c\mathbb{P}(\mathscr  A_ {r}^{N}(\cP ))$,
 which in turn immediately  yields \eqref{eq:rate-comp} using Remark \ref{r:ruse}.
   \end{proof}

%

%


\section{Application to heavy-tailed shot noise excursions}

The geometric study of the excursion sets of random fields is a classical and still active domain of research, motivated by concrete applications in engineering,  physics, biology and statistics --- see e.g. \cite{AT, AW} for an overview. Because of their tractable structure, Gaussian fields have played for a long time a privileged role, but a growing number of applications requires the use of random fields that are obtained as the convolution of a deterministic kernel with an atomic random measure --- see e.g. the monograph \cite{Bac10a} for theoretical foundations, as well as for applications to telecommunication networks. The aim of this section is to use our general bounds from Section 1 in order to study the multidimensional fluctuations of geometric functionals of {\it shot-noise processes}, that is, of random fields defined as the convolution of a deterministic kernel with a marked Poisson measure on some subset of $\R^d$. The functionals of interest will be expressed in terms of perimeters and volumes of excursion sets.

To fix the notation, select $d\geq 2$, let $g:\mathbb{R}^{d}\to \mathbb{R}$ be a mapping of class $\mathcal{C}^{1}$ that does not vanish a.e., and let $\pi$ be a probability on $\R$. We denote by $\eta$ a Poisson process on $\R^d \times \R$, with intensity ${\rm Leb}\otimes \pi$. Given a measurable $\sB\subset \R^d$, we write throughout the section $\eta_\sB = \eta |_{\sB\times \R}$ and define
\begin{align*}
X_{\eta_\sB}(\bx) := \int_\R \int_{\R^d} m\, g(\bx - \by) \eta_\sB(dm, d\by) =  \sum_{i\ge 1}M_{i}g(\bx-\bx_{i}),\quad \bx\in \mathbb{R}^{d},
\end{align*}
(where $\{ (\bx_i, M_i): i\geq 1\}\subset \sB\times \R$ is any enumeration of the support of $\eta_\sB$) to be the homogeneous Poisson {\bf shot-noise process} with kernel $g$ and marks distribution $\pi$. If $g\in L^{1}(\mathbb{R}^{d})$ and $\int |x|\pi(dx)<\infty $, the field $X_{\eta_\sB}$ is well-defined for every Borel set $\sB$ and every $\bx\in \R^d$, since in this case the Campbell-Mecke formula (see \cite[Chapter 4]{LP}) yields
\begin{align}
\label{eq:sn-integr}
\mathbb{E}\left[
\sum_{i\geq 1 }\left|{M_i}
g(\bx-\bx_{i})
\right|
\right]\leq (\mathbb{E}|M|)\|g\|_{L^{1}}<\infty,
\end{align}
where $M$ is any random variable with law $\pi$. Our main quantitative results are stated in the forthcoming Section \ref{ss:mainexc}. One remarkable feature of our findings is that they only require that $|g(x)|$ and $\|\nabla g(x)\|$ verify an polynomial decay condition of order $<-d$ at infinity, which is a minimal requirement for ensuring the integrability of \eqref{eq:sn-integr}.

Recently, Bulinski, Spodarev and Timmerman \cite{BST} have obtained results for the excursion volume of  {\bf quasi-associated} random fields, which applies to shot noise fields under an assumption of polynomial decay of the order at most $-3d$. In \cite{Lr19}, presumably optimal rates of convergence in the Kolmogorov distance are proved for the volume, the perimeter, and the Euler characteristic of the excursion sets of shot-noise processes, under a stronger assumption of polynomial decay. Central limit theorems of the type derived in this section can be useful for building inference and testing procedures to recover global characteristics of a stationary isotropic random field by using sparse information -- see e.g. \cite{BdBDE}. 

\begin{remark} If $\xi$ is a finite subset of $\R^d \times \R$, we define analogously
$
X_\xi(\bx) := \sum_{(m,\by)\in \xi} m\, g(\bx-\by).
$

\end{remark}


\subsection{Excursion functionals}

 From now on, we assume for simplicity that the marks equal $\pm 1$ with probability $1/2$, i.e. $\pi  =\frac{1}{2}(\delta _{1}+\delta _{-1})$. We now fix $\sB\subset \R$ such that $|\sB|<\infty$, and consider the random elements $\eta_\sB$ and $X_{\eta_\sB}$  introduced above; since $\eta_\sB$ has a.s. finite support, $X_{\eta_\sB}$ inherits the smoothness properties of $g$ with probability one. We define the {\bf excursion sets} of $X_{\eta_\sB}$ as follows:
\begin{align*}
E_{u, \eta_\sB }:=\left\{\bx \in \R^d: X_{\eta_\sB }(\bx) \geq u\right\}, \quad u \in \mathbb{R}.
\end{align*}
Our goal is to study geometric quantities associated with such excursion sets, in particular, the volume and the perimeter. We define the {\bf excursion volume} in the observation window $\sW$ by
\begin{align*}
V\left(u, \eta_\sB , \sW\right):=\mathcal{H}^{d}\left(E_{u, \eta_\sB } \cap \sW\right)
\end{align*}
and its smoothed version by
\begin{align*}
V(\varphi, \eta_\sB , \sW):=\int_{\mathbb{R}} \varphi(u) V(u, \eta_\sB , \sW) d u
\end{align*}
where $\mathcal{H}^{k}$ denotes the $k$ -dimensional Hausdorff measure, and $\varphi: \mathbb{R} \rightarrow \mathbb{R}$ is a smooth function with compact support. The connection between $V(u, \eta_\sB , \sW)$ and $V(\varphi, \eta_\sB , \sW)$ is made clear as follows. Let $\varphi_{n}$ be any approximation of identity shifted by some fixed $u \in \mathbb{R}$. Then, under appropriate non-degeneracy assumptions on $g$, we have $V(u, \xi , W)=\lim _{n \rightarrow \infty} V(\varphi_{n}, \eta_\sB , \sW) $. Similarly, we define the {\bf excursion perimeter} in the window $\sW$ by
\begin{align*}
L(u, \eta_\sB , \sW)=\text{\rm{Per}}(E_{u,\eta_\sB };\sW):=\mathcal{H}^{d-1}\left(\partial E_{u, \eta_\sB } \cap \sW\right)
\end{align*}
and its smoothed version by
\begin{align*}
L(\varphi, \eta_\sB, \sW)=\int_{\mathbb{R}} \varphi(u) L(u, \eta_\sB , \sW) d u.
\end{align*}
This version is more convenient to study because the coarea formula \cite{BieDes12} yields the representation
\begin{align}
\label{eq:coarea}
L(\varphi, \eta_\sB , \sW)=\int_{\sW}\varphi (X_{\eta_\sB }(\bx))\|\nabla  X_{\eta_\sB }(\bx)\|d\bx.
\end{align}

\subsection{Bounds to the normal} \label{ss:mainexc}

Let $\sB_0$ be the unit ball centered at the origin, and let $\sB_n = n\sB_0$, $n\geq 1$. Our main finding is the following quantitative multidimensional CLT.

\begin{theorem}
\label{thm:sn}
Let $m\geq 1$, and for $1\leq i\leq m$, let $\varphi_{i} $ be a non-constant $\mathcal{C}^{1}$ non-negative function with compact support. Define
\begin{align*}
F_{i}=U_{i}(\varphi _{i},\eta _{\sB_{n}},\sB_{n})
\end{align*}
with $U_{i}\in \{V,L\}$, and $\bF=(F_{i})_{1\leq i\leq m}$.
Assume that  $g(x)$ is $\mathcal{C}^{1}$, non-negative and $ | g(x) | ,\|\nabla g(x)\|\leq C_{g}(1+\|x\|)^{-\delta },x\in \mathbb{R}^{d} $, for some $C_{g}\geq 0,\,\,\delta >d$. Then, for $n$ sufficiently large the variances satisfy the   bounds 
\begin{align}
\label{eq:var-sn}
 \sigma_{i} |\sB_n| \leq \Var(F_{i} )\leq  c_{i} |\sB_n| 
\end{align}for some $c_{i}\geq \sigma_{i} >0$ independent of $n$. Moreover, let $N_{\Sigma _{n}}$ be a centered Gaussian vector with the same covariance structure as $\widehat{\bF}=n^{-d/2}( \bF - \EE \bF)$; then, for every  $0<\alpha <1$ and $\gamma \in (0, \frac12)$,
\begin{align}\label{e:boundz}
d_3\left(\mathbf{\wh F},  N_{\Sigma_{n}}\right) \le C_{\alpha ,\gamma} \left[ 
  n^{(\alpha -1)\frac{d}{2}}   +  n^{-\alpha\, \gamma \frac{(d-\delta )^{2}}{\delta }} \,\,\right]\to 0, \quad n\to\infty
,
  \end{align}
where $C_{\alpha ,\gamma}$ is a constant depending on $\alpha, \gamma, g, U_i, \varphi_i$. Furthermore, if the asymptotic covariance matrix $\Sigma_{\infty }=\lim_{n}\Sigma _{n} $ exists  {and is positive definite}, then the bound \eqref{e:boundz} continues to hold for $d_3$ and $d_c$ (possibly with a different constant $C_{\alpha, \gamma}$).  
\end{theorem}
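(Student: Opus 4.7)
The plan is to apply Corollary \ref{c:euclmulti} (via the partition refinement of Remark \ref{r:ruse2}) with $Z = \{-1,+1\}$, $\pi = \frac{1}{2}(\delta_{1}+\delta_{-1})$, reference set $\sB_0$, local-window radius $b_n = n^\alpha$ for some $\alpha \in (0,1)$, and kernels $h^i(\sC; \chi) = U_i(\varphi_i, \chi, \sC)$. With this choice, $F_i = U_i(\varphi_i, \eta_{\sB_n}, \sB_n)$ and $G_i(\sA_{x,n}) = U_i(\varphi_i, \eta|_{\sA_{x,n}\times Z}, \sB_n)$. The proof splits into three tasks: (a) uniform $L^p$ bounds on the add-one cost operators, (b) the variance lower bound in \eqref{eq:var-sn}, and (c) the two-scale discrepancy estimates \eqref{e:goo}--\eqref{e:goooo}.

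For Task (a), the volume admits the representation $V(\varphi_i, \chi, \sB_n) = \int_{\sB_n} \Phi_i(X_\chi(y))\, dy$ where $\Phi_i(t) := \int_{-\infty}^t \varphi_i(u)\, du$ is bounded and Lipschitz with constant $\|\varphi_i\|_\infty$; this yields the deterministic bound $|D_{(x,m)} V| \leq \|\varphi_i\|_\infty \|g\|_{L^1}$, which is finite since $g \in L^1(\R^d)$ by $\delta>d$. For the perimeter, the coarea identity \eqref{eq:coarea} combined with standard Poisson Rosenthal-type moment inequalities (using $g, \nabla g \in L^p(\R^d)$ for every $p\ge 1$) yields uniform bounds $\sup_y \sup_\sB \EE[\|\nabla X_{\eta_\sB}(y)\|^p]<\infty$ for any $p$, from which moment bounds on $D_{(x,m)} F_i$ and $D_{(x,m)} F_i^{(y',m')}$ follow. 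For Task (b), the upper bound in \eqref{eq:var-sn} is immediate from the Poincar\'e inequality and Task (a). The lower bound uses the first-chaos estimate $\Var(F_i) \geq \int (\EE[D_{(x,m)} F_i])^2\, dx\, d\pi(m)$; a first-order Taylor expansion yields, for the volume,
\[
\EE[D_{(x,m)} V] = m \int_{\sB_n} g(y-x)\, \EE[\varphi_i(X_{\eta_{\sB_n}}(y))]\, dy + O(1),
\]
and since $g$ is non-negative and non-trivial while $X_{\eta_{\sB_n}}(y)$ has a smooth density with full support for $y$ in the interior (a standard consequence of the symmetric marks and non-degenerate $g$, e.g., via Malliavin smoothness), one obtains $\EE[\varphi_i(X_{\eta_{\sB_n}}(y))] \geq c > 0$ uniformly; integration gives $\Var(F_i) \geq c' n^d$. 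The perimeter case is handled analogously by expanding first-order in both $g$ and $\nabla g$.

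Task (c) is the main step. The key identity is
\[
X_{\eta_{\sB_n}}(y) - X_{\eta_{\sA_{x,n}}}(y) = X_{\eta_{\sB_n \setminus \sA_{x,n}}}(y) =: \Delta X(y),
\]
a centered Poisson sum whose variance is bounded by $\int_{\sB_n\setminus \sA_{x,n}} |g(y-z)|^2\, dz$, which is of order $b_n^{d-2\delta}$ for $y$ in the near-field zone $\{\|y-x\| \leq b_n/2\}$. For the volume, we apply the interpolation inequality
\[
|\Phi_i(a+u+v) - \Phi_i(a+u) - \Phi_i(a+v) + \Phi_i(a)| \leq C|v|\,|u|^\theta, \qquad \theta \in [0,1],
\]
obtained from $\min(2\|\varphi_i\|_\infty|v|, \|\varphi_i'\|_\infty|v|\,|u|) \leq C|v|\,|u|^\theta$, with $v = m g(y-x)$ and $u = \Delta X(y)$. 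Splitting the integration over $\sB_n$ into near-field and far-field, using Jensen to bound $\EE[|\Delta X(y)|^\theta]\leq \EE[\Delta X(y)^2]^{\theta/2}$ in the near-field, and using the decay estimate $\int_{\|z\|>b_n/2}|g(z)|\,dz \lesssim b_n^{d-\delta}$ in the far-field, one obtains
\[
\EE[|D_{(x,m)} F_i - D_{(x,m)} G_i(\sA_{x,n})|] \leq C\bigl(b_n^{\theta(d-2\delta)/2} + b_n^{d-\delta}\bigr).
\]
Raising to the H\"older power $(p-q)/p$ (with $q\in\{4,5,6\}$) from \eqref{e:goo}--\eqref{e:goooo}, optimizing $\theta$ against the decay exponent, and then taking $p$ large yields the stated rate $n^{-\alpha\gamma(\delta-d)^2/\delta}$ for $\gamma<1/2$, which combines with the boundary term $(b_n/n)^{d/2}=n^{(\alpha-1)d/2}$ from Corollary \ref{c:euclmulti}. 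The perimeter case proceeds identically, with one extra Taylor step absorbing the non-linearity of $\|\cdot\|$ into a factor $\|\nabla g(y-x)\| + \|\Delta(\nabla X)(y)\|^\theta$ that is controlled by the uniform $L^p$ bounds on $\nabla X$ from Task (a). The $d_c$ and $d_2$ estimates require $p>6$ and the positive-definiteness of $\Sigma_\infty$ to invoke the corresponding parts of Corollary \ref{c:euclmulti}, but the discrepancy analysis is unchanged.

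The principal obstacle is in Task (c), namely the simultaneous optimization of the interpolation exponent $\theta$, the H\"older exponent $(p-q)/p$, and the choice of $b_n = n^\alpha$, together with a careful near/far-field decomposition, in order to extract the precise dependence $(\delta-d)^2/\delta$ on the tail exponent of $g$. A secondary but genuinely analytic point is the uniform density lower bound $\EE[\varphi_i(X_{\eta_{\sB_n}}(y))] \geq c>0$ required in Task (b), which rests on Malliavin smoothness/positivity for the one-dimensional distribution of the shot noise value.
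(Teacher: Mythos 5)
The overall scheme matches the paper's: apply Corollary \ref{c:euclmulti} (refined \`a la Remark \ref{r:ruse2}) with $\pi=\tfrac12(\delta_1+\delta_{-1})$ and $b_n=n^\alpha$, and split the work into moment bounds, a variance lower bound, and the two-scale discrepancy. Your Task~(a) is essentially the paper's (multivariate Mecke / Campbell moment control), but Tasks~(b) and~(c) take genuinely different routes, and both contain gaps worth flagging.

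On the \emph{variance lower bound} (Task~(b)), the paper does not use the first-chaos inequality at all: it constructs two explicit marked configurations $\xi_1,\xi_2$ (a filled $R$-ball and an $R$-annulus of lattice points with $+1$ marks), shows on a high-probability event that the excursion volume/perimeter differs between $\eta+\xi_1$ and $\eta+\xi_2$ by a fixed positive amount, and feeds this into \cite[Theorem~5.3]{LPS16}. Your first-chaos route is cleaner if it works, but there are two real gaps. First, the justification ``$X_{\eta_{\sB_n}}(y)$ has a smooth density with full support ... via Malliavin smoothness'' is not correct as stated: on the Poisson space $D$ is a difference operator, and shot-noise laws need not be absolutely continuous (e.g.\ a possible atom at $0$ for compactly supported $g$). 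What you actually need is $\mathbb{P}\bigl(X_{\eta_{\sB_n}}(y)\in\mathrm{supp}\,\varphi_i\bigr)\geq c>0$ uniformly over the bulk, and this should be argued via the support of the infinitely divisible law (an L\'evy-measure argument), not smoothness. Second, your Taylor expansion produces a remainder of the \emph{same order} as the leading term (both are $O(\|g\|_{L^1})$), so the expansion alone does not give a lower bound; one must instead use the exact monotone form $\EE[\Phi_i(X(y)+g(y-x))-\Phi_i(X(y))]=\EE\int_{X(y)}^{X(y)+g(y-x)}\varphi_i$, which is nonnegative and can then be bounded below. For the perimeter this is still more delicate because the first-order coefficient involves $\varphi_i'$ and has no definite sign, so ``handled analogously'' is not immediate.

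On the \emph{two-scale discrepancy} (Task~(c)), your interpolation-plus-Jensen argument is a genuinely different (and in fact sharper) route. The paper bounds $|D_\bx(\bz)|$ by $\min(\text{a }g(\bz-\bx)\text{-bound},\ \text{a }\Delta X\text{-bound})$, estimates $\EE|\Delta X|$ via Campbell (ignoring the $\pm 1$ cancellation, giving $b_n^{d-\delta}$), integrates the uniform near-field bound over a ball of radius $n^\beta$, and then optimizes $\beta$, obtaining exponent $-(\delta-d)^2/\delta$ on $b_n$. You instead keep the full product $|g(y-x)|\cdot|\Delta X(y)|^\theta$ and use $\EE[\Delta X(y)^2]=\int_{\sB_n\setminus\sA_{x,n}}g(y-z)^2\,dz\lesssim b_n^{d-2\delta}$, which does exploit the mark cancellation and yields a smaller near-field contribution while the $|g(y-x)|$ factor renders the near-field $y$-integral $O(1)$. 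If you run the numbers through, the optimal choice gives exponent $d-\delta$ on $b_n$ (for the volume), which is strictly better than $-(\delta-d)^2/\delta$; your claim that the optimization ``yields the stated rate'' understates what the argument actually produces. The one genuine gap is the perimeter case: because $b\mapsto\|b\|$ is not $C^2$, the mixed second difference $\varphi(a)\bigl[\|b+v_2+u_2\|-\|b+v_2\|-\|b+u_2\|+\|b\|\bigr]$ is only bounded by $\min(|u_2|,|v_2|)\leq |u_2|^\theta|v_2|^{1-\theta}$, so the factor coming with the near-field cutoff is $\|\nabla g(y-x)\|^{1-\theta}$ rather than $\|\nabla g(y-x)\|$; this imposes the constraint $\theta<(\delta-d)/\delta$ for the near-field $y$-integral to converge, and you do not mention it. With that constraint the near-field term becomes the bottleneck for the perimeter and the rate changes, though it is still at least as good as the paper's stated one. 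Please also double-check that the uniform $L^p$ moment bounds you invoke control $\EE\|\nabla f_X^{\sA_\bx}\|^p$ and $\EE|f_X|^p$ as needed in those mixed terms.
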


We will actually show that the estimate \eqref{e:boundz} holds for $d_2, d_3$ with $\gamma$ replaced by $\frac12 - \frac4p$ for an arbitrary $p>4$, and for $d_c$ with $\gamma$ replaced by $\frac12 - \frac3p$ for all $p>6$. The proof of Theorem \ref{thm:sn} is presented in the forthcoming Sections \ref{sec:var-lower-sn} and \ref{ss:wsup}.


\subsection{Variance lower bound}
\label{sec:var-lower-sn}

 Fix $i$ and set $\varphi =\varphi _{i}$ and $F=F_{i}$. We prove below that $\Var(F)\geq c | \sB_{n} | $ for some $c>0.$  In the volume case, i.e. when $U_{i}=V$,  the functional can be rewritten as
\begin{align}
\label{eq:vol-rep}
& F=\int_{\sB_{n}}\int_{}\varphi (u)\mathbf{1}_{\{X_{\eta _{\sB_{n}}}(\bx)\geq u\}}du d\bx =\int_{}\Phi(X_{\eta _{\sB_{n}}}(\bx))d\bx
\end{align}
where $\Phi$ is the primitive function of $\varphi $ vanishing at $-\infty .$
We will use \cite[Th. 5.3]{LPS16} to lower bound the variance in both cases $U_i = V, L$.  It consists in finding two finite sets $\xi_1 ,\xi_2 \subset \mathbb{R}^{d}\times \{-1,1\}$ such that $|\mathbb{E}(F(\eta +\xi_1 )-F(\eta +\xi_2 )|)$ is uniformly bounded from below over translations and small perturbations of $\xi_1 ,\xi_2 .$

Let $R>1,0<\alpha  ,\lambda <1$ such that $2\alpha d+\lambda d+d<\delta , \alpha d<(\delta -d)\lambda $. 
Let $r=R^{\lambda }$, and the sets of marked points 
\begin{align*}
\xi_1 &=((R^{-\alpha }\mathbb{Z}^{d}) \cap \sB_{R})\times \{+1\}\\
\xi_2 &=((R^{-\alpha }\mathbb{Z}^{d}) \cap \sB_{R}^{r})\times \{+1\}\text{\rm{ where }}\sB_{R}^{r}=\sB_{R}\setminus \sB_{r}.
\end{align*}
Remark that the whole problem is invariant under the translation of $g$ by a fixed vector, i.e. if $g$ is replaced by $g(\bx_{0}+\cdot )$ for some $\bx_{0}\in \mathbb{R}^{d}$.
Hence we assume without loss of generality that $g(\b0)>0$, hence for some $\varepsilon _{1}\in (0,1),g\geq g(\b0)/2>0$ on $\sB_{\varepsilon_1} $. For $\bx$ belonging to  $\sB\in \{\sB_{R},\sB_{R}^{r}\}$    it also belongs to some ball   $\sB_{\varepsilon _{1}/2}(\by)\subset \sB$, and
\begin{align}
\label{eq:lower-Xxi}
\min(X_{\xi_1  }(\bx),X_{\xi_2 }(\bx))\geq \sum_{\bz\in (R^{-\alpha }\mathbb{Z}^{d}) \cap \sB_{\varepsilon _{1}/2}(\by)}g(\bz-\bx) \geq  \sum_{\bz\in (R^{-\alpha }\mathbb{Z}^{d}) \cap \sB_{\varepsilon _{1}/2}(\by)}g(\b0)/2\geq  \kappa R^{\alpha d} 
\end{align}
for some fixed $\kappa >0$. For $x$ at distance $\rho \geq  0$ from $\sB\subset \mathbb{R}^{d},\xi =(R^{-\alpha }\mathbb{Z} ^{d})\cap \sB\times \{1\},$
\begin{align}
\label{eq:bd-xi-rho}
X_{\xi }(\bx)\le  \sum_{\by\in (R^{-\alpha }\mathbb{Z} ^{d})\cap \sB_{\rho }(\bx)^{c}}\hspace{-1cm}C_{g}(1+ \| \by-\bx \| )^{-\delta }\le  c_{1}\int_{\rho }^{\infty }R^{\alpha d}(1+ \| \by-\bx \| )^{-\delta }d\by\le  c_{2}R^{\alpha d}(1+\rho) ^{d-\delta }
\end{align}
for some $c_{1},c_{2}<\infty .$
 Let $\Omega  _{R}$ be the event that 
\begin{align}
\label{eq:X-eta-bd}
\sup_{\bx\in \sB_{R}}\sum_{(\by,m)\in \eta }C_{g}(1+\|\by-\bx\|)^{-\delta }<\frac{\kappa R^{\alpha d}}{2},
\end{align}
whose complement  probability is bounded by $c_{3}R^{d}\exp(-c_{3}(R^{\alpha d})^{\nu })$ for some $c_{3},\nu >0$, see for instance \cite[Proposition A.1]{LrM19}. Define also 
\begin{align*}
X_{i}&=X_{\eta +\xi_i }=X_{\eta }+X_{\xi_i },\;i=1,2.
\end{align*}
Assume from now on up to increasing $R$ that $\frac{3\kappa }{2}R^{\alpha d}>u$ and $c_{2}R^{\alpha d}r^{d-\delta } <u/4.$

\begin{lemma}If $\Omega _{R} $ is realised, for $v\in[0,u],$
\begin{align*}
0=\text{\rm{Per}}(v,X_{1},\sB_{R})&\le  \text{\rm{Per}}(v,X_{2},\sB_{R})- \mathbf{1}_{\{v\geq  u/2\}}\text{\rm{Per}}(u/4,X_{\eta },\sB_{1})\\
 \text{\rm{Vol}}(v,X_{2},\sB_{R})+\mathbf{1}_{\{v\geq  u/2\}}\text{\rm{Vol}}(u/4,X_{\eta },\sB_{1})&\le  \text{\rm{Vol}}(v,X_{1},\sB_{R})=\text{\rm{Vol}}(\sB_{R})
\end{align*}
\end{lemma}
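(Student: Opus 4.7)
The plan is to exploit the uniform bound provided by $\Omega_R$ on the ambient shot noise $|X_\eta|$ together with the pointwise estimates \eqref{eq:lower-Xxi}--\eqref{eq:bd-xi-rho} on the grid contributions $X_{\xi_1}, X_{\xi_2}$ in order to pin down the excursion structure of $X_1$ and $X_2$ inside $\sB_R$. First I would combine \eqref{eq:X-eta-bd} and \eqref{eq:lower-Xxi} to obtain $X_1(\bx)\ge \kappa R^{\alpha d}/2$ on $\sB_R$, which strictly exceeds $v\le u$ for $R$ large enough; by continuity of $X_1$, this margin extends to an open neighborhood of $\sB_R$, so $\{X_1\ge v\}\cap \sB_R = \sB_R$ and $\partial\{X_1\ge v\}\cap \sB_R = \emptyset$. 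This immediately gives the two equalities $\mathrm{Vol}(v,X_1,\sB_R) = |\sB_R|$ and $\mathrm{Per}(v,X_1,\sB_R) = 0$. The same argument applied to $X_2$ but only on the annulus $\sB_R^r = \sB_R\setminus \sB_r$ yields $X_2(\bx)>v$ there, so that both the sub-level set $\{X_2<v\}\cap \sB_R$ and the boundary $\partial\{X_2\ge v\}\cap \sB_R$ lie entirely in $\sB_r$.

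Second, I would use \eqref{eq:bd-xi-rho} with $\rho = r-1$ (a lower bound on the distance of $\sB_1$ to $\sB_R^r$) to show $X_{\xi_2}(\bx)\le c_2 R^{\alpha d}r^{d-\delta} < u/4$ on $\sB_1$, so that on $\sB_1$ the field $X_2 = X_\eta + X_{\xi_2}$ coincides with $X_\eta$ up to a nonnegative error of at most $u/4$. For $v \in [u/2, u]$ the inclusion $\{X_2 \ge v\}\cap \sB_1 \subseteq \{X_\eta \ge v-u/4\}\cap \sB_1 \subseteq \{X_\eta \ge u/4\}\cap \sB_1$ (using $v-u/4 \ge u/4$) is then the key geometric input: it allows comparison of the deterministic sub-volume of $X_2$ in $\sB_r$ with $\mathrm{Vol}(u/4,X_\eta,\sB_1)$, and similarly for the level-set boundary $\partial\{X_2\ge v\}\cap \sB_r$ which, being a smooth level set of the $\mathcal{C}^1$ field $X_2$ lying in the strip $\{X_\eta\in[v-u/4,v]\}\subseteq \{X_\eta\ge u/4\}$, is controlled by $\mathrm{Per}(u/4,X_\eta,\sB_1)$ via the coarea formula or implicit function theorem.

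The main obstacle is precisely this final step: translating the inclusion of level sets at shifted thresholds into the stated lower bounds on the missing volume $|\sB_R|-\mathrm{Vol}(v,X_2,\sB_R)$ and on the perimeter $\mathrm{Per}(v,X_2,\sB_R)$ by the fixed quantities $\mathrm{Vol}(u/4,X_\eta,\sB_1)$ and $\mathrm{Per}(u/4,X_\eta,\sB_1)$. This requires the $\mathcal{C}^1$ regularity of the shot-noise fields to ensure non-degeneracy of the relevant level sets, a Sard-type argument to rule out critical levels, and careful coarea bookkeeping to exploit the fact that the $u/4$-margin in \eqref{eq:bd-xi-rho} precisely matches the $u/4$-margin in the threshold condition $v\ge u/2$.
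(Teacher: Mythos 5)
Your Steps 1--3 coincide with the paper's argument: the $\Omega_R$ bound together with \eqref{eq:lower-Xxi} forces $X_1\geq v$ on $\sB_R$ and $X_2\geq v$ on $\sB_R^r$, and \eqref{eq:bd-xi-rho} gives $X_{\xi_2}<u/4$ on $\sB_1$, whence for $v\geq u/2$ the implication $X_\eta(\bx)\leq u/4\Rightarrow X_2(\bx)<u/2\leq v$ holds on $\sB_1$ --- this is exactly your inclusion $E_{v,X_2}\cap\sB_1\subset E_{u/4,X_\eta}\cap\sB_1$. For the volume inequality you already have everything you need and should write it out rather than stopping at ``allows comparison'': since $\sB_1\cap E_{u/4,X_\eta}^c\subset E_{v,X_2}^c\cap\sB_R$, one gets $\mathrm{Vol}(\sB_R)-\mathrm{Vol}(v,X_2,\sB_R)=|E_{v,X_2}^c\cap\sB_R|\geq|E_{u/4,X_\eta}^c\cap\sB_1|$. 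Note this actually delivers the complementary volume $|\sB_1|-\mathrm{Vol}(u/4,X_\eta,\sB_1)$ rather than $\mathrm{Vol}(u/4,X_\eta,\sB_1)$ as written in the lemma; the paper's displayed inclusion has the same feature, so it is worth flagging as an apparent sign/complement slip in the lemma statement, harmless for the downstream variance bound (only non-triviality of the excursion set is used) but something your argument should acknowledge.

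The genuine gap is the perimeter inequality, and neither the coarea formula nor the implicit function theorem closes it in the way you suggest. From $E_{v,X_2}\cap\sB_1\subset E_{u/4,X_\eta}\cap\sB_1$ one cannot infer any ordering of the $\mathcal{H}^{d-1}$-measures of the boundaries, since set inclusion carries no monotonicity for perimeters. Moreover the two level sets are disjoint: if $\by\in\sB_1$ with $X_\eta(\by)=u/4$, then $X_2(\by)=u/4+X_{\xi_2}(\by)<u/2\leq v$, so $\{X_\eta=u/4\}\cap\sB_1$ lies in the interior of $E_{v,X_2}^c$ and never meets $\partial E_{v,X_2}$. Since $X_{\xi_2}$ is essentially constant on $\sB_1$, the level set $\{X_2=v\}\cap\sB_1$ is close to a level set of $X_\eta$ at some level in $(u/4,u)$, whose $(d-1)$-measure bears no a priori relation to that of $\{X_\eta=u/4\}\cap\sB_1$. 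The asserted bound $\mathrm{Per}(v,X_2,\sB_R)\geq\mathrm{Per}(u/4,X_\eta,\sB_1)$, which must hold for every fixed $v\in[u/2,u]$, therefore does not follow from the set inclusion plus regularity; it needs an additional separation/topological or averaging argument that your proposal does not supply (the paper's own proof also stops here with ``which allows to conclude the proof,'' so the step is implicit there as well, but your write-up must still fill it in).
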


\begin{proof}
Let $\bx\in \sB_{R}$. We have using \eqref{eq:lower-Xxi}
 and \eqref{eq:X-eta-bd}
\begin{align*}
X_{1}(\bx)\geq  X_{\xi_1}(\bx)- | X_{\eta }(\bx) | \geq   (\kappa -\kappa /2)R^{\alpha d}\geq  u\geq  v
\end{align*}
hence $\sB_{R}\subset E_{v,X_{1}}$. Similarly, for $x\in \sB_{R}^{r}$, $
X_{2}(\bx)\geq  v $
and $\sB_{R}^{r}\subset E_{v,X_{2}}$.
If $v\geq  u/2$, for $\bx\in \sB_{1}$, \eqref{eq:bd-xi-rho} yields $
X_{ \xi _{2}  }(\bx)\le  c_{2}R^{\alpha d}r^{d-\delta } <u/4$. If furthermore $X_{\eta }(\bx)\le  u/4,X_{2}(\bx)<u/2\leqslant   v$, hence $\sB_{1}\cap E_{u/4,X_{\eta }}\subset E_{v,X_{2}}^{c}$, which allows to conclude the proof.

\end{proof} 

Hence since $\text{\rm{Supp}}(\varphi )\subset [0,u],$ with $U\in \{\text{\rm{Vol}},\text{\rm{Per}}\},$
\begin{align*}
 | \mathbb{E}(\mathbf{1}_{\{\Omega _{R} \}}\int_{\mathbb{R}}\varphi (v)&\left[
U(v,\eta +\xi _{1},\sB_{R})-U(v,\eta +\xi _{2},\sB_{R})
\right]dv) | \\
&\geq  \int_{u/2}^{u}\varphi (v)\mathbb{E}(U(u/4, {\eta },\sB_{1})\mathbf{1}_{\{\Omega _{R} \}})dv\xrightarrow[R\to \infty ]{}\;\delta :=\mathbb{E}(U(u/4, {\eta },\sB_{1}))\int_{u/2}^{u}\varphi (v) dv
\end{align*}
and $\delta >0$ since  the random excursion set $E_{u/4,X_{\eta }}$ is non-trivial and stationary.

In the perimeter case we have, using the representation \eqref{eq:coarea},
\begin{align*}
 |\text{\rm{Per}}&(\varphi ,X_{1},\sB_{R}^{c})- \text{\rm{Per}}(\varphi ,X_{2},\sB_{R}^{c})|
= \left|
\int_{\sB_{R}^{c}}\varphi (X_{1}(\bx))\|\nabla X_{1 }(\bx)\|d\bx-\int_{\sB_{R}^{c}}\varphi (X_{ 2  }(\bx))\|\nabla X_{2  }(\bx)\|d\bx
\right|\\
\leq &
\int_{\sB_{R}^{c}}\left[
\left|\varphi (X_{1}(\bx))-
\varphi (X_{ 2   }(\bx))\right|\|\nabla X_{ 1 }(\bx)\|
+|\varphi (X_{ 2  }(\bx))|\left|
\|\nabla X_{ 1 }(\bx)\|-\|\nabla X_{ 2  }(\bx)\|
\right|
\right]d\bx\\
\leq & \int_{\sB_{R}^{c}}\left[
\|\varphi '\|  
\sum_{\by\in (R^{-\alpha }\mathbb{Z} ^{d})\cap \sB_{r}}|g(\by-\bx)|
 \|\nabla X_{1  }(\bx)\| +\|\varphi \|\sum_{\by \in (R^{-\alpha }\mathbb{Z} ^{d})\cap \sB_{r}}\| \nabla g(\by-\bx)\|
\right]d\bx.
\end{align*}
For the first term we have with \eqref{eq:bd-xi-rho}, for some $c_{4},c_{5},c_{6},c_{7}<\infty ,$ for $\bx\in \sB_{R}^{c},$ with $R/2>R-r,$
\begin{align*}
\mathbb{E}\|\nabla X_{1 }(\bx)\| &\le  \mathbb{E}(\|\nabla X_{\xi_1 }(\bx)+\|\nabla X_{\eta }(\bx)\|)\le   C_{g} \sum_{\by\in( R^{-\alpha }\mathbb{Z} ^{d})\cap \sB_{R}}(1+\|\by-\bx\|)^{-\delta }+c_{4}\le  c_{5}R^{\alpha d},\\
\int_{\sB_{R}^{c}} 
\mathbb{E}\|\nabla X_{1 }(\bx)\|& \sum_{\by\in (R^{-\alpha }\mathbb{Z}^{d} )\cap \sB_{r}} (1+ \| \by-\bx \| )^{-\delta } d\bx
 \\ &\quad\quad\quad \le  c_{6}R^{\alpha d}\int_{\sB_{R}^{c}}r^{d}R^{\alpha d}  (\|\bx\|-R/2)^{-\delta }d\bx\le  c_{7}R^{2\alpha d+\lambda d+d-\delta }.
\end{align*}
Performing similar computations for the second term, and for the volume via the representation \eqref{eq:vol-rep},  {we infer that} there are $c_{8},c_{9}<\infty $ such that
\begin{align*}
|\mathbb{E} ( U(\varphi , &{\eta +\xi_1 },\sW)- U(\varphi , {\eta +\xi_2 },\sW)) |\\
 \geq & |\mathbb{E} (   U(\varphi ,\eta +\xi _{1},\sB_{R})-U(\varphi ,\eta +\xi _{2},\sB_{R}) |-\mathbb{E}(|U(\varphi ,\eta +\xi _{1},\sB_{R}^{c})-U(\varphi ,\eta +\xi _{2},\sB_{R}^{c})|)\\
\geq & |\mathbb{E}(  U(\varphi ,\eta +\xi _{1},\sB_{R})-U(\varphi ,\eta +\xi _{2},\sB_{R}) \mathbf{1}_{\{\Omega_{R} \}} )|-2\text{\rm{Vol}}(\sB_{R})\|\varphi \|_{L^{1}}\mathbb{P}(\Omega _{R}^{c})-c_{8}R^{(2\alpha+\lambda +1 )d -\delta }\\
\geq &\delta +o(1)-c_{9}c_{3}R^{2d}\exp(-c_{3}R^{\alpha d \nu })-c_{8}R^{(2\alpha+\lambda +1 )d -\delta }
\end{align*} 
which is larger than, say, $\delta /2$ for $R$ sufficiently large, fixed. Continuity arguments yield $\varepsilon >0$ such that if each point of $\xi_1 ,\xi_2 $ is perturbed by a quantity in $\sB_{\varepsilon }$, we still have the inequality with $\delta /3$ instead. The inequality still holds after translating $\sB_{R}$ and $\sB_{R}^{r}$ by the same vector $\bz$ as long as   $\sB_{R+\varepsilon }+\bz \subset \sW$. Let us define 
\begin{align*}
\sA=&\{\bz\text{\rm{ such that }}\sB_{R+\varepsilon }(\bz) \subset \sW\} \\
\mathscr   U_{i}=&\{(\bx+\bz+\by_{x})_{\bx\in \xi _{i}}: \by_{x}\in \sB_{\varepsilon }(0),\bz\in A\}\subset (\mathbb{R}^{d})^{ | \xi _{i} | }
\end{align*}
and remark that $ | \sA | \geq c'n^{d}$ for some $c'>0.$
We have for every $\xi _{i,\varepsilon }\in  \mathscr  U_{i}$,
\begin{align}
\label{eq:uniform-bound}
\mathbb{E}\left[
\left|
U(\varphi ,\eta _{| \sW}+ \xi _{1,\varepsilon },\sW)-U(\varphi ,\eta_{|\sW}+ \xi _{2,\varepsilon },\sW)
\right|
\right]\geq  \frac{\delta }{3}>0.
\end{align}
Let $t=n^{d},\eta _{t}=n^{-1}\eta ,f_{t}(\eta _{t})=F$, $\sA,\varepsilon $ like above. Then we can apply \cite[Theorem 5.3]{LPS16} with these variables, hence $ \Var(F)\geq \sigma n^{d}$ for  some $\sigma >0.$
 
  \subsection{ {Two-scale} stabilisation and variance upper bound}\label{ss:wsup}
For the rest of the proof, $c,\, c'$ denote  finite constants that might vary from line to line. We write the proof for the perimeter case ($U_{i}=L$), to treat the volume case one has to replace $\varphi $ by one of its primitive functions $\Phi $, and $\|\nabla X_{\xi }(x)\|$ by $1$, see \eqref{eq:vol-rep}. Since (I),\, (II),\, (III'),\, (IV) from Section 1 are in order with
\begin{align*}
h^{i}(\sC;\chi):=&L(\varphi _{i},\chi \cap \sC,\sC),
\end{align*}we seek to apply Corollary \ref{c:euclmulti} with $Z=\mathbb{R},\pi =\frac{1}{2}(\delta _{1}+\delta _{-1}),b_{n}=n^{\alpha },0<\alpha <1$. In view of bounding $K'$ in \eqref{e:xxxx}, let $(\by_{0},m)\in \mathbb{R}^{d}\times \{-1,1\}$ and $Y$ be either $\{\delta _{(\by_{0},m)}\}$ or $\emptyset $, and denote by $\eta '=\eta +Y,F'=F(\eta '_{\sB_{n}})$, in particular $F'\in \{F,F^{\by_{0}}\} $.
Denote by $$f_{\xi }^{\sB}=X_{(\eta '+ \xi )|\sB},\;I^{\sB}_{\xi }(\bz)=\varphi( f_{\xi }^{\sB}(\bz)) \|\nabla  f_{\xi }^{\sB}(\bz)\|,$$ with the  {shorthand notations} $I_{\sB}=I_{\sB}^{\emptyset },I=I_{\sW},I_{\bx}=I_{\{\bx\}}, ...$ so that for $\bx\in \mathbb{R}^{d}\times \{-1,1\},$
\begin{align*}
F' = \int_{\sB_{n}}I(\bz)d\bz,\;\;\;\; 
D_{\bx}F' = \int_{\sB_{n}}(I_{x}(\bz)-I(\bz))d\bz.
\end{align*}
First notice that, since $\|g(\bx)\|,\|\nabla g(\bx)\|$ are bounded and  {decrease at a rate not slower than} $\|\bx\|^{-\delta }$ for some $\delta  >d$ ( {$\|\bx\|\to\infty$}), they belong to $L^{p}$ for all $p\in (0,\infty ]$. Hence for all $\xi  ,\sB\subset \mathbb{R}^{d}$, the multi-variate Mecke formula yields
\begin{align*}
  \mathbb{E}| f _{\xi }^{\sB}(\bz) | ^{p} \leq \mathbb{E}\left(
\sum_{\by\in \eta '+ \xi } | g(\by-\bz) | 
\right)^{p}\leq C_{p}(\sup_{q\leq p}\|g\|_{L^{q}}^{p}+ (| \xi  | +1 ) \|g\|_{\infty })
\end{align*}
and a similar bound holds for $\mathbb{E}\|\nabla f_{\xi }^{B}\|^{p}.$ Then
\begin{align}
\notag
 | I_{\bx} ^{\sB}(\bz)-I^{\sB}  (\bz) | \leq & \|\nabla f_{\bx}^{B}(\bz)\| | f_{ \bx}^{\sB}(\bz)-f ^{\sB}(\bz) | + | f ^{B}(\bz) | \|\nabla f_{ \bx}^{B}(\bz)-\nabla f^{B}(\bz)\|\\
 \leq & \max\left(
\label{eq:Ixz} | f^{B}(\bz) | ,\|\nabla f_{\bx}^{B}(\bz)\|
\right)\times \underbrace{\max( | g(\bz-\bx) | ,\|\nabla g(\bz-\bx)\|)}_{=:G(\bz-\bx)}.
 \end{align}
We have for $p\in \mathbb{N},\bx\in \sB_{n}$, 
\begin{align}
\notag\mathbb{E}( | D_{\bx}F'  | ^{p})\leq & c\int_{\sB_{n}^{p}} |G(\bx-\bz_{1})\dots G(\bx-\bz_{p}) | d\bz_{1}\dots d\bz_{p}\\
\label{eq:unif-moment}\leq & c'\|G\|_{L^{1}}^{p}<\infty 
\end{align}
hence the strong moment condition \eqref{e:xxxx} is satisfied for every $p.$ For $p=2$, Poincar\'e's inequality also yields that the variance is bounded by $c|\sB_{n}|$, see for instance \cite{Last}, this concludes the proof of \eqref{eq:var-sn}. We have for $\bx\in \sB_{n},\sA_{\bx}\subset \mathbb{R}^{d}$
\begin{align*}
F' -F' (\sA_{\bx})=&\int_{\sB_{n}}(I(\bz)-I^{\sA_{\bx}}(\bz))d\bz,\\
D_{\bx}F' -D_{\bx}F '(\sA_{x})=&\int_{\sB_{n}}\left[\underbrace{
I_{\bx}(\bz)-I(\bz)-(I_{ \bx}^{\sA_{\bx}}-I^{\sA_{\bx}}(\bz))}_{=:D_{\bx}(\bz)}
\right]d\bz\\
|D_{\bx}(\bz)| \leq &\min\left(
|I_{\bx}(\bz)-I(\bz)|+|I_{x}^{\sA_{\bx}}(\bz)-I^{\sA_{x}}(\bz)|,\;|I_{\bx}(\bz)-I_{\bx}^{\sA_{\bx}}(\bz)|+|I(\bz)-I^{\sA_{\bx}}(\bz)|
\right).
\end{align*}
Typically, the first term in the $\min$ will be small when $\bz$ is far from $\bx$, and the second one will be small when $\bz$ is close from $\bx$. We define the cutoff region to be some ball $\sB_{n^{\beta }/2}(\bx)$ for some $0\leq \beta \leq \alpha .$ We have for $\bz\notin \sB_{n^{\beta }/2}(\bx)$, using \eqref{eq:Ixz}, for $\sA\in \{\mathbb{R}^{d},\sA_{\bx}\},$
\begin{align*}
\mathbb{E}(| I_{\bx}^{\sA}(\bz)-I ^{\sA}(\bz) |)\leq &\left(
\|g\|_{L^{1}}+ | g(\bx-\bz) | + | g(\bz-\by_{0}) |+\|\nabla g\|_{L^{1}}+ \| \nabla g(\bx-\bz) \| + \| \nabla g(\bz-\by_{0})  \| 
\right)\\
&\times \max( | g(\bx-\bz) | ,\|\nabla g(\bx-\bz)\|)
 \end{align*}
hence since $ | g(\bz) | ,\|\nabla g(\bz)\|$ are bounded by $c\|\bz\|^{-\delta },$ using also \eqref{eq:unif-moment},
\begin{align*}
\mathbb{E}( | D_{\bx}(\bz) | )\mathbf{1}_{\{\bz\notin \sB_{n^{\beta }/2}(\bx)\}}\leq c'  | \bx-\bz | ^{-  \delta }.
\end{align*}
For $\bz\in \sB_{n^{\beta }/2}(\bx)$, with $X\in \{\emptyset , \delta _{\bx}\},\eta ''=\eta ' + X,$
\begin{align*}
 | I_{X}(\bz)-I_{X}^{\sA_{\bx}}(\bz) | \leq & | f_{X}(\bz) | \|\nabla f_{X}^{\sA_{\bx}}(\bz)-\nabla f_{X} (\bz)\|+\|\nabla f_{X}(\bz)\| | f_{X}^{\sA_{\bx}}(\bz)-f_{X}(\bz) | \\
 \leq &  | f_{X}(\bz) | \sum_{\by\in \eta ''\setminus \sA_{\bx}}\|\nabla g(\by-\bz)\|+\|\nabla f_{X}(\bz)\|\sum_{\by\in  \eta ''\setminus \sA_{\bx}} | g(\bz-\by) |.
\end{align*}
We have, by Campbell's formula,
\begin{align*}
&\mathbb{E}\sum_{\by\in \eta ''\setminus \sA_{\bx}} | f_{X}(\bz) | \|\nabla g(\by-\bz) \| \\
& \leq \int_{\sA_{\bx}^{c}}\mathbb{E}( | f_{X}(\bz) | )\|\nabla g(\by-\bz)\|d\by+ \mathbf{1}_{\{Y=\{\by_{0}\},\by_{0}\notin \sA_{\bx}\}} \|\nabla g(\by_{0}-\bz) \|\mathbb{E} | f_{X}(\bz) |  \\
&\leq c(\int_{n^\alpha }^{\infty }(r-n^{\beta }/2)^{-\delta }r^{d-1}dr+\mathbf{1}_{\{Y=\{\by_{0}\}\}}(n^{\alpha }-n^{\beta }/2)^{-\delta })\\
&\leq  c'(n^{\alpha (d-\delta )}+\mathbf{1}_{\{Y=\{\by_{0}\}\}}n^{-\alpha\delta  })\\
&\leq 2c'n^{\alpha (d-\delta )},
\end{align*}
from whioch we deduce that a similar bound  holds for  $ \mathbb{E} | D_{\bx}(\bz) |.$
Finally,
\begin{align*}
\mathbb{E}( | D_{\bx}F' -D_{\bx}F' (\sA_{\bx}) | )\leq& c \left(
n^{\beta d} n^{\alpha  (d-\delta )} +\int_{n^{\beta }}^{\infty }r^{-\delta }r^{d-1}dr
\right)\\
\leq &c(n^{\beta d+\alpha (d-\delta )}+n^{\beta (d-\delta )}).
\end{align*}
With $\beta =\alpha (\delta -d)\delta ^{-1}  ,$ the two exponents equal $\alpha \delta ^{-1}(d-\delta )^{2}$, hence in Corollary \ref{c:euclmulti},
\begin{align*}
\vartheta(n)\leq &\,\,cn^{-\alpha (d-\delta )^{2}(1-4/p)/\delta }\\
\tau (n),\rho (n)\leq &\,\, c\sup_{q=4,5,6}cn^{-\alpha (d-\delta )^{2}(1-q/p)/\delta }
\end{align*}
which in turn concludes the proof of Theorem \ref{thm:sn}.

\appendix

\section{Ancillary results}\label{s:ancillary}

\subsection{Malliavin calculus on the Poisson space}\label{appendix}

We recall some fundamental results related to Malliavin calculus for Poisson random measures, following \cite{Last} (to which we refer the reader for further details and motivation), in the general framework of Section \ref{ss:frame}. In particular, in this section we denote by $\eta$ a Poisson random measure on a measurable space $(\mathbb X, \mathcal X)$ with $\sigma$-finite intensity $\la$. 

\smallskip

\noindent\underline{\it Wiener-It\^o chaos expansions}. The {\bf Wiener-It\^o chaos expansion} of Poisson functionals \cite[Theorem 1 and Theorem 3]{Last} determines an isomorphism between $L^2(\Omega)$ and the Fock space of sequences $\{ f_n : n\ge 0\}$ of functions satisfying $f_n\in L^2_s(\la^n)$, where (for $n\geq 1$) $L^2_s(\la^n)$ indicates the Hilbert space of a.e. symmetric square-integrable functions with respect to the product measure $\la^n$, and $L_s^2(\la^0)=\RR$. Consider a random variable $F\in L^2(\Omega)$, then $F$ admits a unique Wiener-It\^o chaos expansion of the type
\begin{align}\label{e:chaos}
F = \EE[F] + \sum_{n\ge 1} I_n (f_n),
\end{align} 
where the series converges in $L^2(\Omega)$, $I_n(\cdot)$ is the $n$-th multiple Wiener-It\^o integral with respect to the compensated Poisson measure $\hat \eta$, and $f_n\in L_s^2(\la^{n})$. 

We will now present the definitons and basic properties of several Malliavin operators.

\smallskip

\noindent\underline{\it The Malliavin derivative}. We denote by $\mathbb{D}^{1,2}$ the collection of those $F\in L^2(\Omega)$ such that the kernels of the chaos expansion \eqref{e:chaos} satisfy 
\begin{align} \label{e:d12}
\sum_{n\ge 1} n \cdot n! \|f_n\|^2_n <\infty,
\end{align}
where $\|\cdot\|_n$ is the norm associated with the inner product $\langle \cdot,\cdot\rangle_n$ of $L^2(\la^n)$. It turns out \cite[Theorem 3]{Last} that $F\in \mathbb{D}^{1,2}$ if and only if $DF\in L^2(\PP\otimes \la)$, where $D$ denotes the add-one cost operator defined in \eqref{defdp}, and in this case we have that
\begin{align*}
D_x F = \sum_{n\ge 1} n I_{n-1}(f_n(x,\cdot)), \quad \PP\otimes\la\mbox{-a.e.}.
\end{align*}
\smallskip

\noindent\underline{\it Kabanov-Skorohod integrals}. Let $u \in L^2(\PP\otimes\la)$ and observe  that, for $\la$-a.e. $x$, the random variable $u(x)$ is in $L^2(\Omega)$ and thus admits the representation 
\begin{align*}
u(x) = \EE[u(x)] + \sum_{n\ge 1}  I_n(u_n(x,\cdot))
\end{align*}
where the mapping $(y_1,..,y_n)\mapsto u_n(x,y_1,...,y_n)$ belongs to $L^2_s(\la^{n})$.  We denote by $\tilde u_n$ the symmetrization of $u_n$ in its $n+1$ arguments, that is: $\tilde u_n(x_1,...,x_{n+1}) = \frac{1}{(n+1)!}\sum_{\pi} u_n(x_{\pi(1)},...,x_{\pi(n+1)})$ where the sum runs over all permutations of $[n+1]$. Since $u\in L^2(\PP\otimes\la)$, we see that $\tilde u_n\in L_s^2(\la^{n+1})$. Denote by $\dom \delta$ the family of those $u\in L^2(\PP\otimes \la)$ such that 
\begin{align*}
\sum_{n\ge 0}  (n+1)! \|\tilde u_n\|_{n+1}^2 <\infty,
\end{align*}
and, for $u\in \dom \delta$, define {\bf the Kabanov-Skorohod integral} of $u$ as 
\begin{align*}
\delta(u):= \sum_{n\ge 0} I_{n+1}(\tilde u_n).
\end{align*}
To evaluate the variance of Kabanov-Skorohod integrals, we use the following isometric property. 
\begin{lemma}[{\cite[Theorem 5]{Last}}] \label{l:SkorohodVariance} Let $u\in L^2(\PP\otimes\la)$ be such that 
\begin{align*}
\EE \iint D_x u(y) \la^2(dx,dy) <\infty.
\end{align*}
Then, $u\in\dom\delta$ and 
\begin{align*}
\EE[\delta(u)^2] =\EE \int u^2(x) \la(dx) + \EE \iint D_x u(y) D_y u(x) \la^2(dx,dy).
\end{align*}
\end{lemma}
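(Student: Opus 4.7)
My plan is to proceed via Wiener--It\^o chaos expansions. Given $u \in L^2(\PP \otimes \lambda)$, I would first write, for $\lambda$-a.e.\ $x$, the chaos expansion $u(x) = \sum_{n\geq 0} I_n(u_n(x,\cdot))$ with $u_0(x) := \EE[u(x)]$ and $u_n(x,\cdot) \in L^2_s(\lambda^n)$ for $n\geq 1$. Fubini together with the isometry property of the chaos decomposition then yields
\begin{equation*}
\int_\XX \EE[u(x)^2]\,\lambda(dx) = \sum_{n\geq 0} n!\,\|u_n\|^2_{n+1}.
\end{equation*}
By the definition of $\delta$ and the orthogonality of the multiple integrals, the target quantity admits the representation $\EE[\delta(u)^2] = \sum_{n\geq 0}(n+1)!\,\|\tilde u_n\|^2_{n+1}$, once membership in $\dom\delta$ is established.

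The heart of the proof is to unfold $\|\tilde u_n\|^2_{n+1}$ using the fact that $u_n(x;y_1,\dots,y_n)$ is already symmetric in its last $n$ arguments. This collapses the symmetrization over $(n+1)!$ permutations to an average over the choice of ``distinguished'' variable:
\begin{equation*}
\tilde u_n(x_1,\dots,x_{n+1}) = \frac{1}{n+1}\sum_{i=1}^{n+1} u_n\bigl(x_i\,;\,x_1,\dots,\widehat{x_i},\dots,x_{n+1}\bigr).
\end{equation*}
Squaring and integrating, the $i=j$ diagonal contributions sum to $\tfrac{1}{n+1}\|u_n\|^2_{n+1}$, while each of the $n(n+1)$ off-diagonal ($i\neq j$) pairs reduces, by the partial symmetry, to the same inner product $\langle u_n,\check u_n\rangle_{n+1}$, where
\begin{equation*}
\check u_n(x,y,z_1,\dots,z_{n-1}) := u_n(y\,;\,x,z_1,\dots,z_{n-1}).
\end{equation*}
This bookkeeping---the main technical obstacle---gives the key identity $(n+1)!\,\|\tilde u_n\|^2_{n+1} = n!\,\|u_n\|^2_{n+1} + n\cdot n!\,\langle u_n,\check u_n\rangle_{n+1}$.

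To recognize the resulting cross-term, I would use the Malliavin derivative formula recalled in the excerpt, namely $D_y u(x) = \sum_{n\geq 1} n\,I_{n-1}(u_n(x;y,\cdot))$ (and the symmetric expression for $D_x u(y)$). Applying the $L^2$-isometry for multiple integrals to $D_y u(x) D_x u(y)$ chaos-by-chaos, then integrating over $(x,y)\in\XX^2$ via Fubini, yields
\begin{equation*}
\EE \iint_{\XX^2} D_x u(y)\, D_y u(x)\,\lambda^2(dx,dy) = \sum_{n\geq 1} n^2(n-1)!\,\langle u_n,\check u_n\rangle_{n+1} = \sum_{n\geq 1} n\cdot n!\,\langle u_n,\check u_n\rangle_{n+1},
\end{equation*}
which matches exactly the cross-term produced in the previous step. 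Summing the identity for $(n+1)!\,\|\tilde u_n\|^2_{n+1}$ over $n\geq 0$ thus gives the desired isometry
\begin{equation*}
\EE[\delta(u)^2] = \EE\!\int u^2(x)\,\lambda(dx) + \EE\iint D_x u(y)\, D_y u(x)\,\lambda^2(dx,dy).
\end{equation*}

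Finally, to justify $u\in\dom\delta$, I would observe that the two right-hand quantities are finite under the hypotheses (the first because $u\in L^2(\PP\otimes\lambda)$, the second by the assumed integrability on $Du$ and Cauchy--Schwarz), so the partial sums $\sum_{n\leq N}(n+1)!\,\|\tilde u_n\|^2_{n+1}$ are bounded uniformly in $N$; this gives membership in $\dom\delta$ and legitimizes the formal manipulations above. The delicate step throughout is the combinatorial expansion of $\|\tilde u_n\|^2_{n+1}$ under partial symmetry, which I anticipate to be the main point at which care is required; the Malliavin derivative part is then a clean application of the isometry.
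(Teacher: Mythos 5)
Your proof is correct and follows the same chaos-expansion strategy that underlies the cited result \cite[Theorem 5]{Last} (the paper itself only refers to that reference without supplying an argument): the combinatorial unfolding of $\|\tilde u_n\|^2_{n+1}$ under partial symmetry, the identification of the off-diagonal cross-terms with $\langle u_n,\check u_n\rangle_{n+1}$, and the matching against the chaos decomposition of $\EE\iint D_xu(y)D_yu(x)\,\lambda^2(dx,dy)$ are exactly the right steps, and they check out. One small point worth making explicit: to deduce $u\in\dom\delta$ you need the cross-term series to be \emph{absolutely} summable, which follows from $n\cdot n!\,|\langle u_n,\check u_n\rangle_{n+1}|\le n\cdot n!\,\|u_n\|_{n+1}^2$ and $\sum_n n\cdot n!\,\|u_n\|_{n+1}^2=\EE\iint (D_xu(y))^2\,\lambda^2(dx,dy)<\infty$ (note the square, which is what the hypothesis is meant to impose); you invoke Cauchy--Schwarz, but the bounded-partial-sums phrasing should be replaced by this term-by-term absolute bound, since the individual cross-terms have no sign.
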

The next duality relation \cite[Theorem 4]{Last} between $D$ and $\delta$ is fundamental for implementing Stein's method. 
\begin{lemma}\label{l:duality}
Let $F\in \mathbb{D}^{1,2}$ and $u\in \dom\delta$, then 
\begin{align*}
\EE[\langle DF, u\rangle] =\EE[F\de(u)].
\end{align*}
 where $\langle \cdot,\cdot\rangle $ denotes the $L^2(\la)$ inner product. 
\end{lemma}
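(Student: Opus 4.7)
The plan is to establish the duality by matching chaotic components on both sides, using the orthogonality of multiple Wiener-It\^o integrals on distinct chaoses together with the isometry $\EE[I_n(f)I_n(g)]=n!\langle f,g\rangle_n$ for symmetric $f,g$. First I would write out the chaos expansions
\[
F = \EE[F] + \sum_{n\ge 1} I_n(f_n), \qquad u(x) = \EE[u(x)] + \sum_{m\ge 1} I_m\big(u_m(x,\cdot)\big),
\]
with $f_n\in L^2_s(\lambda^n)$ and $u_m(x,\cdot)\in L^2_s(\lambda^m)$, and recall the explicit formulas $D_xF=\sum_{n\ge 1} n I_{n-1}(f_n(x,\cdot))$ and $\delta(u)=\sum_{n\ge 0} I_{n+1}(\widetilde u_n)$. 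The membership $F\in\mathbb{D}^{1,2}$ and $u\in\dom\delta$ guarantees, via \eqref{e:d12} and the definition of $\dom\delta$, that the relevant series converge in $L^2$.

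Next I would compute the left-hand side. By Fubini (applied to $|DF|\cdot|u|$, whose $L^1(\PP\otimes\lambda)$ norm is finite by Cauchy--Schwarz since $DF, u\in L^2(\PP\otimes\lambda)$) we have
\[
\EE\bigl[\langle DF,u\rangle\bigr] = \int_{\mathbb X} \EE\bigl[ D_xF \cdot u(x)\bigr]\lambda(dx).
\]
Plugging the chaos expansions of $D_xF$ and $u(x)$ and using $\EE[I_{n-1}(f_n(x,\cdot))I_m(u_m(x,\cdot))]=0$ whenever $n-1\ne m$, only the pairs $(n,m)=(n,n-1)$ survive, yielding by the isometry
\[
\EE\bigl[D_xF\cdot u(x)\bigr] = \sum_{n\ge 1} n\cdot (n-1)!\,\bigl\langle f_n(x,\cdot), u_{n-1}(x,\cdot)\bigr\rangle_{n-1}.
\]
A second application of Fubini gives $\EE[\langle DF,u\rangle] = \sum_{n\ge 1} n!\,\langle f_n, u_{n-1}\rangle_n$.

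Then I would compute the right-hand side. Since $\EE[F]$ times any element of the $(n+1)$-th chaos has zero expectation,
\[
\EE[F\delta(u)] = \sum_{n\ge 1} \EE\bigl[I_n(f_n)\,I_n(\widetilde u_{n-1})\bigr] = \sum_{n\ge 1} n!\,\langle f_n,\widetilde u_{n-1}\rangle_n,
\]
where I use again orthogonality between chaoses of different orders and the isometry on the matching chaos. The key final observation is that symmetry of $f_n$ implies
\[
\langle f_n, \widetilde u_{n-1}\rangle_n = \langle f_n, u_{n-1}\rangle_n,
\]
since averaging over permutations of $n$ arguments against a symmetric function is trivial. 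Both sides therefore coincide, proving the duality.

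The main technical point to watch is the interchange of expectation with the infinite sums. I would handle this by first proving the identity for the truncated functionals $F^{(K)} := \EE[F]+\sum_{n=1}^K I_n(f_n)$ and $u^{(K)}(x) := \EE[u(x)]+\sum_{m=1}^K I_m(u_m(x,\cdot))$, where only finitely many chaos components are present and all manipulations are legitimate, and then pass to the limit using the $L^2$-convergences $F^{(K)}\to F$, $u^{(K)}\to u$, $DF^{(K)}\to DF$ in $L^2(\PP\otimes\lambda)$, and $\delta(u^{(K)})\to\delta(u)$ in $L^2(\Omega)$ guaranteed by the hypotheses.
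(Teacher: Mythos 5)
Your proof is correct and is essentially the standard argument via chaos expansions and the Wiener--It\^o isometry, which is the content of \cite[Theorem 4]{Last} that the paper cites rather than proves. The key observation that $\langle f_n,\widetilde u_{n-1}\rangle_n=\langle f_n,u_{n-1}\rangle_n$ because $f_n$ is symmetric is precisely what makes the two chaotic decompositions match term by term, and the truncation-and-$L^2$-limit argument you sketch is the right way to justify the interchange of expectation and infinite sums.
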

\noindent In the proofs of our main results, we will sometimes need to apply the above duality relation to an indicator $F=\1(G>z)$ where $G$ is $\sigma(\eta)$-measurable and $z\in\RR$. In this case, we will implicitly exploit the fact that, if $G$ only depends on the restriction of $\eta$ to a set $\sB$ such that $\lambda(\sB)<\infty$, then $D_xF = 0$ for $x$ outside $\sB$, and the condition $F\in \mathbb{D}^{1,2}$ is therefore trivially implied by the fact that $| DF | \leq 2$.  

\smallskip

\noindent\underline{\it The generator of the Ornstein-Uhlenbeck semigroup}. Denote by $\dom L$ the family of those $F\in L^2(\Omega)$ such that the kernels in the expansion \eqref{e:chaos} satisfy the relation
\begin{align*}
\sum_{n\ge 1} n^2 n! \|f_n\|_n^2<\infty. 
\end{align*}
We define the operator $L : \dom L\to L^2(\Omega)$ (called the {\bf generator of the Ornstein-Uhlenbeck semigroup}) as follows: for $F\in \dom L$,
\begin{align*}
LF := - \sum_{n\ge 1} n I_n(f_n). 
\end{align*}
The {\bf pseudo-inverse} of $L$ is defined for every $F\in L^2(\Omega)$ and is given by 
\begin{align*}
L^{-1} F := - \sum_{n\ge 1} \frac 1 n I_n(f_n).
\end{align*}
It is clear that $L^{-1}F\in \dom L$, for every $F\in L^2(\Omega)$. In the next sections, we will often use the fact that, if $F\in L^2(\Omega)$ is $\sigma(\eta |_\sB)$-measurable, then $L^{-1} F$ is also $\sigma(\eta |_\sB)$-measurable, and the same conclusion holds for $LF$ in case $F\in \dom L$ (to see this, one can apply e.g. \cite[Theorem 2 and formula (16)]{Last}).  The following fact \cite[Proposition 3]{Last} relates the Malliavin operators $D,\delta, L, L^{-1}$.
\begin{lemma}\label{l:L=-delta D}
For any $F\in L^2(\Omega)$, we have $L L^{-1} F = F- \EE[F]$.  
If $F\in\dom L$, then $F\in \mathbb{D}^{1,2}$, $DF\in \dom \delta$ and $LF =- \delta D F$.
\end{lemma}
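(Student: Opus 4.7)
The plan is to derive both claims directly from the chaos expansion \eqref{e:chaos} together with the spectral definitions of $D$, $\delta$, $L$ and $L^{-1}$ recalled in Appendix A. The only non-tautological ingredient I will rely on is the identity $D_x I_n(f_n) = n\, I_{n-1}(f_n(x,\cdot))$, which already underpins the representation of $D_xF$ stated just after \eqref{e:d12}.

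For the first identity, I would start with $F \in L^2(\Omega)$ having expansion $F = \EE[F] + \sum_{n\ge 1} I_n(f_n)$ and remark that the kernels $g_n := -\tfrac{1}{n} f_n$ of $L^{-1} F$ satisfy $\sum_{n\ge 1} n^2\, n!\, \|g_n\|_n^2 = \sum_{n\ge 1} n!\, \|f_n\|_n^2 = \Var F < \infty$, so $L^{-1} F \in \dom L$. Applying $L$ term-by-term then yields
\begin{align*}
L L^{-1} F \;=\; -\sum_{n\ge 1} n\, I_n(g_n) \;=\; \sum_{n\ge 1} I_n(f_n) \;=\; F - \EE[F].
\end{align*}

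For the second claim I would argue as follows. If $F \in \dom L$, the condition $\sum_{n\ge 1} n^2\, n!\, \|f_n\|_n^2 < \infty$ a fortiori implies \eqref{e:d12}, so $F \in \mathbb{D}^{1,2}$ and $D_x F = \sum_{n\ge 1} n\, I_{n-1}(f_n(x,\cdot))$ in $L^2(\PP \otimes \lambda)$. The core step is then to read off the chaos kernels of the random field $(x,\omega) \mapsto D_x F(\omega)$: matching the above with the generic decomposition $D_x F = \sum_{m\ge 0} I_m(u_m(x,\cdot))$ yields $u_m(x,y_1,\dots,y_m) = (m+1)\, f_{m+1}(x,y_1,\dots,y_m)$ for every $m \ge 0$. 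Since $f_{m+1}$ is symmetric in all $m+1$ variables, $u_m$ is already fully symmetric, so $\tilde{u}_m = u_m$ and $\|\tilde{u}_m\|_{m+1}^2 = (m+1)^2 \|f_{m+1}\|_{m+1}^2$. Summation gives
\begin{align*}
\sum_{m\ge 0} (m+1)!\, \|\tilde{u}_m\|_{m+1}^2 \;=\; \sum_{n\ge 1} n^2\, n!\, \|f_n\|_n^2 \;<\; \infty,
\end{align*}
which is precisely the $\dom L$ condition; hence $DF \in \dom \delta$, and the definition of $\delta$ gives
\begin{align*}
\delta(DF) \;=\; \sum_{m\ge 0} I_{m+1}(\tilde{u}_m) \;=\; \sum_{n\ge 1} n\, I_n(f_n) \;=\; -LF.
\end{align*}

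The whole argument is essentially bookkeeping on Fock-space kernels, and I do not anticipate any genuine obstacle. The one point that warrants a little care is the identification $u_m(x,\cdot) = (m+1)\, f_{m+1}(x,\cdot)$; to rigorously justify it one invokes the uniqueness of the Wiener--It\^o chaos decomposition in $L^2(\PP)$ together with Fubini's theorem on $\PP \otimes \lambda$, whose application is legitimate because the $\dom L$ hypothesis furnishes the absolute summability needed to exchange the sum over $n$ with integration in $x$.
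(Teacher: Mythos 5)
Your proof is correct and is the standard Fock--space/chaos-kernel argument; the paper itself does not prove Lemma \ref{l:L=-delta D} but cites \cite[Proposition 3]{Last}, whose proof proceeds by the same bookkeeping on Wiener--It\^o kernels. The kernel identification $u_m(x,\cdot) = (m+1) f_{m+1}(x,\cdot)$ (including the $m=0$ term, with $u_0(x) = \EE[D_xF] = f_1(x)$), the observation that symmetry of $f_{m+1}$ makes $\tilde u_m = u_m$, and the resulting coincidence of the $\dom L$ condition for $F$ with the $\dom\delta$ condition for $DF$ are all stated accurately, and the first identity $LL^{-1}F = F - \EE[F]$ is immediate from the spectral definitions exactly as you wrote.
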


\smallskip

\noindent\underline{\it The Ornstein-Uhlenbeck semigroup}. The generator $L$ is associated with the semigroup of operators $\{T_t : t\geq 0\}$ given by 
\begin{align*}
T_t : L^2(\Omega) \to L^2(\Omega): F = \sum_{n\ge 0}  I_n(f_n)  \mapsto \sum_{n\ge 0} e^{-nt} I_n(f_n):= T_t F, \quad t\ge 0.
\end{align*}
Using the content of the forthcoming Lemma \ref{l:facts}, one can extend $T_t$ to $L^1(\Omega)$ by means of the following Mehler-type construction \cite[Section 7]{Last}. Let $s\in[0,1]$ and let $\chi\in\mathbf{N}_\sigma$ have a representation $\chi = \sum_{i=1}^k \delta_{x_i}$, with $k\in\NN_0\cup\{\infty\}$. Denote by $\chi_{s}$ be the {\bf thinned} point measure obtained by removing independently points in $\chi$ (counting multiplicities) with probability $1-s$. Denote by $\Pi_{\mu}$ the law of a Poisson measure with intensity $\mu$. For $F\in L^1(\Omega)$, we define
\begin{align}\label{e:def_Pt}
T'_t F(\chi) :=  \int \EE[ F( \chi_{e^{-t}}  + \xi )] \Pi_{(1-e^{-t})\la } (d\xi),
\end{align}
where the expectation is taken with respect to the independent thinning of the points in the support of $\chi$, and the integral is over $\mathbf{N}_\sigma$.  It is easy to see that
  \begin{align}\label{e:teta}
  T'_t F(\eta) = \int \EE[F(\eta_{e^{-t}} + \xi )|\eta] \Pi_{(1-e^{-t})\la}(d\xi).
  \end{align}
  The next lemma collects some useful facts, whose full proofs can be found in \cite[Section 7]{Last}.
\begin{lemma}\label{l:facts}
\begin{itemize}
\item[\rm (i)] For $F\in L^1(\Omega)$, $\EE[T'_t F] = \EE[F]$. 
\item[\rm (ii)] If $F\in L^p(\Omega)$ with $p\ge 1$, then $\EE[|T'_t F|^p]\le \EE[|F|^p]$. 
\item[\rm (iii)] If $F\in L^2(\Omega)$, then $T_t F = T'_t F$.
\item[\rm (iv)] If $F\in L^2(\Omega)$, then $\PP\otimes\la$-a.e. one has that $D T'_t F = e^{-t} T'_t DF$.
\item[\rm (v)] If $F\in L^2(\Omega)$, then
\begin{align*}
L^{-1} F =  - \int_0^\infty T_t F dt. 
\end{align*}
\end{itemize}
\end{lemma}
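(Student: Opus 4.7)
The proof plan rests on one structural identity: the \emph{thinning-superposition invariance} of Poisson laws, i.e.\ if $\chi$ has law $\Pi_\la$ and $\chi_s$ denotes the independent $s$-thinning of $\chi$, then $\chi_{e^{-t}}+\xi \stackrel{d}{=} \chi$ whenever $\xi\sim\Pi_{(1-e^{-t})\la}$ is independent of $\chi_{e^{-t}}$. Granting this, item (i) is immediate from \eqref{e:teta}: by Fubini, $\EE[T'_t F(\eta)] = \EE F(\eta_{e^{-t}}+\xi) = \EE F$. Item (ii) follows by applying Jensen's inequality to the convex map $|\cdot|^p$ inside the integral/conditional expectation defining $T'_t F$, which gives $|T'_tF(\eta)|^p\le \int \EE[|F(\eta_{e^{-t}}+\xi)|^p\mid \eta]\,\Pi_{(1-e^{-t})\la}(d\xi)$; taking expectations and reusing the distributional identity yields $\EE|T'_tF|^p\le \EE|F|^p$.

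For (iv) I would analyse directly how thinning acts on $\eta+\delta_x$: the added atom $\delta_x$ is independently retained with probability $e^{-t}$ and discarded otherwise, so conditioning on this Bernoulli outcome produces the decomposition
\begin{equation*}
T'_t F(\eta+\delta_x) \;=\; e^{-t}\!\int \EE[F(\eta_{e^{-t}}+\delta_x+\xi)\mid\eta]\,\Pi_{(1-e^{-t})\la}(d\xi) \;+\; (1-e^{-t})\,T'_t F(\eta).
\end{equation*}
Recognising the first integral as $T'_t G(\eta)$ for $G(\chi):=F(\chi+\delta_x)$ and subtracting $T'_t F(\eta)$ from both sides, one obtains $D_xT'_tF(\eta) = e^{-t}(T'_tG(\eta)-T'_tF(\eta)) = e^{-t}T'_t(D_xF)(\eta)$, which is exactly (iv). Joint measurability ensures the identity is valid $\PP\otimes\la$-a.e.

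The substantive step is (iii). By (ii) both $T_t$ and $T'_t$ are bounded linear operators on $L^2(\Omega)$, so by the completeness of the chaos decomposition and linearity it suffices to check $T'_t I_n(f_n) = e^{-nt}I_n(f_n)$ on a total family of kernels, e.g.\ symmetrisations of $f_n=\mathbf{1}_{\sB_1\times\cdots\times\sB_n}$ with pairwise disjoint $\sB_i\in\mathcal X_\la$; for such kernels $I_n(f_n)=\prod_{i=1}^n\hat\eta(\sB_i)$. Setting $\hat\eta'(\sB_i):=\eta_{e^{-t}}(\sB_i)+\xi(\sB_i)-\la(\sB_i)$, thinning and Poisson superposition over disjoint sets make the $\hat\eta'(\sB_i)$ conditionally independent given $\eta$, while an elementary binomial/Poisson computation yields $\EE[\hat\eta'(\sB_i)\mid\eta] = e^{-t}\hat\eta(\sB_i)$. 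Multiplying gives $T'_tI_n(f_n)(\eta) = e^{-nt}\prod_i\hat\eta(\sB_i) = e^{-nt}I_n(f_n)$, and density extends the identity to every symmetric kernel. The main obstacle is precisely this density/closure argument: one must control the $L^2(\Omega)$-closure of these product integrals together with continuity of $T'_t$, which is why (ii) is proved first. Finally, (v) follows by substituting the chaos expansion of $F-\EE F$ into $-\int_0^\infty T_tF\,dt$ (interpreted on the mean-zero part, since $T_t$ fixes constants) and swapping sum and integral via chaos orthogonality and Fubini: the integration $\int_0^\infty e^{-nt}dt = 1/n$ reproduces term-by-term the expansion $-\sum_{n\ge 1}\tfrac{1}{n}I_n(f_n)$ defining $L^{-1}F$.
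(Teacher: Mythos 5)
Your proof is correct and reconstructs, in essentially the standard way, the argument that the paper delegates to Last's lecture notes (Section 7): the thinning--superposition invariance of the Poisson law drives (i), (ii) and (iv), the Mehler identity (iii) is verified on products $\prod_i\hat\eta(\sB_i)$ over disjoint sets and extended by density using the $L^2$-contraction you obtain from (ii), and (v) is the term-by-term chaos computation, with your caveat about acting on the mean-zero part being exactly the right reading of the displayed identity. I see no gaps.
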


\smallskip

We will now present some useful bounds obtained by combining Malliavin calculus and the so-called Stein's method for normal approximations -- see \cite{CGS, NP}.

\subsection{Bounds}

We work within the framework and notation of the previous section, and write  
\begin{eqnarray}\label{e:innp}
\langle f, g \rangle_\lambda=\langle f, g \rangle = \int_{\mathbb X} f(x) g(x) \lambda(dx),  
\end{eqnarray}
whenever this expression is well-defined (which might happen even if $f$ or $g$ are not in $L^2(\lambda)$). The next statement will be the key for dealing with one-dimensional normal approximations in the 1-Wasserstein distance.

\begin{lemma} [{\cite[Theorem 3.1]{PSTU10}}] \label{l:bound_w} Let $F\in \mathbb{D}^{1,2}$ and $\wh F=(F-\EE F)/\sigma$ with $\sigma\in(0,\infty)$. Then, with $N(0,1)$ denoting a Gaussian random variable with mean zero and unit variance,
\begin{align*}
\dw(\wh F,N(0,1)) \le \left|1 - \frac{\Var[F]}{\sigma^2}\right| + \frac{1}{\sigma^2} \EE\big[ |\Var[F]- \langle DF,-DL^{-1}F \rangle  |\big] + \frac{1}{\sigma^3} \EE[ \langle (DF)^2,|DL^{-1}F|\rangle].
\end{align*}
\end{lemma}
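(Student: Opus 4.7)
The plan is to combine the standard Stein's method bound for the 1-Wasserstein distance with the Malliavin integration-by-parts machinery recalled in Section \ref{appendix}. Classical Stein theory guarantees that for every 1-Lipschitz $h:\R\to\R$ there exists a solution $f=f_h$ of the Stein equation $f'(x)-xf(x)=h(x)-\EE[h(N(0,1))]$ satisfying $\|f'\|_\infty\le 1$ and $\|f''\|_\infty\le 2$. Evaluating this equation at $\wh F$ and taking expectations gives
$$\dw(\wh F,N(0,1))=\sup_h\big|\EE[f_h'(\wh F)-\wh F f_h(\wh F)]\big|,$$
so the task reduces to controlling the right-hand side uniformly in $h$.

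Next I would rewrite the term $\EE[\wh F f(\wh F)]$ using Malliavin calculus. By Lemma \ref{l:L=-delta D}, $F-\EE F = -\delta(DL^{-1}F)$, and the duality formula of Lemma \ref{l:duality} yields
$$\sigma\,\EE[\wh F f(\wh F)]=\EE[(F-\EE F)f(\wh F)]=-\EE[\delta(DL^{-1}F)\,f(\wh F)]=-\EE[\langle Df(\wh F),DL^{-1}F\rangle].$$
A preliminary point to verify here is that $f(\wh F)\in\mathbb{D}^{1,2}$, which is needed to apply Lemma \ref{l:duality}; this follows from the boundedness of $f,f'$ together with $F\in\mathbb{D}^{1,2}$, possibly via a smooth truncation of $f$ and passage to the limit.

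The key computational step is then a first-order Taylor expansion of the discrete derivative
$$D_x f(\wh F)=\int_0^1 f'\big(\wh F+tD_x\wh F\big)\,D_x\wh F\,dt = f'(\wh F)\,\frac{D_xF}{\sigma}+R_x,$$
with $|R_x|\le \tfrac{\|f''\|_\infty}{2\sigma^2}(D_xF)^2\le\tfrac{1}{\sigma^2}(D_xF)^2$. Substituting back gives
$$\EE[\wh F f(\wh F)]=\frac{1}{\sigma^2}\EE\big[f'(\wh F)\langle DF,-DL^{-1}F\rangle\big]+E,\qquad |E|\le\frac{1}{\sigma^3}\EE[\langle(DF)^2,|DL^{-1}F|\rangle].$$

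Finally, combining the previous identity with the Stein representation yields
$$|\EE[f'(\wh F)-\wh F f(\wh F)]|\le\EE\Big[|f'(\wh F)|\cdot\big|1-\tfrac{1}{\sigma^2}\langle DF,-DL^{-1}F\rangle\big|\Big]+|E|,$$
and applying $\|f'\|_\infty\le 1$ together with the triangle inequality $|1-\sigma^{-2}\langle DF,-DL^{-1}F\rangle|\le |1-\Var F/\sigma^2|+\sigma^{-2}|\Var F-\langle DF,-DL^{-1}F\rangle|$ produces the three summands in the statement. The only genuine subtlety, beyond bookkeeping with the Stein factors, is the aforementioned justification that $f(\wh F)$ lies in the domain of the Malliavin derivative; once this is granted, the rest is a direct Taylor expansion combined with duality.
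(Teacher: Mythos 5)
Your proposal is correct, and it reproduces the standard Malliavin--Stein argument that underlies \cite[Theorem 3.1]{PSTU10}: solve the Stein equation for a $1$-Lipschitz test function with the classical bounds $\|f'\|_\infty\le 1$, $\mathrm{Lip}(f')\le 2$, rewrite $\EE[\wh F f(\wh F)]$ via $F-\EE F = -\delta(DL^{-1}F)$ and duality, and perform a first-order Taylor expansion of the non-local difference $D_xf(\wh F)$ to isolate the cubic remainder. The paper itself cites this lemma as an external result rather than re-proving it, so there is no internal argument to compare against; your reconstruction matches the one given in the cited reference.
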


Lemma \ref{l:bound_w} and Theorem \ref{l:bound_kol} are the starting point of  our general results in the one-dimensional case.  The condition $F\in \mathbb{D}^{1,2}$ is a minimal requirement in order to apply the Malliavin-Stein methodology.  In the multi-dimensional case, our reference bounds for the $d_2, d_3$ distances were  obtained in \cite{PZ10}. 

\begin{lemma}[{\cite[Theorems 3.3 and 4.2]{PZ10}}]\label{l:d_2}
 Let $\bF=(F_1,...,F_m)$ with $F_i\in L^2(\Omega)$ for each $i\in [m]$. Set $\mathbf{\wh F} =( \wh F_1,...,\wh F_d )$ where $\wh F_i:= (F_i -\EE F_i)/\sigma_i$ with some $\sigma_i\in(0,\infty)$. Let $N_\Sigma$ be a centered Gaussian vector with  $m\times m$ covariance matrix $\Sigma$. Then,
 \begin{align*}
d_3\left(\mathbf{\wh F}, N_\Sigma\right) \le  \frac{m}{2} \sqrt{\sum_{i,j=1}^m \frac{1}{\sigma_i^2\sigma_j^2} \EE[(\Sigma(i,j)\sigma_i\sigma_j-\langle D F_i, -DL^{-1}F_j\rangle)^2] } \\
 + \frac{1}{4} \EE \left\langle \Big(\sum_{i=1}^m \frac{|DF_i|}{\sigma_i}\Big)^2, \sum_{j=1}^m \frac{|DL^{-1}F_j|}{\sigma_j}\right\rangle. 
\end{align*}
If $\Sigma$ is invertible, then
\begin{align*}
\d2\left(\mathbf{\wh F}, N_\Sigma\right) \le \op{\Sigma^{-1}} \op{\Sigma}^{1/2} \sqrt{\sum_{i,j=1}^m \frac{1}{\sigma_i^2\sigma_j^2} \EE[(\Sigma(i,j)\sigma_i\sigma_j-\langle D F_i, -DL^{-1}F_j\rangle)^2] } \\
 + \frac{\sqrt{2\pi}}{8} \op{\Sigma^{-1}}^{3/2} \op{\Sigma} \EE \left\langle \Big(\sum_{i=1}^m \frac{|DF_i|}{\sigma_i}\Big)^2, \sum_{j=1}^m \frac{|DL^{-1}F_j|}{\sigma_j}\right\rangle. 
\end{align*}
\end{lemma}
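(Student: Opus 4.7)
The plan is to combine Stein's method for multivariate normal approximation---via the generator of a $\Sigma$-structured Ornstein--Uhlenbeck semigroup on $\R^m$---with the Malliavin integration-by-parts formula on the Poisson space (Lemmas \ref{l:L=-delta D} and \ref{l:duality}). I will describe the $d_3$ bound in detail; the $d_2$ bound follows along the same lines, with the Stein-solution estimates replaced by the stronger ones that are available when $\Sigma>0$ and $h$ is merely Lipschitz with bounded Hessian.

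First, for each $h\in \mathcal{H}^a_m$ ($a=2,3$), I would introduce the Stein solution
\begin{align*}
U_h(x) := -\int_0^\infty \bigl[P_t h(x) - \EE h(N_\Sigma)\bigr]\,dt, \qquad P_t f(x) := \EE f\bigl(e^{-t}x + \sqrt{1-e^{-2t}}\,N_\Sigma\bigr),
\end{align*}
which solves $\mathrm{tr}\bigl(\Sigma\,\hess U_h(x)\bigr) - \langle x,\nabla U_h(x)\rangle = h(x) - \EE h(N_\Sigma)$. Differentiating under the integral sign produces, in the $d_3$ case, uniform estimates of the form $\sup_{i,j}\|\partial_{ij}U_h\|_\infty \le \tfrac12$ and $\sup_{i,j,k}\|\partial_{ijk}U_h\|_\infty \le \tfrac13$; in the $d_2$ case, the analogous bounds are multiplied respectively by $\op{\Sigma^{-1}}\op{\Sigma}^{1/2}$ and $\op{\Sigma^{-1}}^{3/2}\op{\Sigma}$, which arise by performing a Gaussian integration by parts against the density of $N_\Sigma$ inside the semigroup representation of $U_h$ (this is the only step where $\Sigma>0$ is used).

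Next, plugging $\mathbf{\wh F}$ into the Stein identity and taking expectations yields
\begin{align*}
\EE h(\mathbf{\wh F}) - \EE h(N_\Sigma) = \EE\bigl[\mathrm{tr}(\Sigma\,\hess U_h(\mathbf{\wh F}))\bigr] - \sum_{i=1}^m \EE\bigl[\wh F_i\,\partial_i U_h(\mathbf{\wh F})\bigr].
\end{align*}
Writing $F_i - \EE F_i = -\delta(DL^{-1}F_i)$ via Lemma \ref{l:L=-delta D} and applying the duality of Lemma \ref{l:duality},
\begin{align*}
\EE[\wh F_i\,\partial_i U_h(\mathbf{\wh F})] = \frac{1}{\sigma_i}\,\EE \bigl\langle D\,\partial_i U_h(\mathbf{\wh F}),\,-DL^{-1}F_i\bigr\rangle.
\end{align*}
Since $D$ is a genuine finite difference, for each $x\in\XX$ a discrete second-order Taylor expansion in $\R^m$ gives
\begin{align*}
D_x \partial_i U_h(\mathbf{\wh F}) = \sum_{j=1}^m \partial_{ij}U_h(\mathbf{\wh F})\,\frac{D_xF_j}{\sigma_j} + \frac12\sum_{j,k=1}^m \partial_{ijk}U_h(\overline{\mathbf F})\,\frac{D_xF_j\,D_xF_k}{\sigma_j\sigma_k},
\end{align*}
for some $\overline{\mathbf F}$ on the segment joining $\mathbf{\wh F}(\eta)$ and $\mathbf{\wh F}(\eta+\delta_x)$. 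Inserting this identity into the previous display and summing over $i$, the linear piece recombines with $\EE[\mathrm{tr}(\Sigma\,\hess U_h(\mathbf{\wh F}))]$ into
\begin{align*}
\sum_{i,j=1}^m \frac{1}{\sigma_i\sigma_j}\,\EE\Bigl[\partial_{ij}U_h(\mathbf{\wh F})\bigl(\Sigma(i,j)\sigma_i\sigma_j - \langle DF_j,-DL^{-1}F_i\rangle\bigr)\Bigr],
\end{align*}
which, after bounding each expectation via $\|\partial_{ij}U_h\|_\infty$ and the Jensen inequality, and then applying a Cauchy--Schwarz inequality over the $m^2$ pairs $(i,j)$, produces the first summand of the stated bound (the factor $m/2$ coming from $\tfrac12\sqrt{m^2}=m/2$). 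The quadratic remainder is bounded in absolute value by $\tfrac12\sup_{i,j,k}\|\partial_{ijk}U_h\|_\infty\bigl(\sum_j |D_xF_j|/\sigma_j\bigr)^2$, and its integration against $\sum_i |DL^{-1}F_i|/\sigma_i$ yields the second summand.

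The main obstacle is the absence of an exact chain rule for $D$: on the Poisson space the add-one cost operator is a genuine finite difference, so the first-order expansion of $\partial_i U_h(\mathbf{\wh F})$ inevitably carries a quadratic remainder in the family $\{D_xF_j\}_j$ whose coefficient is pinned down by the $L^\infty$-norm of the \emph{third} partials of $U_h$. Making this remainder analysis compatible with the desired operator-norm dependence---in particular obtaining the prefactor $\op{\Sigma^{-1}}^{3/2}\op{\Sigma}$ in the $d_2$ bound without any extra regularity assumption on $h$---requires a careful Gaussian calculus on the Stein solution and is the technical heart of the proof.
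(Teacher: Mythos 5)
Your sketch is correct, and it reproduces (in outline) the argument of Peccati--Zheng \cite{PZ10}, which is exactly what the paper cites for this lemma; the paper itself supplies no proof of Lemma \ref{l:d_2}, it simply invokes Theorems 3.3 and 4.2 of that reference. The ingredients you identify --- the Mehler/generator construction of the Stein solution $U_h$, the Malliavin identity $F_i-\EE F_i=-\delta(DL^{-1}F_i)$ followed by duality, the second-order Taylor expansion of $\partial_i U_h$ to control the finite-difference add-one-cost, the recombination of the linear term with $\mathrm{tr}(\Sigma\,\hess U_h)$, and the Gaussian integration by parts inside the Mehler formula (needing $\Sigma>0$) to trade regularity of $h$ for powers of $\op{\Sigma^{-1}}$ and $\op{\Sigma}$ --- are precisely those used in the cited proof, so this is the same route, not a genuinely different one. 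One small remark: your stated bound $\|\partial_{ijk}U_h\|_\infty\leq \tfrac13$ combined with the $\tfrac12$ from the Taylor remainder would give a constant $\tfrac16$ on the cubic term, sharper than the $\tfrac14$ appearing in the lemma as quoted from \cite{PZ10}; this is not an error in your argument (if anything the cited constant is not optimized), but it does mean the constants you report do not literally match those in the statement you were asked to prove, and in a fully written-out proof you would want either to match the quoted bound or to flag that you obtain a slightly better one.
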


In order to deal with the convex distance $d_c$, we will need to exploit some results related to the multidimensional Stein's method for normal approximations, that we explain in the next section.

\subsection{Stein's equations for multivariate normal approximations}

The reader is referred e.g. to \cite[Chapter 4]{NP} for full details about the content of the present section. For $m\geq 2$, let $N_\Sigma$ be a centered Gaussian random vector in $\RR^m$ with invertible covariance $\Sigma=(\Sigma(i,j))$.  Consider Stein's equation  with unknown $f$ and given test function $h:\RR^m\to\RR$:
\begin{align}\label{e:SE_d}
\sum_{i,j=1}^m \Sigma(i,j) \partial_{ij}f(\mathbf x) - \sum_{i=1}^m x_i \partial_i f(\mathbf x) = h(\mathbf x) -\EE h(N_\Sigma),
\end{align}
or, in a more compact form, $\langle\hess f(\mathbf x), \Sigma\rangle_{\mathrm{HS}} - \langle \mathbf x, \nabla f(\mathbf x)\rangle = h(\mathbf x)- \EE[h(N_\Sigma)]$.  
The left-hand-side of \eqref{e:SE_d} is the generator of the $\RR^d$-valued diffusion process given by
\begin{align*}
dX_t = - X_t dt +  (2\Sigma)^{1/2} dB_t 
\end{align*}
where $\{ B_t : t\geq 0\}$ is a standard Brownian motion in $\RR^m$. The Markov semigroup associated with $\{X_t : t\geq 0\}$ enjoys the following Mehler-type representation:
\begin{align}\label{e:MehlerGaussian}
\EE_\mathbf x[h(X_t)] = \EE[h(e^{-t}\mathbf x+ \sqrt{1-e^{-2t}}N_\Sigma)],
\end{align}
where $\EE_\mathbf x$ is the expectation with respect to the distribution of $X_t$ with initial condition $X_0=\mathbf x$.  The generator approach to multivariate normal approximation consists in using the underlying semigroup to construct solutions to \eqref{e:SE_d}.  This approach was first devised by Barbour in his study of Poisson approximation \cite{Barbour88}, and then further developed in \cite{Barbour90, Gotze91} --- see also \cite{Reinert}.  More precisely, define 
\begin{align*}
f(\mathbf x) : =  -\int_0^\infty (\EE_\mathbf x[h(X_t)]-\EE[h(N_\Sigma)]) dt = -\int_0^\infty \EE[h(e^{-t}\mathbf x+\sqrt{1-e^{-2t}} N_\Sigma)] - \EE[h(N_\Sigma)] dt,
\end{align*}
where we used \eqref{e:MehlerGaussian}.  A change of variables gives 
\begin{align}\label{e:sol0}
f(\mathbf x)= -\frac{1}{2}\int_0^1 \frac{1}{u} (\EE[h(\sqrt{u}\mathbf x+\sqrt{1-u}N_\Sigma)] -\EE[h(N_\Sigma)])du. 
\end{align}

In the case $h\in C^2$ with bounded first and second partial derivatives, the second and third partial derivatives of $f$ are uniformly bounded \cite[Lemma 2.17]{PZ10}, which makes the estimation of bounds in $d_2$ a simpler task than the non-smooth $\dc$ estimates.
In order to obtain bounds in $\dc$, we would adopt a smoothing approach. For any bounded and measurable $h$ and $t\in(0,1)$, define its mollification at level $\sqrt{t}$ by
\begin{align}\label{e:h_t}
h_t(\mathbf x) :=  \EE[h(\sqrt{t} N_\Sigma + \sqrt{1-t} \mathbf x)].
\end{align} 
We stress that $h_t$ and $f_t$ given below both depend on $\Sigma$ which we suppress from the subscript to simplify the notation. 
Then, it is plain that $h_t$ is $C^\infty$ with bounded derivatives of all orders. Hence,  the solution to \eqref{e:SE_d} with $h=h_t$ is given by (up to a change of variables)
\begin{align}\label{e:sol_mol}
f_t(\mathbf x) := -\frac{1}{2}\int_t^1 \frac{1}{1-s} (\EE[h(\sqrt{s}N_\Sigma+\sqrt{1-s}\mathbf x)]-\EE[h(N_\Sigma)]) ds.
\end{align}
Denote by $\ph_\Sigma$ the density of $N_\Sigma$. One sees that $f_t\in C^\infty$ and for $i,j,k\in [m]:=\{1,...,m\}$,
\begin{align}
\partial_i f_t(\mathbf x)& = -\frac{1}{2}\int_t^1 \frac{1}{\sqrt{s(1-s)}}\int h(\sqrt{s}\mathbf z+\sqrt{1-s}\mathbf x) \partial_i \ph_\Sigma(\mathbf z)d\mathbf z ds, \nonumber\\
\partial^2_{ij} f_t(\mathbf x)& = \frac{1}{2}\int_t^1\frac{1}{s} \int h(\sqrt{s}\mathbf z+\sqrt{1-s}\mathbf x) \partial^2_{ij}\ph_\Sigma(\mathbf z) d\mathbf z ds, \nonumber\\
\partial^3_{ijk} f_t(\mathbf x)&=  -\frac{1}{2}\int_t^1 \frac{\sqrt{1-s}}{s^{3/2}} \int  h(\sqrt{s}\mathbf z+\sqrt{1-s}\mathbf x) \partial^3_{ijk} \ph_\Sigma(\mathbf z) d\mathbf z ds.  \label{e:der3}
\end{align}
 Thus, for $i,j,k\in[m]$ and $t\in(0,1)$,  one obtains the uniform bounds
 \begin{align}
|\partial_i f_t(\mathbf x)| &\le C(m,\Sigma) \norm{h}, \nonumber\\
|\partial^2_{ij} f_t(\mathbf x) | &\le C(m,\Sigma) \norm{h} |\log t|, \label{e:unif_2nd_partial}\\
 |\partial^3_{ijk} f_t(\mathbf x) | &\le C(m,\Sigma) \norm{h} \frac{1}{\sqrt{t}}\nonumber. 
\end{align}  

The remainder of the paper is devoted to the proof of our main results.

\section{Proof of the main estimates}\label{s:proofs}

\subsection{Proof of Theorem \ref{l:bound_kol}}
By homogeneity, it suffices to consider $F$ centered with $\EE[F^2]=1$, and $\sigma^2=1$.  We denote by $f_z$ the canonical solution (see e.g. \cite[formula (3.4.1)]{NP}) to the {\bf Stein's equation for the Kolmogorov distance} at $z\in \RR$, given by
\begin{align} \label{e:Stein_kol}
f'(x) - x f(x) = 1_{(-\infty,z]}(x) - \phi(z), \quad x\in \RR,
\end{align}
where $\phi(z) =(2\pi)^{-1/2}\int_{-\infty}^z e^{-u^2/2}du $. It is a well-known fact that $f_z$ is continuously differentiable at every $x\neq z$, and one moreover adopts the standard convention $f'_z(z) :=z f_z(z) + 1 - \phi(z)$. The proof of the following properties of $f_z$ can be found e.g. in \cite[Lemma 2.3]{CGS} and \cite[Section 3.4]{NP}: 
\begin{itemize}
\item[(i)] $f_z(x)\leq \sqrt{2\pi}/4$ and $|f'_z(x)|\le 1$ for all $z,x\in\RR$;
\item[(ii)] $|xf_z(x)|\le 1$ for all $x$ and $x\mapsto xf_z(x)$ is nondecreasing for all $z\in\RR$. 
\end{itemize}
Using Lemmas \ref{l:duality} and \ref{l:L=-delta D} together with the fact that $f_z(F) \in \mathbb{D}^{1,2}$ (since $f_z$ is Lipschitz and $F\in \mathbb{D}^{1,2}$) yields that
\begin{align*}
\EE[Ff_z(F)] = \EE[L L^{-1}Ff_z(F)] = \EE[\de (-D L^{-1}F) f_z(F)]  = \EE \left[ \langle D f_z(F),  -DL^{-1} F \rangle \right].
\end{align*}
We can now rewrite the add-one cost $D_x f_z(F)= f_z(F(\eta+\de_x)) - f_z(F(\eta)) = f_z(F+D_xF) - f_z(F)$ in integral form and deduce the identity
\begin{align*}
&\EE[f_z'(F)] -\EE[Ff_z(F)] \\
&= \EE[f_z'(F)(1-\langle DF, -L^{-1}DF\rangle)] - \EE\left[ \left\langle \int_0^{DF} \big(f_z'(F+t) -f_z'(F)\big) dt, -DL^{-1}F\right\rangle \right] =:I_1+I_2.
\end{align*}
Using Stein's equation \eqref{e:Stein_kol}, we see that
\begin{align*}
\int_0^{DF}\big( f_z'(F+t)-f_z'(F)\big) dt = \int_0^{DF} \big((F+t)f_z(F+t) - Ff_z(F)\big) dt - \int_0^{DF}  (1_{F\le z} - 1_{F+t\le z})  dt
\end{align*}
Since $x\mapsto xf_z(x)$  and $x\mapsto 1_{(z,\infty)}(x)$ are  nondecreasing, if $DF\ge 0$, then the integrands of both integrals are nonnegative and  bounded from above by $(F+DF)f_z(F+DF) - Ff_z(F)= D(Ff_z(F))$ and $1_{z-DF< F\le z}=D(1_{F>z})$, respectively. Hence,  if $DF\ge 0$, 
\begin{align}\label{e1}
\left|\int_0^{DF} f_z'(F+t)-f_z'(F) dt \right|\le DF D(Ff_z(F) + 1_{F>z}).
\end{align}
Likewise, when $DF<0$, both $D(Ff_z(F))$ and $D 1_{F>z}$ are non-positive so that the upper bound in \eqref{e1} is nonnegative regardless of the sign of $DF$, and the estimate \eqref{e1} continues to hold by an analogous monotonicity argument. We now observe the following facts: (a) since $f_z$ is bounded, $F f_z(F) \in \mathbb{D}^{1,2}$ (by virtue of \eqref{e:hhgg}), and (b) $DF D1_{F>z}\geq 0$ for every $z$. Exploiting again properties (i)-(ii) of $f_z$, we now infer that  
\begin{align*}
|I_1|&\le \EE|1-\langle DF, -L^{-1}DF\rangle|, \\
|I_2| &\le \EE[\langle DFD(Ff_z(F)+1_{F>z}), |DL^{-1}F|\rangle]\\
& = \EE[(Ff_z(F) + 1_{F>z})\de(DF|DL^{-1}F|)] \le 2 \EE |\de(DF|DL^{-1}F|)|,
\end{align*}
where in the central equality we have combined \cite[Proposition 2.3]{LPS16} both with the integration by parts formula stated in Lemma \ref{l:duality} (in order to deal with $D(F f_z(F))$) and with the generalised integration by parts relation stated in \cite[Lemma 2.2]{LPS16} (in order to deal with $D1_{F>z}$, this being the exact point at which \eqref{e:hg} is needed). Using the classical Stein's bound on the Kolmogorov distance stated e.g. in \cite[Theorem 3.4.2]{NP}, one deduces that 
\begin{align*}
\dk(F, N(0,1)) &\le \sup_{z\in\RR} \| \EE[f_z'(F)] -\EE[Ff_z(F)] \| \\
&\le \EE|1-\langle DF, -L^{-1}DF\rangle| + 2 \EE |\de(DF|DL^{-1}F|)|,
\end{align*}
thus concluding the proof. \qed

\subsection{Key technical estimates}

The proofs of our main results are based on a number of technical estimates, that we gather together in the present subsection. From now on, we work in the general framework of Section \ref{ss:frame}, and use the tools of Malliavin calculus discussed in Section \ref{appendix}; recall also the notation \eqref{e:innp}, and that $H^y := H(\eta+\de_y)$ for $H\in \FNS$ and $y\in\XX$.

The following lemma will be used on several occasions.
\begin{lemma} \label{l:E[DLF^p]} 
Let $F\in L^2(\Omega)$, $y\in\XX$ and $p\ge 1$. Then, for every $x\in \XX$,
\begin{align*}
\EE[ |D_x L^{-1} F|^p] &\le \EE[|D_x F|^p], \\
2 \EE[ |(D_x L^{-1} F)^y|^p] &\le \EE[|D_x F^y|^p] +  \EE[|D_x F|^p].
\end{align*}
\end{lemma}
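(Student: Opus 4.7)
The plan is to derive both inequalities from Mehler's formula for the Ornstein--Uhlenbeck semigroup together with the integral representation $L^{-1} F = -\int_0^\infty T_t F \, dt$ from Lemma \ref{l:facts}(v). Applying the add-one cost operator and using Lemma \ref{l:facts}(iv) (the commutation relation $D T'_t F = e^{-t} T'_t DF$, valid $\PP \otimes \lambda$-a.e.\ for $F \in L^2(\Omega)$), one obtains the representation
\[
D_x L^{-1} F = -\int_0^\infty e^{-t}\, T'_t D_x F \, dt,
\]
where $e^{-t} dt$ is a probability measure on $[0,\infty)$.

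For the first inequality, I would exploit two layers of Jensen's inequality: first in the outer integral (against $e^{-t} dt$) to pull the $|\cdot|^p$ inside, and then inside $T'_t$, which is a positivity-preserving operator arising from the probability kernel combining thinning with an independent Poisson superposition (see \eqref{e:def_Pt}). Both applications give $|T'_t G|^p \leq T'_t |G|^p$ pointwise. Taking expectations and using the invariance $\EE[T'_t G] = \EE[G]$ from Lemma \ref{l:facts}(i) immediately gives $\EE[|D_x L^{-1} F|^p] \leq \EE[|D_x F|^p]$.

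The second inequality is slightly more delicate, and constitutes the main technical obstacle: one has to identify how $T'_t$ behaves under the $\,\cdot^y\,$ operation. The key computation, based on the definition \eqref{e:def_Pt}, is that thinning the measure $\eta + \delta_y$ with parameter $e^{-t}$ decomposes as $\eta_{e^{-t}} + B_y \delta_y$ with $B_y \sim \mathrm{Bernoulli}(e^{-t})$ independent of $\eta$, yielding the identity
\[
(T'_t F)^y = e^{-t}\, T'_t F^y + (1-e^{-t})\, T'_t F.
\]
Plugging this (applied to $D_x F$, recalling that $(D_x F)^y = D_x F^y$) into the integral representation of $(D_x L^{-1} F)^y$, one computes the normalising constants $\int_0^\infty e^{-2t} dt = 1/2 = \int_0^\infty e^{-t}(1-e^{-t}) dt$, which shows that $(D_x L^{-1} F)^y$ is the average (with weights $1/2, 1/2$) of integrals of $T'_t D_x F^y$ and $T'_t D_x F$ against probability measures on $[0,\infty)$.

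Applying Jensen's inequality first to this convex combination with weights $(1/2, 1/2)$, and then as in the first part (inner Jensen against the two probability measures and across $T'_t$), followed by $\EE[T'_t |G|^p] = \EE[|G|^p]$, yields
\[
\EE[|(D_x L^{-1} F)^y|^p] \leq \tfrac{1}{2} \EE[|D_x F^y|^p] + \tfrac{1}{2} \EE[|D_x F|^p],
\]
which is exactly the second claimed inequality. The identification of the commutation formula for $(T'_t \cdot)^y$ is the only non-routine step; everything else is a chain of standard Jensen and Fubini applications.
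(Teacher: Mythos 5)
Your proposal is correct and follows essentially the same route as the paper: both proofs hinge on the representation $D_x L^{-1}F = -\int_0^\infty e^{-t} T'_t D_x F\,dt$, the Bernoulli decomposition of the thinning $(\eta+\delta_y)_{e^{-t}}$ into $\eta_{e^{-t}} + B_y\delta_y$ with $B_y\sim\mathrm{Bernoulli}(e^{-t})$, and Jensen plus the mass-preservation $\EE[T'_tG]=\EE[G]$. The only cosmetic difference is that you state the decomposition as the semigroup identity $(T'_t F)^y = e^{-t}T'_t F^y + (1-e^{-t})T'_tF$ and then take expectations, whereas the paper writes the same fact directly at the level of $\EE[|D_xF((\eta+\delta_y)_{e^{-t}}+\xi)|^p\mid\eta]$ before integrating; both produce identical normalisation constants $\int_0^\infty e^{-2t}dt=\int_0^\infty e^{-t}(1-e^{-t})dt=\tfrac12$ and hence the same bound (the paper also cites \cite{LPS16} for the first inequality rather than reproving it as you do).
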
 
\begin{proof}
We can assume that $\EE[|D_x F^y|^p] +  \EE[|D_x F|^p]<\infty$ (if not, then there is nothing to prove). The first inequality was proved in \cite[Lemma 3.4]{LPS16}. Applying Item (v) and Item (iv) of Lemma \ref{l:facts}, and reasoning as in the proof of \cite[Corollary 3.3]{LPS16} in order to exchange integrals and add-one cost operators, we infer that, a.s.-$\PP$, 
\begin{align*}
D_x L^{-1} F = - D_x \int_0^\infty T'_t F dt=  -\int_0^\infty e^{-t} T'_t D_x F dt.
\end{align*}
We now observe that, for all $\xi\in \mathbf{N}_\sigma$ and using the same notation as in \eqref{e:def_Pt}--\eqref{e:teta} for thinned measures,
\begin{align}\label{e:juste}
&\EE[ |D_x F((\eta+\de_y)_{e^{-t}}+\xi)|^p | \eta]  \\
&\quad\quad=  e^{-t} \EE[|D_x F(\eta_{e^{-t}} + \xi + \de_y)|^p | \eta]  +  (1-e^{-t}) \EE[|D_x F(\eta_{e^{-t}} + \xi )|^p | \eta], \notag
\end{align}
which yields, using again \eqref{e:teta},
$$
 \int_0^\infty e^{-t} \EE| (T'_t D_x F)(\eta+\de_y)| dt<\infty,
$$
and finally
\begin{align*}
(D_x L^{-1}F)^y = -\int_0^\infty e^{-t} (T'_t D_x F)(\eta+\de_y) dt, \quad \mbox{a.s.--}\PP.
\end{align*}
Using Jensen's inequality with respect to the probability measure $e^{-t}dt$ and \eqref{e:teta}, one deduces therefore that
\begin{align}\label{e:DLF^p}
|(D_x L^{-1} F)^y|^p \le \int_0^\infty e^{-t} \int_{\mathbf{N}_\sigma} \EE[ |D_x F((\eta+\de_y)_{e^{-t}}+\xi)|^p | \eta]  \Pi_{(1-e^{-t})\la}(d\xi) dt.
\end{align}
Taking expectation in \eqref{e:DLF^p} after having applied \eqref{e:juste} and solving the resulting integrals in $dt$ leads to the desired estimate. 
\end{proof}

The next inequality provides a useful upper bound for the variance of random variables of the type $\langle DF, -DL^{-1}F\rangle$.

\begin{proposition}\label{p:cdc}
Fix $\sB\in\mathcal{X}$ such that $\lambda(\sB)<\infty$. Let $F,G\in \FNS$ be such that $F(\sB), G(\sB)\in L^2(\Omega)$, and let $\{\sA_x : x\in\mathbb \sB\}$ be functionally measurable and such that $F(\sA_x), G(\sA_x)\in L^2(\Omega)$ for every $x$. Assume that, for some $p>4$,
\begin{align*}
\sup_{x\in\sB} \EE[ |D_x F(\sB)|^p]^{\frac 1 p} + \EE[|D_x F(\sA_x)|^p]^{\frac 1 p}+\EE[ |D_x G(\sB)|^p]^{\frac 1 p} + \EE[|D_x G(\sA_x)|^p]^{\frac 1 p} = K_1<\infty.
\end{align*}
Then, $F(\sB), L^{-1}G(\sB)\in \mathbb{D}^{1,2}$, $\langle DF(\sB), -DL^{-1}G(\sB) \rangle\in L^2(\Omega)$ and
\begin{align*}
&\Var[\langle DF(\sB), -DL^{-1}G(\sB) \rangle ]\\
& \le C \la^2(\{(x,y)\in\sB^2: \sA_x\cap\sA_y\neq \emptyset\})  \\
&\quad + C \iint_{\sB^2_\Delta} \Big( \EE[|D_x F(\sB) - D_xF(\sA_x)|]^{1-4/p} 
+ \EE[|D_x G(\sB)-D_xG(\sA_x)|]^{1-4/p} \Big) \la^2(dx,dy),
\end{align*} 
where  $C= 8 \max(1, K_1)^4$ and  $\sB^2_\Delta$ is defined in \eqref{e:bdelta}. 
\end{proposition}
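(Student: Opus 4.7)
The plan is to carry out a two-scale comparison argument for the random variable $V := \langle DF(\sB), -DL^{-1}G(\sB) \rangle$. I will begin by verifying the integrability claims: the $L^p$ moment bound on $D_xF(\sB)$ together with $\lambda(\sB) < \infty$ immediately gives $F(\sB) \in \mathbb{D}^{1,2}$, and Lemma~\ref{l:E[DLF^p]} transfers the bound to $D_xL^{-1}G(\sB)$, yielding $L^{-1}G(\sB) \in \mathbb{D}^{1,2}$; Cauchy--Schwarz combined with Lyapunov's inequality (using $p > 4$) then shows that $W_x := -D_xF(\sB) \cdot D_xL^{-1}G(\sB)$ has $\EE[W_x^2]$ uniformly bounded on $\sB$, so $V \in L^2(\Omega)$.

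The core step will be to introduce the localized counterpart $U_x := -D_xF(\sA_x) \cdot D_xL^{-1}G(\sA_x)$, which is $\sigma(\eta|_{\sA_x})$-measurable because $F(\sA_x)$ and $G(\sA_x)$ (and hence $L^{-1}G(\sA_x)$) depend only on $\eta|_{\sA_x}$. Setting $\tilde V := \int_\sB U_x \lambda(dx)$, the elementary inequality $\Var V \le 2\Var \tilde V + 2\Var(V-\tilde V)$ splits the proof into two pieces. By Fubini, $\Var \tilde V = \iint_{\sB^2} \Cov(U_x, U_y)\, \lambda^2(dx,dy)$; the integrand vanishes on $\sB^2_\Delta$ since $U_x$ and $U_y$ are then measurable with respect to the independent $\sigma$-algebras generated by $\eta$ on the disjoint sets $\sA_x$ and $\sA_y$, while on the complement Cauchy--Schwarz together with the moment bounds and Lemma~\ref{l:E[DLF^p]} yield $|\Cov(U_x, U_y)| \le K_1^4$, producing a contribution of order $\lambda^2(\sB^2 \setminus \sB^2_\Delta)$.

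For $\Var(V - \tilde V)$ I will use $\Var(V - \tilde V) \leq \EE[(V - \tilde V)^2]$, expand as $\iint \EE[(W_x - U_x)(W_y - U_y)]\, \lambda^2$, and apply Cauchy--Schwarz on the integrand followed by $\sqrt{ab} \leq (a+b)/2$ to reduce to $\lambda(\sB) \int_\sB \EE[(W_x - U_x)^2]\, \lambda(dx)$. Using linearity of $D_x$ and $L^{-1}$, I will write
\[
W_x - U_x = -\Delta^F_x\, D_xL^{-1}G(\sB) - D_xF(\sA_x)\, D_xL^{-1}\bigl(G(\sB) - G(\sA_x)\bigr),
\]
with $\Delta^F_x := D_xF(\sB) - D_xF(\sA_x)$ and $\Delta^G_x$ defined analogously. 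Each squared summand is then of the form $\EE[X^2 Z^2]$ with $X$ a $\Delta$-type increment and $Z$ an $L^p$-bounded factor. The key interpolation will be
\[
\EE[X^2 Z^2] = \EE\bigl[|X|^{1 - 4/p}\, |X|^{1 + 4/p} Z^2\bigr] \le \EE[|X|]^{1-4/p} \cdot \EE\bigl[|X|^{(p+4)/2} Z^{p/2}\bigr]^{4/p},
\]
obtained via H\"older with conjugate exponents $p/(p-4)$ and $p/4$ (both valid precisely because $p > 4$), with the residual factor controlled through Cauchy--Schwarz and Lyapunov (note $(p+4)/2 \le p$ iff $p \ge 4$), and with Lemma~\ref{l:E[DLF^p]} used to replace $D_xL^{-1}(\cdot)$ by $D_x(\cdot)$ in the $L^p$-norm of $Z$. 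This will deliver
\[
\EE[(W_x - U_x)^2] \lesssim \EE[|\Delta^F_x|]^{1-4/p} + \EE[|\Delta^G_x|]^{1-4/p}.
\]

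The final step is to assemble the bound: rewriting $\lambda(\sB)\int_\sB = \iint_{\sB^2}$ and splitting into $\sB^2_\Delta$ and its complement produces exactly the main integral term of the proposition, plus an additional $\lambda^2(\sB^2 \setminus \sB^2_\Delta)$-type contribution (using that $\EE[|\Delta^F_x|]^{1-4/p} \le (2K_1)^{1-4/p}$ since $\|\Delta^F_x\|_p \le 2K_1$ by the triangle inequality). Combining with the bound on $\Var \tilde V$ finishes the proof. The main obstacle is producing the sharp exponent $1-4/p$: a direct Cauchy--Schwarz on covariance terms such as $\Cov(W_x - U_x, U_y)$ would yield only $(1-4/p)/2$, which is insufficient. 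The improvement relies on splitting $\Var V$ via the two-scale decomposition (so that no square root of a moment difference is ever taken) and performing the H\"older interpolation \emph{after} squaring, where the degree-4 structure of $W_x^2$ makes the exponent $1-4/p$ emerge naturally.
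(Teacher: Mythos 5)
Your proposal is correct and follows a route genuinely different from the paper's proof. The paper expands $\Var\langle DF(\sB),-DL^{-1}G(\sB)\rangle=\iint_{\sB^2}\Cov(W_x,W_y)\,\lambda^2(dx,dy)$ directly and, on the off-diagonal region $\sB^2_\Delta$, telescopes the covariance through five intermediate terms (localising each of the four factors $D_xF$, $D_xL^{-1}G$, $D_yF$, $D_yL^{-1}G$ one at a time), the fifth vanishing by independence; each of the remaining four is controlled via $\Cov(A,B)\le\EE|AB|+\EE|A|\,\EE|B|$ combined with the degree-four H\"older interpolation \eqref{e:xyzw}. You instead compare $V$ globally to the localised proxy $\tilde V=\int_\sB U_x\,\lambda(dx)$: the split $\Var V\le 2\Var\tilde V+2\EE[(V-\tilde V)^2]$ dispenses with $\Var\tilde V$ by independence (off-diagonal) and a crude moment bound (on-diagonal), and reduces $\EE[(V-\tilde V)^2]$ to $\lambda(\sB)\int_\sB\EE[(W_x-U_x)^2]\lambda(dx)$ via Cauchy--Schwarz and AM--GM, after which $W_x-U_x$ is telescoped into only two terms and interpolated. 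Both routes rest on Lemma~\ref{l:E[DLF^p]} and a H\"older split with conjugate exponents $p/(p-4)$ and $p/4$, and both circumvent the $(1-4/p)/2$ exponent loss that a naive Cauchy--Schwarz on covariances would incur --- the paper by never taking a square root of a covariance, you by deferring all square roots until after the interpolation. Your decomposition packages the entire two-scale discrepancy into the single scalar quantity $\EE[(W_x-U_x)^2]$, which is structurally tidier, though it yields a somewhat larger numerical constant than the paper's $8\max(1,K_1)^4$ (immaterial for all downstream applications). One small typo to fix: in your key interpolation display the residual exponent should read $|X|^{(p+4)/4}$, not $|X|^{(p+4)/2}$, since $(|X|^{1+4/p}Z^2)^{p/4}=|X|^{(p+4)/4}Z^{p/2}$; the subsequent Cauchy--Schwarz plus Lyapunov step you invoke (using $(p+4)/2\le p$ for $p\ge4$) then applies to $\EE[|X|^{(p+4)/4}Z^{p/2}]$ as intended.
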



\begin{proof}[Proof of Proposition \ref{p:cdc}] 
In order to simplify the notation, we write $F(\sB) = F, G(\sB) = G$, $L^{-1} G(\sB) = L^{-1} G  $, and so on. Since $\lambda(\sB), \, K_1<\infty$, one deduces immediately the first part of the statement (e.g., applying twice the Cauchy-Schwarz inequality). The finiteness of $K_1$ and $\lambda(\sB)$ can also be invoked to justify the implicit use of Fubini Theorem in the remainder of the proof (details omitted). Exploiting the bi-linearity of covariances together with Points (iv)-(v) of Lemma \ref{l:facts} gives
\begin{align*}
\Var[\langle DF, -DL^{-1}G\rangle] &= \Cov\left(\int D_x F D_xL^{-1}G\la(dx),\int D_y F D_y L^{-1}G\la(dy)\right) \\
&=\iint_{\sB^2} \Cov(D_x F D_xL^{-1}G, D_y F D_yL^{-1}G)\la^2(dx,dy).
\end{align*}
We now write $\iint_{\sB^2} = \iint_{\sB^2\backslash \sB_\Delta^2} +\iint_{\sB_\Delta^2}$ and evaluate separately the two integrals. The first integral only involves pairs $(x,y)$ such that $\sA_x\cap\sA_y\neq \emptyset$. In this case, we first use  H\"older's inequality to infer that
\begin{align*}
&| \Cov(D_x F  D_x L^{-1} G, D_y F  D_y L^{-1} G) | \\
&\le \EE|D_x F  D_x L^{-1} G D_y F  D_yL^{-1} G| + \EE|D_x F D_x L^{-1} G| \EE|D_y F  D_y L^{-1} G|\\
&\le  2(\EE[|D_x F|^4] \EE[|D_x L^{-1} G|^4] \EE[|D_y F|^4] \EE[|D_y L^{-1} G|^4])^{\frac 1 4}.
\end{align*}
Applying Lemma \ref{l:E[DLF^p]}, one therefore deduces the bound
\begin{align}\label{e:cov_bdunif}
| \Cov(D_x F  D_x L^{-1} G, D_y F  D_y L^{-1} G) |  &\le 2\sqrt{\sup_{x\in \sB} \EE[|D_x F|^4] \sup_{x\in \sB} \EE[|D_x G|^4]} \nonumber\\
&\le  \sup_{x\in \sB} \EE[|D_x F|^4] + \sup_{x\in \sB} \EE[|D_x G|^4] \le 2 K_1^4.
\end{align}
To estimate the contribution of the second integral, fix $(x,y)$ such that $\sA_x\cap\sA_y=\emptyset$. We rewrite the corresponding covariance as follows:
\begin{align}\label{e:cov}
&| \Cov(D_x F D_x L^{-1} G, D_y F D_y L^{-1} G)|\\&=  \Cov((D_x F - D_x F(\sA_x)) D_x L^{-1} G, D_y F D_y L^{-1} G)\nonumber\\
 &+ \Cov(D_x F(\sA_x) (D_x L^{-1} G - D_x L^{-1} G(\sA_x)), D_y F D_y L^{-1} G) \nonumber\\
 &+  \Cov(D_x F(\sA_x) D_x L^{-1} G(\sA_x), (D_y F - D_y F(\sA_y)) D_y L^{-1} G) \nonumber\\
 &+\Cov(D_x F(\sA_x) D_x L^{-1} G(\sA_x), D_y F(\sA_y) (D_y L^{-1} G - D_y L^{-1} G(\sA_y)))\nonumber\\
 &+\Cov(D_x F(\sA_x) D_x L^{-1} G(\sA_x), D_y F(\sA_y) D_y L^{-1} G(\sA_y)).\notag
\end{align}
Since $( D_x F(\sA_x) , D_x L^{-1} G(\sA_x))$ and $(D_y F(\sA_y) ,D_y L^{-1} G(\sA_y))$ are, respectively, measurable with respect to $\sigma(\eta |_{\sA_x})$ and with respect to $\sigma(\eta |_{\sA_y})$, they are also independent, implying that the last term on the right-hand side of \eqref{e:cov} vanishes. We now bound the first term on the right-hand side of \eqref{e:cov}, the idea  is to isolate the two-scale add-one-cost discrepancy of $F$ by using the following consequence of H\"older's inequality: for any non-negative random variables $X,Y,Z,W\in L^p(\Omega)$ with $p\in [4,\infty]$, one has that
\begin{align}\label{e:xyzw}
 \EE[XYZW] &=\EE[X^{1 - \frac 4 p} X^{\frac{4}{p}} YZW] \le \EE[X]^{1 - \frac{4}{p}}\EE[X(YZW)^{\frac{p}{4}}]^{\frac{4}{p}} \nonumber\\
 &\le \EE[X]^{1- \frac{4}{p}}  (\EE[X^4]\EE[Y^p]\EE[Z^p]\EE[W^p])^{\frac 1 p}. 
\end{align}
Combining \eqref{e:xyzw}, the trivial bound $\Cov[X_1,X_2]\le \EE[|X_1 X_2|]+\EE[|X_1|]\EE[|X_2|]$, the elementary inequality $(a+b)^4\le 8(a^4+b^4)$,  as well as Lemma \ref{l:E[DLF^p]} gives
\begin{align*}
&\Cov[(D_x F - D_x F(\sA_x))  D_x L^{-1} G, D_y F  D_y L^{-1} G ]\\
&\le  \EE[|D_x F-D_xF(\sA_x)|]^{1-\frac 4 p} \big[ (16 K_1^4 K_1^{3p})^{\frac 1 p} +  (16 K_1^4 K_1^p)^{\frac 1 p} \big] \\
&\le 4 \max(1,K_1)^{4} \EE[|D_x F-D_xF(\sA_x)|]^{1-\frac 4 p}.
\end{align*}
The remaining non-vanishing three terms on the right-hand side of \eqref{e:cov} can be dealt with in a similar way, using (when necessary) Lemma \ref{l:E[DLF^p]} in order to bypass the operator $L^{-1}$. This yields the estimate: for every $(x,y)\in \sB_\Delta$,
\begin{multline}\label{e:cov_bd2scale}
\Cov(D_x F  D_xL^{-1} G, D_y F  D_y L^{-1} G) \le 4 \max(1,K_1)^{4} \Big( \EE[|D_x F - D_xF(\sA_x)|]^{1-\frac{4}{p}} \\
+ \EE[|D_x G-D_xG(\sA_x)|]^{1-\frac{4}{p}} + \EE[|D_y F-D_yF(\sA_y)|]^{1-\frac{4}{p}} +\EE[|D_y G-D_y G(\sA_y)|]^{1-\frac{4}{p}}\Big).
\end{multline}
 Integrating the bounds \eqref{e:cov_bdunif} and \eqref{e:cov_bd2scale} over  $\sB^2\backslash \sB_\Delta^2$ and $\sB_\Delta^2$, respectively, yields the desired conclusion.
\end{proof}

The next statement provides a bound of a similar nature on the variance of Kabanov-Skorohod integrals. 

\begin{proposition}\label{p:sko} Fix $\sB\in\mathcal{U}$ such that $\lambda(\sB)<\infty$ and let $\{\sA_x : x\in\sB\}$ be functionally measurable. Let $F,G\in \FNS$ be such that $F(\sB), G(\sB), F(\sA_x), G(\sA_x)\in L^2(\Omega)$ for every $x$. Suppose that, for some $p>4$,
\begin{align*}
\sup_{x,y\in\sB} \EE[ |D_x F^y(\sB)|^p]^{\frac 1 p} + \EE[ |D_xF^y(\sA_x)|^p]^{\frac 1 p} + \EE[ |D_x G^y(\sB)|^p]^{\frac 1 p} + \EE[ |D_x G^y(\sA_x)|^p]^{\frac 1 p}:=K_2  <\infty,
\end{align*}
Then, $DF(\sB) | DL^{-1}G(\sB)|, \, DF(\sB) DL^{-1}G(\sB) \in \dom \delta$ and 
\begin{align*}
&\EE[\de(DF (\sB) |DL^{-1}G(\sB)|)^2] +\EE[\de(DF(\sB) DL^{-1}G(\sB))^2] \\
&\le C\la(\sB) +  C \la^2(\{(x,y)\in\sB^2: \sA_x\cap\sA_y\neq \emptyset\}) \\ 
&\quad +C\iint_{\sB^2_\Delta} \Big( \EE [|D_yG^x(\sB) -D_yG^x(\sA_y)|]^{1- \frac 4 p} + \EE [|D_yF^x(\sB) -D_yF^x(\sA_y)|]^{1- \frac 4 p}\Big) \la^2(dx,dy)\\
&\quad + {C\iint_{\sB^2_\Delta} \Big( \EE [|D_yG(\sB) -D_yG(\sA_y)|]^{1- \frac 4 p} + \EE [|D_yF(\sB) -D_yF(\sA_y)|]^{1- \frac 4 p}\Big) \la^2(dx,dy)}.
\end{align*}
where $C = 16 \max(1,K_2)^4$.
\end{proposition}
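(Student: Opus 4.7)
The plan is to apply Lemma \ref{l:SkorohodVariance}, which yields, for $u \in \dom\delta$,
\begin{align*}
\EE[\delta(u)^2] = \EE\int u(x)^2 \lambda(dx) + \EE\iint D_y u(x)\, D_x u(y)\, \lambda^2(dx,dy),
\end{align*}
applied separately to $u(x)=D_x F(\sB)|D_x L^{-1}G(\sB)|$ and $u(x)=D_x F(\sB)D_x L^{-1}G(\sB)$. Membership in $\dom\delta$ follows from the finiteness of $K_2$ and Lemma \ref{l:E[DLF^p]} (this also justifies a routine application of Fubini throughout). The diagonal term $\EE\int u(x)^2\lambda(dx)$ is bounded by $C\lambda(\sB)$ via H\"older's inequality on the pointwise product $D_xF(\sB)\cdot D_xL^{-1}G(\sB)$, invoking again Lemma \ref{l:E[DLF^p]} to bypass $L^{-1}$.

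For the off-diagonal term I split $\sB^2 = (\sB^2\setminus\sB^2_\Delta)\cup \sB^2_\Delta$. On $\sB^2\setminus\sB^2_\Delta$ (where $\sA_x\cap \sA_y\neq\emptyset$), I use the product rule $D_y(fg)=f^yD_yg + gD_yf$ to express $D_yu(x)$ and $D_xu(y)$ as sums of products of four factors of the form $(D_xF(\sB))^y$, $D_yF^x(\sB)-D_yF(\sB)$, $|D_xL^{-1}G(\sB)|$, and $D_y|D_xL^{-1}G(\sB)|$ (analogously with $x,y$ swapped). I then apply the Cauchy–Schwarz inequality followed by Lemma \ref{l:E[DLF^p]} to bound $\EE[|D_yu(x)D_xu(y)|]$ uniformly by a constant depending only on $K_2$, producing the contribution $C\lambda^2(\{(x,y)\in \sB^2:\sA_x\cap \sA_y\neq \emptyset\})$.

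On $\sB^2_\Delta$ I proceed by the same ``localization and cancellation'' strategy used in the proof of Proposition \ref{p:cdc}. Each factor appearing in the product-rule expansion of $D_yu(x)D_xu(y)$ is written as (its localized version on $\sA_x$ or $\sA_y$) plus a discrepancy, using the elementary estimate $\bigl||a|-|b|\bigr|\le |a-b|$ to handle the absolute value. The crucial observation is that if all four factors are simultaneously replaced by their fully localized versions, then: (i) since $\sA_x\cap \sA_y=\emptyset$, the factors measurable with respect to $\sigma(\eta|_{\sA_x})$ are independent of those measurable with respect to $\sigma(\eta|_{\sA_y})$; (ii) $D_y\bigl(D_xF(\sA_x)\bigr)=0$ and $D_y\bigl(D_xL^{-1}G(\sA_x)\bigr)=0$ whenever $y\notin \sA_x$, and symmetrically for $D_x$. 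Hence the fully localized contribution to $\EE[D_yu(x)D_xu(y)]$ vanishes identically. Every remaining contribution contains at least one discrepancy factor, which is then isolated by means of the H\"older-type inequality \eqref{e:xyzw} (using $p>4$ and the uniform $p$-moment bound $K_2$, combined with Lemma \ref{l:E[DLF^p]} to replace the factors of $D L^{-1}G$ by $DG$). This yields pointwise bounds by sums of terms of the form
\begin{align*}
\EE[|D_yF^x(\sB)-D_yF^x(\sA_y)|]^{1-\frac4p},\quad \EE[|D_yG^x(\sB)-D_yG^x(\sA_y)|]^{1-\frac4p},
\end{align*}
together with the analogous terms without the superscript $x$ (arising from the component $-D_yF(\sB)$ of $D_yD_xF(\sB)$ and similarly for $G$), each multiplied by a factor at most $16\max(1,K_2)^4$. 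Integrating over $\sB^2_\Delta$ and combining all contributions gives the announced estimate.

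The main obstacle is the combinatorial bookkeeping of the product-rule expansions, as each of $D_yu(x)$ and $D_xu(y)$ splits into two summands, producing four cross products, and each of these must then be decomposed into localized-plus-discrepancy pieces. The presence of $|DL^{-1}G|$ rather than $DL^{-1}G$ prevents direct substitution, and the need to convert $D_y D_xF(\sB)$ into $D_yF^x(\sB)-D_yF(\sB)$ is what forces both $F^x$-type and $F$-type discrepancies to appear in the final bound.
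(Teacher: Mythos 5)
Your proof is essentially correct and follows the same strategy as the paper: apply Lemma \ref{l:SkorohodVariance}, bound the diagonal term by $C\lambda(\sB)$ via H\"older and Lemma \ref{l:E[DLF^p]}, bound the near-diagonal piece of the off-diagonal term by crude uniform moment estimates, and on $\sB^2_\Delta$ introduce localized versions and use the cancellation $D_y(\cdot)=0$ for $\sigma(\eta|_{\sA_x})$-measurable quantities together with the interpolation inequality \eqref{e:xyzw} and Lemma \ref{l:E[DLF^p]} to isolate the two-scale discrepancies. The only cosmetic difference is that the paper telescopes directly inside the factor $D_x(D_yF\,|D_yL^{-1}G|)$ rather than first applying the product rule, and one should note that the appearance of both the $F^{x}$-type and the plain $F$-type discrepancies in the final bound comes not only from the two-term decomposition of $D_yD_xF$ you describe but also from the averaging in the second estimate of Lemma \ref{l:E[DLF^p]}, which converts $\EE[|(D_yL^{-1}(G - G(\sA_y)))^x|]$ into a sum of both superscripted and unsuperscripted $G$-discrepancies.
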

\begin{proof}   
To simplify, we write $F(\sB) = F, G(\sB) = G$, $L^{-1} G(\sB) = L^{-1} G,  F^y(\sB) = F^y, G^y(\sB) = G^y  $, and so on. We only prove the bound on $\EE[\de(DF (\sB) |DL^{-1}G(\sB)|)^2]$; the second summand on the left-hand side of the bound can be dealt with along the same lines. Since $K_2, \lambda(\sB)<\infty$, one checks immediately (with the help of Lemma \ref{l:E[DLF^p]} ) that $u(x) := D_xF | D_xL^{-1}G|$  verifies the assumptions of Lemma \ref{l:SkorohodVariance}, which implies in particular that $DF | DL^{-1}G| \in \dom \delta$ and
\begin{align*}
&\EE[\de(DF|DL^{-1}G|)^2]\\
&= \EE \int_\sB |D_x F D_xL^{-1}G|^2 \la(dx) + \EE \iint_{\sB^2} D_x(D_yF|D_y L^{-1} G|) D_y(D_x F |D_xL^{-1} G|) \la^2(dx,dy) \\
&=: J_1+J_2,
\end{align*}
where the restricted domains of integration are justified by the fact that, for all $H\in L^0(\Omega)$, one has that $
 D_x H(\sB) = 0$ whenever $x\not\in \sB$ (similar facts are exploited without mention throughout the proof). We also recall the relation \eqref{e:claro}, and subdivide the proof into several steps.

\smallskip

\noindent\underline{\it Step 1: Bounding $J_1$.} By H\"older's inequality and Lemma \ref{l:E[DLF^p]}, we have
\begin{align*}
\EE[ |D_x F D_xL^{-1}G|^2] \le \EE[|D_x F|^4]^{\frac 1 2} \EE[|DL^{-1}G|^4]^{\frac 1 2} \le K_2^{4}.
\end{align*}
leading to $J_1\le K_2^{4}\la(\sB)$.

\medskip

\noindent\underline{\it Step 2: On-diagonal contribution to $J_2$.} Denote the integrand in $J_2$ by
$$\scr R = \scr R(x,y)  :=D_x(D_yF|D_y L^{-1} G|) D_y(D_x F |D_xL^{-1} G|)$$ 
We first consider the case where $(x,y)$ is such that $\sA_x\cap\sA_y\neq \emptyset$ and use the crude bound $|D_xH| \le |H^x|+|H|$ for any $H\in L^0(\Omega)$ and $x\in\XX$ to obtain
\begin{align*}
|\scr R|\le \big( |D_yF^x  (D_y L^{-1} G )^x| + |D_yF  D_y L^{-1} G| \big)  \big( |D_x F^y (D_xL^{-1} G)^y | + |D_x F  D_xL^{-1} G|   \big).
\end{align*}
We now assess the expectation of the first among the four products. By H\"older's inequality and Lemma \ref{l:E[DLF^p]}, one has that 
\begin{align*}
&\EE |D_yF^x (D_y L^{-1} G)^x   D_x F^y (D_xL^{-1} G)^y | \\
&\le \Big(\EE[ |D_y F^x|^4] \EE[ |(D_y L^{-1}G)^x|^4] \EE[ |D_x F^y|^4] \EE[ |(D_x L^{-1} G)^y|^4]\Big)^{1/4}\le K_2^4.
\end{align*}
 Handling the expectation of remaining three terms by analogous arguments, implies the bound  $\EE[ |\scr R(x,y) | ]\le 4K_2^4$, for all $x,y\in \sB$.
  
  \smallskip
\noindent\underline{\it Step 3: Off-diagonal contribution to $J_2$.} Now we consider $(x,y)$ such that $\sA_x\cap\sA_y=\emptyset$.  We can thus write
\begin{align*}
\scr R = \scr R(x,y)  
&=D_x\Big( (D_yF-D_yF(\sA_y)) |D_y L^{-1} G|\Big) D_y(D_x F |D_xL^{-1} G|) \\
&\quad + D_x\Big(D_yF(\sA_y) (|D_y L^{-1} G| -|D_yL^{-1}G(\sA_y)|) \Big) D_y(D_x F |D_xL^{-1} G|)\\
&=(D_y F^x - D_y F^x({\sA_y})) |(DL^{-1}G)^x| D_y(D_xF |D_x L^{-1}G|) \\
&\quad- (D_y F - D_y F(\sA_y) ) |DL^{-1}G| D_y(D_xF |D_x L^{-1}G|) \\
&\quad+ D_y F^x({\sA_y})( |( D_y L^{-1}G)^x|-  |(D_yL^{-1}G({\sA_y}))^x| ) D_y(D_x F |D_xL^{-1} G|) \\
&\quad- D_y F(\sA_y)(|D_yL^{-1}G| - |D_yL^{-1}G(\sA_y)|) D_y(D_x F |D_xL^{-1} G|).
\end{align*}
We estimate the expectation of the third term appearing after the last equality, namely $$\scr J = \scr J(x,y) :=\EE D_y F^x({\sA_y})( |( D_y L^{-1}G)^x|-  |(D_yL^{-1}G({\sA_y}))^x| ) D_y(D_x F |D_xL^{-1} G|),$$ the other summands being dealt with by a slight variation of the same argument. Writing
\begin{align*}
D_y(D_x F |D_xL^{-1} G|) = D_x F^y |(D_x L^{-1}G)^y| - D_x F|D_xL^{-1}G|,
\end{align*}
 then applying \eqref{e:xyzw} to 
\begin{align*}
X&= |(D_y L^{-1}G)^x-  (D_yL^{-1}G({\sA_y}))^x|,  \\
Y&= |( D_y F({\sA_y}))^x|,\\
Z&= |D_x F^y| + |D_xF|,\\
W&=|(D_x L^{-1}G)^y| + |D_xL^{-1}G|,
\end{align*} 
 we arrive at 
\begin{align*}
|\scr J|\le \EE[|( D_y L^{-1}G)^x -  ( D_yL^{-1}G({\sA_y})^x|]^{1-\frac 4 p}
\cdot (\EE[X^4]\EE[Y^p]\EE[Z^p]\EE[W^p])^{1/p}.
\end{align*}
Applying Lemma \ref{l:E[DLF^p]} to $F=G-G(\sA_y)$ and $p=1$, we infer that 
\begin{align*}
2\EE[|( D_y L^{-1}G)^x -  ( D_yL^{-1}G({\sA_y}))^x|] \le \EE[|D_y G^x-  D_yG^x({\sA_y})|] +{ \EE[|D_y G-  D_yG({\sA_y})|]}.
\end{align*}
A further application of Lemma \ref{l:E[DLF^p]} also shows that $\EE[X^4]\le 16K_2^4$, $\EE[Y^p]\le K_2^p$ and $\EE[Z^p], \EE[W^p]\le 16 K_2^p$. Hence,
\begin{align*}
|\scr J(x,y)|\le 8 \max(1,K_2)^4 \Big( \EE[|D_y G^x-  D_yG^x({\sA_y})|]^{1-\frac{4}{p}}+ \EE[|D_y G-  D_yG({\sA_y})|]^{1-\frac{4}{p}} \Big).
\end{align*}
After assessing the remaining terms decomposing $\scr R(x,y)$, we deduce that there exists a finite constant $C$ that depends on $K_2$ such that
\begin{align*}
&|\EE \scr R(x,y)| \notag\le 16 \max(1,K_2)^4 \Big(\EE[|D_y G^x-  D_yG^x({\sA_y})|]^{1-\frac{4}{p}} +  \EE[|D_y F^x-  D_y F^x({\sA_y})|]^{1-\frac{4}{p}} \\
&\hspace{4.5cm} + \EE[|D_y G-  D_yG({\sA_y})|]^{1-\frac{4}{p}} +  \EE[|D_y F-  D_y F({\sA_y})|]^{1-\frac{4}{p}}\Big).
\end{align*}
Integrating the estimates at Step 2 and 3 over $\sB^2\backslash \sB^2_\Delta$ and $\sB_\Delta$, respectively, and taking into account the content of Step 1 yields the desired conclusion.

%
\end{proof}

The following result can be regarded as a refinement of Proposition \ref{p:cdc},  displaying an additional indicator, as well as an integral to evaluate. In particular, in contrast to the situation of Propositions \ref{p:cdc} and \ref{p:sko}, the presence of the indicator requires us to pay careful attention to the scaling and centering of the considered random variables.


\begin{proposition}
\label{p:p3}
Fix $\sB\in\mathcal{X}$ such that $\lambda(\sB)<\infty$ and consider a functionally measurable collection of sets $\{\sA_x : x\in \sB\}$. Let $\mathbf{F}=(F_1,...,F_m)$ a vector of elements of $L^0(\Omega)$ such that, for each $i\in [m]$ and every $x\in \sB$, $F_i(\sB), F_i(\sA_x)\in L^2(\Omega)$. Set $\wh F_i (\sB) =(F_i(\sB)-\EE F_i(\sB))/\sigma_i$ with $\sigma_i\in(0,\infty)$, $\forall i\in[m]$ and write $\mathbf{\wh F}(\sB) =(\wh F_1(\sB),...,\wh F_m(\sB))$. Suppose that for some $p>6$, one has 
\begin{align*}
\sup_{\ell\in[m]}\sup_{x\in\sB} \EE[ |D_x F_\ell(\sB)|^p]^{\frac 1 p} + \EE[|D_x F_\ell(\sA_x)|^p]^{\frac 1 p}= K_3<\infty.
\end{align*}
Then for any $j,k\in[m]$,
\begin{align*}
&\int_0^1 \Var[\langle 1_{w\le \norm{D\mathbf{\wh F}}}, |D_x \wh F_j(\sB) D_x L^{-1} \wh F_k(\sB)|\rangle] dw \\
&\le \sum_{i=1}^m \frac{C}{\sigma_i\sigma_j^2\sigma_k^2} \Big(\la^2\{(x,y)\in\sB^2: \sA_x\cap\sA_y\neq \emptyset\} + \iint_{\sB^2_\Delta} \sup_{\ell\in[m]} \EE|D_x F_\ell(\sB) - D_x F_\ell(\sA_x)|]^{1-\frac{5}{p}}\la^2(dx,dy)\Big).
\end{align*}
and
\begin{align*}
&\int_0^1 w \Var[\langle 1_{w\le \norm{D\mathbf{\wh F}}}, |D_x \wh F_j (\sB)D_x L^{-1} \wh F_k(\sB)|\rangle] dw\\
&\le  \sum_{i=1}^m \frac{C}{\sigma^2_i\sigma_j^2\sigma_k^2} \Big(\la^2\{(x,y)\in\sB^2: \sA_x\cap\sA_y\neq \emptyset\} + \iint_{\sB^2_\Delta} \sup_{\ell\in[m]} \EE|D_x F_\ell(\sB) - D_x F_\ell(\sA_x)|]^{1-\frac{6}{p}}\la^2(dx,dy)\Big).
\end{align*}
where $C= 24 \max(1,K_3)^6$.
\end{proposition}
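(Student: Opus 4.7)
My plan is to adapt the argument of Proposition \ref{p:cdc} by exchanging the order of the $dw$-integration with the covariance structure, and then carefully managing the additional factor $\norm{D\mathbf{\wh F}}$ that emerges after integrating out the indicator $1_{w\le\norm{D_x\mathbf{\wh F}}}$. Writing $Y_w(x) := 1_{w \le \norm{D_x\mathbf{\wh F}}}|D_x\wh F_j(\sB) D_xL^{-1}\wh F_k(\sB)|$ and $Z_x := D_x\wh F_j(\sB) D_xL^{-1}\wh F_k(\sB)$, bilinearity of the covariance combined with a Fubini argument (justified by $K_3<\infty$, by Lemma \ref{l:E[DLF^p]}, and by $\la(\sB)<\infty$) reduces both quantities of interest to
\[
\iint_{\sB^2}\Big(\int_0^1 \Cov(Y_w(x), Y_w(y))\,dw\Big)\la^2(dx, dy) \quad \text{and} \quad \iint_{\sB^2}\Big(\int_0^1 w\,\Cov(Y_w(x), Y_w(y))\,dw\Big)\la^2(dx, dy).
\]
The elementary identities $\int_0^1 1_{w\le a}1_{w\le b}\,dw \le \sqrt{ab}$ and $\int_0^1 w\,1_{w\le a}1_{w\le b}\,dw \le ab/2$, together with Cauchy--Schwarz in $L^2(dw)$ (relying on the bound $\int_0^1\EE[Y_w(x)]^2\,dw \le \EE[Z_x^2\norm{D_x\mathbf{\wh F}}]$ obtained via Jensen's inequality), will then yield
\[
\int_0^1 |\Cov(Y_w(x), Y_w(y))|\,dw \le 2\,\EE[Z_x^2 \norm{D_x\mathbf{\wh F}}]^{1/2}\EE[Z_y^2 \norm{D_y\mathbf{\wh F}}]^{1/2},
\]
with $\norm{D\mathbf{\wh F}}$ replaced by $\norm{D\mathbf{\wh F}}^2$ in the weighted case.

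After this reduction, I would split the integration domain as $\sB^2 = (\sB^2 \setminus \sB_\Delta^2) \cup \sB_\Delta^2$ and handle the two regions separately. On the overlap region $\sB^2\setminus \sB_\Delta^2$, the pointwise bound $\norm{D_x\mathbf{\wh F}} \le \sum_{i=1}^m |D_xF_i|/\sigma_i$ combined with H\"older's inequality, the moment assumption $K_3$, and Lemma \ref{l:E[DLF^p]} (to replace $D_xL^{-1}F_k$ by $D_xF_k$) will yield a uniform upper bound of order $\sum_i \max(1,K_3)^{O(1)}/(\sigma_i\sigma_j^2\sigma_k^2)$ for the unweighted case and $\sum_i \max(1,K_3)^{O(1)}/(\sigma_i^2\sigma_j^2\sigma_k^2)$ for the weighted case---in the latter, the double sum $\sum_{i,i'}1/(\sigma_i\sigma_{i'})$ is collapsed to $m\sum_i 1/\sigma_i^2$ via the AM--GM inequality. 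Integrating against $\la^2$ over the overlap region then produces the first summand of each bound.

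On the disjoint region $\sB_\Delta^2$, I would introduce the localized quantities $\tilde Y_w(x, \sA_x) := 1_{w\le\norm{D_x\mathbf{\wh F}(\sA_x)}}|D_x\wh F_j(\sA_x) D_xL^{-1}\wh F_k(\sA_x)|$, which are independent at $x$ and $y$ under $\sA_x \cap \sA_y = \emptyset$, so that $\Cov(\tilde Y_w(x,\sA_x),\tilde Y_w(y,\sA_y))=0$. The remaining error terms of the form $Y_w - \tilde Y_w$ decompose as
\[
Y_w(x) - \tilde Y_w(x,\sA_x) = (1_{w\le\norm{D_x\mathbf{\wh F}}} - 1_{w\le\norm{D_x\mathbf{\wh F}(\sA_x)}})|Z_x| + 1_{w\le\norm{D_x\mathbf{\wh F}(\sA_x)}}(|Z_x|-|Z_x(\sA_x)|).
\]
The second summand is handled exactly as in Proposition \ref{p:cdc}. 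For the first, I would use the identity $\int_0^1 |1_{w\le a}-1_{w\le b}|\,dw = |a-b|$ and the $1$-Lipschitz property of the Euclidean norm to obtain $|\norm{D_x\mathbf{\wh F}} - \norm{D_x\mathbf{\wh F}(\sA_x)}| \le \sum_i |D_xF_i(\sB) - D_xF_i(\sA_x)|/\sigma_i$. Applying the H\"older trick \eqref{e:xyzw} now with \emph{five} (respectively \emph{six}) factors instead of four---the extra ones arising from the additional $\norm{D\mathbf{\wh F}}$ (respectively $\norm{D\mathbf{\wh F}}^2$) factor produced by integrating the indicator---will change the exponent from $1-4/p$ to $1-5/p$ (respectively $1-6/p$), and will yield exactly the claimed two-scale discrepancy $\sup_{\ell\in[m]}\EE[|D_xF_\ell(\sB) - D_xF_\ell(\sA_x)|]^{1-q/p}$ with $q\in\{5,6\}$.

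The main technical obstacle is consistently handling the indicator within the telescoping argument: unlike the multiplicative add-one-cost factors, the indicator $1_{w\le\norm{D_x\mathbf{\wh F}}}$ couples all coordinates of $\mathbf{\wh F}$ and is not \emph{a priori} localized on $\sA_x$. The strategy of isolating the norm discrepancy via the elementary $dw$-identity above, followed by a Lipschitz estimate for the Euclidean norm and a careful bookkeeping of H\"older exponents and $\sigma_i$-weights, will bypass this difficulty and should produce the final constant $C = 24\max(1,K_3)^6$.
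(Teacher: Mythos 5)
Your proposal is correct and follows essentially the same strategy as the paper: exchange the $dw$-integral with the covariance, split $\sB^2$ into the overlap region $\sB^2\setminus\sB^2_\Delta$ and the disjoint region $\sB^2_\Delta$, give a crude uniform moment bound on the former, and on the latter telescope against the localized quantities so that independence kills the fully localized term, treating the indicator discrepancy via $\int_0^1 |1_{w\le a}-1_{w\le b}|\,dw \le |a-b|$ together with the $1$-Lipschitz property of the Euclidean norm, and finally applying the Hölder trick \eqref{e:xyzw} with an extra factor of $\|D\wh{\mathbf F}\|$ (resp.\ $\|D\wh{\mathbf F}\|^2$) producing the exponent $1-5/p$ (resp.\ $1-6/p$).

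Two minor implementation remarks. First, on the overlap region the paper avoids your Cauchy--Schwarz-in-$L^2(dw)$ step by the more elementary route of bounding $\Cov[X,Y]\le\EE|XY|+\EE|X|\,\EE|Y|$, dropping the indicator in the $y$-factor, and then integrating $w$ out of the remaining $x$-indicator to produce $\|D_x\wh{\mathbf F}\|\wedge\|D_y\wh{\mathbf F}\|$ directly; both routes give the same order of bound. Second, for the weighted case your AM--GM collapse of $\sum_{i,i'}1/(\sigma_i\sigma_{i'})$ to $m\sum_i1/\sigma_i^2$ is correct but costs an extra factor of $m$; the paper avoids any cross-term by expanding the squared Euclidean norm exactly as $\|D_x\wh{\mathbf F}\|^2=\sum_i\sigma_i^{-2}(D_xF_i)^2$, which is the cleaner move and yields the stated constant without the $m$. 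Neither point affects the correctness of your argument.
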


\begin{proof}
As for the other proofs in this section, we will remove all dependencies on $\sB$ from the considered random elements, in order to simplify the notation.
Denote the left-hand side of the two inequalities in the statement by $V^{(1)}_{jk}$ and $V^{(2)}_{jk}$ respectively. We rewrite them in terms of covariances,  as follows
\begin{align*}
V^{(1)}_{jk}&=\iint_{\sB^2} \int_0^1 \Cov[ 1_{w\le \norm{D_x\mathbf F}} |D_x \wh F_j D_x L^{-1}\wh F_k|,1_{w\le \norm{D_y\mathbf F}}|D_y \wh F_j D_y L^{-1}\wh F_k|] dw\la^2(dx,dy),\\
V^{(2)}_{jk}&= \iint_{\sB^2} \int_0^1 w\,\Cov[1_{w\le \norm{D_x\wh{\mathbf F}}} |D_x \wh F_j D_x L^{-1}\wh F_k|,1_{w\le \norm{D_y\mathbf F}}|D_y \wh F_j D_y L^{-1}\wh F_k|] dw\la^2(dx,dy).
\end{align*}
As previously, we subdivide the proof into several steps, according to the relative position of the coordinates of $(x,y)\in \sB^2$. 

\noindent\underline{\it Case 1: $\sA_x\cap\sA_y \neq \emptyset$.} We invoke the trivial bound $\Cov[X,Y]\le \EE[|XY|]+\EE|X|\EE|Y|$ to deduce that
\begin{align*}
&\Cov[1_{w\le \norm{D_x\wh{\mathbf F|}}} |D_x \wh F_j D_x L^{-1}\wh F_k|, 1_{w\le \norm{D_y\wh{\mathbf F}}}|D_y \wh F_j D_y L^{-1}\wh F_k|] \\
&\le \EE [1_{w\le \norm{D_x\wh{\mathbf F}}\wedge \norm{D_y\wh{\mathbf F}}}  |D_x \wh F_j D_x L^{-1}\wh F_k D_y \wh F_j D_y L^{-1}\wh F_k|] \\
&+ \EE[1_{w\le \norm{D_x\wh{\mathbf F}}} |D_x \wh F_j D_x L^{-1}\wh F_k|] \EE[1_{w\le \norm{D_y\wh{\mathbf F}}}|D_y \wh F_j D_y L^{-1}\wh F_k|].
\end{align*}
Bounding the last $1_{w\le \norm{D_y \wh{\mathbf F}}}$ term by $1$, then integrating with respect to $w$ over $[0,1]$, one obtains
\begin{align*}
&\int_0^1 \Cov[1_{w\le \norm{D_x\wh{\mathbf F}}} |D_x \wh F_j D_x L^{-1}\wh F_k|, 1_{w\le \norm{D_y\wh{\mathbf F}}}|D_y \wh F_j D_y L^{-1}\wh F_k|] dw \\
& \le \EE\Big[\norm{D_x\wh{\mathbf F}}\wedge \norm{D_y\wh{\mathbf F}} |D_x \wh F_j D_x L^{-1}\wh F_k D_y \wh  F_j D_y L^{-1}\wh F_k|\Big]\\
&\hspace{4cm} + 
\EE\Big[\norm{D_x\wh{\mathbf F}}|D_x \wh F_j D_x L^{-1}\wh F_k|]\EE[|D_y \wh F_j D_y L^{-1}\wh F_k|]\Big]\\
&\le \sum_{i=1}^m \frac{1}{\sigma_i\sigma_j^2\sigma_k^2} \EE[|D_xF_i D_x F_j D_x L^{-1}F_k D_y F_j D_yL^{-1}F_k|] \\
&\hspace{4cm}+  \sum_{i=1}^m \frac{1}{\sigma_i\sigma_j^2\sigma_k^2} \EE[|D_x F_i D_x F_j D_x L^{-1}F_k|]\EE[|D_yF_j D_yL^{-1}F_k|].
\end{align*}
Applying Lemma  \ref{l:E[DLF^p]}  as we did in the proof of Proposition \ref{p:sko} gives 
\begin{align}\label{e:p6.4_1}
&\int_0^1 \Cov[1_{w\le \norm{D_x\wh{\mathbf F}}} |D_x \wh F_j D_x L^{-1}\wh F_k|, 1_{w\le \norm{D_y\wh{\mathbf F}}}|D_y \wh F_j D_y L^{-1}\wh F_k|] dw  \le \sum_{i=1}^m \frac{2  K_3^5}{\sigma_i\sigma_j^2\sigma_k^2}.
\end{align}
Similarly,
\begin{align*}
&\int_0^1 w\,\Cov[1_{w\le \norm{D_x\wh{\mathbf F}}} |D_x \wh F_j D_x L^{-1}\wh F_k|, 1_{w\le \norm{D_y\wh{\mathbf F}}}|D_y\wh F_j D_y L^{-1}\wh F_k|] dw \\
& \le \frac{1}{2}\EE\Big[\norm{D_x\wh{\mathbf F}}^2\wedge \norm{D_y\wh{\mathbf F}}^2 |D_x \wh F_j D_x L^{-1}\wh F_k D_y \wh F_j D_y L^{-1}\wh F_k|\Big] \\
& \hspace{4cm}+
\frac{1}{2}\EE\Big[\norm{D_x\wh{\mathbf F}}^2|D_x \wh F_j D_x L^{-1}\wh F_k|]\EE[|D_y \wh F_j D_y L^{-1}\wh F_k|]]\\
&\le \frac{1}{2}\sum_{i=1}^m \frac{1}{\sigma^2_i\sigma_j^2\sigma_k^2} \EE[|D_xF_i|^2| D_x F_j D_x L^{-1}F_k D_y F_j D_yL^{-1}F_k|] \\
&\hspace{4cm} +\frac{1}{2}\sum_{i=1}^m \frac{1}{\sigma_i^2\sigma_j^2\sigma_k^2} \EE[|D_x F_i|^2| D_x F_j D_x L^{-1}F_k|]\EE[|D_yF_j D_yL^{-1}F_k|]\\
&\le \sum_{i=1}^m \frac{K_3^6}{\sigma_i^2\sigma_j^2\sigma_k^2} 
\end{align*}

\noindent\underline{\it Case 2:  $\sA_x\cap\sA_y=\emptyset$.} Setting $\wh F_i(\sA_x):= F_i(\sA_x)/\sigma_i$ for $i\in[m]$ and $\wh{\mathbf F}(\sA_x)=(\wh F_1(\sA_x),...,\wh F_m(\sA_x) )$, one has by independence
\begin{align*}
&\Cov[ 1_{w\le \norm{D_x\wh{\mathbf F}}} |D_x \wh F_j D_x L^{-1}\wh F_k|,1_{w\le \norm{D_y\wh{\mathbf F}}}|D_y\wh F_j D_y L^{-1}\wh F_k|] \\
&= \Cov[(1_{w\le \norm{D_x\wh{\mathbf F}}}- 1_{w\le \norm{D_x\wh{\mathbf F}(\sA_x)}})|D_x \wh F_j D_x L^{-1}\wh F_k|,1_{w\le \norm{D_y\wh{\mathbf F}}}|D_y\wh F_j D_y L^{-1}\wh F_k|] \\
&+ \Cov[ 1_{w\le \norm{D_x\wh{\mathbf F}(\sA_x)}}(|D_x \wh F_j| - |D_x \wh F_j(\sA_x)|) |D_xL^{-1}\wh F_k|, 1_{w\le \norm{D_y\wh{\mathbf F}}}|D_y\wh F_j D_y L^{-1}\wh F_k|] \\
&+ \Cov[ 1_{w\le \norm{D_x\wh{\mathbf F}(\sA_x)}} |D_x {\wh F}_j(\sA_x)|(|D_xL^{-1}\wh F_k| - |D_xL^{-1}\wh F_k(\sA_x)|),1_{w\le \norm{D_y\wh{\mathbf F}}}|D_y \wh F_j D_yL^{-1}\wh F_k|] \\
&+ \Cov[ 1_{w\le \norm{D_x\wh{\mathbf F}(\sA_x)}} |D_x {\wh F}_j(\sA_x)D_xL^{-1}\wh F_k(\sA_x)|,(1_{w\le \norm{D_y\wh{\mathbf F}}}- 1_{w\le \norm{D_y\wh{\mathbf F}(A_y)}})|D_y \wh F_j D_yL^{-1}\wh F_k| ]\\
&+ \Cov[1_{w\le \norm{D_x\wh{\mathbf F}(\sA_x)}} |D_x {\wh F}_j(\sA_x)D_xL^{-1}\wh F_k(\sA_x)|, 1_{w\le \norm{D_y\wh{\mathbf F}(A_y)}} (|D_y \wh F_j| - |D_y \wh F_j(A_y)|) |D_yL^{-1}\wh F_k|] \\
&+ \Cov[1_{w\le \norm{D_x\wh{\mathbf F}(\sA_x)}} |D_x {\wh F}_j(\sA_x)D_xL^{-1}\wh F_k(\sA_x)|, 1_{w\le \norm{D_y\wh{\mathbf F}(A_y)}} |D_y \wh F_j(A_y)| (|D_yL^{-1}\wh F_k|-|D_yL^{-1}\wh F_k(A_y)|)].
\end{align*}
Let us denote the summands in the above display by $\frak C_1,..., \frak C_6$. Note that 
\begin{align*}
|1_{w\le \norm{D_x\wh{\mathbf F}}} - 1_{w\le \norm{D_x\wh{\mathbf F}(\sA_x)}}|= 1_{\norm{D_x\wh{\mathbf F}(\sA_x)} <w\le \norm{D_x\wh{\mathbf F}}} + 1_{\norm{D_x\wh{\mathbf F}} <w\le \norm{D_x\wh{\mathbf F}(\sA_x)}}.
\end{align*}
This shows 
\begin{align*}
|\frak C_1| &\le \EE[1_{\norm{D_x\wh{\mathbf F}(\sA_x)} <w\le \norm{D_x\wh{\mathbf F}}}|D_x\wh F_j D_x L^{-1}\wh F_k D_y \wh F_j D_yL^{-1}\wh F_k|] \\
& +  \EE[1_{\norm{D_x\wh{\mathbf F}} < w \le \norm{D_x\wh{\mathbf F}(\sA_x)}} |D_x\wh F_j D_x L^{-1}\wh F_k D_y \wh F_j D_yL^{-1}\wh F_k|] \\
&+ \EE[ 1_{\norm{D_x\wh{\mathbf F}(\sA_x)} <w\le \norm{D_x\wh{\mathbf F}}}|D_x\wh F_j D_x L^{-1}\wh F_k|]\EE[|D_y \wh F_j D_yL^{-1}\wh F_k|]\\
&+ \EE[ 1_{\norm{D_x\wh{\mathbf F}} < w \le \norm{D_x\wh{\mathbf F}(\sA_x)}}|D_x\wh F_j D_x L^{-1}\wh F_k|]\EE[|D_y \wh F_j D_yL^{-1}\wh F_k|].
\end{align*}
Similarly,
\begin{align*}
|\frak C_2| &\le \EE[1_{w\le \norm{D_x \wh{\mathbf F}(\sA_x)}\wedge \norm{D_y\wh{\mathbf F}}}|D_x\wh F_j - D_x {\wh F}_j(\sA_x)| |D_x L^{-1}\wh F_k D_y \wh F_j D_yL^{-1}\wh F_k|] \\
&+ \EE[1_{w\le \norm{D_x \wh{\mathbf F}(\sA_x)}} |D_x\wh F_j - D_x {\wh F}_j(\sA_x)||D_x L^{-1}\wh F_k|]\EE[|D_y \wh F_j D_yL^{-1}\wh F_k|].
\end{align*}
We handle $\frak C_4$ in the same way as we did for $\frak C_1$, and $\frak C_3,\frak C_5, \frak C_6$ the same as $\frak C_2$. We omit the details to avoid repetitions.  Integrating with respect to $w$ over $[0,1]$ gives
\begin{align*}
&\int_0^1 \Cov [1_{w\le \norm{D_x\wh{\mathbf F}}} |D_x\wh F_j D_x L^{-1}\wh F_k|,1_{w\le \norm{D_y\wh{\mathbf F}}}|D_y \wh F_j D_yL^{-1}\wh F_k|] dw\\
&\le 2 \EE[\norm{D_x\wh{\mathbf F} - D_x \wh{\mathbf F}(\sA_x)} |D_x\wh F_j D_x L^{-1}\wh F_k D_y \wh F_j D_yL^{-1}\wh F_k|] \\
&+ 2 \EE[\norm{D_x\wh{\mathbf F} - D_x \wh{\mathbf F}(\sA_x)} |D_x\wh F_j D_x L^{-1}\wh F_k|]\EE[|D_y \wh F_j D_yL^{-1}\wh F_k|]\\
&+\EE[ \norm{D_x \wh{\mathbf F}(\sA_x)}\wedge \norm{D_y\wh{\mathbf F}}|D_x\wh F_j - D_x {\wh F}_j(\sA_x)| |D_x L^{-1}\wh F_k D_y \wh F_j D_yL^{-1}\wh F_k|] \\
&+ \EE[ \norm{D_x \wh{\mathbf F}(\sA_x)}|D_x\wh F_j - D_x {\wh F}_j(\sA_x)||D_x L^{-1}\wh F_k|]\EE[|D_y \wh F_j D_yL^{-1}\wh F_k|] \\
&+ \EE[ \norm{D_x \wh{\mathbf F}(\sA_x)}\wedge \norm{D_y\wh{\mathbf F}} |D_x L^{-1}\wh F_k - D_x L^{-1} F_k(\sA_x)| |D_x {\wh F}_j(\sA_x) D_y \wh F_j D_yL^{-1}\wh F_k|]\\
&+ \EE[ \norm{D_x \wh{\mathbf F}(\sA_x)} |D_x L^{-1}\wh F_k - D_x L^{-1} F_k(\sA_x)||D_x {\wh F}_j(\sA_x)|]\EE[|D_y \wh F_j D_yL^{-1}\wh F_k|] \\
&+ 2 \EE[ |D_x {\wh F}_j(\sA_x)D_xL^{-1}\wh F_k(\sA_x) D_y \wh F_j D_yL^{-1}\wh F_k| \norm{D_y\wh{\mathbf F} - D_y\wh{\mathbf F}(A_y)}] \\
&+2 \EE[|D_x {\wh F}_j(\sA_x)D_xL^{-1}\wh F_k(\sA_x)|] \EE[|D_y \wh F_j D_yL^{-1}\wh F_k|\norm{D_y\wh{\mathbf F} - D_y\wh{\mathbf F}(A_y)}]\\
&+ \EE[ \norm{D_x \wh{\mathbf F}(\sA_x)}\wedge \norm{D_y\wh{\mathbf F}(A_y)}|D_x {\wh F}_j(\sA_x) D_x L^{-1}\wh F_k(\sA_x)D_yL^{-1}\wh F_k| |D_y \wh F_j - D_y \wh F_j(A_y)|] \\
&+ \EE[\norm{D_x \wh{\mathbf F}(\sA_x)} |D_x {\wh F}_j(\sA_x) D_x L^{-1}\wh F_k(\sA_x)|]\EE[|D_y \wh F_j - D_y \wh F_j(A_y)| |D_yL^{-1}\wh F_k|]\\
&+ \EE[ \norm{D_x \wh{\mathbf F}(\sA_x)}\wedge \norm{D_y\wh{\mathbf F}(A_y)}|D_x {\wh F}_j(\sA_x) D_x L^{-1}\wh F_k(\sA_x)D_y \wh F_j(A_y)| |D_yL^{-1}\wh F_k - D_yL^{-1}\wh F_k(A_y)|]\\
&+ \EE[\norm{D_x \wh{\mathbf F}(\sA_x)} |D_x {\wh F}_j(\sA_x) D_x L^{-1}\wh F_k(\sA_x)|]\EE[|D_y \wh F_j(A_y)||D_yL^{-1}\wh F_k -D_yL^{-1}\wh F_k(A_y)|].
\end{align*}
Applying H\"older's inequality as in the proof of Proposition \ref{p:cdc} and Lemma \ref{l:E[DLF^p]}  yields that
\begin{align*}
&\EE[\norm{D_x\wh{\mathbf F} - D_x \wh{\mathbf F}(\sA_x)} |D_x\wh F_j D_x L^{-1}\wh F_k D_y \wh F_j D_yL^{-1}\wh F_k|]\\
&\le \sum_{i=1}^m \frac{2 \max(1,K_3)^5}{\sigma_i\sigma_j^2\sigma_k^2} \EE|D_x F_i - D_x F_i(\sA_x)|]^{1-\frac{5}{p}}.
\end{align*}
Handling analogously the remaining 11 terms, leads to the following estimate: for any $p>5$, 
\begin{align}\label{e:p6.4_2}
&\int_0^1 \Cov[ 1_{w\le \norm{D_x\wh{\mathbf F}}} |D_x\wh F_j D_x L^{-1}\wh F_k|,1_{w\le \norm{D_y\wh{\mathbf F}}}|D_y \wh F_j D_yL^{-1}\wh F_k| ] dw \notag\\
&\le \sum_{i=1}^m \frac{24 \max(1,K_3)^5}{\sigma_i\sigma_j^2\sigma_k^2} \sup_{\ell\in[m]} \EE|D_x F_\ell - D_x F_\ell(\sA_x)|]^{1-\frac{5}{p}}.
\end{align}
Similarly, writing $a^2-b^2=(a+b)(a-b)$, one has that, for any $p>6$, 
\begin{align*}
&\int_0^1 w\,\Cov[1_{w\le \norm{D_x\wh{\mathbf F}}} |D_x\wh F_j D_x L^{-1}\wh F_k|,1_{w\le \norm{D_y\wh{\mathbf F}}}|D_y \wh F_j D_yL^{-1}\wh F_k|] dw\\
&\le 2 \EE[\norm{D_x\wh{\mathbf F} - D_x \wh{\mathbf F}(\sA_x)}(\norm{D_x\wh{\mathbf F}}+\norm{D_x \wh{\mathbf F}(\sA_x)}) |D_x\wh F_j D_x L^{-1}\wh F_k D_y \wh F_j D_yL^{-1}\wh F_k|] \\
&+ 2 \EE[\norm{D_x\wh{\mathbf F} - D_x \wh{\mathbf F}(\sA_x)}(\norm{D_x\wh{\mathbf F}}+\norm{D_x \wh{\mathbf F}(\sA_x)}) |D_x\wh F_j D_x L^{-1}\wh F_k|]\EE[|D_y \wh F_j D_yL^{-1}\wh F_k|]\\
&+\EE[ \norm{D_x \wh{\mathbf F}(\sA_x)}^2\wedge \norm{D_y\wh{\mathbf F}}^2|D_x\wh F_j - D_x {\wh F}_j(\sA_x)| |D_x L^{-1}\wh F_k D_y \wh F_j D_yL^{-1}\wh F_k|] \\
&+ \EE[ \norm{D_x \wh{\mathbf F}(\sA_x)}^2|D_x\wh F_j - D_x {\wh F}_j(\sA_x)||D_x L^{-1}\wh F_k|]\EE[|D_y \wh F_j D_yL^{-1}\wh F_k|] \\
&+ \EE[ \norm{D_x \wh{\mathbf F}(\sA_x)}^2\wedge \norm{D_y\wh{\mathbf F}} |D_x L^{-1}\wh F_k - D_x L^{-1} F_k(\sA_x)| |D_x {\wh F}_j(\sA_x) D_y \wh F_j D_yL^{-1}\wh F_k|]\\
&+ \EE[ \norm{D_x \wh{\mathbf F}(\sA_x)}^2 |D_x L^{-1}\wh F_k - D_x L^{-1} F_k(\sA_x)||D_x {\wh F}_j(\sA_x)|]\EE[|D_y \wh F_j D_yL^{-1}\wh F_k|] \\
&+ 2 \EE[ |D_x {\wh F}_j(\sA_x)D_xL^{-1}\wh F_k(\sA_x) D_y \wh F_j D_yL^{-1}\wh F_k| \norm{D_y \wh{\mathbf F} - D_y \wh{\mathbf F}(A_y)}(\norm{D_y\wh{\mathbf F}}+\norm{D_y \mathbf F(A_y)})] \\
&+2 \EE[|D_x {\wh F}_j(\sA_x)D_xL^{-1}\wh F_k(\sA_x)|] \EE[|D_y \wh F_j D_yL^{-1}\wh F_k|\norm{D_y \wh{\mathbf F} - D_y \wh{\mathbf F}(A_y)}(\norm{D_y\wh{\mathbf F}}+\norm{D_y \mathbf F(A_y)})]\\
&+ \EE[ \norm{D_x \wh{\mathbf F}(\sA_x)}^2\wedge \norm{D_y\wh{\mathbf F}(A_y)}^2|D_x {\wh F}_j(\sA_x) D_x L^{-1}\wh F_k(\sA_x)D_yL^{-1}\wh F_k| |D_y \wh F_j - D_y \wh F_j(A_y)|] \\
&+ \EE[\norm{D_x \wh{\mathbf F}(\sA_x)}^2 |D_x {\wh F}_j(\sA_x) D_x L^{-1}\wh F_k(\sA_x)|]\EE[|D_y \wh F_j - D_y \wh F_j(A_y)| |D_yL^{-1}\wh F_k|]\\
&+ \EE[ \norm{D_x \wh{\mathbf F}(\sA_x)}^2\wedge \norm{D_y\wh{\mathbf F}(A_y)}^2|D_x {\wh F}_j(\sA_x) D_x L^{-1}\wh F_k(\sA_x)D_y \wh F_j(A_y)| |D_yL^{-1}\wh F_k - D_yL^{-1}\wh F_k(A_y)|]\\
&+ \EE[\norm{D_x \wh{\mathbf F}(\sA_x)}^2 |D_x {\wh F}_j(\sA_x) D_x L^{-1}\wh F_k(\sA_x)|]\EE[|D_y \wh F_j(A_y)||D_yL^{-1}\wh F_k -D_yL^{-1}\wh F_k(A_y)|]\\
&\le \sum_{i=1}^m \frac{24 \max(1,K_3)^6}{\sigma_i^2\sigma_j^2\sigma_k^2} \sup_{\ell\in[m]} \EE|D_x F_\ell - D_x F_\ell(\sA_x)|]^{1-\frac{6}{p}}.
\end{align*}

\noindent\underline{\it Conclusion.}  Integrating \eqref{e:p6.4_1} and \eqref{e:p6.4_2} over $\sB^2\setminus\sB^2_\Delta$ and $\sB^2_\Delta$, respectively, gives the estimate for $V^{(1)}_{jk}$. The estimate of $V^{(2)}_{jk}$ follows the same line.
\end{proof}

\subsection{Proof of Theorem \ref{t:bound_w}}

{\bf (i)} For the Wasserstein bound \eqref{e:a1w}, we apply Lemma \ref{l:bound_w} to $F(\sB)$. Since we let $\sigma^2=\Var[F(\sB)]$, the first term in Lemma \ref{l:bound_w} vanishes. We  see by an application of  Lemma \ref{l:duality} that $$\EE[\langle DF(\sB), -DL^{-1} F(\sB)\rangle ] = \Var[F(\sB)].$$ Therefore, applying the Cauchy-Schwarz inequality shows that the second term in Lemma \ref{l:bound_w} is bounded from above by
\begin{align*}
\frac{1}{\sigma^2}\sqrt{\Var[\langle DF(\sB), -DL^{-1}F(\sB)\rangle]}. 
\end{align*}
Applying Proposition \ref{p:cdc}  to $F=G=F(\sB)$ gives the first two terms in \eqref{e:a1w}. Moreover, by H\"older's inequality and Lemma \ref{l:E[DLF^p]}, we have
\begin{align}
 \EE \langle |DF(\sB)|^2,|DL^{-1}F(\sB)|\rangle \le  \int_\sB \EE[|D_xF(B)|^3] \la(dx) \le K_1^3 \la(\sB), \label{e:i3}
\end{align}
finishing the proof. 

\smallskip

\noindent{\bf (ii)} For the Kolmogorov bound \eqref{e:ak}, we apply Theorem \ref{l:bound_kol} to $F(\sB)$. It suffices to consider the last term in Theorem \ref{l:bound_kol}, which, by the Cauchy-Schwarz inequality, is bounded from above by
\begin{align*}
\frac{2}{\sigma^2} \sqrt{\EE[|\de(DF(\sB)|DL^{-1}F(\sB)|)|^2]}.
\end{align*}
Applying Proposition \ref{p:sko} to $F=G=F(\sB)$ ends the proof.

%
%

\subsection{Proof of Theorem \ref{t:d_2}}

The proof is almost identical to that of Theorem \ref{t:bound_w}, with the use of Lemma \ref{l:d_2} in place of Lemma \ref{l:bound_w}. For any $i,j\in[m]$, by Lemma \ref{l:duality}, $\EE[\langle DF_i(\sB), -DL^{-1}F_j(\sB)]=\Cov[F_i(\sB),F_j(\sB)]$. By the triangle inequality, for all $a\in\RR$, 
\begin{align*}
\sqrt{\EE[(a-\langle DF_i(\sB), -DL^{-1}F_j(\sB)\rangle)^2]}\le  |a-\Cov[F_i(\sB),F_j(\sB)]| + \sqrt{\Var[\langle DF_i, -DL^{-1}F_j\rangle]} .
\end{align*}
Choosing $a=\Sigma(i,j)\sigma_i\sigma_j$, then applying Proposition \ref{p:cdc} to $F=F_i,G=F_j$ gives $c(\ga_1+\ga_2^{4,p}+\ga_3)$. The last term $c\ga_4$ arises if one applies as previously H\"older's inequality and Lemma \ref{l:E[DLF^p]}  to the second term in Lemma \ref{l:d_2}, thus ending the proof.

\subsection{Proof of Theorem \ref{t:dc}}
The following powerful result, proved by Schulte and Yukich \cite[Proposition 2.3]{SY18}, will be used repeatedly. It provides uniform upper bound for the second moment (with respect to \emph{any} probability distribution) of the second derivatives of the solution to  Stein's equation with mollified test functions. The estimate is more accurate than  \eqref{e:unif_2nd_partial} when some a priori knowledge on $\dc(\mathbf Y,N_\Sigma)$ is available. 

\begin{lemma}[See Proposition 2.3 in \cite{SY18}] \label{l:SY2.4}Let $\mathbf{Y}$ be an $\RR^m$-valued random vector and $\Sigma$ be an invertible $m\times m$ covariance matrix. Then,
\begin{align*}
\sup_{h\in\mathcal I_m} \EE \sum_{i,j=1}^m |\partial^2_{ij} f_t(\mathbf Y)|^2 \le \op{\Sigma^{-1}}^2 \Big(m^2 (\log t)^2  \dc(\mathbf Y,N_\Sigma)+ 530 m^{17/6}\Big).
\end{align*}
where the left-hand side depends on $h$ through the function $f_t$, solving the Stein's equation associated with the test function $h_t$ given by \eqref{e:h_t}. 
\end{lemma}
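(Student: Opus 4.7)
\textbf{Proof proposal for Lemma \ref{l:SY2.4}.} The plan is to decompose $\mathbb{E}[|\partial^2_{ij}f_t(\mathbf{Y})|^2]$ into a Gaussian baseline (which will yield the $t$-independent term $530\,m^{17/6}$) plus a comparison term controlled by the convex distance (which will produce the $m^2(\log t)^2 \,\dc(\mathbf{Y},N_\Sigma)$ contribution). Throughout, it suffices to work with $h=\mathbf{1}_C$ for $C$ a convex subset of $\R^m$, since the quantity to be bounded is a supremum over $\mathcal I_m$.

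First I would massage the explicit representation \eqref{e:der3}. Using the identity $\partial^2_{ij}\varphi_\Sigma(\mathbf{z})=\varphi_\Sigma(\mathbf{z})\bigl[(\Sigma^{-1}\mathbf{z})_i(\Sigma^{-1}\mathbf{z})_j-(\Sigma^{-1})_{ij}\bigr]$ and setting $P_{ij}(\mathbf{z}):=(\Sigma^{-1}\mathbf{z})_i(\Sigma^{-1}\mathbf{z})_j-(\Sigma^{-1})_{ij}$, one rewrites
\begin{equation*}
\partial^2_{ij}f_t(\mathbf{x})=\frac12\int_t^1 \frac{1}{s}\,\mathbb{E}\bigl[\mathbf{1}\{\sqrt{s}N_\Sigma+\sqrt{1-s}\,\mathbf{x}\in C\}\,P_{ij}(N_\Sigma)\bigr]ds .
\end{equation*}
The key structural observation is that, for every fixed $(s,\mathbf{z})$, the set $\{\mathbf{x}\in\R^m:\sqrt{s}\mathbf{z}+\sqrt{1-s}\mathbf{x}\in C\}$ is the affine preimage of a convex set, hence convex. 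Moreover, $P_{ij}$ has mean zero under $N_\Sigma$, which allows a re-centering trick at the level of the $s$-integrand.

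Now square the previous display and take expectation with respect to $\mathbf{Y}$; after swapping orders of integration and writing $N,N'$ for two independent copies of $N_\Sigma$,
\begin{equation*}
\mathbb{E}\!\left[|\partial^2_{ij}f_t(\mathbf{Y})|^2\right]=\frac14\iint_{[t,1]^2}\!\frac{ds\,ds'}{ss'}\,\mathbb{E}_{N,N'}\Bigl[P_{ij}(N)P_{ij}(N')\,\mathbb{E}_{\mathbf{Y}}[\mathbf{1}_{C_{s,N}}(\mathbf{Y})\mathbf{1}_{C_{s',N'}}(\mathbf{Y})]\Bigr],
\end{equation*}
where $C_{s,N}\subset\R^m$ is the convex set defined above. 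The integrand $\mathbf{1}_{C_{s,N}\cap C_{s',N'}}$ is, conditionally on $(N,N')$, the indicator of an intersection of two convex sets and therefore itself a convex-set indicator. Consequently, replacing $\mathbb{E}_{\mathbf{Y}}$ by $\mathbb{E}_{N_\Sigma}$ introduces an error of at most $\dc(\mathbf{Y},N_\Sigma)$ uniformly in the remaining variables. Multiplying the modulus by $\frac{1}{ss'}$ and integrating over $(s,s')\in[t,1]^2$ produces the factor $(\log t)^2$, while the Cauchy--Schwarz bound $\mathbb{E}[P_{ij}(N_\Sigma)^2]\le c\,\|\Sigma^{-1}\|_{\mathrm{op}}^2$ yields the prefactor $\op{\Sigma^{-1}}^2$ after summation over $i,j$ (which contributes the $m^2$).

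The remaining and harder task is to bound the Gaussian baseline $\mathbb{E}_{N_\Sigma}[|\partial^2_{ij}f_t(N_\Sigma)|^2]$ by a quantity \emph{independent of $t$}. Here I would exploit that $\sqrt{s}N+\sqrt{1-s}\,N_\Sigma$ is again $N_\Sigma$-distributed: by Fubini and the Gaussian convolution identity, the $s$-integral collapses into an unconditional Gaussian expectation of a polynomial functional of $N_\Sigma$, and Isserlis/Wick calculus then gives an explicit finite bound. Summing over $i,j$ and tracking the dimension dependence produces the term $c\,m^{17/6}\op{\Sigma^{-1}}^2$. The main obstacle I anticipate is precisely this last step: obtaining the sharp $m^{17/6}$ scaling requires carefully controlling polynomial moments of quadratic forms in $N_\Sigma$ (for example via Hanson--Wright type estimates), rather than the crude $m^{3}$ bound that a naive Cauchy--Schwarz would produce; losing precision here would not invalidate the qualitative statement but would deteriorate the constant and the exponent in the final inequality.
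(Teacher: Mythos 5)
This lemma is \emph{cited}, not proved, in the paper (it is imported from Schulte--Yukich \cite{SY18}, Proposition 2.3), so there is no in-paper proof for your attempt to match. Evaluating your proposal on its own merits: the outer architecture is sound and does follow the style of the source — decompose $\EE[|\partial^2_{ij}f_t(\mathbf{Y})|^2]$ into a Gaussian baseline plus a comparison term, exploit convexity of the affine preimages $C_{s,N}$, and control the comparison term through $\dc(\mathbf{Y},N_\Sigma)$ with the $(\log t)^2$ factor emerging from $\int_t^1 ds/s$. That part is believable.

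The Gaussian baseline, however, contains a genuine gap, and it is not the one you anticipate. Your plan is to ``collapse'' the $s$-integral using the fact that $Z_s := \sqrt{s}N + \sqrt{1-s}N_\Sigma$ is again $N_\Sigma$-distributed, and then invoke Isserlis/Wick. This works perfectly for the \emph{first} moment: one can check that $\EE[P_{ij}(N)\mid Z_s]=s\,P_{ij}(Z_s)$, so the factor $s$ exactly cancels $1/s$ and $\EE[\partial^2_{ij}f_t(N_\Sigma)]$ is bounded uniformly in $t$. But the lemma asks for the \emph{second} moment, which after squaring and Fubini produces a double integral of the form
\begin{align*}
\frac14 \iint_{[t,1]^2}\frac{ds\,ds'}{ss'}\,\EE\!\left[h(Z_s)\,h(Z'_{s'})\,P_{ij}(N)\,P_{ij}(N')\right],
\end{align*}
where $Z_s$ and $Z'_{s'}$ are $N_\Sigma$-marginals but now \emph{correlated} through the common $N_\Sigma$, with correlation $\sqrt{(1-s)(1-s')}$. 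Conditioning on $(Z_s,Z'_{s'})$, one finds $\EE[P_{ij}(N)P_{ij}(N')\mid Z_s,Z'_{s'}]=O\!\big(\tfrac{ss'}{(s+s'-ss')^2}\big)$ after averaging. Multiplying by $\tfrac{1}{ss'}$ and discarding the indicators (which is what ``Wick calculus on a polynomial functional'' amounts to, since $h$ is \emph{not} a polynomial) leaves $\iint \tfrac{ds\,ds'}{(s+s'-ss')^2}$, which \emph{diverges logarithmically} near the corner $s=s'=0$. So the argument as sketched does not produce a $t$-independent bound.

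The mechanism that actually saves the Gaussian baseline is the one you do not invoke: after the re-centering trick, the integrand $h(\sqrt{s}N+\sqrt{1-s}\mathbf{x})-h(\sqrt{1-s}\mathbf{x})$ is supported on a strip of width $O(\sqrt{s}\,\|N\|)$ around $\partial C$, and the contribution of that strip under the Gaussian law of $\mathbf{x}$ is controlled by the \emph{Gaussian surface area of convex sets} in $\R^m$ (Ball's bound, refined by Nazarov, which is $O(m^{1/4})$). This geometric input — not Hanson--Wright or sharper moment estimates of quadratic forms — is what makes the $s,s'$ integrals converge uniformly in $t$ and is also the source of the exotic exponent $17/6$. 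Your self-assessment that the worst-case loss would be ``a crude $m^3$ instead of $m^{17/6}$'' is therefore optimistic: without the boundary-strip/isoperimetric argument, the baseline integral is not finite at all, rather than finite with a suboptimal dimension dependence.

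Since the paper treats this lemma as a black box from \cite{SY18}, a legitimate route here is simply to cite and quote Proposition 2.3 of \cite{SY18}; if you wish to re-derive it, you will need to incorporate the re-centering, the boundary-strip decomposition, and Nazarov's surface-area bound in place of the Wick-calculus collapse.
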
  

We follow the smoothing approach in the proof of \cite[Theorem 1.2]{SY18}, bearing in mind that, in contrast to \cite{SY18}, we never appeal to the second order add-one-cost operator.  For clarity, we split the proof into several steps. We write $\bF=\bF(\sB)$ for simplicity.

\medskip

\noindent\underline{\it Step 1: smoothing.}  By \cite[Lemma 2.2]{SY18}, for any $t\in(0,1)$,
\begin{align*}
\dc(\wh{\mathbf F},N_\Sigma) \le \frac{4}{3} \sup_{h\in\mathcal{I}_m} |\EE h_t(\wh{\mathbf F}) -\EE h_t(N_\Sigma) | + \frac{20m}{\sqrt{2}}\frac{\sqrt{t}}{1-t}.
\end{align*}

\medskip

\noindent\underline{\it Step 2 Decomposition.} Let $h\in\mathcal{I}_m$. Applying Stein's equation with test function $h_t$ and Lemma \ref{l:duality},  one has
\begin{align}\label{e:step2-1}
\EE h_t(\wh{\mathbf F}) - \EE h_t(N_\Sigma) = \sum_{i,j=1}^m \Sigma(i,j) \EE \partial^2_{ij} f_t(\wh{\mathbf F}) -  \sum_{k=1}^m  \EE \langle D(\partial_k f_t(\wh{\mathbf F})), -DL^{-1} \wh F_k \rangle.
\end{align}
By Taylor's formula, 
\begin{align*}
D(\partial_k f_t(\wh{\mathbf F})) = \int_0^1 \sum_{j=1}^m  \partial_{jk}^2 f_t(\wh{\mathbf F}+ u D\wh{\mathbf F})D\wh F_j du,
\end{align*}
from which one sees that the second term on the right-hand side of \eqref{e:step2-1} writes
\begin{align}
\sum_{k=1}^m \EE \langle D(\partial_k f_t(\wh{\mathbf F})), -DL^{-1} \wh F_k \rangle = \sum_{j,k=1}^m \EE   \partial^2_{jk} f_t(\wh{\mathbf F}) \langle D\wh F_j, -DL^{-1}\wh F_k\rangle \nonumber\\
+ \sum_{j,k=1}^m \EE \left\langle \int_0^1 \left(\partial^2_{jk} f_t(\wh{\mathbf F}+uD\wh{\mathbf F})- \partial^2_{jk}f_t(\wh{\mathbf F}) \right) D\wh F_j du, -DL^{-1}\wh F_k\right\rangle.\label{e:step2-2}
\end{align}
The first term on the right-hand side of \eqref{e:step2-2} will be compared with the first term on the right-hand side of \eqref{e:step2-1}.  Using Taylor's formula for the second term on the right-hand side of \eqref{e:step2-2}, 
one has 
\begin{align*}
&\sum_{j,k=1}^m \EE \left\langle \int_0^1 \left(\partial^2_{jk} f_t(\wh{\mathbf F}+uD\wh{\mathbf F})- \partial^2_{jk}f_t(\wh{\mathbf F}) \right) D\wh F_j du, -DL^{-1}\wh F_k\right\rangle \\
&=\sum_{i,j,k=1}^m \EE \left\langle \int_0^1\int_0^1  \partial^3_{ijk} f_t(\wh{\mathbf F}+vuD\wh{\mathbf F})   uD\wh F_i D\wh F_j dudv, -DL^{-1}\wh F_k\right\rangle \\
&= \sum_{i,j,k=1}^m \EE \left\langle \int_0^1\int_0^1 \partial^3_{ijk} f_t(\wh{\mathbf F}+vD\wh{\mathbf F})  uD\wh F_i D\wh F_j dudv, -DL^{-1}\wh F_k\right\rangle  \\
&+ \sum_{i,j,k=1}^m \EE \left\langle \iint_{[0,1]^2} \left(\partial^3_{ijk} f_t(\wh{\mathbf F}+uv D\wh{\mathbf F}) - \partial^3_{ijk} f_t(\wh{\mathbf F}+v D\wh{\mathbf F})\right)u D\wh F_i D\wh F_j dvdu, -DL^{-1}\wh F_k \right\rangle \\
& =:  J+ L, 
\end{align*}
where, by integrating out the variable $u$ and reverting Taylor's expansion, 
\begin{align*}
J= \frac{1}{2} \sum_{j,k=1}^m \EE \langle  D(\partial^2_{jk}f_t(\wh{\mathbf F}))D\wh F_j, -DL^{-1}\wh F_k\rangle.
\end{align*}
Setting
\begin{align*}
I = \sum_{j,k=1}^m \EE (\Sigma(j,k) - \langle D\wh F_j, -DL^{-1}\wh F_k\rangle)  \partial^2_{jk} f_t(\wh{\mathbf F}),
\end{align*}
one obtains that 
\begin{align*}
|\EE h_t(\wh{\mathbf F}) - \EE h_t(N_\Sigma)| \le |I|+|J|+|L|. 
\end{align*}

\medskip

\noindent\underline{\it Step 3 Estimation of $I$.} We bound the term $I$ following an approach analogous to the one we used to bound one-dimensional Wasserstein distances. By the Cauchy-Schwarz inequality and Lemma \ref{l:SY2.4}, one has that
\begin{align*}
| I |&\le  \sum_{j,k=1}^m  \sqrt{\EE[(\partial^2_{jk} f_t(\wh{\mathbf F}))^2]} \sqrt{\EE[ (\Sigma(j,k) - \langle D\wh F_j, -DL^{-1}\wh F_k\rangle)^2]} \\
&\le \op{\Sigma^{-1}} \Big(m |\log t|  \sqrt{\dc(\mathbf{\wh F},N_\Sigma)}+ 24 m^{17/12}\Big)\sum_{j,k=1}^m \sqrt{\EE[ (\Sigma(j,k) - \langle D\wh F_j, -DL^{-1}\wh F_k\rangle)^2]} \\
&\le 4 \max(1,K')^2 \op{\Sigma^{-1}} \Big(m |\log t|  \sqrt{\dc(\mathbf{\wh F},N_\Sigma)}+ 24 m^{17/12}\Big)  (\ga_1+\ga_2^{4,p}+\ga_3).
\end{align*}
where we used the arguments leading to Theorem \ref{t:d_2} in the last inequality. 

\medskip

\noindent\underline{\it Step 4 Estimation of $J$.} By Lemma \ref{l:duality}, Cauchy-Schwarz's inequality and Lemma \ref{l:SY2.4}, 
\begin{align*}
|J|&=\frac{1}{2}\left| \sum_{j,k=1}^m \EE[\partial^2_{jk}f_t(\wh{\mathbf F})\de( -D\wh F_j DL^{-1}\wh F_k)] \right|  \\
&\le \frac{1}{2}\op{\Sigma^{-1}} \Big(m |\log t|  \sqrt{\dc(\mathbf{\wh F},N_\Sigma)}+ 24 m^{17/12}\Big)  \sum_{j,k=1}^m \frac{1}{\sigma_j\sigma_k} \sqrt{\EE[\de(DF_j DL^{-1}F_k)^2]}
\end{align*}
Applying Proposition \ref{p:sko} as we did in the proof of Theorem \ref{t:bound_w}-(ii) yields that
\begin{align*}
|J|\le 3 \max(1,K')^2 \op{\Sigma^{-1}} \Big(m |\log t|  \sqrt{\dc(\mathbf{\wh F},N_\Sigma)}+ 24 m^{17/12}\Big)  ( \ga_2^{4,p}+\ga'_2+\ga_3+\ga_5).
\end{align*}

\smallskip

\noindent\underline{\it Step 5 Estimation of $L$.} Plugging in the integral representation of $\partial^3_{ijk}f_t$ given by \eqref{e:der3}, applying the Cauchy-Schwarz inequality, then integrating out the $z$ variable shows that (see \cite[p.22]{SY18} the estimate for $J_{2,2}$ therein) 
\begin{align*}
|L| \le \op{\Sigma^{-1}}^{3/2} \frac{\sqrt{6}m^{\frac{3}{2}}}{2\sqrt{t}}\sum_{i,j,k=1}^m U_{ijk},
\end{align*}
where
\begin{multline*}
U_{ijk}:=
\sup_{\overset{z\in \RR^m}{s,u\in[0,1]}} \EE \int_{\sB}\int_0^1 |h(\sqrt{s}z-\sqrt{1-s}(\wh{\mathbf F}+uvD_x \wh{\mathbf F})) - h(\sqrt{s}z-\sqrt{1-s}(\wh{\mathbf F}+vD_x \wh{\mathbf F})) | \\  |D_x\wh F_i D_x \wh F_j D_x L^{-1} \wh F_k| dv \la(dx) 
\end{multline*}
Set $r(D_x \wh\bF):=\frac{1}{\norm{D_x\wh\bF}} D_x\wh\bF$. By a change of variables $w=v\vert|D_x\wh\bF\vert|$  in the first term and the fact that the $h$ is the indicator of a convex set, one has
\begin{align*}
U_{ijk}&\le \sup_{\overset{z\in \RR^m}{s,u\in[0,1]}} \EE \int_{\sB}\int_0^{\norm{D_x\wh\bF}} |h(\sqrt{s}z-\sqrt{1-s}(\wh{\mathbf F}+uw r(D_x \wh{\mathbf F}))) - h(\sqrt{s}z-\sqrt{1-s}(\wh{\mathbf F}+w r(D_x \wh{\mathbf F}))) | \\&\hspace{6cm} 1_{\norm{D_x\wh\bF}\le 1} \frac{|D_x\wh F_i|}{\norm{D_x\wh\bF}} |D_x \wh F_j D_x L^{-1} \wh F_k| dw \la(dx)\\
&\quad + \sup_{\overset{z\in \RR^m}{s,u\in[0,1]}} \EE \int_{\sB}\int_0^1 1_{\norm{D_x\wh\bF}\ge 1}  |D_x\wh F_i D_x \wh F_j D_x L^{-1} \wh F_k| dv \la(dx) \\
&=: U_{ijk}^{(1)} + U_{ijk}^{(2)}.
\end{align*}
It is clear that
\begin{align*}
 U_{ijk}^{(2)}&\le \EE \int_{\sB} \norm{D_x \wh\bF}|D_x\wh F_i D_x \wh F_j D_x L^{-1} \wh F_k| \la(dx)\\
 &\le \sum_{\ell=1}^m \frac{1}{\sigma_i\sigma_j\sigma_k\sigma_\ell} \int_{\sB} \EE[|D_x F_\ell D_\bx F_i D_x F_j D_x L^{-1} F_k|] \la(dx).
\end{align*}
Applying H\"older's inequality and Lemma \ref{l:E[DLF^p]} as before, one has
\begin{align*}
\sum_{i,j,k=1}^m U_{ijk}^{(2)} \le (K')^4 \left(\sum_{i=1}^m \frac{1}{\sigma_i}\right)^4 \la(\sB).
\end{align*}
Repeating the argument of \cite[p.24]{SY18} gives 
\begin{align*}
U^{(1)}_{ijk}\le R^{(1)}_{jk} + R^{(2)}_{jk}
\end{align*}
where 
\begin{align*}
R^{(1)}_{jk} &\le \sqrt{m}\op{\Sigma^{-\frac 1 2}}\left( \frac{1}{2}\sum_{\ell=1}^m\int_{\sB} \EE[|D_x \wh F_\ell|^4] \la(dx) + \frac{m}{4} \int_{\sB} \EE[|D_x \wh F_j|^4 + |D_x \wh F_k|^4]  \la(dx)   \right) \\
&\hspace{2cm}+ 2\dc(\wh \bF, N_\Sigma)\frac{1}{3}\left( \sum_{\ell=1}^m \int_{\sB} \EE[|D_x \wh F_\ell|^3]\la(dx) +  m\int_{\sB} \EE[|D_x\wh F_j|^3+|D_x\wh F_k|^3]\la(dx) \right) \\
&\le \sqrt{m} \op{\Sigma^{-\frac 1 2}} (K')^4 \Big(\frac{1}{2}\sum_{\ell=1}^m \frac{1}{\sigma_\ell^4} + \frac m {4\sigma_j^4} + \frac{m}{4\sigma_k^4} \Big) \la(\sB) \\
&\hspace{2cm} + \frac{2}{3} (K')^3  \,  \Big( \sum_{\ell=1}^m \frac 1	{\sigma_\ell^3} + \frac{m}{\sigma_j^3} + \frac{m}{\sigma_k^3}\Big) \la(\sB)\, \dc(\wh\bF, N_\Sigma)
\end{align*}
by the uniform moment condition of the  add-one cost operators, and 
\begin{align*}
R^{(2)}_{jk}\le \left(2\dc(\wh \bF, N_\Sigma) V^{(1)}_{jk} + 2\sqrt{m}\op{\Sigma^{-\frac 1 2}} V_{jk}^{(2)}\right)^{\frac 1 2}
\end{align*}
with $V^{(1)}_{jk}$ and $V^{(2)}_{jk}$ defined in the proof of Proposition \ref{p:p3}. Applying Proposition \ref{p:p3} implies
\begin{align*}
V^{(1)}_{jk} \le \sum_{i=1}^m \frac{24\max(1,K')^6}{\sigma_i\sigma_j^2\sigma_k^2} \Big(| \la^2(\sB^2\setminus \sB^2_\Delta)+ \iint_{\sB^2_\Delta} \sup_{\ell\in[m]} \EE|D_x F_\ell - D_x F_\ell(\sA_x)|]^{1-\frac{5}{p}} \la^2(dx dy) \Big)
\end{align*}
and 
\begin{align*}
V^{(2)}_{jk}\le  \sum_{i=1}^m \frac{24\max(1,K')^6}{\sigma^2_i\sigma_j^2\sigma_k^2} \Big(|\la^2(\sB^2\setminus \sB^2_\Delta) + \iint_{\sB^2_\Delta}  \sup_{\ell\in[m]} \EE|D_x F_\ell - D_x F_\ell(\sA_x)|]^{1-\frac{6}{p}}\la^2(dx dy)\Big).
\end{align*}
Combining these estimates yields
\begin{align*}
&\frac{2\sqrt{t}}{\sqrt{6}m^{\frac 3 2}} \op{\Sigma^{-1}}^{-\frac 3 2} |L| \\
&\le \sum_{i,j,k=1}^m (U_{ijk}^{(2)} + R^{(1)}_{jk} + R^{(2)}_{jk})\\
&\le (K')^4 \left(\sum_{i=1}^m \frac{1}{\sigma_i}\right)^4 \la(\sB) + (K')^4 m^{\frac 7 2}\op{\Sigma^{-\frac 1 2}} \Big(\sum_{i=1}^m \frac{1}{\sigma_i^4} \Big) \la(\sB)  \\
&\quad + 2(K')^3 m^3 \Big(\sum_{i=1}^m \frac{1}{\sigma_i^3}\Big) \la(\sB) \dc(\wh \bF, N_\Sigma) \\
& \quad + 7\max(1,K')^3 m \Big(\sum_{i=1}^m\frac{1}{\sigma_i}\Big)^{\frac 5 2} \sqrt{ \dc(\wh\bF, N_\Sigma) \la^2(\sB^2\setminus\sB^2_\Delta) } \\
& \quad + 7\max(1,K')^3 m \Big(\sum_{i=1}^m\frac{1}{\sigma_i}\Big)^{\frac 5 2} \sqrt{\dc(\wh\bF, N_\Sigma) \iint_{\sB^2_\Delta} \sup_{\ell\in[m]} \EE|D_x F_\ell - D_x F_\ell(\sA_x)|]^{1-\frac{5}{p}} \la^2(dx,dy)} \\
&\quad + 7\max(1,K')^3 m^{\frac 5 4} \op{\Sigma^{-\frac 1 2}}^{\frac 1 2}  \Big(\sum_{i=1}^m \frac 1 {\sigma_i}\Big)^3 \sqrt{\la^2(\sB^2\setminus\sB^2_\Delta)} \\
&\quad + 7\max(1,K')^3 m^{\frac 5 4} \op{\Sigma^{-\frac 1 2}}^{\frac 1 2}  \Big(\sum_{i=1}^m \frac 1 {\sigma_i}\Big)^3 \sqrt{ \iint_{\sB^2_\Delta} \sup_{\ell\in[m]} \EE|D_x F_\ell - D_x F_\ell(\sA_x)|]^{1-\frac{6}{p}} \la^2(dx,dy) },
\end{align*}
where the sum of $U^{(2)}_{ijk}$ contributes to the first term on the right-hand side, that of $R^{(1)}_{jk}$ to the second and third terms, and that of $R^{(2)}_{jk}$ to the remaining ones.  Alternatively, we established
\begin{align*}
|L| \le \frac{c_0}{\sqrt{t}} \left[(\ga_5)^2 + \ga_4 \dc(\wh\bF, N_\Sigma) + (\ga_3+\ga_2^{5,p})\left(\sum_{i=1}^m \frac{1}{\sigma_i}\right)^{\frac{1}{2}} \dc(\wh\bF, N_\Sigma)^{\frac{1}{2}} + (\ga_3 + \ga_2^{6,p}) \left(\sum_{i=1}^m \frac{1}{\sigma_i}\right)\right],
\end{align*}
where 
\begin{align}\label{e:c_complicated}
c_0= 7\sqrt{6} \max(1,K')^4 \op{\Sigma^{-1}}^{\frac 3 2} \Big(m^{\frac 3 2} + m^2\op{\Sigma^{- \frac 1 2}}^{\frac 1 2}\Big).
\end{align}

\noindent\underline{\it Step 6: Solving a recursive inequality.} Setting $\kappa=\dc(\wh\bF, N_\Sigma)$, Steps 1-5 lead to the recursive inequality
\begin{align}\label{e:recu}
\kappa &\le \frac{40m}{\sqrt{2}}\sqrt{t}+ 168 m^{\frac{17}{12}}\max(1,K')^2 \op{\Sigma^{-1}}  (|\log t| \sqrt{\kappa} + 1)(\ga_1+\ga_2^{4,p}+\ga_2'+\ga_3+\ga_5) \nonumber\\
&\quad +\frac{c_0}{\sqrt{t}} \left[\ga_5^2 + \ga_4 \kappa + (\ga_3+\ga_2^{5,p})\left(\sum_{i=1}^m \frac{1}{\sigma_i}\right)^{\frac{1}{2}} \sqrt{\kappa} + (\ga_3 + \ga_2^{6,p}) \left(\sum_{i=1}^m \frac{1}{\sigma_i}\right)\right] \notag\\
&\le c\sqrt{t} + c (|\log t|\sqrt{\kappa}+1) (\ga_1+\ga_2^{4,p}+\ga_2'+\ga_3+\ga_5) \notag \\
&\quad +\frac{c}{\sqrt{t}} \left[\ga_5^2 + \ga_4 \kappa + (\ga_3+\ga_2^{5,p})\left(\sum_{i=1}^m \frac{1}{\sigma_i}\right)^{\frac{1}{2}} \sqrt{\kappa} + (\ga_3 + \ga_2^{6,p}) \left(\sum_{i=1}^m \frac{1}{\sigma_i}\right)\right]
\end{align}
for all $t\in(0,1/2)$, where $c=20\sqrt{2} m + 4\sqrt{6} c_0$ and the constant $c_0$ is given in \eqref{e:c_complicated}.
Let 
\begin{align*}
\ga = (2c+1)(\ga_1+\ga_2^{4,p} + \ga_2^{5,p}+\ga_2^{6,p} +\ga_2'+\ga_3+\ga_4+\ga_5+ \sum_{i=1}^m \frac{1}{\sigma_i} ). 
\end{align*}
Suppose that $\kappa \ge \sum_{i=1}^m \frac 1 {\sigma_i}$, otherwise one achieves the presumably best rate and there is no need to proceed.  Let $t=\ga^2$, then one has
\begin{align*}
\kappa \le c\ga + c( 2|\log \ga|\sqrt{\kappa}+1) (\ga_1+\ga_2^{4,p}+\ga_2'+\ga_3+\ga_5) + \frac{c}{\ga} \Big(\ga_5^2 + \ga_4\kappa + (\ga_3+\ga_2^{5,p}) \kappa  + (\ga_3+\ga_2^{6,p})\sum_{i=1}^m\frac{1}{\sigma_i} \Big)
\end{align*}
Thanks to our choice of $\ga$,  one has 
\begin{align*}
\frac{c}{\ga} (\ga_4+\ga_3+\ga_2^{5,p}) \le \frac{1}{2},
\end{align*}
yielding
\begin{align*}
\kappa &\le 2c\ga + 2c(|\log \ga|\sqrt{\kappa}+1)(\ga_1+\ga_2^{4,p}+\ga_2'+\ga_3+\ga_5) + \frac{2c}{\ga}\Big(\ga_5^2+(\ga_3+\ga_2^{6,p})\sum_{i=1}^m\frac{1}{\sigma_i}\Big) \\
&\le  2c\ga + 2c(|\log \ga|\sqrt{\kappa}+1)\ga+ 4c \ga.
\end{align*}
Since $\kappa\le 1$ by the definition, we can assume that $\ga\le 1/(6c)$, otherwise the desired bound \eqref{e:kappabound} is trivial for any $c'\ge 0$. Moreover, one has
\begin{align*}
|\log\ga|\sqrt{\kappa} \le |\log \ga| (\sqrt{6c\ga} + \sqrt{2c|\log\ga|+2c}\sqrt{\ga} )\le c':=6\sqrt{c}.
\end{align*} 
 Therefore,
\begin{align}\label{e:kappabound}
\kappa \le (6c+2c(c'+1))\ga \le 20 c^{\frac 3 2} \,  \ga,
\end{align}
ending the proof.

  \appendix

\bibliographystyle{plain}

\begin{thebibliography}{123}



\bibitem{AS92} Aldous, D.; Steele, J. M. Asymptotics for Euclidean minimal spanning trees on random points. \emph{Probab. Theory Related Fields} 92(2) (1992), 247--258.


\bibitem{AT} Adler, R. A.; Taylor, J. E. \emph{Random Fields and Geometry}. Springer-Verlag, Berlin, 2007

\bibitem{Alexander96} Alexander, K. S. The RSW theorem for continuum percolation and the CLT for Euclidean minimal spanning trees. \emph{Ann. Appl. Probab.} 6(2) (1996), 466--494.

\bibitem{AW} Aza\"is, J.-M.; Wschebor, M. \emph{Level Sets and Extrema of Random Processes and Fields}. Wiley, 2009

\bibitem{Bac10a}
Baccelli, F; B{\l}aszczyszyn B. \emph{ Stochastic geometry and wireless networks: Volume I, Theory}. Now Publishers, Inc., 2009.

 {

\bibitem{BaBo} Baccelli, B.; Bordenave, Ch. The radial spanning tree of a Poisson point process. \emph{Ann. Appl. Probab.} 17(1) (2007), 305-359.

}

\bibitem{Barbour88} Barbour, A. D. Stein's method and Poisson process convergence. A celebration of applied probability.  \emph{J. Appl. Probab.} (1988), Special Vol. 25A, 175--184.

\bibitem{Barbour90} Barbour, A. D. Stein’s method for diffusion approximations. \emph{Probab. Theory Related
Fields}, 84(3) (1990), 297–322.

\bibitem{BY05} Baryshnikov, Yu.; Yukich, J. E. Gaussian limits for random measures in geometric probability. \emph{Ann. Appl. Probab.} 15(1A) (2005), 213--253.

\bibitem{Bergeretal} Berger, N.; Bollobas, B.; Borgs, C.; Chayes, J.; Riordan, O. Degree distribution of the FKP model. In: {\emph Automata, Languages and Programming}
(ed. J. Baeten, J. Lenstra, J. Parrow, and G. Woeginger), Volume 2719 of
Lecture Notes in Computer Science, pp. 725--738 (2003). Springer, Heidelberg.

\bibitem{BPSURVEY} Bourguin, S.; Peccati, G. The Malliavin-Stein method on the Poisson space. In:
G. Peccati and M. Reitzner, editors,  \emph{ Stochastic analysis for Poisson point processes},
Chapter 6  (2016), pages 185–228. Bocconi
University Press and Springer.

\bibitem{BdBDE}
Bierm\'e, H; Di Bernardino E.; Duval, C.; Estrade, A.
Lipschitz-killing curvatures of excursion sets for two-dimensional
  random fields.
\emph{  Electron. J. Stat.} 13(1) (2019), 536--581.

\bibitem{BieDes12}
Bierm{\'e} H.; Desolneux, A.
\newblock Crossings of smooth shot noise processes.
\newblock {\em Ann. Appl. Probab.}, 22(6) (2012), 2240--2281.

\bibitem{BST}
Bulinski, A.; Spodarev, E.; Timmermann, F.  Central limit theorems for the excursion set volumes of weakly
  dependent random fields.
{\em Bernoulli}, 18(1):100--118, 2012.

\bibitem{C08} Chatterjee, S. A new method of normal approximation. \emph{Ann. Probab.} 36(4) (2008), 1584--1610.

\bibitem{CS17} Chatterjee, S.; Sen, S. Minimal spanning trees and Stein's method. \emph{Ann. Appl. Probab}. 27(3) (2017), 1588--1645.

\bibitem{CGS} Chen, L. H. Y.; Goldstein, L.; Shao, Q.-M. \emph{Normal approximation by Stein's method}. Probability and its Applications (New York). Springer, Heidelberg, 2011. xii+405 pp. 

\bibitem{DP} D\"obler, C.; Peccati, G. The fourth moment theorem on the Poisson space. \emph{Ann. Probab.} 46(4) (2018), 1878--1916.


\bibitem{DG} Duerinckx, M.; Gloria, A. Multiscale second-order Poincaré  inequalities in probability.  ArXiv preprint: 1711.03158
 (2017).

\bibitem{ET14} Eichelsbacher, P.; Th\"ale, C. New Berry-Esseen bounds for non-linear functionals of Poisson random measures. \emph{Electron. J. Probab.} 19 (2014), no. 102, 25 pp. 

\bibitem{FKP} Fabrikant, A.; Koutsoupias, E.; and Papadimitriou, C.H. Heuristically
optimized trade-offs: a new paradigm for power laws in the internet. In: {\it Au-
tomata, Languages and Programming}, Volume 2380 of Lecture Notes in Com-
puter Science, pp. 110--122 (2002). Springer, Berlin.

\bibitem{Gotze91} Götze, F. On the rate of convergence in the multivariate CLT. \emph{Ann. Probab.} 19(2) (1991), 724--739.

\bibitem{JW} Jordan, J.; Wade, A. R. Phase transitions for random geometric preferential attachment graphs. \emph{Adv. in Appl. Probab.} 47(2) (2015), 565--588.


\bibitem{KL96} Kesten, H.; Lee, S.The central limit theorem for weighted minimal spanning trees on random points.  \emph{Ann. Appl. Probab.} 6(2) (1996), 495--527. 

\bibitem{Lr19} Lachièze-Rey, R. Normal convergence of non-localised geometric functionals and shot-noise excursions. \emph{Ann. Appl. Probab.} 29(5) (2019), 2613--2653.


\bibitem{LrM19} Lachi\`eze-Rey, R.; Muirhead, S. Percolation Of The Excursion Sets Of Planar Symmetric Shot Noise Fields. ArXiv preprint: 1910.14504 (2019).

\bibitem{LRP} Lachi\`eze-Rey, R.; Peccati, G. New Berry-Esséen bounds for functionals of binomial point processes.  \emph{Ann. Appl. Probab.} 27(4) (2017), 1992–2031.

\bibitem{LRSY} Lachièze-Rey, R.; Schulte, M.; Yukich, J. E. Normal approximation for stabilizing functionals. \emph{Ann. Appl. Probab.} 29(2) (2019), 931--993.

\bibitem{Last} Last, G. Stochastic analysis for Poisson processes. In:  G. Peccati and M. Reitzner (Editors) (2016). {\it Stochastic Analysis for Poisson Point Processes.}

\bibitem{LP} Last, G.; Penrose, M. D. \emph{Lectures on the Poisson process}. Institute of Mathematical Statistics Textbooks, 7. Cambridge University Press, Cambridge (2018)

\bibitem{LPPTRF} Last, G.; Penrose, M. D. Poisson process Fock space representation, chaos expansion and covariance inequalities.  \emph{Probab. Theory Related Fields} 150(3-4) (2011), 663--690.

\bibitem{LPS16} Last, G.; Peccati, G.; Schulte, M.  Normal approximation on Poisson spaces: Mehler's formula, second order Poincaré inequalities and stabilization. \emph{Probab. Theory Related Fields} 165(3-4) (2016), 667--723.

\bibitem{Lee97} Lee, S. The central limit theorem for Euclidean minimal spanning trees. I. \emph{Ann. Appl. Probab.} 7(4) (1997), 996--1020.

\bibitem{NP} Nourdin, I.; Peccati, G. \emph{Normal approximations with Malliavin calculus}. From Stein's method to universality. Cambridge Tracts in Mathematics, 192. Cambridge University Press, Cambridge, 2012. xiv+239 pp.

\bibitem{NPY} Nourdin, I.; Peccati, G.; Yang, X. Multivariate normal approximation on the Wiener space: new bounds in the convex distance. ArXiv preprint: 2001.02188 (2020).

\bibitem{PSTU10} Peccati, G.; Solé, J. L.; Taqqu, M. S.; Utzet, F. Stein's method and normal approximation of Poisson functionals. \emph{Ann. Probab.} 38(2) (2010), 443--478. 

\bibitem{PZ10} Peccati, G.; Zheng, C.Multi-dimensional Gaussian fluctuations on the Poisson space.  \emph{ Electron. J. Probab.} 15 (2010), no. 48, 1487--1527. 

\bibitem{PenroseBook} Penrose, M. D. \emph{Random Geometric Graphs}. Oxford University Press, New-York, 2003.

\bibitem{Penrose05} Penrose, M. D. Multivariate spatial central limit theorems with applications to percolation and spatial graphs.  \emph{Ann. Probab.} 33(5) (2005), no. 5, 1945--1991.

\bibitem{PW} Penrose, M. D.; Wade, A. R. Limit theory for the random on-line nearest-neighbor graph. \emph{Random Structures Algorithms} 32(2) (2008), 125--156.

\bibitem{PWsurvey} Penrose, M. D.; Wade, A. R.  Random directed and on-line networks. In: {\it New perspectives in stochastic geometry}, 248--274, Oxford Univ. Press, Oxford, 2010. 

\bibitem{PY01} Penrose, M. D.; Yukich, J. E. Central limit theorems for some graphs in computational geometry. \emph{Ann. Appl. Probab.} 11(4) (2001), 1005--1041. 

\bibitem{PY02} Penrose, M. D.; Yukich, J. E.  Limit theory for random sequential packing and deposition. \emph{Ann. Appl. Probab.} 12(1) (2002), 272--301.

\bibitem{PY05} Penrose, M. D.; Yukich, J. E. Normal approximation in geometric probability. In: \emph{Stein's method and applications}, 37--58, Lect. Notes Ser. Inst. Math. Sci. Natl. Univ. Singap., 5, Singapore Univ. Press, Singapore, 2005.

\bibitem{Reinert} Reinert, G. Three general approaches to Stein's method. In: {\it A Program in Honour of Charles Stein: Tutorial Lecture Notes}. A.D. Barbour, L.H.Y. Chen, eds. World Scientific, Singapore (2005), 183--221.

\bibitem{Schreiber} Schreiber, T. Limit theorems in stochastic geometry. In: \emph{New perspectives in stochastic geometry}, 111--144, Oxford Univ. Press, Oxford, 2010.

\bibitem{Schulte16} Schulte, M. Normal approximation of Poisson functionals in Kolmogorov distance. \emph{J. Theoret. Probab.} 29(1) (2016), 96--117. 
 {
\bibitem{SchTh} Schulte, M.; Th\"ale, Ch. Central limit theorems for the radial spanning tree. {\emph Random Structures and Algorithms}, 50 (2017), 262–286.}

\bibitem{SY18} Schulte, M.; Yukich, J.E.  Multivariate second order Poincar\'e
inequalities for Poisson functionals. \emph{Electron. J. Probab.} 24 (2019), no. 130, 1–42.

\bibitem{SZ19} Shao, Q.-M.; Zhang, Z.-S. Berry-Esseen bounds of normal and nonnormal approximation for unbounded exchangeable pairs. \emph{Ann. Probab.} 47(1) (2019), 61--108.

\bibitem{Steele88} Steele, J. M. Growth rates of Euclidean minimal spanning trees with power weighted edges. \emph{Ann. Probab.} 16(4) (1988), 1767--1787.

\bibitem{steele} Steele, J.M. Cost of sequential connection for points in space. \emph{Oper. Res. Lett.} 8 (1989), 137--142.

\bibitem{Stein} Stein, Ch. \emph{Approximate computation of expectations}. Institute of Mathematical Statistics Lecture Notes—Monograph Series, 7. Institute of Mathematical Statistics, Hayward, CA, 1986. {\rm iv}+164 pp.

\bibitem{wade07} Wade, A. R. Explicit laws of large numbers for random nearest-neighbour-type graphs. \emph{Adv. in Appl. Probab.} 39 (2007), no. 2, 326--342.

\bibitem{wade09} Wade, A. R. Asymptotic theory for the multidimensional random on-line nearest-neighbour graph. \emph{Stochastic Process. Appl.} 119(6) (2009), 1889--1911.

\bibitem{YSA} Yogeshwaran, D.; Subag, E.; Adler, R. J. Random geometric complexes in the thermodynamic regime. \emph{Prob. Th. Rel. Fields} 167 (2017), 107-142

\bibitem{Yukich} Yukich, J. E. \emph{Probability theory of classical Euclidean optimization problems}. Lecture Notes in Mathematics, 1675. Springer-Verlag, Berlin, 1998. x+152 pp.

\end{thebibliography}

\end{document}